\numberwithin{equation}{section}
\DeclareFontFamily{OT1}{pzc}{}
\DeclareFontShape{OT1}{pzc}{m}{it}{<-> s * [1.2] pzcmi7t}{}
\DeclareMathAlphabet{\mathpzc}{OT1}{pzc}{m}{it}
\def\Aut{\operatorname{Aut}}
\def\End{\operatorname{End}}
\def\Dom{\operatorname{Dom}}
\def\supp{\operatorname{supp}}
\def\dim{\operatorname{dim}}
\def\id{\operatorname{id}}
\def\sup{\operatorname{sup}}
\def\max{\operatorname{max}}
\def\min{\operatorname{min}}
\def\id{\operatorname{id}}
\def\supp{\operatorname{supp}}
\def\d{\operatorname{d}}
\def\C{\mathbb{C}}
\def\R{\mathbb{R}}
\def\N{\mathbb{N}}
\def\Z{\mathbb{Z}}
\def\T{\mathbb{T}}
\def\JJ{\mathcal{J}}
\def\NN{\mathcal{N}}
\def\TT{\mathcal{T}}
\newcommand{\ep}{\varepsilon}
\renewcommand{\mid}{\,:\,}
\newtheorem{thm*}{Theorem}
\newtheorem{thm}{Theorem}[section]
\newtheorem{cor}[thm]{Corollary}
\newtheorem{lemma}[thm]{Lemma}
\newtheorem{prop}[thm]{Proposition}
\theoremstyle{definition}
\newtheorem{definition}[thm]{Definition}
\newtheorem{notation}[thm]{Notation}
\theoremstyle{remark}
\newtheorem{remark}[thm]{Remark}
\newtheorem{example}[thm]{Example}
\newcommand{\G}{\mathcal{R}}
\newcommand{\op}{\textnormal{op}}
\newcommand{\Dsla}{\slashed{D}}
\begin{document}

\date{\today}
\title{Wieler solenoids, Cuntz-Pimsner algebras and $K$-theory}

\author[Deeley]{Robin J. Deeley}
\address{Robin J. Deeley,   Department of Mathematics, University of Hawaii, 2565 McCarthy Mall, Keller 401A, Honolulu HI 96822}
\email{robin.deeley@gmail.com}
\author[Goffeng]{Magnus Goffeng}
\address{Magnus Goffeng, Department of Mathematical Sciences, Chalmers University of Technology and the University of Gothenburg, SE-412 96 Gothenburg, Sweden}
\email{goffeng@chalmers.se}
\author[Mesland]{Bram Mesland}
\address{Bram Mesland, Institut f\"{u}r Analysis, Leibniz Universit\"{a}t, Welfengarten 1, 30167 Hannover, Germany}
\email{brammesland@gmail.com }
\author[Whittaker]{Michael F. Whittaker}
\address{Michael F. Whittaker, School of Mathematics and Statistics, University of Glasgow, United Kingdom}
\email{Mike.Whittaker@glasgow.ac.uk}

\begin{abstract} 
We study irreducible Smale spaces with totally disconnected stable sets and their associated $K$-theoretic invariants. Such Smale spaces arise as Wieler solenoids, and we restrict to those arising from open surjections. The paper follows three converging tracks: one dynamical, one operator algebraic and one $K$-theoretic. Using Wieler's Theorem, we characterize the unstable set of a finite set of periodic points as a locally trivial fibre bundle with discrete fibres over a compact space. This characterization gives us the tools to analyze an explicit groupoid Morita equivalence between the groupoids of Deaconu-Renault and Putnam-Spielberg, extending results of Thomsen. The Deaconu-Renault groupoid and the explicit Morita equivalence leads to a Cuntz-Pimsner model for the stable Ruelle algebra. The $K$-theoretic invariants of Cuntz-Pimsner algebras are then studied using the Pimsner extension, for which we construct an unbounded representative. To elucidate the power of these constructions we characterize the KMS weights on the stable Ruelle algebra of a Wieler solenoid. We conclude with several examples of Wieler solenoids, their associated algebras and spectral triples.
\end{abstract}

\maketitle

\section*{Introduction}

Inspired by Williams \cite{williamsexp}, Wieler \cite{Wielerpaper} recently proved that irreducible Smale spaces with totally disconnected stable sets always arise as solenoids -- inverse limits associated with continuous surjections. Due to Wieler's characterization of such spaces we call them Wieler solenoids. In view of \cite{deaacaneudoaod}, Wieler solenoids arising from an open surjection can be studied by means of Cuntz-Pimsner algebras; a class of $C^*$-algebras that provide a different picture than the usual Smale space groupoid $C^*$-algebras. The purpose of this paper is to study the connection between Wieler solenoids defined from an open mapping and Cuntz-Pimsner algebras. This connection allows us to improve our understanding of the $K$-theoretic invariants of such Smale spaces. We do this by linking Putnam's stable Ruelle algebra \cite{Put_Alg, PS} with unbounded Kasparov theory on Cuntz-Pimsner algebras studied by Adam Rennie and the second and third listed authors \cite{goffmes, GMR}. 

Wieler's classification result states that any irreducible Smale space with totally disconnected stable sets can be written in the form of an inverse limit $X=V\xleftarrow{g}V\xleftarrow{g}\cdots$ where $g:V\to V$ is a continuous finite-to-one surjection satisfying two additional axioms, which are weakened versions of $g$ being locally expanding and open. For our construction, $g$ must be an open map, rather than the weakened version appearing in Wieler's second axiom. When $g$ is open and satisfies Wieler's first axiom, we show $g$ is a local homeomorphism. We discuss several different assumptions on $g$ in Section \ref{SmaleDisStaSet} and show that they are all equivalent to the condition that $g$ is an open map. 

Our main results revolve around a systematic study of the $C^*$-algebras associated with Wieler solenoids arising from an open mapping. We arrive at our results through combining dynamics, operator algebras, and $K$-theoretic invariants. In particular, we describe: i) the global unstable set $X^{u}(P)$ as a locally trivial bundle over the base space $V$, ii) the stable Ruelle algebra as a Cuntz-Pimsner algebra and iii) the $KK$-theory of the stable Ruelle algebra. An immediate consequence of the Cuntz-Pimsner model for the stable Ruelle algebra is the complete classification of KMS weights on the stable Ruelle algebra for the natural gauge dynamics of the Cuntz-Pimsner algebra.

The structure of a Smale space naturally occurs in hyperbolic and symbolic dynamics, see \cite{Put_Alg,Putnam notes, PutLift,put,PS,Rue1,Rue2,Tho, Wielerpaper, yisolenoid}. The study of Smale spaces through invariants of their $C^*$-algebras has been central to the theory since its very beginning. In \cite{Rue2}, Ruelle introduced $C^*$-algebra theory into the study of Smale spaces and studied their noncommutative dynamics using equilibrium (KMS) states. Continuing Ruelle's program, Putnam and Spielberg \cite{Put_Alg,PS} showed that the stable and unstable Ruelle algebras are separable, simple, stable, nuclear, purely infinite, and satisfy the Universal Coefficient Theorem (UCT). It follows that $K$-theory is a complete invariant for the stable and unstable Ruelle algebras using the Kirchberg-Phillips classification theorem \cite{Phi}. The computation of $K$-theory for these algebras is therefore of fundamental importance. One of our main results provides a new method of computing $K$-theory for the stable Ruelle algebra of a Wieler solenoid through a six term exact sequence.

Invariants of the Smale spaces themselves led Putnam to define a homology theory for Smale spaces \cite{put}. Inspired by $KK$-theoretic correspondences, together with Brady Killough, the first and fourth listed author used Putnam's homology theory to define dynamical correspondences for Smale spaces  \cite{DKWcorr}. This paper is a first step towards understanding dynamical correspondences from the point of view of $KK$-theory \cite{Kas1, Kas2}. The reason for taking this first step for Wieler solenoids is the presence of an explicit Morita equivalence between the stable Ruelle algebra and a Cuntz-Pimsner algebra over $C(V)$. This allows us to describe the noncommutative geometry (unbounded $KK$-theory, \cite{BJ, connesbook} see also \cite{M, MR}) and noncommutative dynamics (KMS weights \cite{ln, lrrw}) of the stable Ruelle algebra in an explicit way. Our motivation for doing so is to make the Smale space origin clear and susceptible to generalization.

The dynamics of expanding continuous surjections $g:V\to V$ is well studied in the literature \cite{aahr,deaacaneudoaod,raeburnetal,GMR,lrrw,Nek_crelle,Tho,Wal,williamsexp,yisolenoid}. This fact guarantees us that the tools we need are available. For instance, \cite[Chapter 4.4]{Tho} treated Smale spaces constructed from expansive mappings with a dense set of periodic points, see more details in Remark \ref{remarkaboutthomsen} below. Although the starting point for the paper is \cite{Wielerpaper}, the inspiration for several results describing the dynamical structure of Smale spaces with totally disconnected stable sets comes from work of Thomsen, see \cite{Tho}. In particular, one of our aims is to make explicit the Morita equivalence obtained from Remark 1.14, Lemma 4.16 and Theorem 4.19 in \cite{Tho}. Since this paper is a first step towards understanding the $KK$-theory of Smale spaces, we are careful to keep the paper self-contained and all of our constructions in noncommutative geometry and dynamics are explicit.

We now conclude the introduction with our main results and the organization of the paper. In Section \ref{Sec:Smale spaces and dyn}, we recall the relevant definitions of Smale spaces. Section \ref{SmaleDisStaSet} introduces Wieler's characterization of irreducible Smale spaces with totally disconnected stable sets. In particular, Wieler shows that such Smale spaces $(X,\varphi)$ always arise from a continuous surjection $g:V\to V$ of a compact Hausdorff space $V$ in the sense that $X$ is the inverse limit $X:=V\xleftarrow{g}V\xleftarrow{g}\cdots$ and $\varphi:X\to X$ the shift map. The following is the main dynamical result of our paper, and appears as Theorem \ref{localfibb}. This result should be compared to the structural results of Hurder-Clark-Lukina on matchbox manifolds \cite{homhcl,shapehcl}.

\begin{thm*}
\label{firstintrothm}
Suppose $(X,\varphi)$ is an irreducible Wieler solenoid arising from an open continuous surjection. Let $P\subseteq X$ be a finite $\varphi$-invariant set of periodic points and 
\[
X^u(P):=\{x\in X: \text{ there exists } p\in P \text{ such that } \lim_{n\to \infty}\d_X(\varphi^{-n}(x),\varphi^{-n}(p))=0\}.
\]
Then the projection map $\pi_0:X^u(P)\to V$, $(x_0,x_1,x_2,\ldots)\mapsto x_0$ defines a covering map. In particular, $\pi_0:X^u(P)\to V$ is a locally trivial bundle with discrete fibres.
\end{thm*}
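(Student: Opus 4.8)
The plan is to exploit the local homeomorphism property of $g$ established in Section~\ref{SmaleDisStaSet}, together with the contraction of its inverse branches coming from Wieler's first axiom, and to equip $X^u(P)$ with the \emph{unstable (leaf) topology} generated by the local unstable sets rather than the subspace topology inherited from $X$ (in the latter topology the fibres of $\pi_0$ would be dense, not discrete). First I would record two preliminaries. Since $V$ is compact Hausdorff and $g$ is a finite-to-one local homeomorphism, $g$ is automatically closed, hence a finite covering map; in particular its even cover of $V$ admits a Lebesgue number, which furnishes a uniform radius $\ep_0>0$ on which every inverse branch of $g$ is defined and, after shrinking, uniformly contracting with a constant $0<\gamma<1$. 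Second, unwinding the inverse-limit metric one checks that $y\in X^u(p)$ if and only if $\d_V(y_n,p_n)\to 0$, so that $X^u(P)$ is the finite union of the unstable leaves of the points of $P$.

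The core of the argument is the local structure of $\pi_0$. Fix $x=(x_0,x_1,\dots)\in X^u(P)$ and let $h_n$ be the branch of $g\inv$ near $x_{n-1}$ with $h_n(x_{n-1})=x_n$. For $v\in B(x_0,\delta)$ with a uniform $\delta\le\ep_0$, the sequence $s(v):=(v,h_1v,h_2h_1v,\dots)$ lies in $X$ because $g(h_n\cdots h_1v)=h_{n-1}\cdots h_1v$, and the contraction estimate $\d_V(h_n\cdots h_1v,x_n)\le\gamma^n\,\d_V(v,x_0)\to 0$ (the intermediate points staying within $\ep_0$ of the $x_n$, so the branches remain defined) shows $s(v)\in X^u(x)\subseteq X^u(P)$. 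Thus $s$ is a continuous section of $\pi_0$ over $B(x_0,\delta)$ whose image is the local unstable set $X^u(x,\ep)$, and $\pi_0$ restricts to a homeomorphism $X^u(x,\ep)\xrightarrow{\ \sim\ }B(x_0,\delta)$. This exhibits $\pi_0\colon X^u(P)\to V$ as a local homeomorphism with a trivialization radius $\delta$ that, crucially, is uniform in $x$ by the Lebesgue number and the uniform contraction constant.

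To upgrade this to a covering map I would verify even covering over each ball $B(v,\delta)$. Distinct points $x\ne x'$ of the fibre $\pi_0\inv(v)\cap X^u(P)$ have disjoint plaques: if $y\in X^u(x,\ep)\cap X^u(x',\ep)$ then $x$ and $x'$ are unstably $\ep$-close and share the coordinate $x_0=x_0'=v$, hence are stably $\ep$-close as well, so expansiveness of the Smale space forces $x=x'$. Conversely the plaques cover $\pi_0\inv(B(v,\delta))\cap X^u(P)$: any such $y$ lies on some leaf $X^u(p)$, and the section of $\pi_0$ along $X^u(y,\ep)$ (defined at $v$ since $v\in B(y_0,\delta)$) produces the unique point of that leaf over $v$, which lies in $X^u(P)$. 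Hence $\pi_0\inv(B(v,\delta))\cap X^u(P)=\bigsqcup_{x\in\pi_0\inv(v)\cap X^u(P)}X^u(x,\ep)$, each piece mapped homeomorphically onto $B(v,\delta)$, and the fibres are discrete.

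It remains to see $\pi_0(X^u(P))=V$. The lifting step shows the image is closed: if $v_k\to v$ with $v_k=\pi_0(y^{(k)})$, then for large $k$ we have $v_k\in B(v,\delta)$, and lifting $v$ along the leaf of $y^{(k)}$ yields a point of $X^u(P)$ over $v$. On the other hand, as $(X,\varphi)$ is irreducible the unstable set of a periodic point in $P$ is dense in $X$, so $\pi_0(X^u(P))$ is dense in $\pi_0(X)=V$; being also closed, it equals $V$. Therefore $\pi_0$ is a covering map, hence a locally trivial bundle with discrete fibres. I expect the main obstacle to be the uniform local product structure of the second paragraph---making the trivialization radius independent of the base point, which rests on marrying the Lebesgue number of the cover $g$ with Wieler's uniform contraction---and the clean use of expansiveness to separate the sheets in the even-covering step.
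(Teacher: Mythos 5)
Your overall route is genuinely different from the paper's, but it contains one genuine gap, located at the heart of your second paragraph. You claim that Wieler's Axiom 1 makes the inverse branches of $g$ \emph{itself} uniformly contracting ``after shrinking,'' and you then use the per-step estimate $\d_V(h_n\cdots h_1v,x_n)\le\gamma^n\,\d_V(v,x_0)$ both to keep the intermediate points inside the domains of the branches $h_{n+1}$ and to conclude $s(v)\in X^u(x)$. This is not what the axioms provide: Axiom 1 is explicitly a \emph{weakening} of local expansiveness, and Lemma \ref{locally expanding} yields only that $g^K$ is locally expanding for $\gamma^{-K}$, where $K$ is Wieler's constant. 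When $K>1$, a single-step inverse branch of $g$ need not contract at all, and no shrinking of the radius changes this; so the displayed estimate fails, and with it the well-definedness of the infinite composition (the points $h_n\cdots h_1v$ may escape the $\ep_0$-neighbourhoods of the $x_n$ before any contraction has occurred). The repair is to run your construction in blocks of length $K$: use uniform continuity of the $K$-fold inverse branches (available since $\pi_0^K:V(K,g)\to V$ is a finite-to-one covering of a compact space, Lemma \ref{morelocaltriv}) to choose $\delta$ so that any $K$ consecutive branches keep the orbit within $\ep_0$ of the corresponding $x_n$, and then apply the $\gamma^K$-contraction of the inverse branches of $g^K$ block by block, yielding $\d_V(h_n\cdots h_1v,x_n)\le C\gamma^n$ and preserving the uniformity of $\delta$. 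This is exactly the care the paper takes by insisting that $M$ be a multiple of $K$ and by phrasing Lemma \ref{bracketEqualsStableForSmallN} in terms of $g^K$ and its expansiveness constant.

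Apart from this, your argument is sound and follows a different path than the paper's. The paper first reduces, via Lemma \ref{morelocaltriv}, to showing that $\pi_M:X^u(P)\to V$ is a covering for a suitable $M$, and then builds explicit trivializations $\Psi_v(z)=(\pi_M(z),[x,z])$ out of the Smale bracket, with Lemma \ref{bracketEqualsStableForSmallN} doing the coordinate bookkeeping; surjectivity is also obtained from the bracket. You instead lift paths by composing inverse branches to produce local sections, verify even covering directly (your separation of sheets --- equal zeroth coordinates force stable closeness, which together with unstable closeness forces equality by the bracket axioms/expansiveness --- is correct), and get surjectivity from ``closed image plus dense image,'' avoiding the bracket almost entirely. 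Your observation about using the leaf topology rather than the subspace topology is exactly right. The paper's bracket-based trivialization buys uniformity and the discrete fibre $F_v$ in one stroke and stays within Smale-space formalism; your section-based approach is closer in spirit to Williams' expanding attractors and makes the covering structure of $g$ itself do the work, at the cost of the block-of-$K$ bookkeeping described above.
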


In section \ref{sectioncpmodel}, we discuss Cuntz-Pimsner algebras associated with a continuous surjective local homeomorphism, following \cite{deaacaneudoaod}. This Cuntz-Pimsner algebra is isomorphic to a Deaconu-Renault groupoid algebra. We conclude by relating our findings to Wieler solenoids. Pavlov-Troitsky's results on branched coverings \cite{pavlovstrots} show that, if $g:V\to V$ satisfies Wieler's axioms, it is necessary that $g$ is a local homeomorphism in order to have an associated Cuntz-Pimsner algebra.

In Sections \ref{sectionfour} and \ref{sectionfive} we define an unbounded $K$-cycle representing the Pimsner extension of $O_E$ and show that it applies to the stable Ruelle algebra through an explicit Morita equivalence. The following result combines Theorem \ref{moreth} and Proposition \ref{stablecpa} to relate the stable Ruelle algebra $C^*(G^s(P))\rtimes \Z$ with the Cuntz-Pimsner algebra $O_E$, here $G^s(P)$ denotes the stable groupoid of the Smale space.

\begin{thm*}
\label{moritathmintor}
Let $(X,\varphi)$ be an irreducible Wieler solenoid arising from an open mapping $g:V \to V$ and suppose $P\subseteq X$ is a finite $\varphi$-invariant set of periodic points. The crossed product groupoid $G^s(P)\rtimes \Z$ is groupoid Morita equivalent to the Deaconu-Renault groupoid $\G_g$, which satisfies $O_E\cong C^*(\G_g)$. 

Moreover, the stable Ruelle algebra $C^*(G^s(P))\rtimes \Z$ is itself isomorphic to a Cuntz-Pimsner algebra $O_{\tilde{E}}$, defined using the $C(V)$-Hilbert $C^*$-module $\mathpzc{H}_{C(V)}:=L^2(X^u(P))_{C(V)}$ and the bi-Hilbertian $\mathbb{K}_{C(V)}(\mathpzc{H}_{C(V)})$-bimodule $\tilde{E}:=\mathpzc{H}_{C(V)}\otimes E\otimes_{C(V)}\mathpzc{H}^*$ .
\end{thm*}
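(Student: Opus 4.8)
The plan is to establish the two assertions in turn, with the covering-map property of Theorem \ref{localfibb} as the essential geometric input and the Morita transform of Cuntz-Pimsner algebras as the algebraic engine. Throughout I take for granted the identification $O_E\cong C^*(\G_g)$ from Section \ref{sectioncpmodel}, so that the second assertion amounts to transporting a Cuntz-Pimsner presentation across the Morita equivalence produced by the first.

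For the groupoid Morita equivalence I would exploit that, by Theorem \ref{localfibb}, $\pi_0\colon X^u(P)\to V$ is a covering map, hence a continuous open surjection onto the unit space $\G_g^{(0)}=V$. The standard blow-up construction then produces the pullback groupoid
\[
\pi_0^*\G_g=\{(a,\gamma,b)\in X^u(P)\times\G_g\times X^u(P): r(\gamma)=\pi_0(a),\ s(\gamma)=\pi_0(b)\},
\]
which is groupoid equivalent to $\G_g$ through the linking space $\{(a,\gamma): \pi_0(a)=r(\gamma)\}$. Since the unit space of $\pi_0^*\G_g$ is canonically $X^u(P)=(G^s(P)\rtimes\Z)^{(0)}$, the problem reduces to producing a topological groupoid isomorphism $G^s(P)\rtimes\Z\cong\pi_0^*\G_g$.

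The construction and verification of this isomorphism is the step I expect to be the main obstacle, and it is exactly the content of Theorem \ref{moreth}. Conceptually, passing to $X^u(P)\subseteq X=\varprojlim(V,g)$ resolves the non-invertibility of $g$: on the inverse limit the shift $\varphi$ is a homeomorphism, so pulling the one-sided Deaconu-Renault structure back along $\pi_0$ converts it into a genuine crossed product by $\Z$. Concretely, I would send $((x,y),n)\in G^s(P)\rtimes\Z$ to the triple $(x,\gamma,y)$ with $\gamma=(\pi_0(x),k,\pi_0(y))\in\G_g$, where the $\Z$-coordinate $k$ records the lag between the forward $\varphi$-orbits of $x$ and $y$ together with the shift $n$. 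The delicate point is to show that stable equivalence on $X^u(P)$ corresponds \emph{exactly} to the $\pi_0$-pullback of the Deaconu-Renault relation: the implication that stably equivalent points project to $g$-tail-equivalent points is direct from the definition of the metric on $X$, whereas the converse requires the local triviality and discreteness of the fibres from Theorem \ref{localfibb} in order to lift each Deaconu-Renault move uniquely, and to check that the lift is compatible with both the groupoid multiplication and the $\varphi$-action.

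For the second assertion I would pass to the associated $C^*$-algebraic imprimitivity bimodule and recognise it via the Hilbert module $\mathpzc{H}_{C(V)}=L^2(X^u(P))_{C(V)}$. Because $\pi_0$ has discrete fibres, fibrewise-square-summable sections form a full right Hilbert $C(V)$-module under $\langle\xi,\eta\rangle(v)=\sum_{\pi_0(x)=v}\overline{\xi(x)}\eta(x)$, and a direct computation gives $\mathbb{K}_{C(V)}(\mathpzc{H}_{C(V)})\cong C^*(X^u(P)\times_V X^u(P))$. I would then apply the Morita transform for Cuntz-Pimsner algebras (as developed in \cite{GMR}, after Muhly-Solel): for a full imprimitivity bimodule $\mathpzc{H}_{C(V)}$ between coefficient algebras, the correspondence $\tilde{E}=\mathpzc{H}_{C(V)}\otimes E\otimes_{C(V)}\mathpzc{H}^*$ over $\mathbb{K}_{C(V)}(\mathpzc{H}_{C(V)})$ satisfies $O_{\tilde{E}}\cong\mathbb{K}_{O_E}(\mathcal{F})$ for the Hilbert $O_E$-module $\mathcal{F}$ induced from $\mathpzc{H}_{C(V)}$. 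The first part identifies $\mathcal{F}$ with the imprimitivity bimodule implementing the equivalence, so $\mathbb{K}_{O_E}(\mathcal{F})$ is the stable Ruelle algebra itself, giving the isomorphism $C^*(G^s(P))\rtimes\Z\cong O_{\tilde{E}}$. The remaining checks -- fullness of $\mathpzc{H}_{C(V)}$, injectivity of the left action on $\tilde{E}$, and compatibility of the transform with the gauge $\Z$-gradings -- are routine and make up Proposition \ref{stablecpa}.
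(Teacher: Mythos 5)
Your proposal is correct and follows essentially the same route as the paper: your pullback groupoid $\pi_0^*\G_g$ with linking space $\{(a,\gamma):\pi_0(a)=r(\gamma)\}$ is precisely the paper's space $Z^u(P)\cong X^u(P)_{\pi_0}\times_r\G_g$ from Proposition \ref{Moritaprop}, and the two directions of the correspondence between stable equivalence and $g$-tail equivalence that you flag as the delicate point are supplied by Lemma \ref{triviality} (tail equivalence of projections implies stable equivalence, via Wieler's contraction estimate) and by the local expansiveness coming from Lemma \ref{locally expanding} (stable equivalence forces eventual exact equality of the $V$-coordinates), rather than by local triviality per se, which instead enters in matching the \'{e}tale topologies. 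Likewise, your appeal to the general Morita transform of Cuntz-Pimsner algebras is exactly what the paper carries out by hand: Proposition \ref{tensorfactor} (via the factorization Lemmas \ref{factorweak} and \ref{factorstrong}) identifies the imprimitivity bimodule $L^2(Z^u(P))_{C^*(\G_g)}$ with $L^2(X^u(P))_{C(V)}\otimes_{C(V)}C^*(\G_g)$, after which Proposition \ref{stablecpa} exhibits the covariant representation of $\tilde{E}$ and checks bijectivity using Pimsner's theorem.
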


To describe the $K$-theoretic invariants of $C^*(G^s(P))\rtimes \Z$ we make use of the Pimsner extension for $O_E$. The Pimsner extension describes $C^*(G^s(P))\rtimes \Z$ in the triangulated category $KK^{\T}$ using $C(V)$ (carrying the trivial action of the circle group $\T$) and $[E]\in KK_0^{\T}(C(V),C(V))$ defined by equipping $E$ with the $\T$-action $z\cdot \xi:=z\xi$, for $z\in \T\subseteq \C$ and $\xi\in E$. This allows us to compute $K$-theoretic invariants, $\T$-equivariant or not, from knowledge of $E$ and $C(V)$ alone. The form of the Pimsner extension is determined by a distinguished class $\partial\in KK_1^{\T}(C^*(G^s(P))\rtimes \Z,C(V))$ that we now describe by combining Theorem \ref{pimsnerthm} and Theorem \ref{productwithzup}.

\begin{thm*}
\label{thmthree}
Let $(X,\varphi)$ be an irreducible Wieler solenoid arising from an open mapping $g:V \to V$, $P \subseteq X$ a finite set of periodic points and let $C^*(G^s(P))\rtimes \Z$ be the stable Ruelle algebra. Then there is a six term exact sequence 
\[\begin{CD}
K_0(C(V))@>1-\theta_*>>K_0(C(V)) @>\iota_{\mathcal{R}*}>> K_0(C^*(G^s(P))\rtimes \Z)\\
@A\partial AA  @. @VV\partial V\\
K_1(C^*(G^s(P))\rtimes \Z) @<\iota_{\mathcal{R}*}<<K_1(C(V))@<1-\theta_*<< K_1(C(V))\\
\end{CD}. \]
Moreover, each connecting map is represented by an explicit unbounded Kasparov module.
\end{thm*}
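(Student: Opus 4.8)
The plan is to assemble the displayed six term sequence from the three ingredients already at hand: the identification of the stable Ruelle algebra with a Cuntz--Pimsner algebra (Theorem~\ref{moritathmintor}), the Pimsner extension for $O_E$ together with its unbounded representative (Theorem~\ref{pimsnerthm}), and the product computation relating the $O_E$-picture to the crossed product picture (Theorem~\ref{productwithzup}). The backbone is the general Pimsner six term sequence. For a bi-Hilbertian $C(V)$-bimodule $E$ the Cuntz--Pimsner algebra $O_E$ fits into the Toeplitz--Pimsner extension
\[
0\to \mathbb{K}_{C(V)}(\FF_E)\to \TT_E\to O_E\to 0,
\]
where $\FF_E$ is the Fock module. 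Under the $KK$-equivalences $\TT_E\simeq C(V)\simeq \mathbb{K}_{C(V)}(\FF_E)$ this has a class $\partial\in KK_1(O_E,C(V))$ whose associated six term sequence in $K$-theory carries the horizontal maps $1-\theta_*$, with $\theta_*$ the endomorphism of $K_*(C(V))$ induced by the $KK$-class $[E]$ of the bimodule, and the inclusion $\iota_*:K_*(C(V))\to K_*(O_E)$ of the coefficient algebra. I would invoke Theorem~\ref{pimsnerthm} to realize $\partial$ by an explicit unbounded Kasparov module built from the number operator on $\FF_E$.

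Next I would transport the sequence across the identifications of Theorem~\ref{moritathmintor}. There $C^*(G^s(P))\rtimes\Z$ is shown isomorphic to $O_{\tilde E}$, and $C^*(\G_g)\cong O_E$ is groupoid Morita equivalent to the crossed product. Since $K$-theory and $KK$-theory are Morita invariant, these isomorphisms let me identify $K_*(C^*(G^s(P))\rtimes\Z)$ with $K_*(O_E)$ and pull the Pimsner class back to a class $\partial\in KK_1^{\T}(C^*(G^s(P))\rtimes\Z,C(V))$. The gauge $\T$-action on $O_E$ is, under these identifications, Fourier-dual to the $\Z$-action defining the crossed product, which is why the resulting classes are naturally $\T$-equivariant. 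This is exactly the content of the ``product with $\Z$'' computation of Theorem~\ref{productwithzup}, which I would use to produce the unbounded representative of $\partial$ in the crossed-product picture from the one on $O_E$, and which supplies the $\T$-equivariant refinement from which the non-equivariant six term sequence follows by forgetting the action.

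Assembling these gives the displayed diagram: the two horizontal arrows labelled $1-\theta_*$ are $1$ minus the action of $[E]$ on $K_*(C(V))$; the two arrows labelled $\iota_{\mathcal{R}*}$ are the compositions of the coefficient inclusion $C(V)\hookrightarrow O_E$ with the Morita equivalence isomorphism onto the stable Ruelle algebra; and the two vertical arrows are the boundary map induced by $\partial$. Exactness of the diagram is inherited verbatim from Pimsner's general sequence for $O_E$; the only point needing care here is naturality, namely that $\theta_*$, $\iota_{\mathcal{R}*}$ and $\partial$ correspond correctly under the isomorphism $C^*(G^s(P))\rtimes\Z\cong O_{\tilde E}$ and the Morita equivalence, so that the $\T$-equivariant picture of Theorem~\ref{productwithzup} is compatible with the $K$-theoretic maps appearing in the sequence.

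The main obstacle, and the reason the ``moreover'' clause is substantive rather than formal, is verifying that the unbounded $K$-cycles constructed in Sections~\ref{sectionfour} and \ref{sectionfive} genuinely represent the bounded $KK$-classes occurring as connecting maps. Concretely, one must check that the self-adjoint regular operator assembled from the grading/number operator defines an honest unbounded Kasparov module, with compact resolvent on the relevant module, whose bounded transform equals the Pimsner extension class $\partial$, and that the Kasparov product realizing the passage to the crossed product in Theorem~\ref{productwithzup} can be computed at the unbounded level without destroying the representative. Once this compatibility is established, each connecting map of the diagram comes equipped with its explicit unbounded representative, which completes the proof.
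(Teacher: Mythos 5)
Your proposal is correct and follows essentially the same route as the paper: the six term sequence is obtained exactly by combining the Pimsner exact triangle with its unbounded representative (Theorem~\ref{pimsnerthm}, via Theorem~\ref{represent!}) with the $\T$-equivariant Morita equivalence $Z^u(P)$ and the unbounded product computation of Theorem~\ref{productwithzup} (whose technical core is Kucerovsky's criterion, packaged in Proposition~\ref{awesome}). The only ingredient you leave implicit is Proposition~\ref{propcomposing}, which is what identifies the action of $[E_g]$ on $K_*(C(V))$ with the map $\theta_*$ induced by the explicit $*$-homomorphism $\theta$ of \eqref{omegadefin}; your definition of $\theta_*$ as ``the endomorphism induced by $[E]$'' absorbs this identification, so no gap results.
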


We note that the element $\iota_\mathcal{R}\in KK^{\T}_{0}(C(V),C^*(G^s(P))\rtimes \Z)$ is induced from a stable inclusion (cf. Corollary \ref{stableisocor}), the $*$-homomorphism $\theta:C(V)\to C(V,M_N(\C))$ is defined from a partition of unity as in \eqref{omegadefin} and $\partial\in KK_1^{\T}(C^*(G^s(P))\rtimes \Z,C(V))$ is explicitly represented by the unbounded Kasparov module appearing in Theorem \ref{productwithzup}. 

In fact, a more general statement is true: The $C^*$-algebra $C^*(G^s(P))\rtimes \Z$ fits into an exact triangle with $C(V)$, as in Theorem \ref{productwithzup}. Using this description allows us to compute $K$-theoretic invariants in the sense of Corollary \ref{corollarpseqpage13}. Since the $K$-theory computation is $\T$-equivariant one can also use it to compute the $K$-theory of the stable algebra $C^*(G^s(P))$, see Remark \ref{uoneremark}. The fact that all $KK$-classes appearing in Theorem \ref{thmthree} are explicitly represented by unbounded Kasparov modules makes the result suitable for computations.

The stable Ruelle algebra $C^*(G^s(P))\rtimes\Z$ carries a dual $\T$-action, so we can study the KMS weights of the associated $\R$-action. In  Corollary \ref{kmsweightsonds}, we combine a result of Laca-Neschveyev \cite{ln} and the Morita equivalence of Theorem \ref{moritathmintor} to prove the following.

\begin{thm*}
Let $(X,\varphi)$ be an irreducible Wieler solenoid arising from an open mapping $g:V \to V$, $P \subseteq X$ a finite set of periodic points and $\pi_0$ the map from Theorem \ref{firstintrothm}.
For $\beta>0$, there is a bijective correspondence between the measures $\mu$ on $V$ satisfying $g^*\mu=e^\beta \mu$ and KMS$_\beta$ weights on $C^*(G^s(P)) \rtimes \Z$ via $\mu\mapsto \phi_\mu$, where the KMS$_\beta$ weight $\phi_\mu$ is defined on $C_c(\mathcal{G}^s(P)\rtimes \Z)$ by 
\[
\phi_\mu(f):=\int_{X^u(P)} f(x,0,x)\mathrm{d}(\pi_0^*\mu).
\]
The $C^*$-algebra $C^*(G^s(P))\rtimes \Z$ always admits at least one KMS$_\beta$ weight. If $(V,g)$ is mixing, there is a unique $\beta>0$ for which there exists a KMS$_\beta$ weight and that KMS$_\beta$ weight is unique.
\end{thm*}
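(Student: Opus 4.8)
The plan is to transport the entire question through the identification of $C^*(G^s(P))\rtimes\Z$ with a Cuntz--Pimsner algebra and then to invoke the classification of KMS weights on such algebras. First I would use Theorem \ref{moritathmintor}, which simultaneously produces a groupoid Morita equivalence $G^s(P)\rtimes\Z \sim \G_g$ and an isomorphism $C^*(G^s(P))\rtimes\Z \cong O_{\tilde E}$, together with $O_E \cong C^*(\G_g)$. Under these identifications the dual $\T$-action on the crossed product corresponds to the gauge action on the Cuntz--Pimsner algebra, and the $\R$-action whose KMS weights we study is the one-parameter group obtained by exponentiating the generator of this gauge grading. Since the stable Ruelle algebra is stable and purely infinite, weights rather than states are the correct category, which is exactly what the cited classification handles. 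The first step is therefore to check that the Morita equivalence is equivariant for these dynamics, so that KMS$_\beta$ weights on $C^*(G^s(P))\rtimes\Z$ are in canonical bijection with KMS$_\beta$ weights on $O_E$ for the gauge dynamics.

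Second, I would apply the Laca--Neschveyev classification \cite{ln} of KMS weights on Cuntz--Pimsner algebras with gauge dynamics. Their theorem identifies KMS$_\beta$ weights on $O_E$ with traces, equivalently measures, on the coefficient algebra $C(V)$ satisfying a scaling identity relating the right and left $C(V)$-valued inner products on $E$ by the factor $e^{\beta}$. The key computation is to make this scaling identity concrete for the bimodule $E$ attached to $g$: since the right inner product sums a function over the $g$-preimages of a point, the scaling condition becomes the Ruelle--Perron--Frobenius eigenmeasure equation $g^*\mu=e^\beta\mu$, where $g^*$ is the dual of the transfer operator $(\mathcal{L}f)(v)=\sum_{g(w)=v}f(w)$. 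Pulling the resulting weight back through the Morita equivalence, and using the covering map $\pi_0:X^u(P)\to V$ of Theorem \ref{firstintrothm} to express the induced weight as integration along the diagonal of the groupoid against $\pi_0^*\mu$, should yield precisely the stated formula $\phi_\mu(f)=\int_{X^u(P)}f(x,0,x)\,\mathrm{d}(\pi_0^*\mu)$.

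For the existence and uniqueness assertions I would appeal to the thermodynamic formalism of the expanding local homeomorphism $g:V\to V$. Existence of at least one pair $(\beta,\mu)$ with $\beta>0$ and $g^*\mu=e^\beta\mu$ follows from the Ruelle--Perron--Frobenius theorem applied to $\mathcal{L}$: its spectral radius is a positive eigenvalue $\lambda=e^\beta$ of the dual operator, with an associated probability eigenmeasure. When $(V,g)$ is mixing, the transfer operator is primitive, so $\lambda$ is a simple isolated eigenvalue and the eigenmeasure is unique up to normalization; this forces $\beta=\log\lambda$ and the KMS$_\beta$ weight to be unique, and rules out eigenmeasures at any other value of $\beta$. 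Translating these statements back through the bijection established above gives the final conclusions.

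The step I expect to be the main obstacle is the second one: correctly matching the abstract Laca--Neschveyev scaling condition on the bi-Hilbertian $C(V)$-bimodule $E$ with the concrete eigenmeasure equation, while simultaneously tracking how a weight transforms under the Morita equivalence so that the answer is expressed on the diagonal of $G^s(P)\rtimes\Z$ rather than on $O_E$. The auxiliary module $\mathpzc{H}_{C(V)}=L^2(X^u(P))_{C(V)}$ implementing the equivalence must be shown to intertwine the two tracial pictures compatibly with $\pi_0$, and verifying that the normalization of the inner products produces exactly the factor $e^{\beta}$ requires care with the conventions for left versus right inner products on $E$ and for the orientation of the gauge grading.
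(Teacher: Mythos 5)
Your proposal follows, in essence, the same strategy as the paper: move the problem to the Cuntz--Pimsner algebra $O_E$, classify its KMS equilibria via Laca--Neshveyev \cite{ln} in terms of eigenmeasures of the transfer operator, and settle existence and uniqueness by thermodynamic formalism. The differences are in how the two delicate steps are implemented, and in both cases the paper's choices are worth noting. First, the paper never invokes a general principle that KMS weights correspond under an equivariant Morita equivalence (the step you correctly single out as the main obstacle). Instead it uses the explicit $\T$-equivariant $*$-isomorphism $C^*(G^s(P))\rtimes\Z\cong O_E\otimes\mathbb{K}$ of Corollary \ref{stableisocor}, combined with the elementary Proposition \ref{proponkms}: for a unital $\R$-$C^*$-algebra $B$, the map $\omega\mapsto\omega\otimes\mathrm{tr}$ is a bijection between KMS$_\beta$ \emph{states} on $B$ and KMS$_\beta$ \emph{weights} on $B\otimes\mathbb{K}$. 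This simultaneously fixes the state-versus-weight bookkeeping (Laca--Neshveyev is applied to the unital $O_E$, where the classification is in terms of bounded traces, i.e.\ probability measures on $V$) and reduces the formula $\phi_\mu(f)=\int_{X^u(P)}f(x,0,x)\,\mathrm{d}(\pi_0^*\mu)$ to a direct computation in local trivializations of the covering $\pi_0$ from Theorem \ref{localfibb}; your route would require proving a weight-induction result for the bimodule $L^2(Z^u(P))_{C^*(\G_g)}$, which the paper deliberately sidesteps. Second, your appeal to a Ruelle--Perron--Frobenius spectral gap ("mixing implies the transfer operator is primitive, so $\lambda$ is a simple isolated eigenvalue") is stronger than what is available here: $g$ is only a mixing positively expansive local homeomorphism with continuous data, and on $C(V)$ there is no H\"older structure to produce an isolated simple eigenvalue. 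The paper instead cites Walters \cite{Wal} for existence of a pair $(\mu,\beta)$ for expansive maps and for uniqueness under the weak specification property, together with Kumjian--Renault \cite{kr} showing that weak specification is equivalent to mixing; this also identifies $\beta=h(g)$, the topological entropy. Your conclusions are the correct ones, but the primitivity mechanism should be replaced by these references (and the final translation between mixing of $(V,g)$ and of $(X,\varphi)$ is handled in the paper by \cite[Theorem 3.5.3]{AH}).
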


Throughout the paper $n$-solenoids feature as our running example. In addition,
we consider two other examples in Section \ref{examplessection}. These include subshifts of finite type, where the results in the present paper, together with previous results of the second and third listed authors \cite{goffmes} prove that the $K$-homology of the stable and unstable Ruelle algebra is exhausted by explicit $\theta$-summable spectral triples whose phases are finitely summable Fredholm modules (see Proposition \ref{proponcktwosided}). We study self-similar groups in Subsection \ref{selfsimilarsubsec}. The construction of limit sets of regular self-similar groups provide a broad range of examples fitting into the framework of the paper.

In Section \ref{solenoidsubsec}, we study solenoids constructed from local diffeomorphisms $g:M\to M$ acting conformally on a closed Riemannian manifold $M$. In this case, we can construct explicit spectral triples on the stable Ruelle algebras that are not Kasparov products with the class $\partial$ in Theorem \ref{thmthree}. These spectral triples are defined from an elliptic log-polyhomogeneous pseudo-differential operator (in some cases acting on the GNS-space of the KMS-weight). 

\section*{Acknowledgments}

The authors thank Tyrone Crisp, Ian Putnam, Adam Rennie and Aidan Sims for interesting and insightful discussions. We are indebted to the universities of Copenhagen, Glasgow, Gothenburg, M${\rm \ddot{u}}$nster (Focus Programme on $C^*$-algebras, 2015), Warwick, and Wollongong and for their hospitality and support in facilitating this collaboration. We gratefully acknowledge additional travel support by the Centre for Symmetry and Deformation (Copenhagen). MG wishes to thank the Knut and Alice Wallenberg foundation for their support. MG was partially supported by the Swedish Research Council Grant 2015-00137 and Marie Sklodowska Curie Actions, Cofund, Project INCA 600398, BM was partially supported by EPSRC grant EP/J006580/2, and MW was partially supported by ARC grant DP130100490.

\section{Smale spaces and their $C^*$-algebras}
\label{Sec:Smale spaces and dyn}

In this section we will recall some well known facts about Smale spaces and their associated $C^*$-algebras, see \cite{Put_Alg,Putnam notes,put,PS,Rue2,Tho}. For a more detailed presentation we refer the reader to \cite{put}. 

\subsection{Smale spaces}\label{Sec:Smale}

A dynamical system $(X,\varphi)$ consists of a compact metric space $X$ and a continuous map $\varphi:X\to X$. A Smale space is a dynamical system in which $\varphi$ is a homeomorphism and the space can be locally decomposed into the product of a coordinate whose points get closer together under the map $\varphi$ and a coordinate whose points get farther apart under the map $\varphi$. Ruelle axiomatized the notion of a Smale space with the following definition.

\begin{definition}[{\cite[p.19]{put}, \cite{Rue1}}]\label{defSmaSpa}
Suppose $X$ is a compact metric space and $\varphi:X \to X$ is a homeomorphism. Consider the data $(X,\d_X,\varphi,[\cdot,\cdot],\lambda,\ep_X)$ where $\ep_{X} > 0$ and $0<\lambda < 1$ are constants and
\[ [\cdot,\cdot]:\{(x,y) \in X \times X : \d_X(x,y) \leq \ep_{X}\}\to X,\quad (x,y) \mapsto [x, y]\]
is a continuous mapping, called \emph{the bracket mapping}. We say that $(X,\d_X,\varphi,[\cdot,\cdot],\lambda,\ep_X)$ is a \emph{Smale space} if the following axioms hold
\begin{itemize}
\item[B1] $\left[ x, x \right] = x$,
\item[B2] $\left[ x, [ y, z] \right] = [ x, z]$ if both sides are defined,
\item[B3] $\left[ [ x, y], z \right] = [ x,z ]$ if both sides are defined,
\item[B4] $\varphi[x, y] = [ \varphi(x), \varphi(y)]$ if both sides are defined,
\item[C1] For $x,y \in X$ such that $[x,y]=y$, we have $\d_X(\varphi(x),\varphi(y)) \leq \lambda \d_X(x,y)$, and
\item[C2] For $x,y \in X$ such that $[x,y]=x$, we have $\d_X(\varphi^{-1}(x),\varphi^{-1}(y)) \leq \lambda \d_X(x,y)$.
\end{itemize}
We denote a Smale space by $(X,\varphi)$, the other data is taken to be implicit.
\end{definition}

To see that Ruelle's axioms define a local product structure, for $x \in X$ and $ 0 < \ep \leq \ep_{X}$, define 
\begin{align*}
 X^{s}(x, \ep) & := \{ y \in X : \d_X(x,y) < \ep, [y,x] =x \} \,\text{ and } \\ 
X^{u}(x, \ep) & := \{ y \in X : \d_X(x,y) < \ep, [x,y] =x \}.
\end{align*}
It follows that for $y,y' \in X^s(x,\ep)$ we have $\d_X(\varphi(y),\varphi(y')) \leq \lambda \d_X(y,y')$. We call $X^s(x,\ep)$ a \emph{local stable set} of $x$. Similarly, for $z,z' \in X^u(x,\ep)$ we have $\d_X(\varphi^{-1}(z),\varphi^{-1}(z')) \leq \lambda \d_X(z,z')$, and we call $X^u(x,\ep)$ a \emph{local unstable set} of $x$. The local product structure on a Smale space arises in the following way. For $ 0 < \ep \leq \ep_{X}$ and $x\in X$, the bracket mapping defines a mapping 
\begin{equation}
\label{prodcstruc}
[\cdot,\cdot]:X^{s}(x, \ep)\times X^{u}(x, \ep)\to X,
\end{equation}
which is a homeomorphism onto an open neighbourhood of $x$ (see \cite[Proposition 2.1.8]{put}).

We also note that if $x,y \in X$ with $\d_X(x,y)<\ep_X/2$, then $[x,y]$ is the unique point $X^s(x,\ep_X) \cap X^u(y,\ep_X)$ and $[y,x]$ is the unique point $X^s(y,\ep_X) \cap X^u(x,\ep_X)$, see Figure \ref{Bracket intersection}. This fact implies that $(X,\d_X,\varphi,[\cdot,\cdot],\lambda,\ep_X)$ is uniquely determined by $(X,\varphi)$ (up to changing $\ep_X$ and $\lambda$).
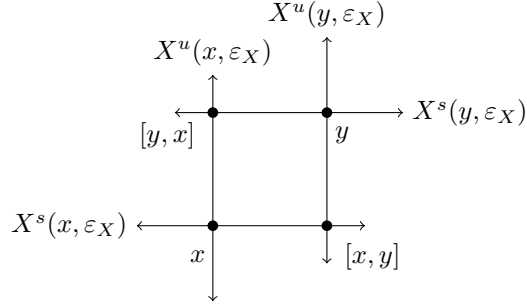
\begin{figure}[ht]
\begin{center}
\begin{tikzpicture}
\tikzstyle{axes}=[]
\begin{scope}[style=axes]
	\draw[<->] (-2,-1) node[left] {$X^s(x,\ep_X)$} -- (1,-1);
	\draw[<->] (-1,-2) -- (-1,1) node[above] {$X^u(x,\ep_X)$};
	\node at (-1.2,-1.4) {$x$};
	\node at (1.1,-1.4) {$[x,y]$};
	\pgfpathcircle{\pgfpoint{-1cm}{-1cm}} {2pt};
	\pgfpathcircle{\pgfpoint{0.5cm}{-1cm}} {2pt};
	\pgfusepath{fill}
\end{scope}
\begin{scope}[style=axes]
	\draw[<->] (-1.5,0.5) -- (1.5,0.5) node[right] {$X^s(y,\ep_X)$};
	\draw[<->] (0.5,-1.5) -- (0.5,1.5) node[above] {$X^u(y,\ep_X)$};
	\node at (0.7,0.2) {$y$};
	\node at (-1.6,0.2) {$[y,x]$};
	\pgfpathcircle{\pgfpoint{0.5cm}{0.5cm}} {2pt};
	\pgfpathcircle{\pgfpoint{-1cm}{0.5cm}} {2pt};
	\pgfusepath{fill}
\end{scope}
\end{tikzpicture}
\caption{The local coordinates of $x,y \in X$ and their bracket maps}
\label{Bracket intersection}
\end{center}
\end{figure}

Given a Smale space $(X,\varphi)$ and $x,y \in X$. We define equivalence relations by
\begin{align*}
x \sim_s y & \text{ whenever } \d_X(\varphi^n(x),\varphi^n(y)) \to 0 \text{ as } n \to \infty \text{ and} \\
x \sim_u y & \text{ whenever } \d_X(\varphi^{-n}(x),\varphi^{-n}(y)) \to 0  \text{ as } n \to \infty.
\end{align*}
We denote the stable equivalence class of $x \in X$ by  $X^s(x)$ and note that $X^s(x,\ep) \subset X^s(x)$. Similarly we denote the unstable equivalence class of $x \in X$ by  $X^u(x)$. A locally compact Hausdorff topology on $X^s(x)$ is generated by the open sets $$\{X^s(y,\ep) : y \in X^s(x), 0 <\ep< \ep_X\}.$$ A similar topology is defined in the unstable case. The reader should note that the topologies of $X^s(x)$ and $X^u(x)$ are in general different from the subspace topologies coming from the inclusions into $X$.

Lastly, we are interested in dynamical systems with topological recurrence conditions. In the following definition, we do not assume that $(X,\varphi)$ is a Smale space. 

\begin{definition}
\label{recprops} 
Suppose $(X,\varphi)$ is a dynamical system. We say $(X,\varphi)$ is
\begin{enumerate} 
\item \label{nonw} \emph{non-wandering} if, for all $x \in X$ and all open sets $U$ containing $x$ there exists an $N \in \N$ such that $\varphi^N(U) \cap U \neq \varnothing$;
\item \label{irred} \emph{irreducible} if, for all non-empty open sets $U,V \subseteq X$, there exists $N\in\N$ such that $\varphi^N(U) \cap V \neq \varnothing$;
\item \emph{mixing}\label{mixing} if, for all non-empty open sets $U,V \subseteq X$, there exists $N\in\N$ such that $\varphi^n(U) \cap V \neq \varnothing$ for all $n \geq N$. 
\end{enumerate} 
\end{definition}

In the previous definition it is clear that \eqref{mixing} $\implies$ \eqref{irred} $\implies$ \eqref{nonw}. In general, none of the reverse directions hold.
\subsection{$C^*$-algebras of Smale spaces}
\label{subsec:Algebras}

The first $C^*$-algebras associated to Smale spaces were defined by Ruelle in \cite{Rue2}; Ruelle considered the homoclinic algebra. The $C^*$-algebras of interest in this paper are associated with the stable and unstable equivalence relation. They are now called the stable and unstable algebras of a Smale space. Putnam \cite{Put_Alg} showed that there are natural crossed product $C^*$-algebras of the stable and unstable algebras, he called them the stable and unstable Ruelle algebras. These algebras generalize Cuntz-Krieger algebras in the sense that if the Smale space is a sub-shift of finite type, then the stable Ruelle algebra is Morita equivalent to the Cuntz-Krieger algebra $O_{\pmb{A}}$ where $\pmb{A}$ is the $0-1$ matrix defining the sub-shift (see more in Example \ref{firstckex} and Subsection \ref{cksubsec} below). All of these algebras are defined using groupoids. However, in \cite{Put_Alg} the groupoids were not \'{e}tale. Putnam and Spielberg \cite{PS} showed that up to Morita equivalence, one can take the groupoids to be \'{e}tale by restricting to an abstract transversal. We will use the construction from \cite{PS}.

Suppose $(X,\varphi)$ is a Smale space. It is well known that the set of $\varphi$-periodic points in $X$ is nonempty. This follows from the following two facts: the set of non-wandering points in $X$ is nonempty \cite[Appendix A.2]{Rue1} and is equal to the closure of the set of periodic points (see \cite[Section 7.3]{Rue1} and \cite[Lemma 3.8]{Bowen}). For a comprehensive treatment of these results, see \cite[Proposition 1.1.3, Theorem 4.4.1]{Putnam notes}. In particular, if $(X,\varphi)$ is non-wandering then the periodic points are dense in $X$. 

We choose a non-empty finite set of $\varphi$-invariant periodic points $P$. We define $X^{u}(P) = \cup_{p \in P} X^{u}(p)$ and endow this set with the locally compact and Hausdorff topology generated by the set $\{X^u(x,\ep) : x \in X^u(P),\; \ep\in (0,\ep_X]\}$. The stable groupoid is defined by
\begin{equation}
\label{stable groupoid def}
G^s(P) := \{(v,w) \in X \times X : v \sim_s w \textrm{ and } v,w \in X^u(P) \}
\end{equation}
and has unit space $X^u(P)$. For the groupoid $G^s(P)$, the partial product operation is given by $(v,w)(w,z):=(v,z)$ and inversion is given by $(v,w)^{-1}:=(w,v)$.
We construct a basis for an \'{e}tale topology on $G^s(P)$ as follows.

Suppose $v\sim_s w$ and choose $N \in \N$ such that $\varphi^{N}(v)\in X^{s}(\varphi^{N}(w),\ep_{X}/2)$. There is a relatively open neighbourhood $U$ of $w$ such that $\varphi^{N}(U)\subset  X^{u}(\varphi^{N}(w),\ep_{X}/2)$. This means that $[\varphi^{N}(x),\varphi^{N}(v)]\in X^{u}(\varphi^{N}(v),\ep_{X}/2)$ is defined for all $x\in U$. The map
\[
h^{s}_{N}:U\to X,\quad x\mapsto \varphi^{-N}[\varphi^{N}(x),\varphi^{N}(v)],
\]
is a local homeomorphism with $h^{s}_{N}(w)=v$. More precisely, if we take $\delta>0$ small enough and $U=X^u(w,\delta)$, then $h^{s}_{N}:X^u(w,\delta)\to X^u(v,\ep_X/2)$ is a homeomorphism onto its image. The mapping $h^s_N$ depends on the choice of $v$ and $w$ which we suppress from the notation as they will be clear from the context. Using this, a base of local neighbourhoods of $(v,w)\in G^{s}(P)$ is given by the sets
\begin{equation}
\label{dahood}
V^{s}(v,w,N,U):=\{(h^{s}_{N}(x),x): x\in U\subset X^{u}(w,\ep_{X}/2)\},
\end{equation}
where $N \in \N$ is such that $\varphi^{N}(v)\in X^{s}(\varphi^{N}(w),\ep_{X}/2)$ and the set $U$ is relatively open with $\varphi^{N}(U)\subset  X^{u}(\varphi^{N}(w),\ep_{X}/2)$. With this topology, $G^{s}(P)$ is an \'{e}tale groupoid. 

We define the stable algebra $C^*(G^s(P))$ as the reduced groupoid $C^*$-algebra associated with $G^{s}(P)$. In the case that $(X,\varphi)$ is irreducible, it is shown in \cite{PS} that $C^*(G^s(P))$ is strongly Morita equivalent to $C^*(G^s(Q))$ for any other finite $\varphi$-invariant set of periodic points $Q$. The same is true in the non-wandering case provided both $P$ and $Q$ intersect every irreducible component that arises in Smale's Decomposition Theorem \cite[Theorem 2.1.13]{put}. That is, for $P$ large enough and $X$ non-wandering, $C^*(G^s(P))$ is independent of $P$ up to stable isomorphism. 

The map $\alpha:=\varphi \times \varphi$ induces an automorphism of the $C^{*}$-algebra $C^*(G^s(P))$. The stable Ruelle algebra is the crossed product $C^*(G^s(P)) \rtimes_{\alpha} \Z$. The stable Ruelle algebra, as defined here, is strongly Morita equivalent to the stable Ruelle algebra originally defined by Putnam in \cite{Put_Alg}. In \cite{PS}, the Ruelle algebras were shown to be separable, stable, nuclear, purely infinite, and satisfy the UCT when $(X,\varphi)$ is irreducible and simple when $(X,\varphi)$ is mixing. The discussion above carries over to the unstable algebra using the Smale space $(X,\varphi^{-1})$ equipped with the opposite bracket $[x,y]_{\rm op}:=[y,x]$. The unstable groupoid with unit space $X^s(P)$ is denoted by $G^u(P)$.

\section{Wieler solenoids}
\label{SmaleDisStaSet}

Irreducible Smale spaces with totally disconnected stable sets were recently characterized by Wieler \cite{Wielerpaper}. We outline the main results \cite[Theorem A and B]{Wielerpaper} in Wieler's paper that every continuous surjection $g:V\to V$ satisfying Wieler's axioms defines a Smale space with totally disconnected stable sets, and that all such irreducible Smale spaces arise in this way. Assuming $g$ is open, we define a fibre bundle structure that will be used later in the paper. The bundle structure is similar to a construction in \cite{williamsexp} and related ideas appear implicitly in \cite{Tho}.

\begin{definition}
Suppose $f$ is a map on a compact metric space $Y$. We say that $f$ is \emph{locally expanding} if there exists constants $\delta>0$ and $\lambda>1$ such that $\d_Y(x,y)<\delta$ implies $\d_Y(f(x),f(y))\geq \lambda \d_Y(x,y)$. We say that $f$ is locally expansive for $\lambda>1$ within distance $\delta$.
\end{definition}

\begin{definition}[{\cite[p.2068]{Wielerpaper}}]
\label{wieleax}
Suppose $V$ is a compact metric space and $g:V\to V$ is a continuous surjection. We say \emph{$(V,g)$ satisfies Wieler's axioms} if there exists constants $\beta>0$, $K\in \N_+$, and $\gamma\in (0,1)$ such that the following hold: 
\begin{description}
\item[Axiom 1]\label{WAxiom1} If $v,w\in V$ satisfy $\d_V(v,w)<\beta$, then 
\[
\d_V(g^K(v),g^K(w))\leq\gamma^K \d_V(g^{2K}(v),g^{2K}(w)).
\]
\item[Axiom 2] For all $v\in V$ and $\ep \in (0,\beta]$
\[
g^K(B(g^K(v),\ep))\subseteq g^{2K}(B(v,\gamma\ep)).
\]
\end{description}
\end{definition}

\begin{remark}
\label{firstpavlovremark}
As discussed in \cite{Wielerpaper}, Wieler's axioms are weakenings of $g$ being locally expanding and open, respectively. Furthermore, note that \cite[Lemma 3.4]{Wielerpaper} shows that Wieler's Axiom 1 implies that $g$ is finite-to-one. In fact, if $g$ satisfies Wieler's axioms then combining \cite[Lemma 3.4]{Wielerpaper} with \cite[Theorem 2.9]{pavlovstrots} it follows that the map $g$ is a branched covering if and only if $g$ is open (see Subsection \ref{discussionong} below).
\end{remark}

Suppose $V$ is a compact metric space and $g:V\to V$ is a continuous surjection. We define
\begin{equation}
\label{solenodiconst}
X_V:=\{(v_i)_{i\in \N}\in V^\N: \; v_i=g(v_{i+1})\}
\end{equation}
along with a map $\varphi_g:X_{V}\to X_{V}$ given by
\begin{equation}
\label{solenodimap}
\varphi_g(v_0,v_1,\ldots):=(g(v_0),v_0,v_1,\ldots)=(g(v_0),g(v_1),g(v_2),\ldots).
\end{equation}

\begin{definition}
A \emph{Wieler solenoid} is a dynamical system of the form $(X_{V},\varphi_g)$ defined as above from a pair $(V,g)$ satisfying Wieler's axioms (see Definition \ref{wieleax}).
\end{definition}

We often use juxtaposition to write $(v_0,v_1,v_2\ldots)=v_0v_1v_2\cdots$. Note that $X_{V}$ is a compact metric space when equipped with the following metric (see \cite[page 2071]{Wielerpaper} for details):
\begin{equation}
\label{wielermetric}
\d_{X_{V}}(x,y)=\sum_{k=0}^K \gamma^{-k}\sup_{i\in \N}\gamma^i \d_V(x_{i+k},y_{i+k}),\quad\mbox{for}\quad x,y \in X_{V}.
\end{equation}

\begin{thm}[Wieler {\cite[Theorem A and B]{Wielerpaper}}]
\label{wielersthm} \quad
\begin{enumerate}
\item[(A)] Suppose $(V,g)$ satisfies Wieler's axioms. The associated Wieler solenoid $(X_{V},\varphi_g)$ is a Smale space with totally disconnected stable sets. If $(V,g)$ is irreducible, then $(X_{V},\varphi_g)$ is as well.
\item[(B)] Suppose $(X,\varphi)$ is an irreducible Smale space with totally disconnected stable sets. There exists an irreducible dynamical system $(V,g)$ satisfying Wieler's axioms such that $(X,\varphi)$ is conjugate to the Wieler solenoid $(X_{V},\varphi_g)$.
\end{enumerate}
\end{thm}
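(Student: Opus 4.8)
Since this is Wieler's theorem, the plan is to follow the structure of her argument, exploiting the duality between the two parts: (A) builds a Smale space out of $(V,g)$, while (B) recovers $(V,g)$ from the Smale space by collapsing local stable sets. I would treat (A) first, as its bracket calculus sets up the identifications needed in (B).

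For (A), I would begin by checking that $\varphi_g$ is a homeomorphism of $X_V$: it is the shift on the inverse limit of the surjective system $V\xleftarrow{g}V\xleftarrow{g}\cdots$, with explicit inverse $\varphi_g\inv(v_0,v_1,\ldots)=(v_1,v_2,\ldots)$, and continuity of both maps follows from continuity of $g$ together with compactness. The crux is producing the bracket. For $x,y$ with $\d_{X_V}(x,y)$ small the zeroth coordinates $x_0,y_0$ are close in $V$, and I would define $[x,y]$ coordinatewise so that its forward orbit tracks that of $x$ (the stable data) while its backward tail tracks that of $y$ (the unstable data), using Wieler's Axiom 2 to lift $y$'s tail through the preimages selected near $x$. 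Axioms B1--B4 of Definition \ref{defSmaSpa} are then formal consequences of this coordinatewise definition. The contraction estimates C1 and C2 are where the real work lies: C2 (expansion in the base direction, equivalently contraction of the unstable coordinate under $\varphi_g\inv$) is extracted from Wieler's Axiom 1, which is exactly a $K$-step weakened local expansion, fed through the explicit metric \eqref{wielermetric}; C1 follows because the stable coordinate lives in the inverse-limit fibre, which is damped by the $\gamma^i$ weighting in \eqref{wielermetric}. Finally, total disconnectedness of the stable sets follows because the fibres of $g$ are finite --- hence totally disconnected --- by Wieler's finiteness lemma (see Remark \ref{firstpavlovremark}), and irreducibility transfers from $(V,g)$ to $(X_V,\varphi_g)$ because the projection $x\mapsto x_0$ is a semiconjugacy, $\pi_0\circ\varphi_g=g\circ\pi_0$.

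For (B), I would set $V:=X/{\sim}$, where $\sim$ collapses each local stable set to a point; total disconnectedness of the stable sets is precisely what makes this quotient a well-behaved compact metric space, with a neighbourhood of each class modelled on a local unstable set via the local product structure \eqref{prodcstruc}. Since $\varphi$ contracts stable sets (Axiom C1), it maps stable classes to stable classes and therefore descends to a continuous surjection $g:V\to V$, which is automatically finite-to-one once Axiom 1 is verified (Remark \ref{firstpavlovremark}). I would then check that $(V,g)$ satisfies Wieler's axioms of Definition \ref{wieleax}: Axiom 1 is the descent of the unstable expansion encoded in C2, and Axiom 2 is the descent of the openness of the bracket map and local product structure. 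The conjugacy is $X\to X_V$, $x\mapsto([x],[\varphi\inv x],[\varphi^{-2}x],\ldots)$, which is $\varphi$-to-$\varphi_g$ equivariant by construction \eqref{solenodimap}; injectivity uses that distinct points are separated by their backward orbits modulo stable equivalence, while surjectivity and continuity follow from compactness.

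The main obstacle is (B), and specifically the verification that the induced $g$ satisfies Wieler's two axioms with uniform constants $\beta,K,\gamma$. This requires choosing a compatible metric on the quotient $V$ and translating the hyperbolic rates $\lambda,\ep_X$ of the Smale space into Wieler's $K$-step estimates, while simultaneously controlling how much the quotient map distorts distances; this is the technical heart of the argument and is where the totally-disconnected hypothesis is used most essentially. A secondary difficulty, in (A), is confirming that the coordinatewise bracket actually lands in $X_V$, i.e.\ that the lifted coordinates remain compatible under $g$ at every level of the inverse limit, for which Axiom 2 must be invoked uniformly down the tower.
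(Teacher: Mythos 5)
You should first note that this paper contains no proof of this statement at all: it is imported wholesale, with citation, from Wieler \cite{Wielerpaper} (her Theorems A and B), so the only meaningful comparison is with Wieler's published argument, not with anything internal to this paper.

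Measured against that argument, your outline follows essentially the same route, and it is correct as a sketch. For (A), Wieler does construct the bracket coordinatewise: $[x,y]_0=x_0$, with the deeper coordinates obtained by repeatedly choosing $g$-preimages, Axiom 2 supplying the approximate lifts. One slip: your clause ``lift $y$'s tail through the preimages selected near $x$'' has the roles reversed — one lifts $x_0$ down the tower through preimages chosen \emph{near $y$'s coordinates}; this is exactly what is recorded in Lemma \ref{bracketEqualsStableForSmallN} above, whose proof cites Wieler for $[x,y]_0=x_0$ and $\d_V([x,y]_m,y_m)<\delta$. Your first characterization (forward orbit tracks $x$, backward tail tracks $y$) is the correct one, and your attribution of C1 to the $\gamma^i$-damping in the metric \eqref{wielermetric} and of C2 to Axiom 1 matches Wieler. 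For irreducibility, the semiconjugacy $\pi_0\circ\varphi_g=g\circ\pi_0$ alone is not quite enough, since a nonempty open subset of $X_V$ need only contain a basic set of the form $\pi_k^{-1}(U')$ for some $k$; the argument needs the full family of relations $\pi_k\circ\varphi_g=g\circ\pi_k$ from \eqref{pikrela} together with surjectivity of each $\pi_k$, but this is a routine repair. For (B), your plan coincides with Wieler's: $V$ is the quotient of $X$ collapsing local stable sets, $g$ is induced by $\varphi$, and the conjugacy is $x\mapsto([x],[\varphi^{-1}x],[\varphi^{-2}x],\ldots)$, with injectivity from expansivity of the Smale space. You also correctly locate the technical heart, which your sketch (necessarily) defers: total disconnectedness is what allows the reflexive--symmetric relation $y\in X^s(x,\delta)$ to be refined to a genuine equivalence relation with uniformly small clopen classes, and the subsequent verification of Axioms 1 and 2 for the induced $g$ with uniform constants $\beta$, $K$, $\gamma$ occupies the bulk of Wieler's proof. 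In short: same decomposition, same key constructions, two minor local errors, and the quantitative work left open exactly where Wieler spends it.
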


\begin{remark}
Suppose $(V,g)$ satisfies Wieler's axioms. It is unclear to the authors if applying the construction in the proof of \cite[Theorem \ref{wielersthm} (B)]{Wielerpaper} to the irreducible Smale space $(X_{V},\varphi_g)$ reproduces $(V,g)$ up to some suitable equivalence relation.
\end{remark}

Suppose $(X_V,\varphi_g)$ is a Wieler solenoid. The projection maps $\pi_k:X\to V$ are defined by $\pi_k(v_0,v_1,\ldots):=v_k$. We note that 
\begin{equation}
\label{pikrela}
\pi_k\circ \varphi_g=g\circ \pi_k,\quad \textnormal{and}\quad \pi_k\circ \varphi_g^{-1}=\pi_{k+1}.
\end{equation}

It natural to consider stronger conditions than Wieler's Axiom 1 and 2. For example, one could assume $g$ satisfies Axiom 1 and is open, one could assume $g$ is locally expanding and satisfies Axiom 2, or one could assume $g$ is locally expanding and open. The next two results show that each of these strengthenings of Wieler's axioms are equivalent to $g$ being a local homeomorphism with $g^K$ locally expanding.

\begin{lemma}
\label{locally expanding}
Suppose $V$ is a compact metric space and $g:V \rightarrow V$ is an open surjection satisfying Wieler's Axiom 1. Then $g$ is a local homeomorphism and $g^K:V \to V$ is locally expanding for $\gamma^{-K}$, where $\gamma\in (0,1)$ and $K \in \N_+$ appear in Wieler's axioms.
\end{lemma}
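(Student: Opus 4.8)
The plan is to prove the two assertions in the order \emph{expansion first, local homeomorphism second}, since the expansion estimate is the engine that rules out branching. The starting point is Wieler's Axiom~1 rewritten in expanding form: dividing by $\gamma^K>0$ gives
\[
\d_V(g^{2K}(v),g^{2K}(w))\ge \gamma^{-K}\d_V(g^K(v),g^K(w))\qquad\text{whenever }\d_V(v,w)<\beta,
\]
so that $g^K$ expands, by the factor $\gamma^{-K}>1$, every pair of the form $(g^K(v),g^K(w))$ arising from a $\beta$-close pair $(v,w)$. To upgrade this to a genuine local expansion statement I must, given two nearby points $a,b$, lift them to a $\beta$-close pair of $g^K$-preimages. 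This is exactly where openness enters, and it is the only place the hypothesis is used beyond Axiom~1.

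The crucial preliminary is a \emph{uniform openness} estimate for $g^K$ (note that $g$ open and surjective forces $g^K$ open and surjective): there exists $\eta>0$ such that $B(g^K(v),\eta)\subseteq g^K(B(v,\beta))$ for every $v\in V$. I expect this to be the main obstacle, and I would prove it by a compactness--contradiction argument. If it failed, there would be sequences $v_n,y_n$ with $\d_V(g^K(v_n),y_n)\to 0$ but $y_n\notin g^K(B(v_n,\beta))$; passing to convergent subsequences $v_n\to v_*$ and $y_n\to y_*$ forces $g^K(v_*)=y_*$, and then openness of $g^K$ at $v_*$ yields $B(y_*,\eta_*)\subseteq g^K(B(v_*,\beta/2))$ for some $\eta_*>0$, so for large $n$ one finds a $g^K$-preimage of $y_n$ inside $B(v_*,\beta/2)\subseteq B(v_n,\beta)$, contradicting the choice of $y_n$.

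Granting this estimate, I fix $a,b\in V$ with $\d_V(a,b)<\eta$, choose by surjectivity some $v$ with $g^K(v)=a$, and observe that $b\in B(a,\eta)=B(g^K(v),\eta)\subseteq g^K(B(v,\beta))$, producing $w\in B(v,\beta)$ with $g^K(w)=b$. Since $\d_V(v,w)<\beta$, the rewritten Axiom~1 applied to $(v,w)$ gives $\d_V(g^K(a),g^K(b))\ge \gamma^{-K}\d_V(a,b)$, a conclusion independent of the chosen preimages. This is precisely local expansion of $g^K$ for $\gamma^{-K}$ within distance $\eta$, settling the second assertion.

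For the first assertion I deduce that $g$ is a local homeomorphism from the expansion just proved together with openness. Local expansion makes $g^K$ injective on every ball of radius $\eta/2$, because any two distinct points there lie at distance strictly less than $\eta$ and hence have distinct $g^K$-images. Local injectivity of $g$ then follows: if $g(v)=g(w)$ with $v\ne w$ and $\d_V(v,w)<\eta$, applying $g^{K-1}$ gives $g^K(v)=g^K(w)$, contradicting injectivity of $g^K$ at that scale. Finally, a continuous, open, locally injective map is automatically a local homeomorphism: restricting $g$ to a ball $U$ of radius $\eta/2$ on which it is injective, openness of $g$ shows that $g|_U\colon U\to g(U)$ is a continuous open bijection onto the open set $g(U)$, hence a homeomorphism. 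This completes both assertions.
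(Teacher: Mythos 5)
Your proof is correct, and its engine is the same as the paper's: openness of $g^K$ lets you lift two nearby points to $g^K$-preimages lying within $\beta$ of each other, after which Axiom 1 yields the expansion inequality, with compactness of $V$ supplying the required limit point. The difference is in how the compactness argument is packaged. The paper assumes local expansion of $g^K$ fails, extracts violating pairs $(v_n,w_n)$ with $\d_V(v_n,w_n)<1/n$, passes to a limit point $x$, picks one $g^K$-preimage $x'$ of $x$, and uses openness of the single set $g^K(B_r(x'))$ (with $2r\le\beta$) to lift $v_N,w_N$ to a $\beta$-close pair and contradict the violating inequality. You instead isolate a uniform openness estimate of Lebesgue-number type --- there is $\eta>0$ with $B(g^K(v),\eta)\subseteq g^K(B(v,\beta))$ for all $v\in V$ --- prove it by the same compactness-contradiction device, and then derive the expansion inequality directly rather than by contradiction. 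Your route is slightly longer but buys an explicit uniform expansion radius $\eta$ and a reusable lemma; the paper's single contradiction is shorter but only asserts the existence of some expansion radius. The final step is essentially identical in both arguments (local expansion of $g^K$ forces local injectivity of $g$, and a continuous, open, locally injective map is a local homeomorphism), though your version makes explicit that only openness is needed there, whereas the paper's appeal to compactness of $V$ at that point is superfluous.
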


\begin{proof}
Let $\beta>0$, $K\in \N_+$, and $\gamma\in (0,1)$ be the fixed constants from Wieler's axioms. Suppose that $g^K$ is not locally expanding for $\gamma^{-K}$. For any $n \in \N$, there exists $v_n, w_n \in V$ such that 
\begin{enumerate}
\item $\d_V(v_n, w_n) < \frac{1}{n}$ and
\item $\d_V(v_n, w_n) > \gamma^K \d_V(g^K(v_n), g^K(w_n))$.
\end{enumerate}
By compactness, $\{v_n\}_{n\in \N}$ has a limit point, which we denote by $x$. Also let $x'$ denote a preimage of $x$ under $g^K$.

Take $0<r\le \frac{\beta}{2}$. The mapping $g$ is open, so $g^K(B_{r}(x'))$ is an open set. We can therefore choose $N \in \N$ such that $v_N$ and $w_N$ are in $g^K(B_{r}(x'))$. Since $v_N,w_N\in g^K(B_{r}(x'))$, there are pre-images $v'$ and $w'$ in $B_r(x')$ such that $g^K(v')=v_N$ and $g^K(w')=w_N$. By the triangle inequality, $\d_V(v', w')< 2r \le \beta$. Applying Wieler's first axiom to $v'$ and $w'$ gives
\[
\d_V(v_N, w_N) =\d_V(g^K(v'), g^K(w')) \le \gamma^K \d_V(g^{2K}(v'), g^{2K}(w'))=\gamma^K\d_V(g^K(v_N), g^K(w_N)).
\]
This is a contradiction to (2) above, so $g^K$ is locally expanding. If $g^K$ is locally expanding then $g^K$ is locally injective, so $g$ is locally injective. Since $g$ is open and $V$ is compact, it follows that $g$ is a local homeomorphism.
\end{proof}

\begin{lemma}
\label{open}
Suppose $V$ is a compact metric space and $g: V \rightarrow V$ satisfies Wieler's Axiom 2 and $g^K$ is locally expansive. Then $g$ is a local homeomorphism.
\end{lemma}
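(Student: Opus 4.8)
The plan is to split the statement into a local injectivity part, which follows almost for free from expansiveness, and an openness part, which is where Wieler's Axiom 2 does the real work; a continuous, locally injective, open self-map is automatically a local homeomorphism, so these two ingredients suffice. This mirrors the final step of Lemma \ref{locally expanding}, except that there openness was assumed, whereas here it must be extracted from Axiom 2. First I would fix $\delta_0>0$ and $\lambda>1$ witnessing that $g^K$ is locally expansive, i.e. $\d_V(x,y)<\delta_0$ implies $\d_V(g^K(x),g^K(y))\ge \lambda\,\d_V(x,y)$, and observe that each iterate $g^j$ with $1\le j\le K$ is then injective on sets of diameter $<\delta_0$: if $g^j(a)=g^j(b)$ with $\d_V(a,b)<\delta_0$, applying $g^{K-j}$ gives $g^K(a)=g^K(b)$, forcing $a=b$. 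In particular $g$ itself is locally injective with the uniform modulus $\delta_0$, so it remains only to prove that $g$ is an open map.

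The key step is to show first that the iterate $g^K$ is open, and this is exactly where Axiom 2 enters. Fix $v\in V$ and $\ep\in(0,\beta]$ and take any $z\in B(g^K(v),\ep)$. Applying $g^K$ and invoking Axiom 2 gives $g^K(z)\in g^K(B(g^K(v),\ep))\subseteq g^{2K}(B(v,\gamma\ep))$, so there exists $v'\in B(v,\gamma\ep)$ with $g^K(g^K(v'))=g^{2K}(v')=g^K(z)$. I would then cancel the outer $g^K$ using local injectivity: since $g^K$ is uniformly continuous on the compact space $V$, shrinking $\ep$ uniformly in $v$ forces $\d_V(g^K(v'),z)\le \d_V(g^K(v'),g^K(v))+\d_V(g^K(v),z)<\delta_0$, whence $g^K(v')=z$. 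This yields $B(g^K(v),\ep)\subseteq g^K(B(v,\gamma\ep))$ for all sufficiently small $\ep$, which is precisely openness of $g^K$ (every point of $g^K(W)$, for $W$ open, is then interior to it).

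The main obstacle, and the part requiring the most care, is bootstrapping from openness of $g^K$ back to openness of $g$ itself, since Axiom 2 only controls the iterates directly. I would argue by contradiction: if $g$ fails to be open at some $v$, choose a small radius $r>0$ and a sequence $y_n\to g(v)$ with $y_n\notin g(B(v,r))$. Since $g^{K-1}(y_n)\to g^K(v)$ and $g^K(B(v,r))$ is an open neighbourhood of $g^K(v)$ by the previous step, for large $n$ we may write $g^{K-1}(y_n)=g^K(x_n)=g^{K-1}(g(x_n))$ with $x_n\in B(v,r)$. Taking $r$ small enough (via uniform continuity of $g$) ensures both $y_n$ and $g(x_n)$ lie within $\delta_0$ of $g(v)$, so the local injectivity of $g^{K-1}$ forces $y_n=g(x_n)\in g(B(v,r))$, contradicting the choice of $y_n$. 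Hence $g$ is open, and combined with the local injectivity established in the first paragraph this makes $g$ a local homeomorphism.
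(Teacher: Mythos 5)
Your proof is correct and follows essentially the same route as the paper: local injectivity of $g$ comes from expansiveness of $g^K$, openness of $g^K$ comes from Axiom 2 plus cancelling an outer $g^K$, and openness of $g$ is then deduced, giving a local homeomorphism. The only difference is that the paper is terser at two points — it performs the cancellation by simply invoking injectivity of $g^K$ on $B(g^K(v),\ep/\gamma)$, and it asserts ``it follows that $g^K$ is open and hence that $g$ is also open'' without argument — so your uniform-continuity estimate ensuring both points lie within the expansiveness radius, and your bootstrap from openness of $g^K$ to openness of $g$ via local injectivity of $g^{K-1}$, supply details the paper leaves implicit.
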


\begin{proof}
Since $g^K$ is locally expanding, it is locally injective. Thus, we need only show that it is open. Standard results imply that we need only show that for each $v \in V$ and $\ep>0$, there exists $\delta>0$ such that $B(g^K(v), \delta) \subseteq g^K(B(v, \ep))$. We can assume $\ep \le \beta$ and $g^K|_{B(g^K(v),\frac{\ep}{\gamma})}$ is injective. By Axiom 2, 
\[
g^K\left(B\left(g^K(v),\frac{\ep}{\gamma}\right)\right)\subseteq g^{2K}(B(v,\ep)).
\] 
Since $g^K|_{B(g^K(v),\frac{\ep}{\gamma})}$ is injective, we have 
\[
B\left(g^K(v),\frac{\ep}{\gamma}\right)\subseteq g^K(B(v,\ep)).
\]
It follows that $g^K$ is open and hence that $g$ is also open.
\end{proof}

\begin{remark}
\label{clobra}
By combining Lemma \ref{locally expanding} with \cite[Theorem 2.9]{pavlovstrots} we can conclude that if $g:V\to V$ is a branched covering satisfying Wieler's axioms then $g$ is open and, by Lemma \ref{locally expanding}, a finite sheeted covering (i.e. a surjective local homeomorphism), cf. Remark \ref{firstpavlovremark} and Subsection \ref{discussionong}.
\end{remark}

\begin{lemma} 
\label{bracketEqualsStableForSmallN}
Suppose $V$ is a compact metric space and $g:V \rightarrow V$ is a locally expanding continuous surjection satisfying Wieler's axioms. Let $K \in \N_+$ be the constant in Wieler's axioms and let $\delta >0$ be such that $(V,g^K)$ is locally expanding within distance $2\delta$. Suppose $M$ is a multiple of $K$ and $x=x_0x_1\cdots, y=y_0y_1\cdots \in X_{V}$ satisfy 
\begin{enumerate}
\item\label{bracket lemma 1} $\d_X(x,y) < \ep_X$ and
\item for each $m \le M$, $\d_V(x_m, y_m)< \delta$.
\end{enumerate}
Then, for each $m \le M$, $[ x,y ]_m =x_m$.
\end{lemma}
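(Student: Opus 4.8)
The plan is to set $z:=[x,y]$ and to show that its first $M+1$ coordinates coincide with those of $x$ by an induction seeded through the stable direction and propagated through the unstable direction. First I would record the two monotone estimates that drive everything. Since $\d_X(x,y)<\ep_X$, the local product structure places $z=[x,y]$ in $X^s(x,\ep_X)\cap X^u(y,\ep_X)$, so the contraction estimates for local stable and unstable sets apply, giving $\d_X(\varphi_g^n z,\varphi_g^n x)\le\lambda^n\d_X(z,x)$ and $\d_X(\varphi_g^{-n}z,\varphi_g^{-n}y)\le\lambda^n\d_X(z,y)$ for all $n\ge 0$. Since the $k=i=0$ term of the metric \eqref{wielermetric} gives $\d_V(a_0,b_0)\le\d_X(a,b)$ for all $a,b\in X_V$, and $(\varphi_g^n z)_0=g^n z_0$, $(\varphi_g^{-n}z)_0=z_n$, these read off as $\d_V(g^n z_0,g^n x_0)\le\lambda^n\ep_X$ for all $n$, and $\d_V(z_j,y_j)\le\lambda^j\ep_X$ for all $j$. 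I would also note that $g$ is injective on pairs at distance $<2\delta$: if $\d_V(a,b)<2\delta$ and $g^K(a)=g^K(b)$ then local expansion of $g^K$ forces $a=b$, and $g(a)=g(b)$ implies $g^K(a)=g^K(b)$. Finally, after shrinking $\ep_X$ I may assume $\ep_X<2\delta$ and $\lambda\ep_X<\delta$.

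For the base case I would prove $z_0=x_0$. Suppose not, and put $d:=\d_V(z_0,x_0)>0$, noting $d\le\d_X(z,x)<\ep_X<2\delta$. The stable estimate keeps the whole forward orbit inside the expanding regime, $\d_V(g^{jK}z_0,g^{jK}x_0)\le\lambda^{jK}\ep_X<2\delta$ for every $j$, so I may iterate Wieler's expansion (factor $\gamma^{-K}$ within distance $2\delta$) to obtain $\d_V(g^{jK}z_0,g^{jK}x_0)\ge\gamma^{-jK}d$. Combining the two bounds gives $\gamma^{-jK}d\le\lambda^{jK}\ep_X$, that is $d\le(\gamma\lambda)^{jK}\ep_X$ for all $j$; letting $j\to\infty$ forces $d=0$, a contradiction. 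Hence $z_0=x_0$.

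For the inductive step I would assume $z_m=x_m$ for some $m<M$. Then $g(z_{m+1})=z_m=x_m=g(x_{m+1})$, so $z_{m+1}$ and $x_{m+1}$ are $g$-preimages of the same point. By the triangle inequality, the unstable estimate, and hypothesis (2),
\[
\d_V(z_{m+1},x_{m+1})\le\d_V(z_{m+1},y_{m+1})+\d_V(y_{m+1},x_{m+1})<\lambda^{m+1}\ep_X+\delta<2\delta,
\]
and injectivity of $g$ on pairs at distance $<2\delta$ gives $z_{m+1}=x_{m+1}$. This completes the induction and yields $[x,y]_m=z_m=x_m$ for every $m\le M$.

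The one genuinely delicate point is the base case, precisely the need to keep the forward orbit of the pair $(z_0,x_0)$ inside the region of distance $2\delta$ where the expansion hypothesis is available. This is why one must invoke the \emph{monotone} contraction coming from membership in the local stable set $X^s(x,\ep_X)$, rather than mere asymptotic stable equivalence $z\sim_s x$: a homoclinic point can be stably equivalent to $x$ while its forward orbit temporarily leaves the $2\delta$-regime, and for such a point $\pi_0$ need not equal $x_0$. The monotone bound $\d_V(g^n z_0,g^n x_0)\le\lambda^n\ep_X$ excludes this and makes the expansion argument close. The inductive step is then routine, using only injectivity of $g$ in the small scale together with the coordinatewise closeness hypothesis to transport the equality up the inverse-limit tower.
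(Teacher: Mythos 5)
Your proof is correct in its essential mechanism, and your inductive step is in fact the same as the paper's: from $[x,y]_m=x_m$ and the inverse-limit relation one gets $g^K([x,y]_{m+1})=g^K(x_{m+1})$, and local expansion of $g^K$ within distance $2\delta$ (i.e.\ local injectivity at that scale) forces $[x,y]_{m+1}=x_{m+1}$, provided the pair lies within $2\delta$. Where you genuinely differ is in how the two required closeness facts are obtained. The paper quotes Wieler's explicit construction of the bracket (\cite[Lemma 3.3]{Wielerpaper}) both for the base case $[x,y]_0=x_0$ and for the estimate $\d_V([x,y]_m,y_m)<\delta$; you derive both intrinsically from the Smale space axioms: the base case by the contraction-versus-expansion contradiction (monotone forward contraction along $X^s(x,\ep_X)$ keeps the orbit of the pair $([x,y]_0,x_0)$ inside the expanding regime, where Wieler expansion is incompatible with a positive initial distance), and the $\delta$-closeness of $[x,y]_m$ to $y_m$ from backward contraction along $X^u(y,\ep_X)$. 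This buys self-containedness, and your base-case argument is precisely the reason stable sets of Wieler solenoids are totally disconnected, so it is conceptually illuminating.

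The price is the step ``after shrinking $\ep_X$ I may assume $\ep_X<2\delta$ and $\lambda\ep_X<\delta$.'' The lemma is stated for the given Smale space constant $\ep_X$, with no assumed relation between $\ep_X$ and $\delta$ (and $\delta$ may legitimately be very small, since expansion within $2\delta_0$ implies expansion within $2\delta$ for any $\delta<\delta_0$). Shrinking $\ep_X$ strengthens hypothesis \eqref{bracket lemma 1}, so what you actually prove is the lemma for pairs $x,y$ that are sufficiently close in $X$ --- close enough that $\d_X([x,y],x)<2\delta$ and $\lambda\,\d_X([x,y],y)<\delta$ --- not for all pairs with $\d_X(x,y)<\ep_X$. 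Relatedly, the membership $[x,y]\in X^s(x,\ep_X)\cap X^u(y,\ep_X)$ is asserted in the paper only for $\d_X(x,y)<\ep_X/2$, and after your rescaling it must be re-derived from uniform continuity of the bracket (using $[x,x]=x$ and compactness) rather than quoted at the new scale; your write-up glosses this. None of this matters for how the lemma is used in the proof of Theorem \ref{localfibb}, where the relevant pairs can be made as close as one wishes (by density, or by taking $M$ large and $\delta$ small), but as a proof of the lemma as literally stated it is weaker than the paper's version, which covers all pairs within $\ep_X$ by leaning on the internals of Wieler's construction.
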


\begin{proof}
By assumption \eqref{bracket lemma 1}, $[x,y]$ is defined. We use induction on $m\le M$. For $m=0$, the result follows from the definition of the bracket, see \cite[Lemma 3.3]{Wielerpaper} and the discussion following its proof. Thus, we need to show that $[x,y]_m=x_m$ assuming that $[x,y]_{m-1}=x_{m-1}$. The construction of the bracket in \cite[proof of Lemma 3.3]{Wielerpaper} implies that $\d_V([x,y]_m,y_m)< \delta$. Moreover, by assumption, $\d_V(x_m, y_m)< \delta$ so the triangle inequality implies 
\begin{equation}\label{210_1}
\d_V([x,y]_m, x_m) < 2\delta.
\end{equation}
Using the induction hypothesis, we have 
\begin{equation}\label{210_2}
g^K([x,y]_m)=g^{K-1}([x,y]_{m-1}) = g^{K-1}(x_{m-1})=g^K(x_m).
\end{equation}
Thus, combining \eqref{210_1} and \eqref{210_2} with $g$ being locally expanding implies that 
\[
\lambda\d_V([x,y]_m,x_m) \leq  \d_V(g^K([x,y]_m),g^K(x_m))=0.
\]
We conclude that $[x,y]_m = x_m$.
\end{proof}

Suppose $V$ is a compact metric space and $g:V \rightarrow V$ is a map. Define
\[
V(N,g):=\{ (v_0, v_1, \ldots, v_N) \in V^{N+1} \mid g(v_i) = v_{i-1} \hbox{ for all }1 \le i \le N \}.
\]
Let $\pi_0^N$ denote the projection map $(v_0, v_1, \ldots, v_N) \mapsto v_0$.

\begin{lemma}
\label{morelocaltriv}
Suppose $g:V\to V$ is an onto mapping. Then, $g$ is a local homeomorphism if and only if for each $N\ge 0$, the projection map $\pi_{0}^N: V(N, g) \to V$ is a finite-to-one covering map.
\end{lemma}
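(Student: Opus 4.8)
The plan is to reduce the statement to a fact about the iterates of $g$ by identifying $V(N,g)$ with $V$. First I would observe that a tuple $(v_0,\ldots,v_N)\in V(N,g)$ is completely determined by its last coordinate: the relations $g(v_i)=v_{i-1}$ force $v_i=g^{N-i}(v_N)$ for every $i$. This yields a continuous bijection
\[
\Phi_N:V\to V(N,g),\qquad w\mapsto \big(g^N(w),g^{N-1}(w),\ldots,g(w),w\big),
\]
whose inverse is the projection onto the last coordinate. Since $V$ is compact and $V(N,g)$ (a closed subset of $V^{N+1}$, cut out by the closed conditions $g(v_i)=v_{i-1}$) is Hausdorff, the continuous bijection $\Phi_N$ is automatically a homeomorphism. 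Moreover $\pi_0^N\circ\Phi_N=g^N$, so $\pi_0^N$ is a finite-to-one covering map if and only if $g^N$ is. The lemma is therefore equivalent to the assertion that $g$ is a local homeomorphism if and only if $g^N$ is a finite-to-one covering map for every $N\ge 0$.

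For the ``if'' direction I would simply take $N=1$. Under $\Phi_1$ the map $\pi_0^1$ corresponds to $g^1=g$, so the hypothesis says $g$ is a finite-to-one covering map; since every covering map is in particular a local homeomorphism, $g$ is a local homeomorphism. (The cases $N\ge 2$ in the hypothesis are not even needed here, though the equivalence requires them in the other direction.)

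For the ``only if'' direction, suppose $g$ is a local homeomorphism. Then each iterate $g^N$ is a local homeomorphism, being a composition of local homeomorphisms, and $g^N$ is surjective because $g$ is. The technical core is then the standard fact that a surjective local homeomorphism $f:V\to V$ of a compact Hausdorff space is automatically a finite-to-one covering map. I would prove this in two steps. First, for $v\in V$ the fibre $f^{-1}(v)$ is closed in the compact space $V$, hence compact, and it is discrete because $f$ is locally injective; a compact discrete set is finite, so all fibres are finite. Second, to produce evenly covered neighbourhoods I would exploit that $f$ is a closed map, being a continuous map from a compact space into a Hausdorff space. Writing $f^{-1}(v)=\{x_1,\ldots,x_k\}$ and choosing pairwise disjoint open sets $U_i\ni x_i$ on each of which $f$ restricts to a homeomorphism onto an open set, the set
\[
W:=\bigcap_{i=1}^{k} f(U_i)\;\setminus\; f\Big(V\setminus\bigcup_{i=1}^{k} U_i\Big)
\]
is an open neighbourhood of $v$ that is evenly covered by the $U_i$. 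Applying this with $f=g^N$ shows $g^N$, and hence $\pi_0^N$, is a finite-to-one covering map for every $N$.

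The only genuinely non-formal ingredient is this last compactness fact; everything else is bookkeeping through the homeomorphism $\Phi_N$. The step requiring care is the construction of $W$: closedness of $f$ (equivalently, its properness coming from compactness of $V$) is exactly what lets one shrink a neighbourhood of $v$ away from the image $f\big(V\setminus\bigcup_i U_i\big)$ of the complement of the chosen charts, so that the preimage of $W$ lands inside $\bigcup_i U_i$ and splits into $k$ homeomorphic sheets.
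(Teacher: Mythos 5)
Your proof is correct and follows essentially the same route as the paper: both identify $V(N,g)$ with $V$ (the paper phrases this as a homeomorphism with the graph of $g^N$) compatibly with $\pi_0^N$, thereby reducing the lemma to the corresponding statement for the iterate $g^N$. The only difference is that you spell out the standard compactness fact that a surjective local homeomorphism of a compact Hausdorff space is automatically a finite-to-one covering map, a point the paper's very terse proof (which speaks only of local homeomorphisms) leaves implicit.
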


\begin{proof}
Take $N>0$. The map $g$ is a local homeomorphism if and only if $g^N$ is. The space $V(N, g)$ is homeomorphic to the graph of $g^N$ in a way that is compatible with $\pi^N_0$. In particular, $\pi_{0}^N: V(N, g) \to V$ is a local homeomorphism if and only if $g^N$ is a local homeomorphism.
\end{proof}

\begin{thm}
\label{localfibb}
Let $(X,\varphi)$ be an irreducible Wieler solenoid defined from an open surjection $g:V\to V$. For any finite set of $\varphi$-invariant periodic points $P\subseteq X$, the mapping $\pi_0:X^u(P)\to V$ is a covering map.
\end{thm}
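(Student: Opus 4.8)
The plan is to exhibit $\pi_0$ as a local homeomorphism whose local sections are exactly the local unstable sets, and then to upgrade this to an evenly covered decomposition by extracting a uniform chart radius from the expansion estimates. First I would invoke Lemma \ref{locally expanding} to know that $g$ is a local homeomorphism and that $g^K$ is locally expanding for $\gamma^{-K}$. Since neither $X^u(P)$ (as a set with its unstable topology) nor the map $\pi_0$ changes if we replace $(V,g)$ by $(V,g^K)$ --- the unstable equivalence relation is insensitive to passing to a power of $\varphi$, periodic points stay periodic, and the two solenoids are canonically identified via $(v_i)_i\mapsto(v_{Ki})_i$ --- I would reduce to the case that $g$ itself is locally expanding, so that Lemma \ref{bracketEqualsStableForSmallN} becomes available. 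Finally, because \eqref{pikrela} shows $\pi_0\colon X\to V$ is a factor map, $(V,g)$ inherits irreducibility from $(X,\varphi)$.

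The core construction is a chart map. Fix $x=(x_0,x_1,\dots)\in X^u(P)$. Using that $g$ is a local homeomorphism with a uniform radius of injectivity (from compactness of $V$), there are contracting local inverse branches of $g$ along the coordinates of $x$; composing them lifts any $v$ in a ball $B(x_0,\rho)$ to a point $\Phi_x(v)=(v,y_1,y_2,\dots)\in X$ with $g(y_{i+1})=y_i$ and, crucially, $\d_V(y_i,x_i)\lesssim\gamma^{i}\d_V(v,x_0)$ by the uniform contraction of the inverse branches of $g^K$. This geometric decay shows at once that $\Phi_x(v)\sim_u x$ (the tails $\varphi^{-n}\Phi_x(v)$ and $\varphi^{-n}x$ converge in the metric \eqref{wielermetric}) and, via Lemma \ref{bracketEqualsStableForSmallN} applied for each multiple of $K$, that $[x,\Phi_x(v)]=x$; hence $\Phi_x(v)\in X^u(x,\ep)\subseteq X^u(P)$ and $\pi_0\circ\Phi_x=\id$. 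I would then check that, conversely, every element of $X^u(x,\ep)$ tracks $x$ (any $y\sim_u x$ has $\d_V(y_n,x_n)\to0$) and so coincides with $\Phi_x(y_0)$ by uniqueness of the contracting inverse branch; thus $\pi_0$ restricts to a homeomorphism $X^u(x,\ep)\xrightarrow{\ \cong\ }B(x_0,\rho)$. In particular $\pi_0$ is an open local homeomorphism, and the radius $\rho$ can be chosen uniformly in $x$.

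With a uniform chart radius $\rho$ in hand, surjectivity and even covering follow together. The image $\pi_0(X^u(P))$ is open (local homeomorphism) and, by the uniformity of $\rho$, also closed: if $v_n\to v$ with all $v_n\in\pi_0(X^u(P))$ then eventually $v\in B(v_n,\rho)$, so $v$ is hit by the corresponding chart. As this image is nonempty, clopen and $g$-invariant (because $X^u(P)$ is $\varphi$-invariant and $\pi_0\varphi=g\pi_0$), irreducibility of $(V,g)$ forces it to be all of $V$. For the even covering, over each ball $B(v,\rho)$ I would write $\pi_0^{-1}(B(v,\rho))\cap X^u(P)=\bigsqcup_{x\in\pi_0^{-1}(v)}\Phi_x(B(v,\rho))$: the pieces are the graphs of the branch-lifts, each mapped homeomorphically onto $B(v,\rho)$, the fibre $\pi_0^{-1}(v)$ is discrete, and distinct pieces are disjoint because a common point could be re-centred as the origin of its own chart, contradicting injectivity of $\Phi$. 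This yields the covering map with discrete fibres.

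The main obstacle I expect is the bijectivity of the chart in the second paragraph, specifically injectivity of $\pi_0$ on a local unstable set. The difficulty is that the solenoid metric \eqref{wielermetric} weights the $i$-th coordinate by $\gamma^{i}$, so smallness of $\d_X(x,y)$ gives no control over the high coordinates $y_i$; one cannot read off $y=\Phi_x(y_0)$ directly from closeness in $X$. The resolution is dynamical rather than metric: the unstable-equivalence condition forces $\d_V(y_n,x_n)\to0$, and it is the uniform contraction of the inverse branches of $g^K$ together with its uniform local injectivity that pins down all coordinates of $y$ from $y_0$. Making this bookkeeping uniform in $x$ (so that a single radius $\rho$ works everywhere) is the technical heart, and it is exactly what powers both the clopen argument for surjectivity and the disjointness argument for even covering.
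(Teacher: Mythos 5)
Your architecture is genuinely different from the paper's: you build local sections of $\pi_0$ directly by lifting points of $V$ through the contracting inverse branches of $g^K$, and you obtain surjectivity from a clopen-image-plus-irreducibility argument for $(V,g)$, whereas the paper reduces (via Lemma \ref{morelocaltriv}) to showing $\pi_M$ is a covering for one large $M$, trivializes $\pi_M$ over small open sets using the bracket, $\Psi_v(z)=(\pi_M(z),[x,z])$ with explicit inverse $\psi_v(w,y)=[z',y]$, and gets surjectivity from density of $X^u(P)$ together with Lemma \ref{bracketEqualsStableForSmallN}. The forward half of your chart construction ($\pi_0\circ\Phi_x=\id$, membership of $\Phi_x(v)$ in $X^u(x,\ep)$ via Lemma \ref{bracketEqualsStableForSmallN}), the existence of a uniform radius $\rho$ by compactness, the transfer of irreducibility through the factor map $\pi_0$, and the clopen argument are all sound modulo routine bookkeeping (including the reduction to $g^K$, which does require checking that unstable classes and their topology are unchanged under passing to a power of $\varphi$).

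However, there is a genuine gap at exactly the step you single out as the heart of the argument: the claim that every $y\in X^u(x,\ep)$ equals $\Phi_x(y_0)$. Your justification — unstable equivalence forces $\d_V(y_n,x_n)\to 0$, and then uniqueness of the contracting inverse branch ``pins down all coordinates of $y$ from $y_0$'' — is false as stated. Tail convergence together with the value $y_0$ does \emph{not} determine $y$: in the $2$-solenoid the restriction of $\pi_0$ to a full unstable class $X^u(x)$ has countably infinite fibres, and every point $y$ in the fibre $\pi_0^{-1}(y_0)\cap X^u(x)$ satisfies $\d_V(y_n,x_n)\le \d_X(\varphi^{-n}(y),\varphi^{-n}(x))\to 0$, yet these points are pairwise distinct. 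The mechanism of failure is that tail convergence only places the coordinates $y_n$ inside the injectivity balls $B(x_n,\delta_0)$ for $n$ beyond some $N$ depending on $y$; the early coordinates $y_K,\dots,y_N$ may follow inverse branches different from those at $x$, and they are determined from $y_N$ downward by applying $g$, not from $y_0$ upward by your branches. What rescues the step is the defining contraction property of the \emph{local} unstable set rather than unstable equivalence: by axiom C2 (iterated using B4), $y\in X^u(x,\ep)$ gives $\d_X(\varphi^{-n}(y),\varphi^{-n}(x))\le\lambda^n\d_X(x,y)$, hence $\d_V(y_n,x_n)\le\lambda^n\ep$ for \emph{every} $n\ge 0$. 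Choosing $\ep$ below the uniform injectivity radius, every coordinate — not just the tail — lies in the correct ball, and branch uniqueness then applies level by level to give $y=\Phi_x(y_0)$; this is the same mechanism that powers the paper's Lemma \ref{bracketEqualsStableForSmallN}. The same correction must be propagated to your disjointness argument for the even covering, which rests on this injectivity claim; with that substitution (C2 in place of mere $\sim_u$), your proof goes through.
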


\begin{remark}
\label{remarkaboutthomsen}
A mapping $g:V\to V$ is called expanding in \cite{Tho} if it is open, surjective and there exists $\delta>0$ such that $\d_V(g^n(x),g^n(y))\leq \delta$ for all $n$ implies that $x=y$. In the proof of \cite[Theorem 4.19]{Tho}, Thomsen gives the proof of a statement similar to Theorem \ref{localfibb} under the assumption that $g$ is expanding and the set of periodic points in $X$ is dense.
\end{remark}

\begin{proof}[Proof of Theorem \ref{localfibb}]
We first show that $\pi_k:X^u(P)\to V$ is surjective for any $k$. Using \eqref{pikrela}, it suffices to prove that $\pi_0:X^u(P)\to V$ is surjective. Let $v\in V$, since $g$ is surjective, pick $x\in X$ such that $x_0=v$. Since $X^u(P)\subseteq X$ is dense, there exists $y\in X^u(P)$ such that $\d_X(x,y)<\ep_X$. Then $[x,y]\in X^u(P)$ and $[x,y]_0=x_0=v$ by Lemma \ref{bracketEqualsStableForSmallN}, so $[x,y]\in \pi_0^{-1}(v)$. 

Lemma \ref{locally expanding} implies the map $g$ is a local homeomorphism. By Lemma \ref{morelocaltriv}, it suffices to prove that there exists an $M \in \N$ such that $\pi_M:X^u(P) \to V$ is a covering map. We take $M$ and $\delta >0$ as in the hypotheses of Lemma \ref{bracketEqualsStableForSmallN}, such that $\delta$ is also sufficiently small so that if $x,y \in X$ satisfy $\d_V(x_m,y_m)<\delta$ for all $m \leq M$, then $\d_X(x,y)<\ep_{X}/4$. Note that we can take $M$ and $\delta$ such that $M$ is a multiple of the constant $K$ appearing in Wieler's axioms and that $(V,g)$ is locally expanding for $2\delta$.

Using the fact that $g$ is continuous and a short induction argument, we have that for fixed $v \in V$, there exists an open set $U \subseteq V$ such that $v \in U$ and 
\begin{equation}\label{M delta flattening}
\d_V(g^k(w_1), g^k(w_2)) < \delta \quad \text{ for any } w_1,w_2 \in U \text{ and each } 0\le k \le M.
\end{equation}
In particular, for such an open set $U$ we have $\pi_M^{-1}(U) \subseteq X^s(x, \ep_{X}) \times X^u(x, \ep_{X})$ for $x \in \pi_M^{-1}(v)$, where we identify $X^s(x, \ep_{X}) \times X^u(x, \ep_{X})$ with an open neighbourhood of $x$ in $X$ using the bracket mapping as in Equation \eqref{prodcstruc}. Suppose $v \in V$ and let $U \subseteq V$ be an open set satisfying \eqref{M delta flattening}. For any $x \in \pi_M^{-1}(v)$ define 
$$\Psi_{v}: \pi_M^{-1}(U) \rightarrow U \times (X^s(x, \ep_{X}) \cap X^u(P)),\quad
\Psi_{v}(z) =\big(\pi_M(z), [x, z]\big).$$ 

We first claim that $\Psi_v$ is well-defined. By the choice of $U$ and $M$, the bracket $[x, z]$ is defined, and is an element of $X^s(x, \ep_{X})$ since $\big[[x,z],x\big]=x$ by B1 and B3. Similarly, $[x, z] \in X^u(z,\ep_X) \subset X^u(P)$. Thus, $\Psi_v(z) \in U \times (X^s(x, \ep_X) \cap X^u(P))$. 

Next we show that $\Psi_v$ is independent of the choice of $x\in \pi_M^{-1}(v)$ in its definition. Suppose $x' \in \pi_M^{-1}(v)$. Then, by the choice of $M$, $\d_X(x, x') < \frac{\ep_X}{4}$. Since $x \sim_s x'$, we have $[x, z] \in X^s(x', \ep_X)$. Using B2 we have the identity
\[
[x, z] = [x', [x, z]] = [x',z].
\]
The mapping $\Psi_v$ is therefore independent of the choice of $x\in \pi_M^{-1}(v)$.

That $\Psi_{v}$ is continuous follows immediately since the projection and bracket maps are continuous. We now show that $\Psi_v$ is one-to-one. Suppose that $\Psi_v(z) = \Psi_v(z')$ for $z,z' \in \pi_M^{-1}(U)$. Then, since $\pi_M(z)=\pi_M(z')$ and $z \in X^s(z', \ep_X)$, using B2 and B1 we have
\[
z'= [ z,z']=[z, [ x,z']] = [z, [ x, z]] = [z, z]= z, 
\]
Thus $\Psi_v$ is one-to-one. The set 
\[
F_v:=\{ y \mid y=[x, z] \hbox{ for some }z \in \pi_M^{-1}(U) \}\subseteq X^s(x, \ep_X) \cap X^u(P),
\]
has the discrete topology because $X^s(x, \ep_X) \cap X^u(P)$ is discrete. We will show that the map $\Psi_v : \pi_M^{-1}(U) \rightarrow U \times F_v$ is a homeomorphism. We have shown that $\Psi_v$ is one-to-one, onto, and continuous. In fact, we have an explicit inverse. Let $\psi_v : U \times F_v \rightarrow \pi_M^{-1}(U)$ be defined by $\psi_v\big((w, y)\big) = [z', y]$ where $z'$ satisfies $\pi_M(z')=w \in U$ and $\d_V(z'_n, y_n) < \delta < \ep_X$ for each $n \le M$. The existence of $z'$ is guaranteed by \eqref{M delta flattening}.

We note that the bracket $[z', y]$ is well-defined by the properties of $U$. Furthermore, if both $z'$ and $z''$ satisfy $\pi_M(z')=w=\pi_M(z'')$, then $z'' \in X^s(z', \ep_X)$. Using B3 we compute
\[
[z', y]  =[ [z'', z'], y] = [z'', y].
\]
Hence $\psi_v$ is well-defined.

We show $\psi_v$ is continuous. Since $F_v$ has the discrete topology and the topology of local unstable sets coincides with the subspace topology, we need only show that given $\ep_{\psi}>0$, there exists $\delta_{\psi}>0$ such that if $(w, y)$ and $(\hat{w}, y)$ are in $U \times F_v$ with $\d_V(w, \hat{w})<\delta_{\psi}$, then 
\[
\d_X(\psi(w,y), \psi(\hat{w},y))=\d_X( [z', y ] , [\hat{z}', y ] ) < \ep_{\psi}
\]
where $z'$ satisfies the following: $\pi_M(z')=w \in U$ and $\d_V(z'_n, y_n) < \delta < \ep_X$ for each $n \le M$ and $\hat{z}'$ satisfies the analogous condition. 

Based on the definition of the metric on $X$, we need only show that there exists $\hat{\delta}_{\psi}>0$ such that, for each $n$, $\d_V( [z', y ]_n , [\hat{z}', y ]_n )< \ep_{\psi}$ whenever $\d_V(w, \hat{w})< \hat{\delta}_{\psi}$. We have that $[z', y ]$ and $[\hat{z}', y ]$ are both in the local unstable set of $y$. It follows that there exists $L\ge 0$ such that for any $n\ge L$, $\d_V(z_n', \hat{z}_n') < \ep_{\psi}$. For small $n$, since $g$ is a local homeomorphism, $\pi_M(z')=w$, and $\pi_M(\hat{z}')=\hat{w}$, there exists $\hat{\delta}_{\psi}>0$ such that 
\[
\d_V(z_n', \hat{z}_n') < \ep_{\psi}
\]
for $0\le n \le L$. This completes the proof that $\psi_v$ is continuous.

To see that $\psi_v$ is the inverse of $\Psi_v$, we first check that $\pi_M([ z' , y ])=w$. By construction, $\d_V(z'_n, y_n) < \delta$ for $0\le n\le M$ and $\d_X(z',y)< \ep_X$, so by Lemma \ref{bracketEqualsStableForSmallN} we have that
\[ 
[z' , y ]_{M} = z'_M = w,
\]
as desired. Furthermore, using B2 and B1 we compute
\[
(\psi_v \circ \Psi_v)(z)= [ z, [ x, z]] = [z, z] = z.
\]
Using B2 we deduce
\[
(\Psi_v \circ \psi_v)(w, y) = (\pi_M([ z' , y ]), [ x , [z', y]  ]) = (w, [ x , y ]) = (w, y). 
\]
Here we have used the fact that $y \in X^s(x, \ep_X)$ to conclude that $[x, y]=y$.
\end{proof}

\begin{example}
\label{solenioidexampleonaroundpage7}
There are several examples of open maps satisfying Wieler's axioms. In this example we will introduce a particularly simple class, the $n$-solenoids. Consider $V=S^1:=\R /\Z$ and $g(x)=nx \pmod 1$. We set the global Smale space constants to be $\ep_X=1/2$ and $\lambda=n$. For the remainder of this example we will abuse notation and write $nx$ for $nx \pmod 1$. The space
\[
X_{S^1}:=\{(x_0,x_1,x_2,\dots) : x_i \in [0,1), x_i-nx_{i+1}\in \Z \}
\]
is a compact metric space with respect to the product metric
\[
\d_{X_{S^1}}\big((x_0,x_1,x_2,\dots),(y_0,y_1,y_2,\dots)\big)=\sum_{i=0}^\infty n^{-i}\inf\{|x_i-y_i+k| : k\in\Z\}.
\]
We remark that this metric differs from that used in Wieler's construction, see Equation \eqref{wielermetric} on page \pageref{wielermetric}. The dynamics $\varphi_g$ from \eqref{solenodimap} is given by
\[
\varphi_g(x_0,x_1,x_2,\dots)=(nx_0,x_0,x_1,x_2,\dots)=(nx_0,nx_1,nx_2,\dots).
\]
Suppose $x,y$ are in $X_{S^1}$ with $\d_{X_{S^1}}(x,y)<\ep_X$. Let $t=x_0-y_0$, then the bracket map is defined by
\[
[x,y]:=(y_0+t,y_1+n^{-1}t,y_2+n^{-2}t,\dots).
\]
With these definitions in hand it is routine to verify the Smale space axioms. These details can also be found in Putnam's Smale space notes \cite[Section 3.4]{Putnam notes}.

More generally, any matrix $A\in M_d(\Z)$ induces a mapping $g_A:(S^1)^d\to (S^1)^d$. The mapping $g_A$ is a local homeomorphism exactly when $\det(A)\neq 0$ and $g_A$ is a homeomorphism when $|\det(A)|=1$. Moreover, if $\det(A)\neq 0$, $g_A$ satisfies Wieler's axioms if and only if $\|A^{-1}\|_{M_n(\R)}<1$. By \cite[Section 2.2]{lrrw}, the action of $g_A$ on $(S^1)^d$ is an example of the shift mapping acting on the limit space of a self-similar group. This construction is discussed further in Subsection \ref{selfsimilarsubsec}.
\end{example}

There are also examples that satisfy Wieler's axioms where the relevant map is not open, so our constructions do not apply to these examples. Examples 1 and 3 in \cite{Wielerpaper} are two such examples. An additional example is the Smale space associated with an aperiodic substitution tiling, the details are in \cite{AP}. A Cuntz-Pimsner model for the stable Ruelle algebra of an aperiodic substitution tiling is constructed in forthcoming work by Peter Williamson. In the next section we construct a Cuntz-Pimsner model over $C(V)$ for the stable Ruelle algebra of a Wieler solenoid defined from an open surjection $g:V\to V$. It is an interesting challenge to find ``good" Cuntz-Pimsner models for Ruelle algebras of more general Wieler solenoids, or even more general Smale spaces.

\section{Cuntz-Pimsner algebras and topological dynamics}
\label{sectioncpmodel}

In this section, we recall a construction from \cite{deaacaneudoaod} of Cuntz-Pimsner models describing the dynamics of a surjective local homeomorphism $g:V\to V$. We discuss the limitations of this assumption on $g$ in Subsection \ref{discussionong}, but emphasize that the results in this section do {\bf not} need $g$ to satisfy Wieler's axioms. For the general construction of Cuntz-Pimsner algebras, see \cite{Pimsnerspaper}.

\subsection{The Cuntz-Pimsner algebra of a local homeomorphism}
\label{TheCuntzPimsneralgebraofalocalhomeomorphism}

Consider a compact space $V$ and a surjective local homeomorphism $g:V\to V$. A map $g^{*}:C(V)\to C(V)$ is defined by $g^{*}(a):= a\circ g$. We consider $E:=C(V)$ as a right Hilbert module over itself via the action $g^{*}$ and the inner product
\begin{equation}
\label{innerproductl}
\langle \xi,\eta\rangle_E:=\mathfrak{L}(\bar{\xi}\eta),\quad\mbox{where}\quad\mathfrak{L}(\xi)(y):=\sum_{g(z)=y}\xi(z).
\end{equation}
We equip $E$ with the left action defined from the pointwise action. To emphasize this dependence, we write $E_{g}=\,\!_{\id} C(V)_{g^{*}}$. For $k\geq 0$, there is a unitary isomorphism
$$\nu_k:E^{\otimes k}_{g}=E_g\!\otimes E_g\!\otimes\cdots \!\otimes E_g\to \,\!_{\id}C(V)_{g^{*k}}, \quad \eta_1\otimes \cdots \otimes \eta_k\mapsto \prod_{j=1}^kg^{*(j-1)}(\eta_j).$$
Here $\,\!_{\id}C(V)_{g^{*k}}$ is equipped with the inner product $\langle \xi,\eta\rangle_{\,\!_{\id}C(V)_{g^{*k}}}:=\mathfrak{L}^k(\bar{\xi}\eta)$. We define $E^{\otimes 0}:=\,\!_{\id} C(V)_{\id}$. An element $\xi\in E$ induces a
\emph{Toeplitz operator} 
$$T_\xi:E^{\otimes k}\to E^{\otimes k+1},\quad \eta_1\otimes \cdots \otimes \eta_k\mapsto \xi\otimes \eta_1\otimes \cdots \otimes \eta_k.$$
The following computation is immediate:
$$T_\xi^* \eta_1\otimes \cdots \otimes \eta_k=\begin{cases}
\langle \xi,\eta_1\rangle_E\ \eta_2\otimes \cdots \otimes \eta_k,\;&k>1,\\
\langle \xi,\eta_1\rangle_E,&k=1,\\
0,& k=0.\end{cases}$$
Under the collection of isomorphisms $\nu_k$ we have
$$\tilde{T}_\xi:=\nu_{k+1}T_\xi\nu_k^{-1}:\,\!_{\id}C(V)_{g^{*k}}\to \,\!_{\id}C(V)_{g^{*(k+1)}},\quad \eta\mapsto \xi g^{*}(\eta),$$
and for $\eta\in \,\!_{\id}C(V)_{g^{*k}}$ we have 
$$\tilde{T}_\xi^* \eta=\begin{cases}
\mathfrak{L}(\bar{\xi}\eta)\;&k>0,\\
0,& k=0.\end{cases}$$

The \emph{Fock module} of $E$ is the $C(V)$-Hilbert module $F_E:=\bigoplus_{k=0}^\infty  \,\!_{\id}C(V)_{g^{*k}}$. The \emph{Toeplitz algebra} $\mathcal{T}_E\subseteq \End^*_{C(V)}(F_E)$ is the $C^*$-algebra generated by the Toeplitz operators $\{\tilde{T}_\xi:\; \xi\in E\}$ and the compact operators $\mathbb{K}_{C(V)}(F_E)$. The \emph{Cuntz-Pimsner algebra} of $E$ is
\[
O_E:=\mathcal{T}_E/\mathbb{K}_{C(V)}(F_E).
\]
If we equip $E$ with the $\T$-action $z\cdot \xi:=z\xi$, there is an induced $\T$-action on $F_E$ making $F_E$ into a $\T$-equivariant $C(V)$-Hilbert module for the trivial action on $C(V)$. Then $\mathcal{T}_E$ and $\mathbb{K}_{C(V)}(F_E)$ are invariant under the adjoint action. We equip these $C^*$-algebras and the Cuntz-Pimsner algebra $O_E$ with the $\T$-action induced from the adjoint action.

\begin{example}
\label{firstckex}
In the special case of a subshift of finite type, the construction above reproduces the associated Cuntz-Krieger algebra. Let $\pmb{A}$ be a $N\times N$-matrix consisting of zeros and ones. We consider the compact space 
$$\Omega_{\pmb{A}}:=\left\{(s_i)_{i\in \N}\in \{1,...,N\}^\N: \pmb{A}_{x_i,x_{i+1}}=1\;\forall i\right\},$$
which is equipped with the topology induced from the product topology. If $\pmb{A}$ is irreducible, then $\Omega_{\pmb{A}}$ is a Cantor set. The mapping $\sigma(s_i)_{i\in \N}:=(s_{i+1})_{i\in \N}$ is a surjective local homeomorphism. If we construct the $C(\Omega_{\pmb{A}})$-module $E$ from $(\Omega_{\pmb{A}},\sigma)$ we obtain the Cuntz-Krieger algebra defined from the matrix $\pmb{A}$. To prove this, consider the elements $S_i:=\tilde{T}_{\chi_{C_i}}\!\!\!\!\mod \mathbb{K}_{C(\Omega_{\pmb{A}})}$ where $C_i:=\{(s_i)_{i\in \N}\in \Omega_{\pmb{A}}: s_0=i\}$ is the clopen cylinder set on words starting with $i$. A direct computation gives
$$S_i^*S_k=\delta_{ik}\mathfrak{L}(\chi_{C_i})=\delta_{ik}\sum_{j=1}^N \pmb{A}_{ij}\chi_{C_j}\quad \mbox{and}\quad S_jS_j^*=\chi_{C_j}.$$
Hence $\{S_i\}_{i=1}^N$ satisfies the Cuntz-Krieger relations defined from $\pmb{A}$. This defines the isomorphism $O_{\pmb{A}}\to O_E$.
\end{example}

\subsection{The Cuntz-Pimsner algebra of a topological graph}
\label{cptopgraph}

In this section we consider the Cuntz-Pimsner algebras of topological graphs, and make use of an \'{e}tale groupoid previously considered in \cite{deaacaneudoaod,Renpeper,Ren2book}. Suppose $V$ is a compact topological space and consider a closed subset $\mathcal{G}\subseteq V\times V$. We set $t(x,y)=y$ and $o(x,y)=x$ for $(x,y)\in \mathcal{G}$. This situation is a special case of the notion of a topological graph (see \cite[Definition 1.1]{deaacaneudoaod}). Consider the one-sided sequence space
$$X_+(\mathcal{G}):=\left\{(x_i)_{i\in \N}\in \prod_{i\in \N}V: (x_i,x_{i+1})\in \mathcal{G} \,\, \quad\forall i\in\N\right\}.$$
There is a shift mapping $\sigma_\mathcal{G}:X_+(\mathcal{G})\to X_+(\mathcal{G})$, $\sigma_+(x_i)_{i\in \N}:=(x_{i+1})_{i\in \N}$. We define the set
$$\mathcal{R}_\mathcal{G}:=\left\{(x,n,y)\in X_+(\mathcal{G}) \times \Z\times X_+(\mathcal{G}): \exists k \mbox{  with  } \sigma_\mathcal{G}^{n+k}(x)=\sigma_\mathcal{G}^k(y)\right\}.$$
We can make $\mathcal{R}_\mathcal{G}$ into a groupoid by defining 
\begin{align}
r(x,n,y):=x,\quad d(x,n,y):=y\quad\mbox{and}\quad 
\label{compositionrg}
(x,n,y)(y,m,z):=(x,n+m,z).
\end{align} 
With the additional assumption that $t, o:\mathcal{G}\to V$ are surjective local homeomorphisms, the groupoid $\mathcal{R}_\mathcal{G}$ is topologized by the following basis. For $k,l\in \N$ and open subsets $U_1,U_2\subseteq X_+(\mathcal{G})$ such that $\sigma_+^k|_{U_1}$ and $\sigma_+^l|_{U_2}$ are homeomorphisms with the same open range, we declare the following set open
\begin{equation}
\label{basisgg}
\mathcal{U}(U_1,k,l,U_2):=\{(x,k-l,y)\in U_1\times\Z\times U_2: \sigma_+^k(x)=\sigma_+^l(y)\}.
\end{equation}
This construction makes $\mathcal{R}_\mathcal{G}$ into an \'{e}tale groupoid. Henceforth we will always assume that both $t$ and $o$ are surjective local homeomorphisms. 

Consider the $C(V)$-bimodule $C(\mathcal{G})$ with left and right action defined by $o^*$ and $t^*$, respectively. There is a transfer operator $\mathfrak{L}_\mathcal{G}:C(\mathcal{G})\to C(V)$ defined by $$\mathfrak{L}_\mathcal{G}(\xi)(y):=\sum_{(x,y)\in \mathcal{G}}\xi(x,y).$$ We equip $C(\mathcal{G})$ with the inner product $\langle\xi,\eta\rangle_{C(\mathcal{G})}:=\mathfrak{L}_\mathcal{G}(\bar{\xi}\eta)$. Then \cite[Proposition 3.3]{deaacaneudoaod} proves that
\[
O_{C(\mathcal{G})}\cong C^*(\mathcal{R}_\mathcal{G}),
\]
whenever $t$ and $o$ are surjective local homeomorphisms. This isomorphism is $\T$-equivariant for the $\T$-action on $C^*(\mathcal{R}_\mathcal{G})$ induced from the groupoid cocycle $c_\mathcal{G}(x,n,y):=n$.

The situation from \cite{deaacaneudoaod} fits into the theme of this paper through the graph
\begin{equation}
\label{thegraph}
\mathcal{G}_g:=\{(x,g(x))\in V\times V: x\in V\},
\end{equation}
for the surjective local homeomorphism $g:V\to V$ considered in the previous subsection. This satisfies all the conditions above and it is routine to shown that $E\cong C(\mathcal{G})$ as Hilbert bimodules using the pullback along $o:\mathcal{G}\to V$. It is immediate from the definition that there is a conjugacy between $(X_+(\mathcal{G}_g),\sigma_{\mathcal{G}_g})$ and $(V,g)$, and that
\begin{equation}
\label{definitiongroup}
\mathcal{R}_{\mathcal{G}_g}\cong \G_g:= \{(x,n,y)\in V \times \Z\times V: \exists k \mbox{  with  } g^{n+k}(x)=g^k(y)\}.
\end{equation}
We summarize the discussion above into the following result.

\begin{thm}
\label{isothemggtooh}
Suppose $g:V\to V$ is a surjective local homeomorphism defining a $*$-monomorphism $g^{*}$ on $C(V)$. Let $E_{g}:=\,\!_{\id} C(V)_{g^{*}}$, and let $\G_g$ be as in Equation \eqref{definitiongroup}.  The $*$-homomorphism $C(V)\to C^*(\G_g)$ defined from the diagonal inclusion and the mapping
$$t:E\to C^*(\G_g), \quad t(\xi)(x,n,y):=\begin{cases} \xi(x), \; &g(x)=y, \,n=1\\ 0,&\mbox{otherwise}\end{cases}$$
define a covariant representation that induces a $\T$-equivariant isomorphism
$$\pi_E:O_E \to C^*(\G_g).$$
\end{thm}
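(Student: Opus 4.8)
The plan is to deduce the isomorphism from the topological-graph picture assembled above, and then verify by a direct computation in the groupoid convolution algebra that the specific covariant representation in the statement realises it. First I would record that the graph $\mathcal{G}_g=\{(x,g(x)):x\in V\}$ of \eqref{thegraph} satisfies the standing hypotheses of Subsection \ref{cptopgraph}: the origin map $o(x,g(x))=x$ is a homeomorphism $\mathcal{G}_g\xrightarrow{\sim}V$, while the terminal map $t(x,g(x))=g(x)=g\circ o^{-1}$ is a surjective local homeomorphism because $g$ is. Pullback along $o$ gives the bimodule isomorphism $E_g\cong C(\mathcal{G}_g)$ noted after \eqref{thegraph}, intertwining the transfer operators $\mathfrak{L}$ and $\mathfrak{L}_{\mathcal{G}_g}$ and the two bimodule structures, and the conjugacy of $(X_+(\mathcal{G}_g),\sigma_{\mathcal{G}_g})$ with $(V,g)$ gives $\mathcal{R}_{\mathcal{G}_g}\cong\G_g$ as in \eqref{definitiongroup}. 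Transporting \cite[Proposition 3.3]{deaacaneudoaod} through these identifications already yields an abstract $\T$-equivariant isomorphism $O_{E_g}\cong C^*(\G_g)$; the content of the theorem is to show that the concrete pair in the statement is Cuntz--Pimsner covariant and induces this isomorphism.

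To see that $(\pi_{\mathrm{diag}},t)$ is a Toeplitz representation I would verify the three relations by bookkeeping degrees in $C_c(\G_g)$. Since $t(\xi)$ is supported on $\{(x,1,g(x))\}$ and $t(\xi)^*$ on $\{(x,-1,y):g(y)=x\}$, a one-line convolution gives
\[
\big(t(\xi)^*t(\eta)\big)(x,0,x)=\sum_{g(z)=x}\overline{\xi(z)}\,\eta(z)=\mathfrak{L}(\bar\xi\eta)(x)=\langle\xi,\eta\rangle_E(x),
\]
with all components off the unit space vanishing, so $t(\xi)^*t(\eta)=\pi_{\mathrm{diag}}(\langle\xi,\eta\rangle_E)$; the identical degree count yields $t(a\xi)=\pi_{\mathrm{diag}}(a)\,t(\xi)$ and, using the right action $g^*$, $t(\xi\cdot a)=t(\xi)\,\pi_{\mathrm{diag}}(a)$.

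The key point is Cuntz--Pimsner covariance. Because $g$ is a surjective local homeomorphism of a compact space it is a finite covering, so $E_g$ is finitely generated projective, the left action $\phi$ of $C(V)$ is faithful (the monomorphism hypothesis on $g^*$ enters here) and takes values in $\mathbb{K}_{C(V)}(E_g)$, whence Katsura's ideal is all of $C(V)$. I would build an explicit frame by choosing a finite open cover $\{W_j\}$ on which $g$ is injective, a subordinate real partition of unity $\sum_j\rho_j^2=1$ with $\rho_j\in C(V)$, and observing that $\sum_j\rho_j(x)\rho_j(z)=\delta_{xz}$ whenever $g(x)=g(z)$ (injectivity of $g|_{W_j}$ kills the off-diagonal terms). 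This shows $\{\rho_j\}$ is a frame, so $\phi(a)=\sum_j\theta_{a\rho_j,\rho_j}$, and covariance reduces to the identity $\sum_j t(a\rho_j)\,t(\rho_j)^*=\pi_{\mathrm{diag}}(a)$, which follows from the same convolution computation as above together with the frame relation. Thus $(\pi_{\mathrm{diag}},t)$ descends to a $*$-homomorphism $\pi_E:O_{E_g}\to C^*(\G_g)$.

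Finally, surjectivity holds because $\pi_{\mathrm{diag}}(C(V))$ and $t(E_g)$, together with adjoints, generate $C_c(\G_g)$ under convolution (they exhaust the degree $0$ and $\pm1$ bisections, which generate $\G_g$). For injectivity I would invoke the gauge-invariant uniqueness theorem: $\pi_{\mathrm{diag}}$ is faithful, being the inclusion $C_0(\G_g^{(0)})\hookrightarrow C^*(\G_g)$, and $(\pi_{\mathrm{diag}},t)$ is equivariant for the gauge $\T$-action on $O_{E_g}$ and the $\T$-action on $C^*(\G_g)$ induced by the cocycle $c_{\mathcal{G}}(x,n,y)=n$, under which $t(\xi)$ has degree $1$ and $\pi_{\mathrm{diag}}(a)$ degree $0$. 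The theorem then forces $\pi_E$ to be injective and simultaneously records its $\T$-equivariance. I expect the main obstacle to be not any single estimate but the careful tracking of degrees and support sets in $C_c(\G_g)$ required to match the abstract module relations—above all the covariance identity—with concrete groupoid convolutions; the partition-of-unity frame is precisely what makes that matching go through.
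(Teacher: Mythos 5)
Your proposal is correct, and its opening paragraph is, in effect, the paper's entire proof: the paper obtains Theorem \ref{isothemggtooh} by "summarizing the discussion above", namely that $\mathcal{G}_g$ of \eqref{thegraph} satisfies the standing hypotheses of Subsection \ref{cptopgraph}, that pullback along $o$ gives $E_g\cong C(\mathcal{G}_g)$ as Hilbert bimodules, that the conjugacy $(X_+(\mathcal{G}_g),\sigma_{\mathcal{G}_g})\cong (V,g)$ gives $\mathcal{R}_{\mathcal{G}_g}\cong \G_g$, and then citing \cite[Proposition 3.3]{deaacaneudoaod} for the $\T$-equivariant isomorphism $O_{C(\mathcal{G}_g)}\cong C^*(\mathcal{R}_{\mathcal{G}_g})$. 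Everything after your first paragraph is additional content the paper delegates to Deaconu: the convolution verification of the Toeplitz relations, Cuntz--Pimsner covariance via the partition-of-unity frame $\sum_j\rho_j^2=1$ (the same frame the paper itself uses later, before \eqref{omegadefin} and in the proof of Proposition \ref{constrrep}), surjectivity by generation, and injectivity by the gauge-invariant uniqueness theorem. What each route buys: the paper's citation is short and inherits equivariance wholesale, but it leaves implicit that the concrete formula for $t$ in the statement is the one realizing Deaconu's isomorphism; your direct argument proves exactly that, and is self-contained modulo the gauge-invariant uniqueness theorem. Two minor inaccuracies, neither a gap: faithfulness of the left action of $C(V)$ on $E_g$ is automatic, since it is pointwise multiplication, and does not rest on the monomorphism hypothesis on $g^*$ (which is itself automatic from surjectivity of $g$); and $t(E_g)$ does not "exhaust the degree $\pm1$ bisections" of $\G_g$ --- it is supported on the single bisection $\{(x,1,g(x)):x\in V\}$, while degree-one elements $(x,1,y)$ with $g^{k+1}(x)=g^k(y)$, $k\geq 1$, lie outside it --- so surjectivity really requires the products $t(\xi_1)\cdots t(\xi_k)\,\pi_{\mathrm{diag}}(a)\,t(\eta_l)^*\cdots t(\eta_1)^*$, which sweep out the sets $\{(x,k-l,y):g^k(x)=g^l(y)\}$, together with a partition-of-unity and Stone--Weierstrass argument on each basic open set $\mathcal{U}(U_1,k,l,U_2)$ of \eqref{basisgg}; this is the standard argument (it is how Deaconu proves his Proposition 3.3) and it completes your sketch.
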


\begin{remark}
\label{reversedgraph}
The graph $\mathcal{G}_g^{\op}=\{(g(x),x)\in V\times V: x\in V\}$ gives rise to a dynamics very different from that of $\mathcal{G}_g$. Using \eqref{solenodiconst}, we have that $X_+(\mathcal{G}_g^{\op})=X_V$. The $C(V)$-Hilbert $C^*$-module $C(\mathcal{G}_g^{\op})$ is unitarily equivalent to $E^{\op}:=\;\!_\varphi C(V)_{\id}$ with the inner product $\langle \xi,\eta\rangle_{E^{\op}}=\bar{\xi}\eta$. Moreover, the identification $X_+(\mathcal{G}_g^{\op})=X_V$ induces a conjugacy 
\[
(X_+(\mathcal{G}^{\op}_g),\sigma_{\mathcal{G}_g^{\op}})\cong (X_V,\varphi_g^{-1}).
\]
That is, $\sigma_{\mathcal{G}_g^{\op}}$ is a homeomorphism. Hence, $\mathcal{R}_{\mathcal{G}_g^{\op}}\cong X_V \rtimes_{\varphi} \Z$ as groupoids, and by \cite[Proposition 3.3]{deaacaneudoaod} there is a $\T$-equivariant isomorphism $O_{E^{\op}}\cong C(X_V)\rtimes_{\varphi} \Z$ when equipping $C(X_V)\rtimes_{\varphi} \Z$ with the dual $\T$-action.
\end{remark}

\subsection{Wieler's axioms and Cuntz-Pimsner algebras}
\label{discussionong}

Let us discuss the assumption of $g$ being a local homeomorphism. For the purposes of the present paper, the minimal assumptions on $g$ should not only ensure that $X_{V}$ is a Smale space but also that there is an associated Cuntz-Pimsner algebra. In view of Theorem \ref{isothemggtooh}, to use $E:=C(V)_{g^*}$ it is necessary that $E$ admits a right Hilbert $C^*$-module structure. That is, the existence of a right inner product with values in $C(V)$ that is compatible with $g$. The question of existence of such inner products was considered in detail by Pavlov-Troitsky \cite{pavlovstrots} as follows.

\begin{cor}[{\cite[Theorem 1.1 and 2.9]{pavlovstrots}}]
Let $g:V\to V$ be a finite-to-one surjection. Then, the following are equivalent:
\begin{enumerate}
\item $E=C(V)_{g^*}$ admits a right $C(V)$-Hilbert $C^*$-module structure,
\item $g$ is a branched covering in the sense of \cite[Definition 2.4]{pavlovstrots}, and
\item $g$ is open.
\end{enumerate}
\end{cor}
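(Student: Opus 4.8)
The plan is to prove the two equivalences $(2)\Leftrightarrow(3)$ and $(1)\Leftrightarrow(3)$ separately, treating $(1)\Leftrightarrow(3)$ as the analytic core and $(2)\Leftrightarrow(3)$ as essentially topological bookkeeping. For $(2)\Leftrightarrow(3)$ I would unwind Pavlov--Troitsky's Definition 2.4: since $g$ is a continuous map of a compact metric space to a Hausdorff space it is automatically closed, and its fibres $g^{-1}(y)$ are finite, so the branched-covering axioms amount to the existence, near each $y_0$ and each $z\in g^{-1}(y_0)$, of a ``sheet'' $W_z\ni z$ with $g(W_z)$ a neighbourhood of $y_0$. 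This is exactly the statement that $g$ is open, and conversely an open finite-to-one closed surjection is readily checked to satisfy the axioms. This is the content of the cited Theorem 2.9.

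For $(3)\Rightarrow(1)$ I would construct the inner product explicitly. The naive transfer sum $\mathfrak{L}(\bar\xi\eta)$ of \eqref{innerproductl} fails to be continuous at branch points (already for $z\mapsto z^2$ the fibre cardinality jumps from $1$ to $2$), so the point is to replace counting measure on the fibre by a family of weighted fibre measures $\mu_y$ on $g^{-1}(y)$ varying continuously in $y$. Using openness together with finiteness of the fibres and closedness of $g$, one obtains for each $y_0$ disjoint neighbourhoods $W_1,\dots,W_k$ of the preimages $z_1,\dots,z_k$ and a neighbourhood $U\ni y_0$ with $g^{-1}(U)\subseteq\bigsqcup_i W_i$ and $g(W_i)\supseteq U$; openness is precisely what allows the mass carried near each $z_i$ to be distributed continuously as the fibre splits, so that $\langle\xi,\eta\rangle(y):=\int_{g^{-1}(y)}\overline{\xi}\,\eta\,\d\mu_y$ is a continuous function of $y$ (in the model case of a light open map, $\mu_y$ is just counting measure weighted by local multiplicity). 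The Hilbert-module axioms are then immediate: $C(V)$-linearity in the second variable holds because $a\circ g$ is constant equal to $a(y)$ on each fibre, positivity is clear since each $\mu_y$ has full support on $g^{-1}(y)$, and completeness follows because the module norm is equivalent to $\|\cdot\|_\infty$ (the fibre masses being bounded above and below by compactness).

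For the converse $(1)\Rightarrow(3)$, which I expect to be the main obstacle, I would argue that any $C(V)$-valued inner product exhibits $E$ as a Hilbert module over the commutative unital algebra $C(V)$, hence as a continuous field of Hilbert spaces over $V$; computing the fibre $E\otimes_{C(V)}\C_y$ one finds it is the finite-dimensional space of functions on $g^{-1}(y)$, so $y\mapsto\langle\xi,\xi\rangle(y)$ is a continuous function whose support structure is governed by $g^{-1}(y)$. If $g$ failed to be open there would be $z_0\in g^{-1}(y_0)$, an open $W\ni z_0$, and a sequence $y_n\to y_0$ with $g^{-1}(y_n)\cap W$ empty; choosing $\xi\in C(V)$ supported in $W$ with $\xi(z_0)\neq 0$ gives $\langle\xi,\xi\rangle(y_0)>0$ while $\langle\xi,\xi\rangle(y_n)=0$, contradicting continuity. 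The technical subtlety I would need to handle carefully is justifying that the fibre inner product is genuinely local, i.e.\ depends only on the values on $g^{-1}(y)$; this uses the module relation $\langle\xi,\eta(a\circ g)\rangle=\langle\xi,\eta\rangle a$ together with a partition-of-unity argument separating the finite fibre from the rest of $V$. This is the mechanism behind the cited Theorem 1.1, and combining it with Theorem 2.9 closes the cycle $(1)\Leftrightarrow(3)\Leftrightarrow(2)$.
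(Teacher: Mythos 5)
First, a point of orientation: the paper does not prove this corollary at all --- it is imported verbatim from Pavlov--Troitsky (their Theorems 1.1 and 2.9), so your proposal has to stand on its own rather than be compared to an internal argument. Its architecture does mirror theirs (continuous weighted fibre measures for $(3)\Rightarrow(1)$; locality plus a non-openness witness for $(1)\Rightarrow(3)$), but in both directions the decisive step is asserted rather than proved. The more serious gap is in $(1)\Rightarrow(3)$. The locality half of your contradiction is fine and needs no approximation or norm comparison: if $\supp\xi\subseteq W$ and $g^{-1}(y_n)\cap W=\varnothing$, choose $a_n\in C(V)$ with $a_n(y_n)=1$ and $\supp a_n\subseteq V\setminus g(\supp\xi)$ (open because $g$ is closed); then $\xi\cdot a_n=\xi\,(a_n\circ g)=0$, whence $\langle\xi,\xi\rangle(y_n)=|a_n(y_n)|^2\langle\xi,\xi\rangle(y_n)=0$. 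The gap is the other half, $\langle\xi,\xi\rangle(y_0)>0$, which you justify by asserting that the fibre of $E$ at $y_0$ is the space of functions on $g^{-1}(y_0)$. That assertion is exactly the nondegeneracy in question, and it does \emph{not} follow from the properties your argument actually uses (positivity, positive-definiteness, $C(V)$-sesquilinearity, continuity, locality). Concretely: let $g$ be the tent map on $V=[0,1]$, with inverse branches $z_1(y)=y/2$ and $z_2(y)=1-y/2$, and set $\langle\xi,\eta\rangle(y):=(1-y)\bigl(\overline{\xi(z_1(y))}\,\eta(z_1(y))+\overline{\xi(z_2(y))}\,\eta(z_2(y))\bigr)$. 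This pairing is continuous in $y$, $C(V)$-sesquilinear, local, and positive definite, yet its fibre form at $y=1$ is identically zero; the only Hilbert-module axiom it violates is completeness of $C(V)$ in the induced norm. So any correct proof of $(1)\Rightarrow(3)$ must make essential use of completeness (equivalently, must first prove comparability of the module norm with $\|\cdot\|_\infty$ and then nondegeneracy of the fibre forms), and your proposal never invokes completeness anywhere; as written, the step fails.

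In $(3)\Rightarrow(1)$, the set-up with sheets $W_1,\dots,W_k$, $g^{-1}(U)\subseteq\bigsqcup_iW_i$ and $g(W_i)\supseteq U$ is correct, but the sentence ``openness is precisely what allows the mass carried near each $z_i$ to be distributed continuously as the fibre splits'' is the entire content of the hard direction of Pavlov--Troitsky's theorem, not a remark: one must produce measures $\mu_y$ on the fibres, of full support, with $y\mapsto\int\overline{\xi}\,\eta\,\mathrm{d}\mu_y$ continuous for all $\xi,\eta$, \emph{and} with a uniform lower bound on point masses (without which your claims that the module norm is equivalent to $\|\cdot\|_\infty$ and that the module is complete do not follow; ``bounded below by compactness'' is not an argument). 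Whyburn-type local multiplicity disposes of isolated branch points, but inside each sheet $g|_{W_i}\colon W_i\to U$ is again an arbitrary finite-to-one open surjection, branch points can accumulate on branch points, and so the construction is recursive with no smallest scale; one needs a genuine argument, e.g.\ a continuous-selection argument applied to the lower semicontinuous multifunction $y\mapsto\{\text{probability measures on }g^{-1}(y)\}$ (openness of $g$ being equivalent to lower semicontinuity of $y\mapsto g^{-1}(y)$), followed by corrections to achieve full support and the uniform mass bound. Finally, your $(2)\Leftrightarrow(3)$ is not a proof: you guess the content of Definition 2.4 and then appeal to the very Theorem 2.9 that is part of the statement being established.
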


Thus, we obtain a Cuntz-Pimsner model for $C^{*}(\G_{g})$ using $E$ only when $g:V\to V$ is an open map. Since $g$ also satisfies Wieler's axioms, we obtain the following result using Remark \ref{clobra}.

\begin{prop}
Let $g:V\to V$ be a surjection satisfying Wieler's axioms. Then, the following are equivalent:
\begin{enumerate}
\item $E=C(V)_{g^*}$ admits a right $C(V)$-Hilbert $C^*$-module structure,
\item $g$ is a local homeomorphism, and
\item $g$ is open.
\end{enumerate}
\end{prop}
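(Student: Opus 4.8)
The plan is to reduce this proposition entirely to results already established, linking the three conditions by a short cycle of implications. The crucial preliminary observation is that, since $(V,g)$ satisfies Wieler's axioms, Axiom 1 together with \cite[Lemma 3.4]{Wielerpaper} (as recorded in Remark \ref{firstpavlovremark}) forces $g$ to be finite-to-one. This is exactly the hypothesis needed to apply the preceding Corollary of Pavlov--Troitsky \cite{pavlovstrots}.

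First I would invoke that Corollary directly: for the finite-to-one continuous surjection $g$, it already gives the equivalence of (1), the condition that $g$ be a branched covering, and (3). In particular $(1)\Leftrightarrow(3)$ is immediate, with the branched-covering condition simply dropping out of the chain and playing no further role.

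It then remains only to splice (2) into this equivalence, which I would do with a two-line cycle. For $(2)\Rightarrow(3)$ there is nothing to prove beyond the elementary fact that every local homeomorphism is an open map. For $(3)\Rightarrow(2)$ I would appeal to Lemma \ref{locally expanding}: an open surjection of the compact metric space $V$ satisfying Wieler's Axiom 1 is automatically a local homeomorphism, and indeed $g^K$ is then locally expanding for $\gamma^{-K}$. Since $(V,g)$ satisfies all of Wieler's axioms, in particular Axiom 1, the lemma applies verbatim, closing the cycle $(1)\Leftrightarrow(3)\Rightarrow(2)\Rightarrow(3)$.

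I expect no real obstacle here: the substantive content is carried entirely by Pavlov--Troitsky's theorem and by Lemma \ref{locally expanding}, and all this proposition contributes is the bookkeeping that, under Wieler's axioms, \emph{open} and \emph{local homeomorphism} coincide. The only point requiring mild care is to confirm that the finite-to-one hypothesis of the earlier Corollary is genuinely furnished by Axiom 1 rather than by openness, so that the reduction is not circular; once that is checked, the three equivalences assemble immediately.
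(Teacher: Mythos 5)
Your proposal is correct and follows essentially the same route as the paper: the paper also deduces the proposition from the Pavlov--Troitsky corollary (applicable because Wieler's Axiom 1 and \cite[Lemma 3.4]{Wielerpaper} give finite-to-one, as in Remark \ref{firstpavlovremark}) together with Lemma \ref{locally expanding}, which supplies the implication from open to local homeomorphism (cf.\ Remark \ref{clobra}). Your explicit remark that finite-to-oneness comes from Axiom 1 rather than from openness, so the reduction is not circular, is exactly the right point of care and is implicit in the paper's treatment.
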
 

Thus, the assumption that $g$ is a local homeomorphism covers the study of Wieler solenoids that admit a Cuntz-Pimsner model of the kind described in Theorem \ref{isothemggtooh}.

\section{The Cuntz-Pimsner extension as an unbounded Kasparov module}
\label{sectionfour}

As in Section \ref{sectioncpmodel}, we consider a surjective local homeomorphism $g:V\to V$. Again, the  results in this section do {\bf not} assume that $g$ satisfies Wieler's axioms. In this section we study an explicit unbounded representative for the boundary mapping in $KK_1^{\T}(O_E,C(V))$ coming from the $\T$-equivariant short exact sequence
\begin{equation}
\label{sesforoh}
0\to \mathbb{K}_{C(V)}\to \mathcal{T}_E\to O_E\to 0.
\end{equation}
The consequences in $K$-theory and $K$-homology will be studied in Subsection \ref{someconseq}.

\subsection{The $\kappa$-function}
For a subshift of finite type, the groupoid $\G_{g}$ encodes the relation of shift-tail equivalence. The continuous cocycle $c(x,n,y):=n$ allows for a decomposition of $\G_{g}$ into clopen subsets. In \cite[Lemma 5.1.1]{goffmes} it was observed that this decomposition can be further refined using the natural number $k$ arising in the analogue of \eqref{definitiongroup}. We now show that a decomposition of the groupoid $\G_{g}$, as described in \cite{goffmes}, exists for a general local homeomorphism $g:V\to V$.

\begin{definition}
\label{cgg}
For a surjective local homeomorphism $g:V\to V$, we define $\kappa:\mathcal{R}_g\to \Z$ by 
$$\kappa(x,n,y):=\min\{k\in \N: \; g^{n+k}(x)=g^k(y)\}.$$
We also define $c_{\G_g}:\mathcal{R}_g\to \Z$ by 
\[
c_{\G_g}(x,n,y):=n.
\]
\end{definition}

It is implicitly understood in the definition of $\kappa$ that $k+n\geq 0$ so that both sides of the equation $g^{n+k}(x)=g^k(y)$ are well defined.

\begin{prop}
The functions $\kappa$ and $c_{\G_g}$ from Definition \ref{cgg} are locally constant and continuous.
\end{prop}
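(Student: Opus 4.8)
The plan is to show that both $\kappa$ and $c_{\G_g}$ are locally constant, since continuity of an integer-valued function follows immediately once it is locally constant (the target $\Z$ carries the discrete topology). The function $c_{\G_g}(x,n,y)=n$ is the most immediate: the topology on $\mathcal{R}_g$ is generated by the basic open sets $\mathcal{U}(U_1,k,l,U_2)$ from \eqref{basisgg}, on each of which the $\Z$-coordinate is the constant $k-l$. Hence $c_{\G_g}$ is constant on each basic open set, so it is locally constant and therefore continuous. This also shows that $c_{\G_g}$ furnishes the standard groupoid cocycle generating the $\T$-action.

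For $\kappa$ I would argue as follows. Fix a point $(x,n,y)\in\mathcal{R}_g$ and set $k_0:=\kappa(x,n,y)$, so that $g^{n+k_0}(x)=g^{k_0}(y)$ while $g^{n+k_0-1}(x)\neq g^{k_0-1}(y)$ (if $k_0>0$). The idea is to produce a basic open neighbourhood $\mathcal{U}(U_1,a,b,U_2)$ of $(x,n,y)$ on which $\kappa$ is constantly $k_0$. First I would use that $g$ is a local homeomorphism to choose small open neighbourhoods $U_1\ni x$ and $U_2\ni y$ on which the relevant iterates $g^{n+k_0}$ and $g^{k_0}$ are injective and have a common open range, so that $(x',n,y')\in\mathcal{U}(U_1,n+k_0,k_0,U_2)$ forces $g^{n+k_0}(x')=g^{k_0}(y')$, giving $\kappa(x',n,y')\le k_0$ throughout this neighbourhood. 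The reverse inequality is the part requiring care: I must ensure that the strict inequality $g^{n+k_0-1}(x)\neq g^{k_0-1}(y)$ persists for nearby points, so that $\kappa$ cannot drop below $k_0$. Here I would shrink $U_1$ and $U_2$ further, using continuity of $g^{n+k_0-1}$ and $g^{k_0-1}$ together with the Hausdorff property of $V$ to separate the images $g^{n+k_0-1}(x)$ and $g^{k_0-1}(y)$ by disjoint open sets and pull these back; for points in the shrunken neighbourhood the two iterates still land in the disjoint sets and hence remain unequal, so $\kappa(x',n,y')\geq k_0$. Combining the two inequalities gives $\kappa\equiv k_0$ on a basic open neighbourhood.

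The main obstacle is precisely controlling the lower bound $\kappa\ge k_0$, i.e. ruling out that the minimal $k$ \emph{decreases} along a sequence converging to $(x,n,y)$. The upper bound is robust because the defining equation $g^{n+k_0}(x)=g^{k_0}(y)$ is a closed condition that one can arrange to hold identically on a neighbourhood via the local homeomorphism property; but minimality is inherently an open/strict condition, and a priori small perturbations could create coincidences at an earlier stage. The Hausdorff separation argument above is what resolves this, and it is worth noting that only finitely many inequalities (for $j=0,1,\dots,k_0-1$) need to be preserved, so a finite intersection of open neighbourhoods suffices and the construction is legitimate. Once $\kappa$ is shown to be locally constant at the arbitrary point $(x,n,y)$, local constancy everywhere and hence continuity of $\kappa$ follow, completing the proof.
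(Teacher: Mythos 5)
Your proposal is correct, and it is in fact more careful than the paper's own proof. The paper disposes of both functions in one line by asserting that on \emph{every} basic open set one has $\kappa|_{\mathcal{U}(U_1,k,l,U_2)}=l$ and $c_{\G_g}|_{\mathcal{U}(U_1,k,l,U_2)}=k-l$. For $c_{\G_g}$ this is exactly your argument. For $\kappa$, however, the paper's assertion as literally stated only yields the upper bound $\kappa\leq l$: membership in $\mathcal{U}(U_1,k,l,U_2)$ forces $g^{k}(x)=g^{l}(y)$ but not minimality of $l$ (for instance, if $g|_U$ is injective then $\mathcal{U}(U,1,1,U)$ consists of diagonal elements $(x,0,x)$, on which $\kappa=0$, not $1$). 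Your two-sided argument --- the upper bound $\kappa\leq k_0$ from membership in $\mathcal{U}(U_1,n+k_0,k_0,U_2)$, and the lower bound $\kappa\geq k_0$ by separating $g^{n+k_0-1}(x)$ from $g^{k_0-1}(y)$ with disjoint open sets and pulling back --- is precisely the refinement needed to make the paper's claim correct on suitably shrunken basic neighbourhoods, so it proves the proposition where the paper's one-liner glosses over the minimality issue. Two small remarks: first, you only need to preserve the single inequality at step $k_0-1$, not all $j=0,\dots,k_0-1$, since a coincidence $g^{n+j}(x')=g^{j}(y')$ at any earlier admissible $j$ propagates forward under $g$ to a coincidence at step $k_0-1$; second, in the edge case $k_0=\max\{0,-n\}$ (in particular when $n<0$ and $k_0=-n$, where $g^{n+k_0-1}$ is not even defined) no separation is needed at all, because the constraint $n+k\geq 0$ in the definition of $\kappa$ makes minimality automatic.
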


\begin{proof}
It suffices to prove that $c_{\G_g}$ and $\kappa$ are locally constant, then continuity follows automatically. On an open set of the form $\mathcal{U}(U_1,k,l,U_2)\subseteq \mathcal{R}_g$ from the basis of the topology in \eqref{basisgg}, we have that 
\[
\kappa|_{\mathcal{U}(U_1,k,l,U_2)}=l\quad\mbox{and}\quad c_{\G_g}|_{\mathcal{U}(U_1,k,l,U_2)}=k-l.
\]
Thus $\kappa$ and $c_{\G_g}$ are locally constant.
\end{proof}

We often suppress the index from $c_{\G_g}$. Since $\kappa$ is locally constant, we can decompose $\mathcal{R}_g$ into a disjoint union of clopen sets:
\begin{equation}
\label{kappacdec}
\mathcal{R}_g=\dot{\cup}_{n\in \Z}\dot{\cup}_{k\geq -n} \mathcal{R}_g^{n,k},\quad\mbox{where}\quad \mathcal{R}_g^{n,k}:=c^{-1}(\{n\})\cap \kappa^{-1}(\{k\}).
\end{equation}
Define the $C(V)$-Hilbert $C^*$-modules $\Xi_{n,k}:=C(\mathcal{R}_g^{n,k})\subseteq C_c(\mathcal{R}_g)$. We equip $\Xi_{n,k}$ with the $\T$-action defined from $c$; that is, $z\cdot \xi:=z^n\xi$ for $\xi\in \Xi_{n,k}$. Also let $L^2(\G_g)_{C(V)}$ denote the completion of $C_c(\G_g)$ as a $C(V)$-Hilbert $C^*$-module in the inner product defined from the expectation \begin{equation}
\label{Rexp}
\varrho:C_c(\G_g)\to C(V),\quad\varrho(f)(x):=f(x,0,x).
\end{equation}
We then equip $L^2(\G_g)_{C(V)}$ with the $\T$-action defined from $c$; that is, $(z\cdot f)(x,n,y):=z^nf(x,n,y)$.

\begin{remark}
The notation $\Xi_{n,k}$ aligns with the notation $\Xi_{n,r}$ used in \cite[Section 3.2]{GMR} upon identifying $k$ with $r-n$. 
\end{remark}

\begin{prop}
\label{fingenproandmore}
The modules $\Xi_{n,k}$ are finitely generated projective $C(V)$-modules. Moreover, the identification 
$$E^{\otimes n}\cong C(V\times_gV\times_g\cdots \times_g V)\cong C_c(\mathcal{R}_g^{n,0})=\Xi_{n,0}$$
defines a $\T$-equivariant unitary isomorphism $u_n:E^{\otimes n}\to \Xi_{n,0}$. 
\end{prop}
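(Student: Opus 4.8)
The plan is to establish the two claims in turn, beginning with the explicit isomorphism $u_n$ and deducing finite generation and projectivity from it. First I would unwind the definitions. An element of $\mathcal{R}_g^{n,0}$ is a triple $(x,n,y)$ with $\kappa(x,n,y)=0$, meaning $g^n(x)=y$ (and implicitly $n\geq 0$). Thus the pair $(x,n,y)\in \mathcal{R}_g^{n,0}$ is entirely determined by $x$ together with the constraint that $y=g^n(x)$, so $\mathcal{R}_g^{n,0}$ is homeomorphic to $V$ via $(x,n,g^n(x))\mapsto x$; more usefully, recalling from Section \ref{TheCuntzPimsneralgebraofalocalhomeomorphism} that $E^{\otimes n}$ was identified $\T$-equivariantly with $\,\!_{\id}C(V)_{g^{*n}}$ under the isomorphisms $\nu_k$, I would match the fibred-product description $C(V\times_gV\times_g\cdots\times_gV)$ with $C_c(\mathcal{R}_g^{n,0})$. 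The key point is that the $n$-fold fibred product $V\times_g\cdots\times_g V$ over the source/range maps is precisely the set of compositions of length $n$ in the graph $\mathcal{G}_g$, which, because $g^n(x)=y$ pins down the whole composition once the initial vertex $x$ is fixed, collapses to a copy of $V$. I would write out this homeomorphism explicitly and check it intertwines the shift data.

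Next I would verify that the induced map $u_n:E^{\otimes n}\to \Xi_{n,0}$ is unitary as a map of $C(V)$-Hilbert modules. This requires comparing inner products: the inner product on $E^{\otimes n}\cong \,\!_{\id}C(V)_{g^{*n}}$ is $\langle\xi,\eta\rangle=\mathfrak{L}^n(\bar\xi\eta)$, where $\mathfrak{L}^n$ sums over the $g^n$-preimages, while the inner product on $\Xi_{n,0}\subseteq L^2(\G_g)_{C(V)}$ is computed from the expectation $\varrho$ in \eqref{Rexp}, namely $\langle f,h\rangle(x)=(\,\overline{f}\,{*}\,h)(x,0,x)$ using the groupoid convolution. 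I would carry out the convolution computation on $\mathcal{R}_g^{n,0}$ and confirm it reproduces exactly the sum over $g^n$-preimages, which matches $\mathfrak{L}^n$. The right module actions by $g^{*n}$ and the left actions must also be checked to agree under $u_n$, and finally $\T$-equivariance: an element of $\Xi_{n,0}$ carries $\T$-weight $z^n$ by definition, which is exactly the weight carried by $E^{\otimes n}$ under the $\T$-action $z\cdot\xi=z\xi$ on $E$, so equivariance is automatic once the algebraic identification is in place.

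For the finite generation and projectivity of the general $\Xi_{n,k}$, I would argue as follows. Since $g$ is a surjective local homeomorphism and $V$ is compact, a finite open cover $\{W_j\}$ of $V$ can be chosen on which the relevant iterates $g^m$ restrict to homeomorphisms onto their images; subordinate to this cover one takes a partition of unity. Each $\mathcal{R}_g^{n,k}$ is clopen and, by the local structure of the basic sets $\mathcal{U}(U_1,k,l,U_2)$ in \eqref{basisgg}, is locally homeomorphic to $V$ in a way compatible with the source map. Hence $\Xi_{n,k}=C(\mathcal{R}_g^{n,k})$ is, as a right $C(V)$-module, a finite direct sum of modules of the form $C(W_j)$ pulled back along local homeomorphisms, each of which is finitely generated projective over $C(V)$; the partition of unity supplies the explicit frame, exhibiting $\Xi_{n,k}$ as a complemented submodule of a free module $C(V)^N$. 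I expect the main obstacle to be bookkeeping rather than anything deep: one must handle the two indices $n$ and $k$ carefully, keep track of the $\T$-weights throughout, and ensure that the convolution-versus-transfer-operator inner product computation is done with the correct iterate of $\mathfrak{L}$. The unitarity of $u_n$ — specifically matching the groupoid inner product to $\mathfrak{L}^n$ — is the step most likely to require care, since it is where the abstract module structure of $E^{\otimes n}$ must be reconciled with the concrete convolution structure of $C^*(\G_g)$.
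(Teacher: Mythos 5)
Your proposal is correct and follows essentially the same route as the paper's (very terse) proof: the paper cites Lemma \ref{morelocaltriv} to see that $\mathcal{R}_g^{n,k}\to V$ is a finite-to-one covering of the compact base, which is exactly the bisection-plus-partition-of-unity frame argument you give for finite generation and projectivity, and it leaves the unitarity of $u_n$ as precisely the convolution-versus-$\mathfrak{L}^n$ computation you outline, namely $\langle u_n\xi,u_n\eta\rangle(v)=\sum_{g^n(z)=v}\overline{\xi(z)}\eta(z)=\mathfrak{L}^n(\bar{\xi}\eta)(v)$. One cosmetic point: the inner product on $\Xi_{n,0}$ is $\varrho(f^**h)$ with $f^*$ the groupoid involution $f^*(x,n,y)=\overline{f(y,-n,x)}$ rather than the pointwise conjugate $\overline{f}$, which is clearly what you intend since your preimage-sum formula is the correct outcome.
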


\begin{proof}
It follows from Lemma \ref{morelocaltriv} that $\mathcal{R}_g^{n,k}\to V$ is a finite-to-one covering map, so $\Xi_{n,k}$ is a finitely generated projective $C(V)$-module. That $u_n$ is a unitary operator follows from a short computation.
 \end{proof}

\begin{prop}
\label{directsumprop}
There is an orthogonal direct sum decomposition of $\T$-equivariant $C(V)$-Hilbert $C^*$-modules
\[
L^2(\mathcal{R}_g)_{C(V)}=\bigoplus_{n\in \Z}\ \bigoplus _{k\geq-n} \Xi_{n,k}.
\]
\end{prop}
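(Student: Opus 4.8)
=== MY PROOF PROPOSAL ===

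\begin{proof}[Proof proposal]
The plan is to establish the claimed orthogonal direct sum decomposition by verifying three things: that the summands $\Xi_{n,k}$ are pairwise orthogonal as subspaces of $L^2(\mathcal{R}_g)_{C(V)}$, that their algebraic sum is dense, and that the inner product restricted to each summand agrees with its intrinsic Hilbert module structure. The key structural input is the clopen decomposition $\mathcal{R}_g=\dot{\cup}_{n,k}\,\mathcal{R}_g^{n,k}$ from \eqref{kappacdec}, which was obtained precisely because $\kappa$ and $c_{\G_g}$ are locally constant and continuous.

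First I would record that, since each $\mathcal{R}_g^{n,k}$ is clopen in $\mathcal{R}_g$, the characteristic function $\chi_{n,k}$ of $\mathcal{R}_g^{n,k}$ is a continuous (hence locally constant) function, and multiplication by $\chi_{n,k}$ gives a bounded operator on $L^2(\mathcal{R}_g)_{C(V)}$ that is an orthogonal projection onto the closure of $\Xi_{n,k}$. The orthogonality of distinct summands is then immediate from the expectation \eqref{Rexp}: for $\xi\in \Xi_{n,k}$ and $\eta\in \Xi_{n',k'}$ with $(n,k)\neq(n',k')$, the product $\bar\xi\eta$ is supported on $\mathcal{R}_g^{n,k}\cap\mathcal{R}_g^{n',k'}=\varnothing$, so $\langle\xi,\eta\rangle=\varrho(\bar\xi\eta)=0$; here one must note the $\T$-action is compatible in the sense that the convolution $\bar\xi\eta$ evaluated on the unit space only picks up contributions from composable pairs $(x,-n,y)(y,n,x)$, which forces matching cocycle values. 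The main point to check carefully is that $\varrho(\bar\xi\eta)(x)=\sum_{\gamma}\overline{\xi(\gamma)}\eta(\gamma)$ over $\gamma$ with source $x$, so that the inner product on $L^2(\mathcal{R}_g)_{C(V)}$ restricts on each $\Xi_{n,k}=C(\mathcal{R}_g^{n,k})$ to exactly the fibrewise $\ell^2$ inner product making it the finitely generated projective module of Proposition \ref{fingenproandmore}.

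For density, I would use that $C_c(\mathcal{R}_g)$ is dense in $L^2(\mathcal{R}_g)_{C(V)}$ by definition of the completion, and that any $f\in C_c(\mathcal{R}_g)$ has compact support meeting only finitely many of the clopen pieces $\mathcal{R}_g^{n,k}$ (since these form an open cover of the compact support and are pairwise disjoint). Hence $f=\sum_{n,k}\chi_{n,k}f$ is a \emph{finite} sum with $\chi_{n,k}f\in \Xi_{n,k}$, showing the algebraic direct sum $\bigoplus_{n,k}\Xi_{n,k}$ is dense. Combining density with orthogonality yields the internal orthogonal direct sum decomposition of Hilbert $C^*$-modules. Finally, $\T$-equivariance of the decomposition follows since the $\T$-action $(z\cdot f)(x,n,y)=z^n f(x,n,y)$ preserves each $\mathcal{R}_g^{n,k}$ (the cocycle value $n$ is constant on each piece), so each $\Xi_{n,k}$ is $\T$-invariant and the projections commute with the action.

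The step I expect to be the main obstacle is the careful bookkeeping in identifying the restricted inner product on $\Xi_{n,k}$ with the intrinsic one, since the expectation $\varrho$ involves evaluating a groupoid convolution at the unit space and one must confirm that the only composable pairs contributing to $\varrho(\bar\xi\eta)$ are those pairing an element of $\mathcal{R}_g^{n,k}$ with its inverse; the local constancy of $\kappa$ and $c$ under inversion and composition is what guarantees the diagonal contributions land back in the same clopen piece. Everything else is routine once the clopen decomposition \eqref{kappacdec} is in hand.
\end{proof}
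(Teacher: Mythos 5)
Your proof is correct and follows essentially the same route as the paper's two-line argument: density of the algebraic sum of the $\Xi_{n,k}$ follows from the clopen decomposition \eqref{kappacdec} together with compactness of supports in $C_c(\G_g)$, and orthogonality follows because the inner product $\varrho(\xi^* * \eta)$ reduces to a fibrewise pointwise pairing, so disjointly supported functions are orthogonal. Your extra verifications (that the restricted inner product is the pointwise $\ell^2$-pairing over the source fibres, and that each $\Xi_{n,k}$ is $\T$-invariant since $c$ is constant on $\mathcal{R}_g^{n,k}$) are details the paper leaves implicit.
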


\begin{proof}
It is immediate from Equation \eqref{kappacdec} that $L^2(\G_g)_{C(V)}$ coincides with the closed linear span of $\Xi_{n,k}$ as $n$ and $k$ varies. The construction of the inner product shows that elements of $C_c(\G_g)$ with disjoint supports are orthogonal, hence $\Xi_{n,k}\perp \Xi_{n',k'}$ if $n\neq n'$ or $k\neq k'$.
\end{proof}

\subsection{The unbounded representative}

The functions $c$ and $\kappa$ combine into an unbounded $KK$-cycle in the same way as in \cite{goffmes}. Let $T:=\{(n,k)\in \Z\times \N: \; k\geq -n\}$. We consider the function 
$$\psi:T\to \Z, \quad \psi(n,k):=
\begin{cases} 
n, \;&k=0,\\
-|n|-k, \;&k>0.\end{cases}$$

\begin{prop}\label{almostcycle}
The operator $D_0:=\psi(c,\kappa):C_c(\G_g)\to C_c(\G_g)$ defines a $\T$-equivariant self-adjoint regular operator $D:\textnormal{Dom} (D)\rightarrow L^2(\G_g)_{C(V)}$ with compact resolvent and spectrum $\Z$. The positive spectral projection of $D$ coincides with the projection onto $\bigoplus_{n=0}^\infty \Xi_{n,0}$.
\end{prop}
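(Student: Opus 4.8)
The plan is to recognize $D_0$ as a \emph{diagonal operator} with respect to the orthogonal decomposition of Proposition \ref{directsumprop} and then to invoke the standard theory of such operators on Hilbert $C^*$-modules, taking care over the point that distinguishes regularity from mere self-adjointness in this setting. First I would record the diagonal structure. Since $c$ and $\kappa$ are locally constant, any element of $C_c(\G_g)$ has support meeting only finitely many of the clopen pieces $\mathcal{R}_g^{n,k}$, so that $C_c(\G_g)=\bigoplus^{\mathrm{alg}}_{(n,k)\in T}\Xi_{n,k}$ is the algebraic direct sum, with each $\Xi_{n,k}$ compact and hence finitely generated projective by Proposition \ref{fingenproandmore}. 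On $\Xi_{n,k}$ the operator $D_0$ acts as multiplication by the real scalar $\psi(n,k)\in\Z$, so $D_0$ is symmetric; I take $D$ to be its closure, with domain the completion $\{(\xi_{n,k}) : \sum_{(n,k)}\psi(n,k)^2\langle \xi_{n,k},\xi_{n,k}\rangle \text{ converges in } C(V)\}$. The $\T$-equivariance is then immediate, as the action acts on $\Xi_{n,k}$ by the scalar $z^n$, which commutes with the scalar $\psi(n,k)$, so $D$ commutes with the $\T$-action and preserves its domain.

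For self-adjointness and regularity, rather than arguing abstractly I would exhibit the resolvent directly: setting
\[
R_\pm := \bigoplus_{(n,k)\in T}\big(\psi(n,k)\pm i\big)\inv P_{n,k},
\]
where $P_{n,k}$ denotes the projection onto $\Xi_{n,k}$, one has $|(\psi(n,k)\pm i)\inv|\le 1$ uniformly, so each $R_\pm$ is a bounded adjointable operator with $R_+^*=R_-$, and a computation on each summand gives $\operatorname{Ran}(R_\pm)=\operatorname{Dom}(D)$ together with $(D\pm i)R_\pm=\id$. This exhibits $D\pm i$ as surjective onto the whole module, which is precisely the criterion guaranteeing that the symmetric operator $D$ is self-adjoint \emph{and} regular. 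The regularity, which is the genuinely module-theoretic subtlety and does not follow from symmetry, is thereby handled entirely by this surjectivity.

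For the compact resolvent I would use that each $\Xi_{n,k}$ is finitely generated projective, so $P_{n,k}$ admits a finite frame and hence lies in $\mathbb{K}_{C(V)}(L^2(\G_g))$. Because $\{(n,k)\in T: |\psi(n,k)|\le M\}$ is finite for every $M$ (indeed $|\psi(n,k)|=|n|$ when $k=0$ and $|n|+k$ when $k>0$), the finite partial sums of $R_\pm$ converge to $R_\pm$ in operator norm, the tail being bounded by $\sup_{|\psi|>M}(\psi^2+1)^{-1/2}\to 0$. Thus $R_\pm\in\mathbb{K}_{C(V)}$, so $D$ has compact resolvent, and its spectrum is the closure of the eigenvalue set $\{\psi(n,k):\Xi_{n,k}\neq 0\}\subseteq\Z$. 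The nonnegative integers all occur since $\psi(n,0)=n$ and $\Xi_{n,0}\cong E^{\otimes n}\neq 0$ for $n\ge 0$; the negative integer $-m$ occurs as $\psi(0,m)$, with $\Xi_{0,m}\neq0$ because $g$ is non-injective, so that $\mathcal{R}_g^{0,m}$ contains $(x,0,y)$ with $g^m(x)=g^m(y)$ but $g^{m-1}(x)\neq g^{m-1}(y)$. Hence $\operatorname{spec}(D)=\Z$. Finally, inspecting the sign of $\psi$ shows $\psi(n,k)\ge 0$ exactly when $k=0$ and $n\ge 0$, so the functional calculus yields $\chi_{[0,\infty)}(D)=\bigoplus_{n\ge 0}P_{n,0}$, the projection onto $\bigoplus_{n=0}^\infty\Xi_{n,0}$.

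The main obstacle is the functional-analytic core of the second step, namely establishing \emph{regularity} of $D$ (not merely self-adjointness) over $C(V)$, where closed symmetric operators need not be regular; producing $R_\pm$ explicitly and checking $(D\pm i)R_\pm=\id$ circumvents this cleanly while simultaneously setting up the compactness estimate. A secondary point requiring care is that the identity $\operatorname{spec}(D)=\Z$ genuinely relies on the non-injectivity of $g$ to produce nonzero modules $\Xi_{0,m}$, and hence the negative part of the spectrum.
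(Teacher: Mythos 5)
Your proposal is correct and takes essentially the same route as the paper's proof: the paper likewise deduces self-adjointness and regularity from symmetry of $D_0$ together with bijectivity of $(i\pm D_0)$ on the core $C_c(\G_g)$, writes $(i\pm D)^{-1}=\sum_{n,k}(i\pm\psi(n,k))^{-1}p_{n,k}$ as a norm limit of finite-rank operators onto the finitely generated projective modules $\Xi_{n,k}$ to get compactness, and reads off the positive spectral projection from the fact that $\psi(n,k)\geq 0$ exactly when $k=0$. Your extra verification that every negative integer genuinely occurs in the spectrum (which, as you note, requires $g$ to be non-injective so that $\Xi_{0,m}\neq 0$) is a detail the paper's proof omits.
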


\begin{proof}
The operator $(i\pm D_0)^{-1}:C_c(\G_g)\to C_c(\G_g)$ is bijective and $D_{0}$ is symmetric for the inner product induced by the expectation \eqref{Rexp}, hence its closure $D$ is regular and self-adjoint. Moreover, we can write
$$(i\pm D)^{-1}=\sum_{n,k}(i\pm \psi(n,k))^{-1}p_{n,k},$$
where $p_{n,k}$ denotes the projection onto the finitely generated projective module $\Xi_{n,k}$. Since $\psi(n,k)\to \infty$ as $(n,k)\to \infty$ in $T$, it follows that $(i\pm D)^{-1}$ is the norm limit of finite rank operators. Moreover, $\psi(n,k)\geq 0$ if and only if $k=0$ and the last statement in the proposition follows.
\end{proof}

Recall the notation $\pi_E:O_E\to C^*(\G_g)$ for the isomorphism from Theorem \ref{isothemggtooh}. Let $\beta_E:O_E\to \mathcal{Q}(F_E):=\End^*_{C(V)}(F_E)/\mathbb{K}_{C(V)}(F_E)$ denote the $\T$-equivariant Busby invariant of the extension \eqref{sesforoh}. For a Hilbert $C^{*}$-module $E$, we let $$q:\End^*_{C(V)}(E)\to \mathcal{Q}(E):=\End^*_{C(V)}(E)/\mathbb{K}_{C(V)}(E)$$ denote the quotient mapping. Recall the definition of $u_n:E^{\otimes n}\to \Xi_{n,0}$ from Proposition \ref{fingenproandmore}.

\begin{prop}
\label{uniteqbus}
The $\T$-equivariant inner product preserving adjointable mapping $u:=\oplus_{n\in \N}u_n:F_E\to \bigoplus_{n=0}^\infty \Xi_{n,0}\hookrightarrow L^2(\mathcal{R}_g)_{C(V)}$ satisfies 
$$\beta_E=q\circ Ad(u)\circ \pi_E:O_E\to\mathcal{Q}(F_E).$$
\end{prop}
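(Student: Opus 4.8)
The plan is to exploit the fact that the Busby invariant of an extension whose kernel sits as an ideal inside its total algebra is simply an inclusion. Concretely, the extension \eqref{sesforoh} realizes $\mathcal{T}_E$ inside $\End^*_{C(V)}(F_E)$ with $\mathbb{K}_{C(V)}(F_E)$ as an ideal, so $\beta_E$ is the canonical inclusion $O_E=\mathcal{T}_E/\mathbb{K}_{C(V)}(F_E)\hookrightarrow\mathcal{Q}(F_E)$; that is, $\beta_E(a)=q(\tilde a)$ for any lift $\tilde a\in\mathcal{T}_E$ of $a$. Both $\beta_E$ and $\Phi:=q\circ\Ad(u)\circ\pi_E$ (where $\Ad(u)$ denotes $T\mapsto u^*Tu$) therefore take values in $\mathcal{Q}(F_E)$. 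Since $O_E$ is generated as a $C^*$-algebra by $C(V)$ together with the classes $S_\xi$ of the Toeplitz operators $\tilde T_\xi$, I would reduce the statement to two assertions: that $\Phi$ is a $*$-homomorphism, and that $\Phi$ agrees with $\beta_E$ on $C(V)$ and on each $S_\xi$.

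Writing $P:=uu^*$ for the projection onto $\range(u)=\bigoplus_{n\geq 0}\Xi_{n,0}$, Proposition \ref{directsumprop} shows that $1-P$ projects onto $\bigoplus_{k>0}\Xi_{n,k}$. The heart of the argument, and the step I expect to be the main obstacle, is the compactness lemma: for every $b\in C^*(\G_g)$ the operator $(1-P)\,b\,u$ lies in $\mathbb{K}_{C(V)}(F_E,L^2(\mathcal{R}_g)_{C(V)})$. Granting this, multiplicativity of $\Phi$ follows, since $u^*\pi_E(a)\pi_E(a')u = u^*\pi_E(a)P\pi_E(a')u + u^*\pi_E(a)(1-P)\pi_E(a')u$, where the second summand is an adjointable operator composed with the compact operator $(1-P)\pi_E(a')u$, hence compact, so that after applying $q$ the first summand yields $\Phi(a)\Phi(a')$; linearity and $*$-preservation are then clear. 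To prove the lemma I would use that $b\mapsto(1-P)bu$ is norm-continuous and that the classes of monomials in the generators are dense, reducing to the case where $b$ is a word of length $m$ in left-convolutions by $t(\xi)$, $t(\xi)^*$ and diagonal multiplications. Tracking the bigrading, convolution by $t(\xi)$ sends $\Xi_{n,0}\to\Xi_{n+1,0}$, convolution by $t(\xi)^*$ sends $\Xi_{n,0}\to\Xi_{n-1,0}$ for $n\geq 1$ but $\Xi_{0,0}\to\Xi_{-1,1}$, and diagonal multiplication preserves each summand, so the only way to leave $\{k=0\}$ is to apply an annihilation at first coordinate $0$. Starting from $u(E^{\otimes n})=\Xi_{n,0}$ with $n>m$, an easy induction shows the first coordinate remains $\geq 1$ throughout the word, so the word never leaves $\range(u)$ and $(1-P)bu$ vanishes on $E^{\otimes n}$ for $n>m$. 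On the remaining finite sum $\bigoplus_{n\leq m}E^{\otimes n}$, which is finitely generated projective by Proposition \ref{fingenproandmore}, the operator $(1-P)bu$ factors through a compact projection and is therefore compact.

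Finally I would compute on generators. For $a\in C(V)$, Theorem \ref{isothemggtooh} sends $a$ to the diagonal function, whose left convolution is pointwise multiplication, preserving each $\Xi_{n,k}$ and restricting on $\Xi_{n,0}\cong E^{\otimes n}$ to the left action of $a$; hence $u^*\pi_E(a)u$ is the diagonal action of $a$ on $F_E$, a lift of $a$, giving $\Phi(a)=\beta_E(a)$. For $\xi\in E$, Theorem \ref{isothemggtooh} gives $\pi_E(S_\xi)=t(\xi)$, and by the grading computation above left convolution by $t(\xi)$ carries $\Xi_{n,0}$ into $\Xi_{n+1,0}\subseteq\range(u)$; a direct check against the definitions of $u_n$ and $\tilde T_\xi$ then yields $u^*\pi_E(S_\xi)u=\tilde T_\xi$ exactly, so $\Phi(S_\xi)=q(\tilde T_\xi)=\beta_E(S_\xi)$. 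As $\Phi$ and $\beta_E$ are $*$-homomorphisms agreeing on a generating set, they coincide, which establishes $\beta_E=q\circ\Ad(u)\circ\pi_E$.
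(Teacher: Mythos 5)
Your proof is correct, and its core is the same as the paper's: reduce to the generating set $C(V)\cup\{\tilde T_\xi \bmod \mathbb{K}\colon \xi\in E\}$ and verify the identity there via the intertwining relations $\pi_E(a)u=ua$ for $a\in C(V)$ and $t_\xi u=u\tilde T_\xi$ (hence $u^*t_\xi u=\tilde T_\xi$), which is precisely the computation carried out in the paper. Where you go beyond the paper is in justifying the reduction itself: agreement on generators only propagates because both maps are $*$-homomorphisms, and multiplicativity of $\Phi=q\circ \Ad(u)\circ\pi_E$ is not automatic, since $\Ad(u)\colon T\mapsto u^*Tu$ is merely unital completely positive. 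The paper asserts the reduction without comment; your compactness lemma --- that $(1-P)bu\in\mathbb{K}_{C(V)}(F_E,L^2(\G_g)_{C(V)})$ for every $b\in C^*(\G_g)$, where $P=uu^*$, proved by the bigrading/word-length induction --- supplies exactly this missing step, and your proof of it is sound (the key point, that a word of length $m$ applied to $\Xi_{n,0}$ with $n>m$ never reaches the problematic transition $\Xi_{0,0}\to\Xi_{-1,1}$, is checked correctly, and compactness on the remaining summand $\bigoplus_{n\le m}E^{\otimes n}$ follows from finite generation and projectivity as you say). A marginally quicker route to the same end, using only the computations already present: since $(1-P)\pi_E(a)u=0$ for $a\in C(V)$, $(1-P)t_\xi u=0$, and $(1-P)t_\xi^*u$ is compact (it vanishes off the finitely generated projective summand $E^{\otimes 0}$), the generators lie in the multiplicative domain of the unital completely positive map $\Phi$, which forces $\Phi$ to be a $*$-homomorphism on all of $O_E$; this avoids the induction over arbitrary words.
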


\begin{proof}
It suffices to prove $\beta_E(a)=q\circ Ad(u)\circ \pi_E(a)$ for $a\in C(V)\cup \{\tilde{T}_\xi\mod \mathbb{K}: \;\xi\in E\}$ because this set generates $O_E$. The equation is trivially satisfied for $a\in C(V)$. For $\xi\in E$, Theorem \ref{isothemggtooh} shows that $\pi_E(\tilde{T}_\xi)=t_\xi$. The element $\eta\in E^{\otimes n}=C(V\times_g\cdots \times _g V)$ is mapped by $u_n$ to the element 
$$u\eta(x,n,y)=\eta(x,g(x),\ldots, g^{n-1}(x),y).$$
It holds that
$$t_\xi u\eta (x,n+1,y)=\xi(x)\eta(g(x),g^2(x),\ldots, g^{n}(x),y)=[u(\xi\otimes_{C(Y)} \eta)](x,n+1,y).$$
Therefore $u^*t_\xi u=\tilde{T}_\xi$ and $\beta_E(\tilde{T}_\xi)=q\circ Ad(u)\circ \pi_E(\tilde{T}_\xi)$
\end{proof}

\begin{thm}
\label{represent!}
Let $g:V\to V$ be a surjective local homeomorphism on a compact space $V$. The $\T$-equivariant unbounded $KK_1$-cycle $(L^2(\G_g)_{C(V)},D)$ for $(C^*(\G_g),C(V))$ represents the extension \eqref{sesforoh} in $KK_1^{\T}(O_E,C(V))$ under the isomorphism $\pi_E$.
\end{thm}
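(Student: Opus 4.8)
The plan is to verify that $(L^2(\G_g)_{C(V)},D)$ is a genuine $\T$-equivariant odd unbounded Kasparov module for the pair $(C^*(\G_g),C(V))$, and then to identify the $KK_1$-class it defines with the class of the extension \eqref{sesforoh} via the standard description of the $KK_1$–$\Ext$ correspondence through compression by a half-space spectral projection. The bridge to the Busby invariant $\beta_E$ is already supplied by Proposition \ref{uniteqbus}, so the argument splits into an analytic part (the unbounded-module axioms, of which only the commutator bound is nontrivial) and an algebraic identification part.

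For the analytic part, self-adjointness, regularity, compactness of the resolvent and the spectrum are exactly Proposition \ref{almostcycle}. Let $\pi$ denote the left regular representation of $C^*(\G_g)$ on $L^2(\G_g)_{C(V)}$. Since $(i\pm D)^{-1}\in\mathbb{K}_{C(V)}(L^2(\G_g))$ and each $\pi(a)$ is adjointable, the products $\pi(a)(i\pm D)^{-1}$ are automatically compact, so the only condition needing work is boundedness of $[D,\pi(f)]$ on a dense $*$-subalgebra; the natural choice is $f\in C_c(\G_g)$. Writing $D$ as the diagonal multiplier $\psi(c,\kappa)$ and $\pi(f)$ as convolution, the commutator is convolution against $f$ weighted by the scalar $\psi(c(\gamma_1\gamma_2),\kappa(\gamma_1\gamma_2))-\psi(c(\gamma_2),\kappa(\gamma_2))$ attached to each composable pair $(\gamma_1,\gamma_2)$ with $\gamma_1\in\supp f$. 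The key estimate, which I would carry out as in \cite{goffmes,GMR}, is that this weight is uniformly bounded: because $\supp f$ is compact, $c(\gamma_1)$ and $\kappa(\gamma_1)$ range over a finite set, and the cocycle identity $c(\gamma_1\gamma_2)=c(\gamma_1)+c(\gamma_2)$ together with the two-sided comparison $|\kappa(\gamma_1\gamma_2)-\kappa(\gamma_2)|\le |c(\gamma_1)|+\kappa(\gamma_1)$ (obtained by applying powers of $g$ to the defining relations of $\gamma_1,\gamma_2$ and to $\gamma_2=\gamma_1^{-1}(\gamma_1\gamma_2)$) forces the piecewise-linear function $\psi$ to vary only by a bounded amount. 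Once the weight is bounded, the commutator extends to an adjointable operator, establishing the axioms.

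For the identification, recall from Proposition \ref{almostcycle} that the nonnegative spectral projection $P:=\chi_{[0,\infty)}(D)$ is the projection onto $\bigoplus_{n\ge 0}\Xi_{n,0}$, which by Proposition \ref{fingenproandmore} is exactly the range of the $\T$-equivariant isometry $u:F_E\to L^2(\G_g)_{C(V)}$; note that the kernel $\Xi_{0,0}$ of $D$ is absorbed into $P$, matching the convention used to form the bounded transform. The phase $F=D(1+D^2)^{-1/2}$ differs from $2P-1$ by a norm-convergent sum of corrections supported on the finitely generated projective eigenmodules $\Xi_{n,k}$ with coefficients tending to $0$, so $F-(2P-1)\in\mathbb{K}_{C(V)}$ and $(L^2(\G_g),F)$ represents the same class as $(L^2(\G_g),2P-1)$. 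Under the $KK_1$–$\Ext$ correspondence this class is sent to the extension whose Busby invariant is the compression $a\mapsto q\big(P\pi(a)P\big)\in\mathcal{Q}(PL^2(\G_g))$. Transporting along the unitary $u:F_E\to PL^2(\G_g)$ and precomposing with the isomorphism $\pi_E$ turns this compression into $q\circ\Ad(u)\circ\pi_E$, which is precisely $\beta_E$ by Proposition \ref{uniteqbus}. Since $D$, $P$, $u$ and $\pi_E$ are all $\T$-equivariant, the equality of Busby invariants holds $\T$-equivariantly, so $(L^2(\G_g)_{C(V)},D)$ represents \eqref{sesforoh} in $KK_1^{\T}(O_E,C(V))$.

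I expect the main obstacle to be the uniform commutator estimate: controlling $\psi(c,\kappa)$ under groupoid composition is delicate precisely in the mixed regime where $\kappa(\gamma_1\gamma_2)$ and $\kappa(\gamma_2)$ straddle the jump of $\psi$ at $\kappa=0$. One checks that whenever one of these invariants vanishes while the cocycle value $c$ is large, applying a suitable power of $g$ to the defining relation of $\gamma_1$ forces the other invariant to vanish as well, so the genuinely mixed configurations occur only for bounded $c(\gamma_2)$; this is exactly where the choice of $\psi$, rather than $c$ or $\kappa$ alone, is essential. The remaining steps—the compactness of the correction $F-(2P-1)$ and the matching with $\beta_E$—are then formal once Propositions \ref{almostcycle} and \ref{uniteqbus} are in hand.
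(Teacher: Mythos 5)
Your proposal is correct and follows essentially the same route as the paper: the analytic content is the boundedness of the commutator of $D=\psi(c,\kappa)$ with convolution operators, proved by a case analysis on the vanishing of $\kappa$ under groupoid composition (the paper reduces to the generating subspace $C(\mathcal{R}_g^{1,0})$, where your ``mixed regime'' discussion becomes the paper's four explicit cases, two of which are shown to be void or forced to have $\kappa=1$ by exactly the power-of-$g$ minimality argument you indicate, yielding the uniform bound $2$), and the identification of the class with the Pimsner extension goes through Proposition \ref{uniteqbus} precisely as you describe. The only differences are bookkeeping: you treat general $f\in C_c(\G_g)$ directly rather than reducing to generators, and you spell out the positive-spectral-projection/compression form of the $KK_1$--$\Ext$ correspondence that the paper leaves as a one-sentence observation.
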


\begin{proof} 
By Proposition \ref{almostcycle}, $(L^2(\G_g)_{C(V)},D)$ is a $\T$-equivariant unbounded Kasparov module if $D$ has bounded commutators with a dense subalgebra of $C^{*}(\mathcal{R}_{g})$. We proceed to show that for $f\in C(\mathcal{R}_{g}^{1,0})$ the commutator
\[[D,f]:C_{c}(\mathcal{R}_{g})\to C_{c}(\mathcal{R}_{g}),\]
extends to a bounded operator on $L^{2}(\mathcal{R}_{g})_{C(V)}$. Since $C_{c}(\mathcal{R}_{g})$ is a core for $D$ and $C(\mathcal{R}^{1,0}_{g})$ generates $C_{c}(\mathcal{R}_{g})$ as a $*$-algebra this suffices. 

We compute
\begin{align*}[D,f]h(x,n,y)&=(D(f*h)- f*Dh)(x,n,y)\\
&=\sum_{(x,m,z)\in \supp f}\big[\psi(x,n,y)f(x,m,z)h(z,n-m,y)\\ 
&\quad\quad\quad\quad\quad\quad\quad\quad\quad - f(x,m,z)\psi(z,n-m,y)h(z,n-m,y)\big]\\
&=(\psi(x,n,y)-\psi(g(x),n-1,y))f(x,1,g(x))h(g(x),n-1,y)\\
&=(\psi(x,n,y)-\psi(g(x),n-1,y))(f*h)(x,n,y).
\end{align*}
A standard computation shows that pointwise multiplication by a bounded function in $C_{b}(\mathcal{R}_{g})$ defines an adjointable operator on $L^{2}(\mathcal{R}_{g})_{C(V)}$. Thus it suffices to establish that 
\begin{equation}
\label{starfunction}
(x,n,y)\mapsto \psi(x,n,y)-\psi(g(x),n-1,y)
\end{equation}
is a bounded function. This follows from distinguishing the following four cases:
\begin{enumerate}
\item $\kappa(x,n,y)=\kappa(g(x),n-1,y)=0$: In this case $\psi(x,n,y)-\psi(g(x),n-1,y)=n-(n-1)=1$;
\item $0=\kappa(x,n,y)<\kappa(g(x),n-1,y)$: $\kappa(x,n,y)=0$ gives that $n\geq 0$ and $g^{n}(x)=y$, whereas $k:=\kappa(g(x),n-1,y)>0$ means that $g^{k}(y)=g^{n-1+k}(g(x))=g^{n+k}(x)$. This contradicts the minimality of $\kappa(x,n,y)$ unless $n=0$, in which case it must hold that $\kappa(g(x),n-1,y)=1$. Then
\[\psi(x,n,y)-\psi(g(x),n-1,y)=n+|n-1|+\kappa(g(x),n-1,y)=2.\]
\item $0=\kappa(g(x),n-1,y)<\kappa(x,n,y)$: This case is void because $\kappa(g(x),n-1,y)=0$ gives that $n-1\geq 0$ and $g^{n}(x)=y$, whereas $k:=\kappa(x,n,y)>0$ means that $g^{n+k}(x)=g^{k}(y)$, which contradicts the minimality of $\kappa(x,n,y)$.
\item $\min\{\kappa(x,n,y),\kappa(g(x),n-1,y)\}>0$: In this case we compute
\begin{align*}
|\psi(x,n,y)-\psi(g(x),n-1,y)|&=|-|n|-\kappa(x,n,y)+|n-1|+\kappa(g(x),n-1,y)|\\ &\leq 1+|\kappa(x,n,y)-\kappa(g(x),n-1,y)|\leq 2.
\end{align*}
The last inequality follow from the observation that $n+\kappa(x,n,y)\geq 0$ and if $n+\kappa(x,n,y)>0$ then $\kappa(x,n,y)=\kappa(g(x),n-1 ,y)$ by minimality considerations. If $n+\kappa(x,n,y)=0$, then $n<0$ and it must hold that $\kappa(g(x),n-1,y)=\kappa(x,n,y)+1$.
\end{enumerate}
Therefore, $| \psi(x,n,y)-\psi(g(x), n-1, y)|\leq 2$ for all $(x,n,y)$, and \eqref{starfunction} defines a bounded function. 

One observes that Proposition \ref{uniteqbus} implies that the $\T$-equivariant Busby invariant of the extension \eqref{sesforoh} is unitarily equivalent to the Busby invariant associated with the $\T$-equivariant unbounded $KK^1$-cycle $(L^2(\G_g)_{C(V)},D)$ to $O_E$. 
\end{proof}

\begin{remark} 
In the case that $(V,g)$ is a subshift of finite type associated to a matrix $\pmb{A}$ and $\lambda$ is a finite $\pmb{A}$-admissible word, a family of unbounded cycles $(C^{*}(\G_{g}),L^{2}(\G_{g})_{C(V)}, D_{\lambda})$ was constructed in \cite[Theorem 5.1.7]{goffmes}. The cycle in Theorem \ref{represent!} recovers this construction for $\lambda=\circ$, the empty word. The proof that $[D,f]$ is bounded is verbatim the same as the proof of \cite[Lemma 5.1.6]{goffmes} for $\lambda=\circ$. Moreover, Theorem \ref{represent!} can also be obtained as a special case of \cite[Theorem 2.16]{GMR}. This is done by adapting the discussion in \cite[Section 3.2]{GMR} to the case of a general surjective local homeomorphism.
\end{remark}

\subsection{The Pimsner sequence and its consequences}
\label{someconseq}

The extension \eqref{sesforoh} plays an important r\^{o}le in the computation of the $K$-theory and $K$-homology of Cuntz-Pimsner algebras (see \cite{Pimsnerspaper, GMR}). The next theorem follows from Theorem \ref{represent!} and results in \cite[Section 4]{Pimsnerspaper}. The results in \cite{Pimsnerspaper} are formulated in the non-equivariant setting, but the proofs extend mutatis mutandis to the $\T$-equivariant setting.

\begin{thm}
\label{pimsnerthm}
Let $g:V\to V$ be a surjective local homeomorphism on a compact space $V$. There is an exact triangle in the triangulated category $KK^{\T}$:
\[\xymatrix@C=0.8em@R=2.71em{
C(V) \ar[rr]^{1-[E_g]}& & C(V) \ar[dl]^\iota\\
  &C^*(\G_g). \ar[ul]|-\circ^{[L^2(\G_g)_{C(V)},D]}
}\]
Here $\iota\in KK_0^{\T}(C(V),C^*(\G_g))$ is induced from the inclusion $C(V)\hookrightarrow C^*(\G_g)$ and $[E_g]\in KK_0^{\T}(C(V),C(V))$ from the bimodule $E_g$ equipped with the $\T$-action $z\cdot \xi:=z\xi$.
\end{thm}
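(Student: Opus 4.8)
The plan is to exhibit the asserted diagram as the Pimsner exact triangle attached to the Toeplitz extension \eqref{sesforoh}, with the boundary morphism pinned down by Theorem \ref{represent!}. I would start from the general fact that a ($\T$-equivariantly) admissible short exact sequence determines a distinguished triangle in $KK^\T$; the extension \eqref{sesforoh} is admissible because the algebras involved are nuclear and $\T$ is compact, so a completely positive splitting may be averaged to a $\T$-equivariant one. This produces the triangle
\[
\mathbb{K}_{C(V)}(F_E)\xrightarrow{\,j\,}\mathcal{T}_E\xrightarrow{\,q\,}O_E\xrightarrow{\,\partial\,}\Sigma\,\mathbb{K}_{C(V)}(F_E),
\]
with $\Sigma$ the suspension, $j$ the ideal inclusion, $q$ the quotient, and $\partial$ the class of the extension in $KK_1^\T(O_E,\mathbb{K}_{C(V)}(F_E))$. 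After composing $\partial$ with the equivalence $\mathbb{K}_{C(V)}(F_E)\simeq C(V)$ discussed below and transporting through the isomorphism $\pi_E$ of Theorem \ref{isothemggtooh}, the resulting class in $KK_1^\T(C^*(\G_g),C(V))$ is exactly $[L^2(\G_g)_{C(V)},D]$ by Theorem \ref{represent!}.

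The second step is to replace the two outer vertices by $C(V)$. I would invoke Pimsner's two structural $KK$-equivalences from \cite[Section 4]{Pimsnerspaper}: the corner inclusion $C(V)=E^{\otimes 0}\hookrightarrow\mathbb{K}_{C(V)}(F_E)$ is a Morita $KK$-equivalence, and the coefficient inclusion $C(V)\hookrightarrow\mathcal{T}_E$ is a $KK$-equivalence. Both inclusions are manifestly $\T$-equivariant for the actions fixed in Subsection \ref{TheCuntzPimsneralgebraofalocalhomeomorphism}, since the corner $E^{\otimes 0}$ sits in gauge weight zero and the gauge action preserves the grading of $F_E$; hence both classes live in $KK^\T$. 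Under these identifications the composite of the coefficient inclusion with $q$ is the inclusion $C(V)\hookrightarrow C^*(\G_g)$, so $q$ becomes exactly $\iota$.

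The crux is computing the morphism $C(V)\xrightarrow{\sim}\mathbb{K}_{C(V)}(F_E)\xrightarrow{j}\mathcal{T}_E\xleftarrow{\sim}C(V)$ in $KK_0^\T(C(V),C(V))$ obtained by transporting the ideal inclusion along the two equivalences. The left equivalence is a \emph{full} corner because $E$ is finitely generated projective (Proposition \ref{fingenproandmore}), so that $\id_{F_E}$ lies in the ideal generated by $p_0$. I would then use the Toeplitz relation $p_0 = 1 - \sum_i \tilde{T}_{\xi_i}\tilde{T}_{\xi_i}^*$ for a finite frame $\{\xi_i\}$ of $E$: under the equivalence $C(V)\hookrightarrow\mathcal{T}_E$ the summand $\sum_i\tilde{T}_{\xi_i}\tilde{T}_{\xi_i}^*$ represents the bimodule class $[E_g]$, so the generator of $K_0(C(V))$ is sent to $[p_0]=1-[E_g]$. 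This is Pimsner's computation; the only extra point is $\T$-equivariance, and the creation operators $\tilde{T}_\xi$ raise gauge weight by one, which is precisely the action $z\cdot\xi:=z\xi$ on $E_g$, so the identity holds in $KK_0^\T(C(V),C(V))$ with $[E_g]$ carrying the stated $\T$-structure.

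The main obstacle is verifying that Pimsner's arguments, stated non-equivariantly in \cite{Pimsnerspaper}, transport verbatim to $KK^\T$. The substantive point is that the homotopies realizing the two $KK$-equivalences and the identity $j\simeq 1-[E_g]$ respect the gauge action; since this action is implemented by a strongly continuous one-parameter unitary group commuting with the corner projections and raising the weight of the creation operators as above, this is routine bookkeeping rather than new input. Once it is in place, rotating the triangle so that the two copies of $C(V)$ occupy the top edge, and finally transporting along $\pi_E$ to replace $O_E$ by $C^*(\G_g)$, produces exactly the diagram in the statement, with the degree-one boundary $[L^2(\G_g)_{C(V)},D]$ supplied by Theorem \ref{represent!}.
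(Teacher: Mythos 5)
Your proposal is correct and is essentially the paper's own argument: the paper proves Theorem \ref{pimsnerthm} precisely by combining Theorem \ref{represent!} with \cite[Section 4]{Pimsnerspaper} and noting that Pimsner's proofs extend mutatis mutandis to the $\T$-equivariant setting, which is exactly what you spell out (the semisplit extension triangle, Pimsner's two $KK$-equivalences, the frame identity $[p_0]=1-[E_g]$, and gauge-weight bookkeeping for equivariance). One minor slip that does not affect the argument: $\id_{F_E}$ is not compact, so it does not lie in the closed ideal of $\mathbb{K}_{C(V)}(F_E)$ generated by $p_0$; fullness of that corner is instead automatic, since $\langle E^{\otimes 0},E^{\otimes 0}\rangle_{C(V)}=C(V)$ already forces the ideal generated by $p_0$ to contain every rank-one operator on $F_E$.
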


In practice, the theorem above is most useful to deduce six term exact sequences in $KK$-theory.

\begin{cor}
\label{corollarpseqpage13}
For any separable $C^*$-algebra $B$, we have the following two six-term exact sequences:
\[\begin{CD}
KK_0(B,C(V))@>1-[E_g]>>KK_0(B,C(V)) @>\iota_*>> KK_0(B,C^*(\G_g))\\
@A-\otimes [L^2(\G_g)_{C(V)},D]AA  @. @VV-\otimes [L^2(\G_g)_{C(V)},D]V\\
KK_1(B,C^*(\G_g)) @<\iota_*<<KK_1(B,C(V))@<1-[E_g]<< KK_1(B,C(V))\\
\end{CD} \]
\[\begin{CD}
KK_0(C^*(\G_g),B)@>\iota^*>>KK_0(C(V),B)@>1-[E_g]>>KK_0(C(V),B) \\
@A[L^2(\G_g)_{C(V)},D]\otimes -AA  @. @VV[L^2(\G_g)_{C(V)},D]\otimes -V\\
KK_1(C(V),B) @<1-[E_g]<<KK_1(C(V),B)@<\iota^*<<KK_1(C^*(\G_g),B)\\
\end{CD} \]
The same statement holds in $\T$-equivariant $KK$-theory if $B$ is a separable $\T-C^*$-algebra.
\end{cor}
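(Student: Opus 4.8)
The plan is to deduce both six-term sequences as a formal consequence of the exact triangle in Theorem \ref{pimsnerthm}, using only the triangulated structure of $KK^{\T}$ together with Bott periodicity. The category $KK$ (and its $\T$-equivariant analogue $KK^{\T}$) is triangulated, with translation functor given by the suspension $\Sigma$ and distinguished triangles those isomorphic to mapping cone triangles. Crucially, Bott periodicity supplies a natural isomorphism $\Sigma^2 \cong \mathrm{id}$, so that the category is $2$-periodic; this is precisely what will fold the long exact sequences coming out of the triangle into cyclic six-term sequences.

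First I would recall that, for a fixed separable $C^*$-algebra $B$, the covariant functor $KK_*(B,-)$ is a homological functor and the contravariant functor $KK_*(-,B)$ is a cohomological functor on the triangulated category: each distinguished triangle is sent to a long exact sequence of abelian groups. Writing the triangle of Theorem \ref{pimsnerthm} as
\[
C(V) \xrightarrow{\,1-[E_g]\,} C(V) \xrightarrow{\,\iota\,} C^*(\G_g) \xrightarrow{\,\partial\,} \Sigma C(V),
\]
where $\partial=[L^2(\G_g)_{C(V)},D]$ is the degree-one class marked by the open circle, and applying $KK_*(B,-)$ produces a long exact sequence whose recurring terms are $KK_*(B,C(V))$, $KK_*(B,C(V))$ and $KK_*(B,C^*(\G_g))$. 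The classes $1-[E_g]\in KK_0(C(V),C(V))$ and $\iota\in KK_0(C(V),C^*(\G_g))$ induce the right Kasparov products $-\otimes(1-[E_g])$ and $\iota_*=-\otimes\iota$, while $\partial$ shifts degree and induces $-\otimes[L^2(\G_g)_{C(V)},D]$. Collapsing the long exact sequence via $\Sigma^2\cong\mathrm{id}$ onto the indices $\{0,1\}$ yields the first six-term sequence. Applying instead the contravariant functor $KK_*(-,B)$ reverses all arrows and places the Kasparov products on the left, giving the second sequence with connecting maps $[L^2(\G_g)_{C(V)},D]\otimes-$.

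The $\T$-equivariant statement follows by the identical argument, since $KK^{\T}$ is likewise triangulated and $2$-periodic. I expect the only genuine points requiring care to be bookkeeping ones: matching the orientation convention of the triangle in Theorem \ref{pimsnerthm} with the variance of each representable functor, so that the connecting homomorphism appears as Kasparov product with $[L^2(\G_g)_{C(V)},D]$ on the correct side, and confirming that $1-[E_g]$ and $\iota$ induce the stated maps rather than their negatives or adjoints. These identifications are exactly the standard ones underlying the Pimsner exact sequence, so no substantive obstacle arises beyond the triangulated-category formalism already provided by Theorem \ref{pimsnerthm}.
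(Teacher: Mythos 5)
Your proposal is correct and matches the paper's intended argument: the corollary is deduced exactly by applying the homological functor $KK_*(B,-)$ and the cohomological functor $KK_*(-,B)$ to the exact triangle of Theorem \ref{pimsnerthm} and folding the resulting long exact sequences into six-term sequences via Bott periodicity, with the equivariant case handled identically. The paper treats this as immediate from the triangulated structure of $KK^{\T}$, which is precisely the formalism you invoke.
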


The next proposition is useful for computations with the sequences in Corollary \ref{corollarpseqpage13}; it is inspired by \cite[Section 3]{raeburnetal}. We first introduce some notations. We choose a cover $(U_j)_{j=1}^N$ of $V$ such that $g|_{U_j}$ is injective for any $j$. Choose a subordinate partition of unity $(\chi_j^{2})_{j=1}^N$. It is well-known that $\chi_{j}$ is a frame for $E_{g}$ and the right module mapping 
$$v:E_g\to C(V)^N, \quad x\mapsto (\langle \chi_j,x\rangle_E)_{j=1}^N,$$
is inner product preserving, i.e. $v^*v=\mathrm{id}_{E_g}$. More generally, whenever $(e_j)_{j=1}^N$ is a frame for $E_g$ we can define $v$ as above. We denote the associated left representation by 
\begin{equation}
\label{omegadefin}
\theta:C(V)\to C(V,M_N(\C)), \quad a\mapsto vav^*.
\end{equation}

\begin{prop}
\label{propcomposing}
Let $[\theta]\in KK_0^{\T}(C(V),C(V))$ denote the class associated with the equivariant $*$-homomorphism $\theta$ and $t\in KK_0^{\T}(\C,\C)\cong R(\T)\cong \Z[t,t^{-1}]$ the class associated with the representation given by the inclusion $\T\subseteq \C$. Then 
$$[E_g]=t\otimes_\C [\theta] \in KK_0^{\T}(C(V),C(V)).$$
Define ${}_gE_g$ as the bimodule ${}_{g^*}C(V)_{g^*}$, or equivalently the trivial bimodule structure associated with the right module structure on $E_g$. Then 
$$[E_g]\otimes_{C(V)} [g^*]=[{}_gE_g]\quad\mbox{  in  }KK_0^{\T}(C(V),C(V)).$$
In particular, if $E_g$ is free of rank $r$ as a right module, then $[E_g]\otimes_{C(V)} [g^*]=rt\otimes_\C[1_{C(V)}]$.
\end{prop}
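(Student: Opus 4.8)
The plan is to prove the three assertions in order, since the final ``in particular'' clause will drop out by specializing the first two. Throughout I write $A=C(V)$ and use the frame $(\chi_j)_{j=1}^N$ together with the inner-product-preserving map $v\colon E_g\to A^N$, $v(x)=(\langle\chi_j,x\rangle_E)_j$, introduced just before the statement, so that $v^*v=\id_{E_g}$ and $p:=vv^*\in M_N(A)$ is a projection with entries $p_{ij}=\mathfrak{L}(\bar\chi_i\chi_j)$ and with $\theta(a)=vav^*=p\,\theta(a)\,p$.

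For the first identity $[E_g]=t\otimes_\C[\theta]$, I would realize the $\T$-equivariant Kasparov module $(E_g,0)$ through $v$. The map $v$ carries $E_g$ isometrically onto $pA^N$ and intertwines the left $A$-action on $E_g$ with $\theta$, while the complementary summand $(1-p)A^N$ carries the zero left action and the zero operator, hence is degenerate and vanishes in $KK_0^\T$. The only delicate point is the circle action: since the $\T$-action on $E_g$ is $z\cdot\xi=z\xi$ and the pairing $\langle\chi_j,\cdot\rangle_E$ is $\C$-linear, $v$ intertwines it with the \emph{scaling} action $z\cdot(a_j)_j=(za_j)_j$ on $A^N$, not the trivial one. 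Recognizing $A^N$ with the scaling action as $\C_t\otimes_\C A^N$, where $\C_t$ carries the defining character of $\T$, identifies $(A^N,\theta,0)$ (scaling action) with $t\otimes_\C[\theta]$. As $\theta$ lands in the trivially-acted algebra $M_N(A)$, the class $[\theta]$ genuinely lives in $KK_0^\T$ for trivial actions, so the external product with $t$ is legitimate, and $[E_g]=t\otimes_\C[\theta]$ follows.

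For the second identity I would represent the $*$-homomorphism class $[g^*]$ by its standard bimodule ${}_{g^*}C(V)_{\id}$ and compute the Kasparov product $[E_g]\otimes_{C(V)}[g^*]$ as the corresponding balanced internal tensor product of bimodules over $A$, carrying the zero operator. The heart of the step, and what I expect to be the \textbf{main obstacle}, is identifying this tensor product with ${}_gE_g={}_{g^*}C(V)_{g^*}$: one uses the multiplication map $C(V)\otimes_{C(V)}C(V)\to C(V)$ to collapse the balanced tensor to $C(V)$, checks that both the left and right actions become $g^*$, and verifies that the resulting $A$-valued inner product is exactly $\langle\eta,\eta'\rangle=\mathfrak{L}(\bar\eta\eta')$. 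This last computation, tracking how the transfer operator $\mathfrak{L}$ interacts with $g^*$ on the balanced tensor, is precisely where the definition of the inner product on $E_g$ is essential. The $\T$-equivariance is automatic because $g^*$ is $\T$-trivial, so the scaling action passes unchanged through the product.

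Finally, for the free-rank assertion I would specialize the second identity. When $E_g$ is free of rank $r$ as a right module, the left $A$-action of ${}_gE_g$ is via $g^*$, so on the fibre over $y\in V$, namely the functions on $g^{-1}(y)$, it is multiplication by the \emph{constant} $a(y)$, because $a\circ g$ is constant equal to $a(y)$ on $g^{-1}(y)$. Hence ${}_gE_g$ is the free rank-$r$ bimodule with the diagonal $A$-action, which represents $r[1_{C(V)}]$; incorporating the scaling $\T$-action exactly as in the first paragraph contributes the factor $t$, giving $[{}_gE_g]=rt\otimes_\C[1_{C(V)}]$, and combining with the second identity yields the claim. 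I would stress that this does \emph{not} follow by naively treating $[E_g]$ as $rt\otimes_\C[1_{C(V)}]$ and multiplying by $[g^*]$, since $[g^*]\neq[1_{C(V)}]$ in general (for the $n$-solenoid, $g^*$ acts by $n$ on $K_1(C(V))$); the content of the second identity is exactly that the product untwists the $g^*$-twisted right action of $E_g$ into the scalar bimodule ${}_gE_g$.
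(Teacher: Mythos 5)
Your first identity and the final free--rank specialization are handled correctly, and the first is proved by essentially the paper's own argument: the paper packages your decomposition as the equivariant unitary equivalence $(t^{-1}\otimes E_g)\oplus E_g^{\perp}\cong{}_{\theta}C(V)^N$ implemented by $v$, with $E_g^{\perp}=(1-vv^*)C(V)^N$ carrying the zero left action and hence degenerate. Your closing warning, that one may not simply replace $[E_g]$ by $rt\otimes_{\C}[1_{C(V)}]$ before multiplying by $[g^*]$, is also well taken.

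The second identity is where your argument breaks, and the failure is concrete. With the standard convention that $x\otimes_B y$ composes $x\in KK(A,B)$ with $y\in KK(B,C)$, the product $[E_g]\otimes_{C(V)}[g^*]$ is the class of the interior tensor product $E_g\otimes_{C(V)}{}_{g^*}C(V)$, and for this module none of your three checks hold: the left action is $a\cdot(\xi\otimes\eta)=(a\xi)\otimes\eta$, which the multiplication map sends to $a\xi\eta$ (ordinary multiplication, not $g^*(a)$); the right action likewise collapses to ordinary multiplication; and the inner product is $\langle\xi\otimes\eta,\xi'\otimes\eta'\rangle=\bar\eta\,g^*(\mathfrak{L}(\bar\xi\xi'))\,\eta'$, which for $\xi=\xi'=1$ equals $\bar\eta\eta'\,g^*(\mathfrak{L}(1))$ rather than $\mathfrak{L}(\bar\eta\eta')$, so the multiplication map is not isometric into ${}_gE_g$. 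All of your checks do go through verbatim for the reversed product ${}_{g^*}C(V)\otimes_{C(V)}E_g$, that is for $[g^*]\otimes_{C(V)}[E_g]$: there the balancing reads $\eta a\otimes\xi=\eta\otimes a\xi$, the left action becomes $g^*$, the right action is the $g^*$-twisted one, and $\langle\eta\otimes\xi,\eta'\otimes\xi'\rangle=\langle\xi,\bar\eta\eta'\xi'\rangle_{E_g}=\mathfrak{L}(\overline{\eta\xi}\,\eta'\xi')$ as required. This is not a removable convention quibble, because the two orders are genuinely different classes: for the full one-sided $2$-shift ($V=\{0,1\}^{\N}$, $g=\sigma$), the class $[g^*]\otimes[E_g]=[{}_gE_g]$ acts on $K_0(C(V))=C(V,\Z)$ as multiplication by $2$, whereas $[E_g]\otimes[g^*]$ (transfer first, then pullback) sends the class of the cylinder projection $\chi_{[0]}$ to $[1]\neq 2[\chi_{[0]}]$.

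The repair is to swap the two factors; with that change your direct bimodule computation is correct and is a legitimate alternative to the paper's route. The paper instead deduces the second identity from the first in one line: since left multiplication by $g^*(a)$ on $E_g$ is the right action of $a$ and $v$ is right $C(V)$-linear, one gets $\theta(g^*(a))=vg^*(a)v^*=a\,vv^*$, i.e.\ the diagonal action of $a$ compressed by the projection $vv^*$, which under $v$ is precisely the left action of ${}_gE_g$ (with trivial circle action). Note that this computes the class of the composite homomorphism $\theta\circ g^*$, which is $[g^*]\otimes_{C(V)}[\theta]$; so the paper's own proof also establishes the identity with $[g^*]$ as the \emph{first} factor, and the displayed product in the proposition must be read in that order. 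Your instinct about which bimodule identification carries the content was right, but as literally set up the key step in your second paragraph fails for the module you name.
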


\begin{proof}
Let $E_g^\perp$ denote the right module $(1-vv^*)C(V)^N$ equipped with the trivial left action of $C(V)$ and the trivial $\T$-action. Let ${}_\theta C(V)^N$ denote the right Hilbert module $C(V)^N$ equipped with the left action defined from $\theta$ and the trivial $\T$-action. The isometry $v$ and the inclusion $E_g^\perp\subseteq C(V)^N$ implements an equivariant unitary equivalence of bimodules $(t^{-1}\otimes E_g)\oplus E_g^\perp\cong {}_\theta C(V)^N$.  The second statement follows immediately from the first statement since $v$ is $C(V)$-linear, so $\theta(g^*(a))=vg^*(a)v^*=g(a)vv^*$. 
\end{proof}

\begin{remark}
\label{someremarksonpex}
The applicable aspect of Corollary \ref{corollarpseqpage13} is the computation of $K$-theoretic invariants from the knowledge of the action of $E_g$ on $KK$-theory. The non-equivariant version gives $K$-theoretic information about $C^*(\G_g)$. The equivariant version produces information about the fixed point algebra $C^*(\G_g)^{\T}$. This is of interest below for the stable and unstable algebras of Smale spaces, see the discussion in Remark \ref{uoneremark}. The $K$-theory of $C^*(\G_g)^{\T}$ is computed using the Green-Julg theorem and the Morita equivalence $C^*(\G_g)\rtimes \T\sim_MC^*(\G_g)^{\T}$ induces an isomorphism $K_*^{\T}(C^*(\G_g))\cong K_*(C^*(\G_g)^{\T})$. Moreover, if $C^*(\G_g)$ is $\T$-equivariantly Poincar\'e dual to a $\T-C^*$-algebra $D$, then $K_*(D\rtimes \T)\cong K^{*+j}_{\T}(C^*(\G_g))$ where $j$ is the dimension of the Poincar\'e duality. 
\end{remark}

\begin{example}
\label{computingktehroyexp}
Let us do some computations on the equivariant $K$-theory and $K$-homology of the solenoid associated with an expansive matrix $A\in M_d(\Z)$ from Example \ref{solenioidexampleonaroundpage7}. This computation extends \cite[Theorem 4.9]{raeburnetal}. We let $g_A:(S^1)^d\to (S^1)^d$ denote the mapping associated with the expansive integer matrix $A$. 

The module $E_{g_A}$ is free of rank $|\det(A)|$ as a right $C((S^1)^d)$-module (see \cite[Lemma 2.6]{raeburnetal}). By Proposition \ref{propcomposing}, 
$$[E_{g_A}]\otimes_{C((S^1)^d)} [g^*_A]=|\det(A)|t\otimes_\C[1_{C((S^1)^d)}]\quad\mbox{in $KK^{\T}(C((S^1)^d),C((S^1)^d))$}.$$ 
We emphasize that the $\T$-action on $C((S^1)^d)$ is trivial and $t$ is the generator of the representation ring $R(\T)$. The equivariant $K$-theory group $K_*^{\T}(C((S^1)^d))\cong \wedge^*\Z^d\otimes \Z[t,t^{-1}]$ can be computed as a module over $R(\T)\cong \Z[t,t^{-1}]$. One easily verifies that $[g_A^*]$ acts as $\wedge^*A\otimes \id_{\Z[t,t^{-1}]}$ under this isomorphism, so $[E_{g_A}]$ acts as $B\otimes t$ where 
\[
B=\bigoplus_{j=0}^d B_j\in \bigoplus_{j=0}^d\End(\wedge^j\Z^d),
\]
 satisfies $B_j\wedge^jA=N\id_{\wedge^j\Z^d}$. The matrix $B$ is computed in \cite[Proposition 4.6]{raeburnetal}. From these considerations and Corollary \ref{corollarpseqpage13}, we then have
\begin{equation}
\label{equicompsole}
K_i^{\T}(O_{E_{g_A}})\cong \bigoplus_{j\in i+2\Z} \mathrm{coker}\left(1-B_{j}t:\wedge^{j}\Z^d\otimes \Z[t,t^{-1}]\to\wedge^{j}\Z^d\otimes \Z[t,t^{-1}] \right).
\end{equation}
This gives a new proof of the computation of $K_*(O_{E_{g_A}})$ in \cite[Theorem 4.9]{raeburnetal} using the Pimsner-Voiculescu sequence, i.e. a localization in the trivial $\T$-representation. For instance, if $d=1$ and $A=n\in \Z\setminus \{-1,0,1\}$, then $K_0^\T(O_{E_{g_A}})=\Z\left[\frac{1}{n}\right]$ and $K_1^\T(O_{E_{g_A}})=\Z$ with $t$ acting as $\frac{1}{|n|}$ in degree $0$ and as $\mathrm{sign}(n)$ in degree $1$.

Similarly, using the fact that $1-B_j^T$ is injective for $0\leq j<d$ (see \cite[Proposition 4.6]{raeburnetal}) and Corollary \ref{corollarpseqpage13}, we compute that 
$$K^i(O_{E_{g_A}})\cong 
\begin{cases}
\Z\oplus \bigoplus_{j<d, \;\mathrm{odd}} \mathrm{coker}(1-B_j^T), \quad &\mbox{for $i$ even and $\det(A)>1$},\\
\Z\oplus \bigoplus_{j<d, \;\mathrm{even}} \mathrm{coker}(1-B_j^T), \quad &\mbox{for $i$ odd and $\det(A)>1$},\\
\bigoplus_{j\leq d, \;\mathrm{odd}} \mathrm{coker}(1-B_j^T), \quad &\mbox{for $i$ even and $\det(A)<-1$},\\
\bigoplus_{j\leq  d, \;\mathrm{even}} \mathrm{coker}(1-B_j^T), \quad &\mbox{for $i$ odd and $\det(A)<-1$}.
\end{cases}$$

\end{example}

\begin{cor}
\label{corollarofpseqonpage14}
Suppose that $x\in KK_*(C^*(\G_g),B)$. The equation 
$$x=[L^2(\G_g)_{C(V)},D]\otimes_{C(V)} y, \quad y\in KK_{*-1}(C(V),B),$$
admits a solution $y$ if and only if $\iota^*(x)=0$ holds in $KK_*(C(V),B)$. The analogous $\T$-equivariant statement holds as well.
\end{cor}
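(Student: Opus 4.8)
The plan is to read the statement directly off the contravariant six term exact sequence in Corollary \ref{corollarpseqpage13}, applied with the given coefficient algebra $B$. No new input is required beyond identifying the correct node of that sequence: the content of the corollary is precisely exactness of the Pimsner sequence at $KK_*(C^*(\G_g),B)$.

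First I would recall that, in the second six term sequence of Corollary \ref{corollarpseqpage13}, the map into $KK_*(C^*(\G_g),B)$ is the left Kasparov product
$$[L^2(\G_g)_{C(V)},D]\otimes_{C(V)} -\, :\, KK_{*-1}(C(V),B)\longrightarrow KK_*(C^*(\G_g),B),$$
while the map out of $KK_*(C^*(\G_g),B)$ is $\iota^*=\iota\otimes_{C^*(\G_g)}-\, :\, KK_*(C^*(\G_g),B)\to KK_*(C(V),B)$. Exactness of the sequence at the node $KK_*(C^*(\G_g),B)$ therefore reads
$$\im\big([L^2(\G_g)_{C(V)},D]\otimes_{C(V)} -\big)=\ker(\iota^*).$$

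Next I would translate the two sides of this identity into the language of the corollary. An element $x$ lies in the image of $[L^2(\G_g)_{C(V)},D]\otimes_{C(V)} -$ precisely when the equation $x=[L^2(\G_g)_{C(V)},D]\otimes_{C(V)} y$ admits a solution $y\in KK_{*-1}(C(V),B)$, and $x$ lies in $\ker(\iota^*)$ precisely when $\iota^*(x)=0$ in $KK_*(C(V),B)$. Combining these two reformulations with the displayed exactness yields the claimed equivalence for both $*=0$ (exactness at $KK_0(C^*(\G_g),B)$) and $*=1$ (exactness at $KK_1(C^*(\G_g),B)$).

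Finally, since Corollary \ref{corollarpseqpage13} is stated to hold verbatim in $\T$-equivariant $KK$-theory whenever $B$ is a separable $\T$-$C^*$-algebra, the same argument applied to the equivariant sequence gives the equivariant statement. I do not expect any genuine obstacle here, as the argument is purely formal; the only point to check is the degree bookkeeping, namely that the left factor $[L^2(\G_g)_{C(V)},D]$ lives in $KK_1$, so that its product with $y\in KK_{*-1}(C(V),B)$ lands in $KK_*(C^*(\G_g),B)$, matching the indexing of the sequence.
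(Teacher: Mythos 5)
Your proposal is correct and is exactly the paper's (implicit) argument: the corollary is stated without proof precisely because it is the exactness of the second six term sequence in Corollary \ref{corollarpseqpage13} at the node $KK_*(C^*(\G_g),B)$, where the incoming map is $[L^2(\G_g)_{C(V)},D]\otimes_{C(V)}-$ and the outgoing map is $\iota^*$, together with the equivariant version of that corollary. Your degree bookkeeping (the class $[L^2(\G_g)_{C(V)},D]$ being odd, so the product shifts $KK_{*-1}$ to $KK_*$) is also the right check and is consistent with the paper's indexing.
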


\section{The Morita equivalence with the stable Ruelle algebra}
\label{sectionfive}

The standing assumption in this section is that $(V,g)$ is a pair satisfying Wieler's axioms such that the associated Smale space $(X,\varphi)$ is irreducible. Suppose $P$ is a finite $\varphi$-invariant set of periodic points of $(X,\varphi)$. Recall from Section \ref{subsec:Algebras} the stable groupoid $G^s(P)$, the stable algebra $C^*(G^s(P))$ and the stable Ruelle algebra $C^*(G^s(P)) \rtimes \Z$ constructed from $(X,\varphi)$. We will define a groupoid Morita equivalence $G^s(P)\rtimes \Z\sim \G_g$ that will give rise to a Morita equivalence $C^*(G^s(P)) \rtimes \Z \sim_M O_E$ using Theorem \ref{isothemggtooh}. We then proceed to compute the product of the cycle in Theorem \ref{represent!} with the Morita equivalence, using a $\kappa$-type function as in Section \ref{sectionfour}.

\subsection{The topological space $Z^{u}(P)$} The \'{e}tale groupoid $G^{s}(P)$ carries a continuous action of the integers $\Z$ defined by $\alpha^{n}:(x,y)\mapsto (\varphi^{n}(x),\varphi^{n}(y))$ (the map $\alpha$ was defined in Section \ref{subsec:Algebras}). Recall the construction of crossed products of a groupoid with a group action. Assume that $G$ is an \'{e}tale groupoid with a right action of a discrete group $\Gamma$. We can form the crossed product groupoid $G\rtimes \Gamma$ with unit space $G^{(0)}$ by setting $G\rtimes \Gamma:= G\times \Gamma$ with domain, range and inverse mappings
\[
d_{G\rtimes \Gamma}(g,\gamma)=d_{G}(g\gamma), \quad\quad r_{G\rtimes \Gamma}(g,\gamma):=r_{G}(g)\quad \text{ and }\quad (g,\gamma)^{-1}:=(g\gamma,\gamma^{-1}).
\]
The composition is defined whenever $d(g_{1}\gamma_{1})=r(g_{2})$ and is given by
$$(g_{1},\gamma_1)\cdot (g_2,\gamma_2)=(g_1(g_2\gamma_{1}^{-1}), \gamma_1\gamma_2).$$

\begin{notation}
For notational convenience and compatibility with $\G_g$ (cf. \eqref{compositionrg}),  we identify the groupoid $G^{s}(P)\rtimes\Z$ with the set of triples $(x,n,y)$ with $(x,\varphi^{-n}(y))\in G^{s}(P)$ and $n\in\Z$. The range and domain maps in this model of $G^{s}(P)\rtimes\Z$ are given by $r(x,n,y):=x$ and $d(x,n,y):=y$, respectively. The composition is defined by $(x,n,y)(y,m,z)=(x,n+m,z)$. 
\end{notation}

\begin{definition}
\label{cgspz}
Similarly to Definition \ref{cgg}, we define the groupoid cocycle 
\[
c_{G^{s}(P)\rtimes\Z}:G^{s}(P)\rtimes\Z\to \Z, \quad c_{G^{s}(P)\rtimes\Z}(x,n,y):=n.
\]
\end{definition}

The $\Z$-grading on $C^*(G^{s}(P)\rtimes\Z)$ defined from $c_{G^{s}(P)\rtimes\Z}$ coincides with the grading coming from the crossed product structure $C^*(G^{s}(P)\rtimes\Z)\cong C^*(G^{s}(P))\rtimes\Z$. 

We define the space
\[
Z:=\{(x,j,v)\in X\times \Z\times V: \exists k\mbox{  such that  }  g^{k+j}(\pi_{0}(x))=g^{k}(v)\},
\]
and consider the subspace 
\begin{equation}
\label{zupdefinition}
Z^u(P):=\{(x,j,v)\in Z: \, x\in X^u(P)\} \subset Z.
\end{equation}
We will provide a locally compact Hausdorff topology on $Z^u(P)$ below.

\begin{lemma}
\label{triviality} 
If $(x,j,v)\in Z^u(P)$ and $y \in X$ is such that $\pi_0(y)=v$, then $y \sim_s \varphi^{j}(x)$.
\end{lemma}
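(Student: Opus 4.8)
The plan is to reduce stable equivalence in $X_V$ to an equality of zeroth coordinates after finitely many applications of $g$, and then to check that equality directly from the defining property of $Z$. The technical heart is the following auxiliary claim: \emph{if $a=(a_i)_i,b=(b_i)_i\in X_V$ satisfy $g^k(a_0)=g^k(b_0)$ for some $k\ge 0$, then $a\sim_s b$.} Granting this, the lemma will follow by producing such a $k$ for the pair $y$ and $\varphi^j(x)$.

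First I would prove the auxiliary claim by a direct estimate with the Wieler metric \eqref{wielermetric}. From \eqref{pikrela} one gets $\pi_m\circ\varphi^n=g^n\circ\pi_m$, hence $\varphi^n(a)_m=g^n(a_m)$ for $n\ge 0$; combined with $a_0=g^m(a_m)$ this gives $g^n(a_m)=g^{n-m}(a_0)$ whenever $n\ge m$. Therefore $g^k(a_0)=g^k(b_0)$ forces $\varphi^n(a)_m=\varphi^n(b)_m$ for every $m\le n-k$, so in
\[
\d_{X_V}(\varphi^n a,\varphi^n b)=\sum_{l=0}^{K}\gamma^{-l}\sup_{i}\gamma^{i}\,\d_V\big((\varphi^n a)_{i+l},(\varphi^n b)_{i+l}\big)
\]
only the coordinates $m=i+l>n-k$ contribute. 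For each fixed $l$ the surviving supremum is attained at the smallest admissible $i$, namely $i=n-k-l+1$, giving a bound $\gamma^{-l}\gamma^{\,n-k-l+1}\operatorname{diam}(V)$; summing the finitely many terms $l=0,\dots,K$ yields a bound of order $\gamma^{\,n-k-2K}\operatorname{diam}(V)\to 0$ as $n\to\infty$. Thus $a\sim_s b$.

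Next I would unwind the hypothesis $(x,j,v)\in Z$: there is $k$ with $k\ge 0$ and $k+j\ge 0$ such that $g^{k+j}(\pi_0(x))=g^k(v)$. Iterating the identity $g\circ\pi_0=\pi_0\circ\varphi$ from \eqref{pikrela} gives $g^k\circ\pi_0=\pi_0\circ\varphi^k$, whence
\[
g^k\big(\pi_0(\varphi^j(x))\big)=\pi_0\big(\varphi^{k+j}(x)\big)=g^{k+j}(\pi_0(x)),
\]
the last step using $k+j\ge 0$. The point of this sandwiching is that it never requires computing $\pi_0\circ\varphi^j$ directly, so it treats positive and negative $j$ uniformly. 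Together with $\pi_0(y)=v$ and the defining equation of $Z$, this produces $g^k\big(\pi_0(\varphi^j(x))\big)=g^k(\pi_0(y))$, and applying the auxiliary claim to $a=y$, $b=\varphi^j(x)$ gives $y\sim_s\varphi^j(x)$. The only genuinely computational step is the tail estimate in the auxiliary claim; everything else is formal manipulation of \eqref{pikrela}. Accordingly I expect the main (and fairly modest) obstacle to be keeping the bookkeeping of $k$ and $k+j$ nonnegative and making the Wieler-metric estimate precise, both of which are resolved by the sandwiching identity above.
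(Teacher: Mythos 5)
Your proof is correct and follows essentially the same route as the paper's: both arguments extract a $k\ge 0$ with $k+j\ge 0$ from the definition of $Z$, show that the relevant zeroth coordinates agree after $k$ iterations (your sandwiching identity versus the paper's computation $\varphi^k(y)_0=\varphi^{k+j}(x)_0$), and then conclude stable equivalence from geometric decay of the Wieler metric. The only cosmetic difference is that the paper cites an observation from Wieler's paper to get an exact contraction factor $\gamma^{n-k}$, whereas your auxiliary claim proves the needed tail estimate directly from the explicit form of the metric \eqref{wielermetric}, making the argument self-contained.
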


\begin{proof} 
Write $x=(x_{0},x_{1},\cdots)$ and observe that by definition of $Z^{u}(P)$ there exists $k\in \N$ such that $g^{k+j}(x_{0})=g^{k}(v)$. For $y_{1}, y_{2},\cdots \in V$ such that $y:=(v,y_{1},\cdots )\in X$, we have 
$$\varphi^{k}(y)=(g^{k}(v),g^{k-1}(v),\cdots, v,y_{1},\cdots)=(g^{k+j}(x_{0}),\cdots).$$ 
Therefore, $\varphi^{k}(y)_{0}=\varphi^{k+j}(x)_{0}$. Using \cite[Observation preceding Lemma 3.1]{Wielerpaper} and a short induction argument we find that $\d_X(\varphi^{n+j}(x),\varphi^{n}(y))=\gamma^{n-k}\d_X(\varphi^{k+j}(x),\varphi^{k}(y))$ for $n\geq k$. In particular, we deduce 
$$\lim_{n\to\infty}\d_X(\varphi^{n+j}(x),\varphi^{n}(y))=0,$$ 
so that $\varphi^{j}(x)\sim_{s} y$.
\end{proof}

To define a topology on $Z^u(P)$ we construct a neighbourhood basis of $(x,j,v)\in Z^{u}(P)$ as follows. By Lemma \ref{triviality}, $(\varphi^{j}(x),y)\in G^{s}(P)$ for any $y\in X^{u}(P)$ with $\pi_{0}(y)=v$. Such a $y$ always exists by Theorem  \ref{localfibb}. Take a local neighbourhood $V^s(\varphi^{j}(x),y,N,U)\subseteq G^s(P)$ of $(\varphi^{j}(x),y)$ as in Equation \eqref{dahood}, with the following additional requirements:
\begin{enumerate}
\item $\pi_{0}:U\to \pi_{0}(U)$ is a homeomorphism;
\item $g^{N}$ is injective on $\pi_{0}(U)$;
\item $\pi_{0}: \varphi^{-j}(h_{N}^{s}(U))\to \pi_{0}(\varphi^{-j}(h_{N}^{s}(U)))$ is a homeomorphism;
\item $g^{N+j}$ is injective on $\pi_{0}(\varphi^{-j}h^{s}_{N}(U))$.
\end{enumerate}
Existence of sufficiently many such sets $U$ also follows from Theorem \ref{localfibb}. We declare the following set to be an element in the basis of the topology of $Z^{u}(P)$:
\small
\begin{equation}
\label{basiszup}
W^{s}((x,j,v), N, U):=
\{(\varphi^{-j}(x'),j,\pi_0(y'))\in Z^{u}(P): \; (x',y')\in V^s(\varphi^{j}(x),y,N,U)\}.
\end{equation}
\normalsize
Note that $(\varphi^{-j}(x'),j,\pi_0(y')) \in Z^u(P)$ for all $(x',y')\in V^s(\varphi^{j}(x),y,N,U)$ by the following argument. We have $[\varphi^{N}(y'),\varphi^{N}(x)]\in X^{s}(\varphi^{N}(y'),\ep_{X}/2)$. By Lemma \ref{bracketEqualsStableForSmallN}, it follows that 
$$\pi_{0}([\varphi^{N}(y'),\varphi^{N}(x)])=\pi_{0}(\varphi^{N}(y')).$$ 
Then, $g^{N+j}(\pi_{0}(\varphi^{-j}(h_{N}^{s}(y')))=\pi_{0}([\varphi^{N}(y'),\varphi^{N}(x)])=\pi_{0}(\varphi^{N}(y'))=g^{N}(\pi_{0}(y'))$.

\begin{prop}
\label{momentmaps}
The maps $\rho_L:Z^u(P)\to X^u(P)$, $(x,j,v)\mapsto x$, $\rho_R:Z^u(P)\to V$, $(x,j,v)\mapsto v$, and $\pi_{0}:Z^{u}(P)\to \G_{g}$, $(x,j,v)\mapsto (\pi_{0}(x),j,v)$ are surjective local homeomorphisms.
\end{prop}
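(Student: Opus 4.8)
The plan is to read off everything from the basis $\{W^{s}((x,j,v),N,U)\}$ of the topology on $Z^{u}(P)$, guided by the observation that, as a set, $Z^{u}(P)$ is the fibre product $X^{u}(P)\times_{\pi_{0},r}\G_{g}$ via $(x,j,v)\mapsto (x,(\pi_{0}(x),j,v))$, where $r(a,n,b)=a$ is the range map of $\G_{g}$. Under this identification $\rho_{L}$ is projection to the first factor, the groupoid map $\pi_{0}$ is projection to the second factor, and $\rho_{R}=d\circ\pi_{0}$, where $d(a,n,b)=b$ is the source map. Since $\G_{g}$ is \'{e}tale, $r$ and $d$ are local homeomorphisms, so the substance of the proposition is that the $W^{s}$-topology coincides with the fibre product topology; I would extract this by verifying directly on the basis that each map is continuous, open, and locally injective, and combine it with surjectivity.

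Surjectivity will follow immediately from Theorem \ref{localfibb}. For $\rho_{L}$: given $x\in X^{u}(P)$, the triple $(x,0,\pi_{0}(x))$ lies in $Z^{u}(P)$ and maps to $x$. For $\rho_{R}$: given $v\in V$, pick $x\in X^{u}(P)$ with $\pi_{0}(x)=v$ (possible since $\pi_{0}$ is onto), so $(x,0,v)\mapsto v$. For the groupoid map: given $(a,n,b)\in\G_{g}$, pick $x$ with $\pi_{0}(x)=a$; then $(x,n,b)\in Z^{u}(P)$ maps to $(a,n,b)$.

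For the local structure I would fix $(x,j,v)$ and a basic neighbourhood $W^{s}((x,j,v),N,U)$, and parametrise it by $\Phi(u):=(\varphi^{-j}(h^{s}_{N}(u)),j,\pi_{0}(u))$ for $u\in U$, so that $W^{s}=\Phi(U)$ and $\Phi$ is injective because $h^{s}_{N}$ is. Composing with the three maps gives $\rho_{L}\circ\Phi=\varphi^{-j}\circ h^{s}_{N}$, $\rho_{R}\circ\Phi=\pi_{0}|_{U}$, and, with $a(u):=\pi_{0}(\varphi^{-j}(h^{s}_{N}(u)))$ and $b(u):=\pi_{0}(u)$,
\[
(\pi_{0}\circ\Phi)(u)=(a(u),j,b(u)).
\]
The first composite is a homeomorphism of $U$ onto the open set $\varphi^{-j}(h^{s}_{N}(U))$; the second is a homeomorphism onto the open set $\pi_{0}(U)$ by condition (1) and openness of the covering $\pi_{0}\colon X^{u}(P)\to V$. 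For the third, the identity $g^{N+j}(a(u))=g^{N}(b(u))$ established in the construction of $W^{s}$, together with conditions (1)--(4), shows that $g^{N+j}$ and $g^{N}$ restrict to homeomorphisms of $U_{1}:=a(U)$ and $U_{2}:=b(U)=\pi_{0}(U)$ onto the common open range $g^{N}(\pi_{0}(U))$; hence $\pi_{0}\circ\Phi$ is a bijection onto the basic open set $\mathcal{U}(U_{1},N+j,N,U_{2})$ of $\G_{g}$, invertible via its source coordinate. Each composite is thus a homeomorphism onto an open subset of the target.

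It remains to promote these factorisations to statements about the maps. Openness is then immediate, since the image of $W^{s}$ under each map is the open set just computed and every open subset of $Z^{u}(P)$ is a union of such basic sets; local injectivity on $W^{s}$ holds because each composite with the injective $\Phi$ is injective; and continuity follows from the standard shrinking argument, replacing $U$ by a smaller $U'$ so that the (continuously varying) image of $W^{s}((x,j,v),N,U')$ lands inside a prescribed open target, using that these images furnish neighbourhood bases in $X^{u}(P)$, $V$, and $\G_{g}$. A continuous, open, locally injective map is a local homeomorphism, which finishes the argument; alternatively, once $\pi_{0}$ is handled one may deduce $\rho_{R}=d\circ\pi_{0}$ formally. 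I expect the main obstacle to be the groupoid map $\pi_{0}$, where one must check that conditions (1)--(4) are precisely what is needed to match $\pi_{0}(W^{s})$ with a basic set $\mathcal{U}(U_{1},N+j,N,U_{2})$, via the relation $g^{N+j}\circ a=g^{N}\circ b$.
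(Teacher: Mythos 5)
Your proposal is correct and takes essentially the same route as the paper: surjectivity is deduced from Theorem \ref{localfibb}, and the local homeomorphism property is read off from the basic neighbourhoods $W^{s}((x,j,v),N,U)$, whose defining conditions (1)--(4) make each of the three maps restrict to a homeomorphism of $U$ onto an open set (namely $\varphi^{-j}(h^{s}_{N}(U))$, $\pi_{0}(U)$, and the basic set $\mathcal{U}(U_{1},N+j,N,U_{2})\subseteq\G_{g}$, respectively). Your write-up is in fact more detailed than the paper's proof, which simply asserts these identifications as built into the definition of the topology on $Z^{u}(P)$.
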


\begin{proof} 
The statement for $\rho_{L}$ follows directly from the definition of the local neighbourhoods of $Z^{u}(P)$. For $\rho_{R}$ we know by Theorem \ref{localfibb} that $\pi_0:X^u(P)\to V$ is a covering map. Thus for $v\in V$ there exists $x\in X^{u}(P)$ with $\pi_{0}(x)=v$ and $(x,0,v)\in Z^{u}(P)$. Then $v=\rho_{R}(x,0,v)$ so $\rho_{R}$ is surjective too. For a local neighbourhood $W^{s}((x,j,v),N,U)$ with $U$ sufficiently small, we have that the open set $\rho_{R}(W^{s}((x,j,v),N,U))=\pi_{0}(U)$ is homeomorphic to $U$ because of Theorem \ref{localfibb}. For the map $\pi_{0}$, the local neighbourhoods in $Z^{u}(P)$ are defined in such a way that $W^{s}((x,j,v),N,U)$ is homeomorphic to $\pi_{0}(W^{s}((x,j,v),N,U))$, which proves the statement.
\end{proof}

We now establish that the groupoids $G^{s}(P)\rtimes \Z$ and $\mathcal{R}_{g}$ are Morita equivalent in the sense of \cite{MRW}. This implies that the groupoid $C^{*}$-algebras $C^{*}(\mathcal{R}_{g})$ and $C^{*}(G^{s}(P)\rtimes \Z)$ are strongly Morita equivalent. An equivalence of groupoids is implemented by a topological space carrying appropriate left and right actions. We now show that the space $Z^{u}(P)$ constructed above implements the desired Morita equivalence.

\begin{prop}
\label{Moritaprop}
The map 
\[
\zeta:Z^u(P)\to X^{u}(P)_{\pi_0}\times_r\G_{g}:=\{(x,g)\in X^{u}(P)\times\G_{g}:\pi_0(x)=r(g)\}
\]
given by $(x,j,v) \mapsto (x,(\pi_0(x),j,v))$ is a homeomorphism,
where the right hand side is equipped with the topology from the inclusion $X^{u}(P)_{\pi_0}\times_r\G_{g}\subseteq X^{u}(P)\times\G_{g}$. In particular, the actions 
\begin{align*}
(G^s(P)\rtimes \Z)_d\times_{\rho_L} Z^u(P)&\to Z^u(P),\quad (x,n,y)\cdot(y,j,v)= (x,j+n,v),\\
Z^u(P)_{\rho_R}\times_r \G_g&\to Z^u(P),\quad (x,j,v)\cdot (v,n,w)= (x,j+n,w),
\end{align*}
are well defined, free and proper.
\end{prop}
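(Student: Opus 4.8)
The plan is to verify the bijection and the topology first, then dispatch the two actions, reducing the less standard left action to a pullback-groupoid action. I would begin by noting that $\zeta$ is a bijection essentially by unwinding definitions: the condition ``$\exists k:\ g^{k+j}(\pi_0(x))=g^k(v)$'' defining $Z^u(P)$ is exactly the condition that $(\pi_0(x),j,v)\in\G_g$, and $r(\pi_0(x),j,v)=\pi_0(x)$ places the image in the fibre product, so $\zeta$ is well defined; injectivity is clear and surjectivity runs the equivalence backwards. Writing $\zeta=(\rho_L,\pi_0)$, continuity is immediate from Proposition \ref{momentmaps}. To see $\zeta$ is open I would evaluate it on a basic set $W=W^s((x,j,v),N,U)$ and prove $\zeta(W)=(\rho_L(W)\times\pi_0(W))\cap(X^u(P)_{\pi_0}\times_r\G_g)$. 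The inclusion ``$\subseteq$'' is trivial, and ``$\supseteq$'' uses the third defining requirement of the neighbourhoods \eqref{basiszup}, which guarantees that $\pi_0$ is injective on $\rho_L(W)=\varphi^{-j}(h^s_N(U))$: this injectivity forces the $X^u(P)$-leg of any point of the right-hand side to be $\rho_L$ of a unique point of $W$. Since $\rho_L(W)$ and $\pi_0(W)$ are open, $\zeta$ is an open continuous bijection, hence a homeomorphism.

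For the right action I would transport it along $\zeta$ to the pullback $X^u(P)_{\pi_0}\times_r\G_g$, where it becomes ordinary right multiplication $(x,\gamma)\cdot h=(x,\gamma h)$. Freeness is cancellation of $\gamma$, and properness follows because, for a compact set $C$, the boundedness of the $\G_g$-legs of $(x,\gamma h)$ and of $(x,\gamma)$ forces $h=\gamma^{-1}(\gamma h)$ into a compact subset of the \'etale groupoid $\G_g$, namely the image of a compact set of composable pairs under multiplication.

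The left action is the substantive case. I would first record the characterization of stable equivalence in the solenoid as eventual agreement of leading coordinates, $a\sim_s b\iff\exists N:\ g^N(\pi_0(a))=g^N(\pi_0(b))$; the forward implication follows from local expansiveness of $g^K$ (Lemma \ref{locally expanding}), since convergent but distinct leading $g$-orbits would have their distance blown up by expansiveness. This makes $\Phi\colon(a,n,b)\mapsto(\pi_0(a),n,\pi_0(b))$ a well-defined continuous groupoid homomorphism into $\G_g$, and in particular shows that $(a,n,b)\cdot(b,j,v)=(a,n+j,v)$ genuinely lands in $Z^u(P)$; freeness is then a one-line computation (the formula returns $(b,j,v)$ only when $n=0$ and $a=b$). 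For properness I would, via $\zeta$, identify the left action with the canonical action of the pullback groupoid $\pi_0^*\G_g$ on $X^u(P)_{\pi_0}\times_r\G_g$ through the induced homomorphism $\tilde\Phi\colon G^s(P)\rtimes\Z\to\pi_0^*\G_g$, $(a,n,b)\mapsto(a,(\pi_0(a),n,\pi_0(b)),b)$, whose injectivity is obvious and whose surjectivity is supplied exactly by Lemma \ref{triviality}.

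The main obstacle I expect is establishing that $\tilde\Phi$ is a homeomorphism of \'etale groupoids --- equivalently, proving properness of the left action directly --- since the topologies on $G^s(P)\rtimes\Z$ and on $\pi_0^*\G_g$ are presented by the different bases \eqref{dahood} and \eqref{basisgg} and must be matched on the neighbourhoods $V^s$ and $W^s$. A secondary technical point is making the eventual-agreement characterization of stable equivalence fully rigorous within the generality of Wieler's axioms rather than for the model examples.
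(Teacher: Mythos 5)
Your proposal is correct where it is complete, and it is considerably more detailed than the paper's own proof, which consists of two sentences: the topology on $Z^u(P)$ ``is constructed in such a way that $\zeta$ is a homeomorphism'' (with explicit continuous inverse $\zeta^{-1}(x,(\pi_0(x),j,v))=(x,j,v)$), and freeness and properness of the actions ``follow from a lengthier computation which we omit.'' Your homeomorphism argument --- continuity from Proposition \ref{momentmaps} and openness from the identity $\zeta(W)=\big(\rho_L(W)\times\pi_0(W)\big)\cap\big(X^u(P)_{\pi_0}\times_r\G_g\big)$, whose nontrivial inclusion rests exactly on requirement (3) in the construction of the basis \eqref{basiszup} --- is precisely the verification hidden in the phrase ``constructed in such a way,'' and it is correct. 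The right-action argument (transport along $\zeta$, freeness by cancellation, properness by confining $h=\gamma^{-1}(\gamma h)$ to a compact set using continuity of multiplication and inversion in the \'{e}tale groupoid $\G_g$) is also correct. For the left action, your route through $\Phi(a,n,b)=(\pi_0(a),n,\pi_0(b))$ and the pullback groupoid $\pi_0^*\G_g$ is a clean way to organize what the paper leaves as an unwritten computation: your expansiveness characterization of $\sim_s$ is right (forward direction from Lemma \ref{locally expanding}, backward direction from the contraction estimate already used in the proof of Lemma \ref{triviality}), the algebraic bijectivity of $\tilde{\Phi}$ is right, and its surjectivity is indeed exactly Lemma \ref{triviality}.

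The step you flag as the main obstacle --- that $\tilde{\Phi}$ is a homeomorphism, equivalently continuity and properness of the left action --- is the one genuine gap in your write-up; note that the continuity of $\Phi$ you assert when first introducing it is already part of this unresolved matching of topologies. It coincides with the computation the paper itself omits, so you have not fallen short of the paper, but it can be closed as follows. By Lemma \ref{bracketEqualsStableForSmallN}, if $U$ is small enough then $g^{N}(\pi_0(h^s_N(z)))=g^{N}(\pi_0(z))$ for all $z\in U$; hence $\tilde{\Phi}$ maps the basic bisection $W=\{(h^s_N(z),n,\varphi^{n}(z)):z\in U\}$ of $G^s(P)\rtimes\Z$ into a single open bisection $W'$ of $\pi_0^*\G_g$ whose $\G_g$-component is a set $\mathcal{U}(\cdot,k,k-n,\cdot)$ from \eqref{basisgg} with the uniform index $k=\max\{N,n\}$. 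Since $\tilde{\Phi}$ intertwines the range maps, is bijective, and both groupoids are \'{e}tale over $X^u(P)$, on such a $W$ one has $\tilde{\Phi}|_W=(r|_{W'})^{-1}\circ r|_W$, which is simultaneously continuous and open onto $(r|_{W'})^{-1}(r(W))$; therefore $\tilde{\Phi}$ is a homeomorphism, and properness of the left action then follows from the same compactness argument you used on the right, applied to the canonical $\pi_0^*\G_g$-action on $X^u(P)_{\pi_0}\times_r\G_g$.
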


The topology of $Z^u(P)$ is constructed in such a way that $\zeta$ is a homeomorphism; the inverse of $\zeta$ is the continuous mapping $\zeta^{-1}(x,(\pi_0(x),j,v))=(x,j,v)$. That the actions are free and proper follows from a lengthier computation which we omit. We deduce the following theorem from Proposition \ref{Moritaprop}. 

\begin{thm}
\label{moreth}
Let $(X,\varphi)$ be an irreducible Wieler solenoid arising from an open surjection $g:V \to V$ and suppose $P\subseteq X$ is a finite $\varphi$-invariant set of periodic points.  The space $Z^u(P)$ defines an \'{e}tale groupoid Morita equivalence
\[
G^s(P)\rtimes \Z\sim_M \G_g,
\]
that respects the cocycles $c_{G^s(P)\rtimes \Z}$ (see Definition \ref{cgspz}) and $c_{\G_g}$ (see Definition \ref{cgg}). In particular, there is a $\T$-equivariant Morita equivalence $C^*(G^s(P))\rtimes \Z\sim_M O_E$.
\end{thm}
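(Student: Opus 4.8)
The plan is to verify directly that $Z^u(P)$, equipped with the two actions described in Proposition \ref{Moritaprop}, is a groupoid equivalence in the sense of \cite{MRW}, and then to promote the resulting imprimitivity bimodule to a $\T$-equivariant one using the natural $\Z$-grading on $Z^u(P)$. Most of the structural input is already in hand: Proposition \ref{momentmaps} shows that the momentum maps $\rho_L:Z^u(P)\to X^u(P)=(G^s(P)\rtimes\Z)^{(0)}$ and $\rho_R:Z^u(P)\to V=\G_g^{(0)}$ are surjective local homeomorphisms, and Proposition \ref{Moritaprop} shows that the left $G^s(P)\rtimes\Z$-action and the right $\G_g$-action are well defined, free and proper. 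A direct substitution into the action formulas shows the two actions commute, since both associations reduce $(x,n,y)\cdot(y,j,v)\cdot(v,m,w)$ to $(x,n+j+m,w)$. It therefore remains to identify the two orbit spaces and to check compatibility of the cocycles.

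For the right action, the homeomorphism $\zeta$ of Proposition \ref{Moritaprop} identifies $Z^u(P)$ with the fibre product $X^u(P)_{\pi_0}\times_r\G_g$, on which the right $\G_g$-action is the standard principal action on the $\G_g$-factor; its orbit space is $X^u(P)$ and the orbit map is $\rho_L$, so $\rho_L$ induces a homeomorphism $Z^u(P)/\G_g\cong X^u(P)$. For the left action I would argue as follows. Suppose $\rho_R(x,j,v)=\rho_R(x',j',v')$, i.e. $v=v'$. Choosing $y\in X^u(P)$ with $\pi_0(y)=v$, Lemma \ref{triviality} gives $y\sim_s\varphi^{j}(x)$ and $y\sim_s\varphi^{j'}(x')$, whence $\varphi^{j}(x)\sim_s\varphi^{j'}(x')$; reindexing the defining limit by $m=n+j$ yields $x\sim_s\varphi^{j'-j}(x')$. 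Since $X^u(P)$ is $\varphi$-invariant, $(x,j-j',x')\in G^s(P)\rtimes\Z$ and $(x,j-j',x')\cdot(x',j',v)=(x,j,v)$, so the two points lie in the same $G^s(P)\rtimes\Z$-orbit. As $\rho_R$ is constant on these orbits it factors through the orbit map, and the computation above shows the induced map is injective; being a continuous open surjection it is a homeomorphism $(G^s(P)\rtimes\Z)\backslash Z^u(P)\cong V$. This completes the verification that $Z^u(P)$ is a $(G^s(P)\rtimes\Z,\G_g)$-equivalence.

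To see that the equivalence respects the cocycles of Definitions \ref{cgspz} and \ref{cgg}, I would introduce the locally constant map $\tilde c:Z^u(P)\to\Z$, $(x,j,v)\mapsto j$, and verify from the action formulas that $\tilde c\big((x,n,y)\cdot z\big)=c_{G^s(P)\rtimes\Z}(x,n,y)+\tilde c(z)$ and $\tilde c\big(z\cdot(v,n,w)\big)=\tilde c(z)+c_{\G_g}(v,n,w)$. Thus $\tilde c$ intertwines the two $\Z$-valued cocycles, which is exactly the assertion that the equivalence respects them.

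Finally, the Muhly--Renault--Williams theorem produces a $C^*(G^s(P)\rtimes\Z)$--$C^*(\G_g)$ imprimitivity bimodule from $Z^u(P)$, realised as a completion of $C_c(Z^u(P))$. The grading $\tilde c$ decomposes this module into spectral subspaces and thereby equips it with a $\T$-action intertwining the dual $\T$-actions on $C^*(G^s(P)\rtimes\Z)$ and $C^*(\G_g)$ induced by the two cocycles, making the Morita equivalence $\T$-equivariant. Combining this with the $\T$-equivariant isomorphism $O_E\cong C^*(\G_g)$ of Theorem \ref{isothemggtooh} and the identification $C^*(G^s(P)\rtimes\Z)\cong C^*(G^s(P))\rtimes\Z$ yields the asserted $\T$-equivariant Morita equivalence $C^*(G^s(P))\rtimes\Z\sim_M O_E$. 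I expect the main obstacle to be this last, $\T$-equivariance, step: the groupoid equivalence itself follows almost formally from Propositions \ref{momentmaps} and \ref{Moritaprop}, but one must check carefully that the grading $\tilde c$ descends to a genuine $\T$-action on the imprimitivity bimodule compatible with \emph{both} dual actions simultaneously, rather than merely to a grading on $C_c(Z^u(P))$.
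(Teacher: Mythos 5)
Your proposal is correct and follows the same route as the paper: the paper deduces Theorem \ref{moreth} directly from Proposition \ref{Moritaprop} (taking the structural input of Proposition \ref{momentmaps} for granted), exactly as you do. The only difference is that you spell out the details the paper leaves implicit — the commuting of the actions, the orbit-space identifications $Z^u(P)/\G_g\cong X^u(P)$ and $(G^s(P)\rtimes\Z)\backslash Z^u(P)\cong V$, and the cocycle/grading argument for $\T$-equivariance — and these verifications are all sound.
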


\begin{remark}
\label{remarkonmodules}
The $(C^{*}(G^{s}(P))\rtimes \Z, C^{*}(\G_{g}))$ Morita equivalence bimodule $L^{2}(Z^{u}(P))_{C^{*}(\G_{g})}$ is defined from the \'{e}tale Morita equivalence $Z^u(P)$ through the $C^{*}(\G_{g})$ -valued inner product (see \cite[Page 12]{MRW}):
\begin{equation}\label{Moritainnerprod}
\langle f_{1},f_{2}\rangle(v,n,w):=\sum_{(x,j,v)\in Z^{u}(P)} \overline{f_{1}(x,j,v)}f_{2}(x,j+n,w),
\end{equation}
and gives a representation of $C^*(G^s(P))\rtimes \Z$ on the space $L^{2}(Z^{u}(P))_{C^{*}(\G_{g})}\otimes_{C^*(\G_g)} L^2(\G_g)_{C(V)}$. The latter space can be identified with the completion of $C_c(Z^u(P))$ in the $C(V)$-valued inner product
\begin{equation}\label{tensorinnerprod}
\langle f_1,f_2\rangle(v):=\sum_{(x,j,v)\in Z^u(P)} \overline{f_1(x,j,v)}f_2(x,j,v).
\end{equation}
For $f_1,f_2\in C_c(Z^u(P))$ the sums on the right hand side are finite. The $C(V)$-valued inner product \eqref{tensorinnerprod} on $C_c(Z^u(P))$ can be realized by a convolution product $C_c(Z^u(P)^{\op})\times C_c(Z^u(P))\to C_c(\G_g)$ composed with the conditional expectation $\varrho$ on $C_c(\G_g)$. We use the notation $L^2(Z^u(P))_{C(V)}:=L^{2}(Z^{u}(P))_{C^{*}(\G_{g})}\otimes_{C^*(\G_g)} L^2(\G_g)_{C(V)}$.  To avoid notational confusion, we remark that $L^{2}(Z^{u}(P))_{C(V)}$ and $L^{2}(Z^{u}(P))_{C^*(\G_g)}$ are different as vector spaces in general.
\end{remark}

\begin{remark}
\label{uoneremark}
As a consequence of Theorem \ref{moreth}, there are isomorphisms $K_*(C^*(G^s(P)))\cong K_*^{\T}(O_E)$ and $K_*(C^*(G^s(P))\rtimes \Z)\cong K_*(O_E)$. Both these groups can be computed using Corollary \ref{corollarpseqpage13}. It was proven in \cite{KPW} that $C^*(G^s(P))\rtimes \Z$ and $C^*(G^u(P))\rtimes \Z$ are Poincar\'e dual (with dimension shift $1$). In particular, $K_*(C^*(G^u(P))\rtimes \Z)\cong K^{*+1}(C^*(G^s(P))\rtimes \Z)$ can also be computed using Corollary \ref{corollarpseqpage13}, cf. Remark \ref{someremarksonpex}. If the Poincar\'e duality of \cite{KPW} holds $\T$-equivariantly, then $K_*(C^*(G^u(P)))\cong K_{*+1}^{\T}(C^*(G^s(P))\rtimes \Z)$ which could also be computed using Corollary \ref{corollarpseqpage13}.
\end{remark}

\subsection{A closer look at the bimodule $\pmb{C_{c}(Z^{u}(P))}$} 
We wish to compute the Kasparov product of the bimodule induced by $Z^{u}(P)$ (see Theorem \ref{moreth}) with the $\T$-equivariant unbounded $(O_{E}, C(V))$-cycle constructed in Theorem \ref{represent!}. To do so we describe the module $L^{2}(Z^{u}(P))_{C^{*}(\G_{g})}$ obtained by completing the space $C_{c}(Z^{u}(P))$ in the inner product \eqref{Moritainnerprod}. The module has a fairly simple structure. This is due to the fibre product structure of the space $Z^{u}(P)=X^{u}(P)_{\pi_{0}}\times_{r}\G_{g}$ from Proposition \ref{Moritaprop}. We also provide a Cuntz-Pimsner model of $C^*(G^s(P))\rtimes \Z$ along the way.

\begin{lemma}
\label{factorweak} 
Assume that $f\in C_{c}(Z^{u}(P))$ decomposes as a pointwise product $f(z)=f_{1}(z)f_{2}(z)$ with $f_{i}\in C_{c}(Z^{u}(P))$ and $\supp f_{i}\subset W^{s}(z_0,N,W)$, for some $z_0$, $N$ and $W$ (cf. Equation \eqref{basiszup}). Then there exist $u\in C_{c}(X^{u}(P))$ and $v\in C_{c}(\mathcal{R}_{g})$ such that $f(z)=u(\rho_{L}(z))v(\pi_{0}(z))$.
\end{lemma}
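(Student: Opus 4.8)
The plan is to exploit that, on a single basis set $\mathcal{W}:=W^s(z_0,N,W)$ of the topology on $Z^u(P)$, both the projection $\rho_L:Z^u(P)\to X^u(P)$ and the projection $\pi_0:Z^u(P)\to\G_g$ restrict to homeomorphisms onto open subsets of $X^u(P)$ and $\G_g$ respectively; this is precisely what Proposition \ref{momentmaps}, together with the construction of the neighbourhood basis \eqref{basiszup}, provides. Since $\supp f_1$ and $\supp f_2$ are compact subsets of the open set $\mathcal{W}$, I would define $u$ as the pushforward of $f_1$ along $\rho_L|_{\mathcal{W}}$, extended by zero (that is, $u:=f_1\circ(\rho_L|_{\mathcal{W}})^{-1}$ on the open set $\rho_L(\mathcal{W})$ and $u:=0$ off it), and likewise $v:=f_2\circ(\pi_0|_{\mathcal{W}})^{-1}$ on $\pi_0(\mathcal{W})$ and $v:=0$ elsewhere. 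As each $\supp f_i$ is compact inside the open set on which the relevant projection is a homeomorphism, the transported function has compact support contained in an open set, so its extension by zero is continuous; thus $u\in C_c(X^u(P))$ and $v\in C_c(\G_g)$.

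It then remains to verify the pointwise identity $f(z)=u(\rho_L(z))\,v(\pi_0(z))$ for every $z\in Z^u(P)$. For $z\in\mathcal{W}$ this is immediate, since $(\rho_L|_{\mathcal{W}})^{-1}(\rho_L(z))=z=(\pi_0|_{\mathcal{W}})^{-1}(\pi_0(z))$, whence the right-hand side equals $f_1(z)f_2(z)=f(z)$. For $z\notin\mathcal{W}$ the left-hand side vanishes because $\supp f_i\subseteq\mathcal{W}$, and the real content of the lemma is that the right-hand side vanishes as well. This is the step I expect to be the main obstacle: $\rho_L$ and $\pi_0$ are only injective on $\mathcal{W}$, not globally, so a priori $\rho_L(z)$ and $\pi_0(z)$ could each be hit from inside $\mathcal{W}$ by \emph{distinct} points, leaving the product nonzero.

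To rule this out I would argue by contradiction using the explicit parametrization of $\mathcal{W}$: writing $z_0=(x,j,v)$, every element of $\mathcal{W}$ has the form $(\varphi^{-j}(h^s_N(y')),\,j,\,\pi_0(y'))$ for a unique $y'\in W$, where $\pi_0$ in the last slot is the base projection $X\to V$. Suppose $u(\rho_L(z))\,v(\pi_0(z))\neq0$; then there are $z_1,z_2\in\mathcal{W}$, with parameters $y_1',y_2'\in W$, such that $\rho_L(z_1)=\rho_L(z)$ and $\pi_0(z_2)=\pi_0(z)$. Writing $z=(x_z,j_z,v_z)$, the first equation gives $x_z=\varphi^{-j}(h^s_N(y_1'))$, while the second gives $j_z=j$, $v_z=\pi_0(y_2')$, and $\pi_0(x_z)=\pi_0(\varphi^{-j}(h^s_N(y_2')))$. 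Combining these yields $\pi_0(\varphi^{-j}(h^s_N(y_1')))=\pi_0(\varphi^{-j}(h^s_N(y_2')))$, and here condition (3) among the requirements imposed on $W$ in \eqref{basiszup}, namely injectivity of $\pi_0$ on $\varphi^{-j}(h^s_N(W))$, together with injectivity of $h^s_N$, forces $y_1'=y_2'$. Hence $z_1=z_2$, and then $z=(\varphi^{-j}(h^s_N(y_1')),j,\pi_0(y_1'))=z_1\in\mathcal{W}$, contradicting $z\notin\mathcal{W}$. This establishes the identity and completes the proof; the only genuinely delicate point is this injectivity bookkeeping, which is exactly why conditions (1) and (3) were built into the neighbourhood basis \eqref{basiszup}.
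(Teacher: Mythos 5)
Your proposal is correct and takes essentially the same approach as the paper: you define $u$ and $v$ exactly as the paper does (transport $f_1$ and $f_2$ along the restricted homeomorphisms $\rho_L|_{W^{s}(z_0,N,W)}$ and $\pi_0|_{W^{s}(z_0,N,W)}$ and extend by zero), and your contradiction argument for $z\notin W^{s}(z_0,N,W)$ establishes precisely the paper's key identity $W^{s}(z_0,N,W)=\rho_{L}^{-1}\bigl(\rho_{L}(W^{s}(z_0,N,W))\bigr)\cap\pi_{0}^{-1}\bigl(\pi_{0}(W^{s}(z_0,N,W))\bigr)$, i.e.\ Equation \eqref{tranverse}, using the same injectivity requirements built into the basis \eqref{basiszup}. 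The only difference is presentational: the paper asserts this identity rather tersely, whereas you derive it by explicit parametrization of the basic set, which makes the role of condition (3) more transparent.
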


\begin{proof} 
For $W$ small enough, $W^{s}(z_0,N,W)$ is homeomorphic to 
$$U:=\rho_{L}(W^{s}(z_0,N,W))=\varphi^{-j}h^{s}_{N}(W),$$ 
and to 
$$V:=\pi_{0}(W^{s}(z_0,N,W))=\mathcal{U}(\pi_{0}(U),N,j,W).$$ 
For a reminder on the notation $\mathcal{U}$, see Equation \ref{basisgg}. Let  $\rho_{L}^{U}:U\to W^{s}(z_0,N,W)$ and $\rho_{L}^{V}:V\to W^{s}(z,N,W)$ denote the inverse mappings. Define $u:=f_{1}\circ\rho_{L}^{U}$ and $v:=f_{2}\circ\rho_{L}^{V}$, and extend to $X^u(P)$ and $\G_g$, respectively, by declaring
\begin{equation}
\label{supps} 
\supp u\subset U, \quad \supp v\subset V.
\end{equation}
In particular, both $u$ and $v$ have compact support. The identity 
\begin{equation}
\label{tranverse}
W^{s}(z_0,N,W)=\rho_{L}^{-1}(U)\cap \pi_{0}^{-1}(V),
\end{equation}
holds because if $(x,j,v)\in \rho_{L}^{-1}(V)$ then $x\in V_{i}=\varphi^{-j}h^{s}_{N}(W)$ and if $(x,j,v)\in \pi_{0}^{-1}(U)$ then $(\pi_{0}(x),j,v)\in U$, so $v\in \pi_{0}(W)$. This means  that $(x,j,v)\in W^{s}(z_0,N,U)$. The other inclusion is obvious. Thus we have that $u(\rho_{L}(z))v(\pi_{0}(z))\neq 0$ implies 
$$z\in \rho_{L}^{-1}(U)\cap \pi_{0}^{-1}(V)=W^{s}(z_0,N,W)$$ 
by \eqref{supps} and \eqref{tranverse}. Then 
$$u(\rho_{L}(z))v(\pi_{0}(z))=f_{1}(\rho_{L}^{U}\rho_{L}(z))f_{2}(\pi^{V}_{0}\pi_{0}(z))=f_{1}(z)f_{2}(z)=f(z),$$ 
as desired.
\end{proof}

\begin{remark} 
Note that because $Z^{u}(P)$ is locally compact and Hausdorff, the decomposition $f=f_{1}f_{2}$ assumed in Lemma \ref{factorweak} can always be achieved if $f$ is supported in a set of the form $W^s(z_0,N,W)$. For our purposes the above formulation of the lemma suffices.
\end{remark}

\begin{lemma}
\label{factorstrong} 
For $f\in C_{c}(Z^{u}(P))$ there exists $n\in\N$ and functions $u_{i}\in C_{c}(X^{u}(P))$ and $f_{i}\in C_{c}(\G_{g})$ such that $f(z)=\sum_{i=1}^{n} u_{i}(\rho_{L}(z))f_{i}(\pi_{0}(z))$.
\end{lemma}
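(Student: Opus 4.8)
The plan is to globalize Lemma \ref{factorweak} via a partition of unity. Since $f\in C_c(Z^u(P))$ has compact support and the sets $W^s(z_0,N,U)$ from \eqref{basiszup} form a basis for the locally compact Hausdorff topology on $Z^u(P)$, I would first cover $\supp f$ by finitely many such basic open sets $W_1,\ldots,W_n$, writing $W_j=W^s(z_0^{(j)},N_j,U_j)$, and then choose a subordinate partition of unity $\{\chi_j\}_{j=1}^n$, that is, continuous functions with $\supp\chi_j\subset W_j$ and $\sum_{j=1}^n\chi_j\equiv 1$ on $\supp f$. Such a partition exists because $Z^u(P)$ is locally compact and Hausdorff.

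With this decomposition I have $f=\sum_{j=1}^n\chi_j f$, and each summand $\chi_j f$ is an element of $C_c(Z^u(P))$ whose support lies in the single basic open set $W_j$. By the Remark following Lemma \ref{factorweak}, local compactness and Hausdorffness of $Z^u(P)$ allow each $\chi_j f$ to be written as a pointwise product of two functions in $C_c(Z^u(P))$, both supported in $W_j$; concretely one may take $\chi_j f=(\chi_j f)\cdot\eta_j$ where $\eta_j\in C_c(Z^u(P))$ equals $1$ on $\supp(\chi_j f)$ and is supported in $W_j$. Lemma \ref{factorweak} then applies to $\chi_j f$ and produces $u_j\in C_c(X^u(P))$ and $v_j\in C_c(\mathcal{R}_g)$ with
\[
(\chi_j f)(z)=u_j(\rho_L(z))\,v_j(\pi_0(z)).
\]

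Summing over $j$ and setting $f_i:=v_i$ then yields
\[
f(z)=\sum_{i=1}^n u_i(\rho_L(z))\,f_i(\pi_0(z)),
\]
as desired. The only point requiring care is ensuring that each partition-of-unity piece is both supported in a single basic neighbourhood and factored as a pointwise product there, so that the hypotheses of Lemma \ref{factorweak} are literally met; this is precisely what the Remark after that lemma guarantees, so beyond routine bookkeeping there is no genuine obstacle.
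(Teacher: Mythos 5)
Your proof is correct and follows essentially the same route as the paper: cover $\supp f$ by finitely many basic open sets, take a subordinate partition of unity, and apply Lemma \ref{factorweak} to each piece. The only inessential difference is that the paper uses a squared partition of unity $(\chi_i^{2})$ so that each piece $\chi_i^{2}f=\chi_i\cdot(\chi_i f)$ is automatically a pointwise product of two functions supported in a single basic neighbourhood, whereas you manufacture that factorization with an auxiliary Urysohn function $\eta_j$; both devices are valid.
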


\begin{proof} 
Let $K:=\supp f$ and choose a finite open cover of $K$ sets of the form $W_{i}:=W^{s}(z_{i},N_{i},W_{i})$ for $i=1,\cdots,N$. Adding $W_{0}:=Z^{u}(P)\setminus K$ gives a finite open cover of $Z^{u}(P)$. Let $(\chi_{i}^{2})_{i=1}^n$ be a partition of unity subordinate to $(W_{i})_{i=1}^{n}$, so $$f(z)=\sum_{i=1}^{n}\chi_{i}^{2}(z)f(z)=\sum_{i=1}^{n}\chi_{i}(z)\chi_{i}(z)f(z).$$ The functions $\chi_{i}$ and $\chi_{i}f$ are supported in $W_{i}$. By Lemma \ref{factorweak} there exists functions $u_{i}\in C_{c}(X^{u}(P))$ and $f_{i}\in C_{c}(\G_{g})$ such that $\chi_{i}^{2}(z)f(z)=u_{i}\rho_{L}(z)f_{i}(\pi_{0}(z))$. Thus we have $f(z)=\sum_{i=1}^{n}u_{i}(\rho_{L}(z))f_{i}(\pi_{0}(z))$.
\end{proof}

By Theorem \ref{localfibb}, the map $\pi_{0}:X^{u}(P)\to V$ is a covering map. As such, $\pi_0$ induces a conditional expectation
$$\pi_{0*}:C_{c}(X^{u}(P))\to C(V),\quad \pi_{0*}f(v):=\sum_{x\in \pi_{0}^{-1}(v)} f(x).$$
We denote the $C(V)$-Hilbert $C^{*}$-module completion of $C_{c}(X^{u}(P))$ by $L^{2}(X^{u}(P))_{C(V)}$.
\label{l2xup}

\begin{prop}
\label{tensorfactor} 
Let $(X,\varphi)$ be an irreducible Wieler solenoid arising from an open surjection $g:V \to V$ and suppose $P\subseteq X$ is a finite $\varphi$-invariant set of periodic points. The mapping
\begin{equation}
\label{rholpizer}
\rho_{L}^{*}\otimes \pi_{0}^{*}:C_{c}(X^{u}(P))\otimes_{C(V)} C_{c}(\G_{g})\to C_{c}(Z^{u}(P)),\quad f\otimes h\mapsto \rho_{L}^{*}f\cdot \pi_{0}^{*}h,
\end{equation} 
is a $\T$-equivariant inner product preserving surjection. The mapping \eqref{rholpizer} induces a $\T$-equivariant unitary isomorphism of  $C^{*}$-modules 
$$L^{2}(X^{u}(P))_{C(V)}\otimes_{C(V)}C^{*}(\G_{g})\to L^{2}(Z^{u}(P))_{C^{*}(\G_{g})}.$$ 
Moreover, there exists a $\T$-equivariant isomorphism of $C^*(\G_g)$-Hilbert $C^*$-modules 
\begin{equation}
\label{rholpizertwo}
\mathcal{H}\otimes C^{*}(\G_{g})\cong L^{2}(Z^{u}(P))_{C^{*}(\G_{g})},
\end{equation}
for a separable Hilbert space $\mathcal{H}$ with a trivial $\T$-action.
\end{prop}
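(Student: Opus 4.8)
The plan is to exploit the fibre-product description $Z^{u}(P)\cong X^{u}(P)_{\pi_{0}}\times_{r}\G_{g}$ furnished by Proposition \ref{Moritaprop}, together with the factorisation results Lemma \ref{factorweak} and Lemma \ref{factorstrong}. First I would check that $\rho_{L}^{*}\otimes\pi_{0}^{*}$ is well-defined and $\T$-equivariant: since the $\T$-action on $C_{c}(X^{u}(P))$ is trivial while $(z\cdot h)(x,n,y)=z^{n}h(x,n,y)$ on $C_{c}(\G_{g})$ and $(z\cdot f)(x,j,v)=z^{j}f(x,j,v)$ on $C_{c}(Z^{u}(P))$, the weight of $\rho_{L}^{*}f\cdot\pi_{0}^{*}h$ at $(x,j,v)$ equals $j$, matching the action on $Z^{u}(P)$. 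Surjectivity onto $C_{c}(Z^{u}(P))$ is exactly the content of Lemma \ref{factorstrong}.

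The heart of the first assertion is that $\rho_{L}^{*}\otimes\pi_{0}^{*}$ preserves inner products, which I would verify by computing both sides directly. On the one hand, the interior tensor product inner product on $C_{c}(X^{u}(P))\otimes_{C(V)}C_{c}(\G_{g})$ is $\langle f\otimes h,f'\otimes h'\rangle=\langle h,\,\pi_{0*}(\bar{f}f')\cdot h'\rangle_{C^{*}(\G_{g})}$, where $\pi_{0*}(\bar{f}f')\in C(V)$ acts on $C^{*}(\G_{g})$ through the diagonal inclusion $C(V)\hookrightarrow C^{*}(\G_{g})$ of Theorem \ref{isothemggtooh}. Expanding the convolution in $C^{*}(\G_{g})$ and using $\pi_{0*}(\bar{f}f')(t)=\sum_{x\in\pi_{0}^{-1}(t)}\overline{f(x)}f'(x)$ collapses this to a sum over those $(x,j)$ with $(\pi_{0}(x),j,v)\in\G_{g}$, i.e.\ over $(x,j,v)\in Z^{u}(P)$. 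On the other hand, inserting $\rho_{L}^{*}f\cdot\pi_{0}^{*}h$ into the explicit formula \eqref{Moritainnerprod} for the $C^{*}(\G_{g})$-valued inner product on $Z^{u}(P)$ yields verbatim the same sum. Once isometry and dense range are established, the operator extends to a $\T$-equivariant unitary $L^{2}(X^{u}(P))_{C(V)}\otimes_{C(V)}C^{*}(\G_{g})\to L^{2}(Z^{u}(P))_{C^{*}(\G_{g})}$, which is the second assertion.

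For the final isomorphism \eqref{rholpizertwo} it suffices, by the unitary just constructed, to prove that $L^{2}(X^{u}(P))_{C(V)}$ is a free Hilbert module, isomorphic to $\mathcal{H}\otimes C(V)$ for a separable Hilbert space $\mathcal{H}$ carrying the trivial $\T$-action; tensoring over $C(V)$ with $C^{*}(\G_{g})$ then gives $\mathcal{H}\otimes C^{*}(\G_{g})\cong L^{2}(Z^{u}(P))_{C^{*}(\G_{g})}$, with the $\T$-action concentrated on the $C^{*}(\G_{g})$-factor. To see that $L^{2}(X^{u}(P))_{C(V)}$ is free I would use Theorem \ref{localfibb}: the covering map $\pi_{0}\colon X^{u}(P)\to V$ realises $L^{2}(X^{u}(P))_{C(V)}$ as the module of $L^{2}$-sections of a locally trivial field of Hilbert spaces over the compact (hence paracompact) base $V$, with fibre $\ell^{2}(\pi_{0}^{-1}(v))$. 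Since the unstable leaves $X^{u}(p)$ are non-compact and each maps onto $V$, the fibres $\pi_{0}^{-1}(v)$ are countably infinite, so the field has separable infinite-dimensional fibres. By the triviality of such fields---a consequence of Kuiper's theorem on the contractibility of the unitary group of an infinite-dimensional Hilbert space---the field is trivial, yielding the required $\T$-equivariant isomorphism $L^{2}(X^{u}(P))_{C(V)}\cong\mathcal{H}\otimes C(V)$.

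The routine bookkeeping is the inner-product identity of the first two assertions, which I expect to go through by the matching-of-sums argument above. The main obstacle is the freeness in the last assertion: one must verify that the fibres of $\pi_{0}$ are genuinely infinite---so that the Hilbert field is infinite-dimensional and Kuiper's theorem applies---rather than merely finite, in which case the module could be a non-trivial finitely generated projective module and the conclusion would fail. I would secure this by ruling out compact unstable leaves in the irreducible, infinite setting, after which $\T$-equivariance is automatic since $\T$ acts trivially on all data attached to $X^{u}(P)$.
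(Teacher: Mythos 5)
Your proposal follows essentially the same route as the paper's proof: well-definedness via the fibre-product description from Proposition \ref{Moritaprop}, surjectivity from Lemma \ref{factorstrong}, the same expand-the-convolution-and-apply-$\pi_{0*}$ computation for preservation of inner products, and Kuiper's theorem to trivialise the Hilbert-space bundle over $V$ arising from the covering $\pi_0:X^u(P)\to V$. The one place you go beyond the paper --- insisting that the fibres of $\pi_0$ be verified infinite so that Kuiper's theorem applies --- flags a point the paper's proof glosses over, and your proposed resolution (non-compactness of $X^u(P)$, together with the fact that fibre cardinality is locally constant and spreads over all of $V$ by irreducibility) is the correct way to settle it.
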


\begin{proof} 
To see that the map $\rho_{L}^{*}\otimes \pi_{0}^{*}$ is well-defined, we first assume that $f\otimes h$ is such that $h$ is supported in a basic open neighbourhood $\mathcal{U}(U_{1},k,N, U_{2})$ in $\G_{g}$ which lifts to a basic open neighbourhood $W^{s}(z,N,V_{2})$ and $f$ is supported in $V_{1}:=\rho_{L}(W^{s}(z,N,V_{2}))$. For such $f\otimes h$ we have that 
\[\supp \rho_{L}^{*}(f)\cdot \pi_{0}^{*}(h)\subset \rho_{L}^{-1}(V_{1})\cap \pi_{0}^{-1}(V_{2})=W^{s}(z,N,V_{2}).\]
Therefore, the support of $(\rho_L^*\otimes\pi_0^*)(f\otimes h)$ is compact. Using the balancing relation, a partition of unity argument and Proposition \ref{momentmaps} we have that arbitrary tensors $f\otimes h$ can be written as a sum $f\otimes h=\sum_{i=1}^{n} f_{i}\otimes h_{i}$ for which each $f_{i}$ and $h_{i}$ satisfy the above support requirements. Thus the map $\rho_{L}^{*}\otimes \pi_{0}^{*}$ is well-defined. Surjectivity follows from Lemma \ref{factorstrong}. We prove preservation of the inner products:
\begin{align*}
\langle f_{1}\otimes h_{1}, f_{2}\otimes h_{2}\rangle(v,n,w)&=h^{*}_{1}*\langle f_{1},f_{2}\rangle h_{2}(v,n,w) \\
&=\sum_{(v,j,w)}\sum_{x\in\pi_{0}^{-1}(v)} \overline{h_{1}(u,-j,v)f_{1}(x)}f_{2}(x) h_{2}(u,n-j,w)\\
&=\sum_{(v,j,w)}\sum_{x\in\pi_{0}^{-1}(v)} \overline{\rho_{L}^{*}f_{1}\cdot \pi_{0}^{*}h_{1}(x,-j,v)} \rho_{L}^{*}f_{2}\cdot \pi_{0}^{*}h_{2}(x,n-j,w)\\
&=\sum_{(x,j,v)\in Z^{u}(P)} \overline{\rho_{L}^{*}f_{1}\cdot \pi_{0}^{*}h_{1}(x,j,v)} \rho_{L}^{*}f_{2}\cdot \pi_{0}^{*}h_{2}(x,n+j,w)\\
&=\langle \rho_{L}^{*}f_{1}\cdot \pi_{0}^{*}h_{1}, \rho_{L}^{*}f_{2}\cdot \pi_{0}^{*}h_{2}\rangle \quad\textnormal{by Equation \eqref{Moritainnerprod}}.
\end{align*}
The second statement of the proposition follows immediately. The locally trivial bundle $X^u(P)$ gives rise to a Hilbert space bundle on $V$ which by Kuiper's theorem is trivial. The third statement of the theorem is similar, so that $L^{2}(X^{u}(P))_{C(V)}\cong \mathcal{H}\otimes C(V)$ as $C(V)$-Hilbert $C^*$-modules.
\end{proof}

\begin{cor}
\label{stableisocor}
Let $(X,\varphi)$ be an irreducible Wieler solenoid arising from an open surjection $g:V \to V$ and suppose $P\subseteq X$ is a finite $\varphi$-invariant set of periodic points. The isomorphisms of Theorem \ref{isothemggtooh} and \eqref{rholpizertwo} give rise to a $\T$-equivariant $*$-isomorphism 
$$C^*(G^s(P))\rtimes \Z\cong O_E\otimes \mathbb{K},$$
where $\mathbb{K}$ denotes the $C^*$-algebra of compact operators on a separable Hilbert space with a trivial $\T$-action.
\end{cor}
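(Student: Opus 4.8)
The plan is to realize the groupoid Morita equivalence of Theorem \ref{moreth} as an imprimitivity bimodule, recognize the stable Ruelle algebra as the compact operators on that bimodule, and then simplify the bimodule using Proposition \ref{tensorfactor}. First I would invoke the Muhly--Renault--Williams machinery \cite{MRW}: the \'{e}tale groupoid equivalence $Z^u(P)$ from Theorem \ref{moreth} produces a $(C^*(G^s(P))\rtimes\Z, C^*(\G_g))$-imprimitivity bimodule, namely the completion $L^2(Z^u(P))_{C^*(\G_g)}$ of $C_c(Z^u(P))$ in the inner product \eqref{Moritainnerprod}, with left action given by the convolution formula recorded in Remark \ref{remarkonmodules}. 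A defining feature of an imprimitivity bimodule is that the left action identifies the left algebra with the compact operators of the right module; hence
\[
C^*(G^s(P))\rtimes\Z \;\cong\; \mathbb{K}_{C^*(\G_g)}\!\left(L^2(Z^u(P))_{C^*(\G_g)}\right).
\]
Because the equivalence respects the cocycles $c_{G^s(P)\rtimes\Z}$ and $c_{\G_g}$, this isomorphism intertwines the induced $\T$-actions, so it is $\T$-equivariant.

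Next I would feed in Proposition \ref{tensorfactor}, whose isomorphism \eqref{rholpizertwo} provides a $\T$-equivariant unitary $L^2(Z^u(P))_{C^*(\G_g)}\cong \mathcal{H}\otimes C^*(\G_g)$ for a separable Hilbert space $\mathcal{H}$ carrying the trivial $\T$-action. Conjugating the previous display by this unitary, together with the standard identification of the compacts on a standard module, $\mathbb{K}_{C^*(\G_g)}(\mathcal{H}\otimes C^*(\G_g))\cong \mathbb{K}(\mathcal{H})\otimes C^*(\G_g)$, gives a $\T$-equivariant isomorphism
\[
C^*(G^s(P))\rtimes\Z \;\cong\; \mathbb{K}(\mathcal{H})\otimes C^*(\G_g),
\]
where $\mathbb{K}(\mathcal{H})$ inherits the trivial $\T$-action from $\mathcal{H}$ and $C^*(\G_g)$ carries its gauge action. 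Finally, applying the $\T$-equivariant isomorphism $\pi_E\colon O_E\to C^*(\G_g)$ of Theorem \ref{isothemggtooh} to the second tensor factor and writing $\mathbb{K}:=\mathbb{K}(\mathcal{H})$ yields $C^*(G^s(P))\rtimes\Z\cong O_E\otimes\mathbb{K}$, as claimed.

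The routine points --- that the completion of $C_c(Z^u(P))$ really is an imprimitivity bimodule and that the left action lands exactly on the compacts --- are supplied by \cite{MRW} once the groupoid equivalence of Theorem \ref{moreth} is in hand, so I expect no difficulty there. The only step requiring genuine care is the bookkeeping of $\T$-equivariance at each stage: one must confirm that the MRW identification of the left algebra with $\mathbb{K}_{C^*(\G_g)}$ is equivariant (which follows from cocycle-compatibility of the equivalence in Theorem \ref{moreth}), and that conjugation by the unitary of Proposition \ref{tensorfactor} transports the action on the compacts to the action trivial $\otimes$ gauge on $\mathbb{K}(\mathcal{H})\otimes C^*(\G_g)$. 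This matches the trivial $\T$-action on $\mathbb{K}$ asserted in the statement, so the equivariance closes up consistently.
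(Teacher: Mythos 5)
Your proposal is correct and follows exactly the route the paper intends: the corollary is deduced from the MRW imprimitivity bimodule $L^2(Z^u(P))_{C^*(\G_g)}$ of Theorem \ref{moreth} (so that $C^*(G^s(P))\rtimes\Z\cong\mathbb{K}_{C^*(\G_g)}(L^2(Z^u(P))_{C^*(\G_g)})$, as the paper itself records just before Proposition \ref{stablecpa}), followed by the trivialization \eqref{rholpizertwo} and the isomorphism $\pi_E$ of Theorem \ref{isothemggtooh}, with $\T$-equivariance tracked through the cocycle compatibility. Your bookkeeping of the trivial action on $\mathbb{K}(\mathcal{H})$ matches the statement, so nothing is missing.
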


\begin{remark}
A result similar to Corollary \ref{stableisocor} was proven in \cite[Theorem 4.19]{Tho} for the fixed point algebra of the $\T$-action assuming that $g$ is expansive and the periodic points are dense in $V$. An equivariant version of \cite[Theorem 4.19]{Tho} implies Corollary \ref{stableisocor} under these slightly stronger assumptions on $g$.
\end{remark}

\subsection{A Cuntz-Pimsner model for the stable Ruelle algebra}
It is possible to construct $C^*(G^s(P))\rtimes\Z$ as a Cuntz-Pimnser algebra defined from a non-unital coefficient algebra. We follow the terminology of \cite{AFR}. For notational purposes, we write 
$$\mathbb{K}_V(P):=\mathbb{K}_{C(V)}(L^{2}(X^{u}(P))_{C(V)}).$$
Based on Proposition \ref{tensorfactor}, we define $\tilde{E}:=L^{2}(X^{u}(P))_{C(V)}\otimes E\otimes_{C(V)} L^{2}(X^{u}(P))^*$ which is a $\mathbb{K}_V(P)$-bi-Hilbertian bimodule with finite Jones-Watatani index. The Cuntz-Pimsner algebra $O_{\tilde{E}}$ over the coefficient algebra $\mathbb{K}_V(P)$ is therefore well defined. By Theorem \ref{isothemggtooh}, Theorem \ref{moreth} and Theorem \ref{tensorfactor} there are isomorphisms
\begin{align*}
C^*(G^s(P))\rtimes \Z&\cong \mathbb{K}_{C^*(\G_g)}(L^2(X^u(P))_{C(V)}\otimes C^*(\G_g))\\
&\cong L^2(X^u(P))_{C(V)}\otimes O_E\otimes_{C(V)}L^2(X^u(P))^*.
\end{align*}
Under these isomorphisms, the obvious linear mapping $\tilde{E}\to L^2(X^u(P))_{C(V)}\otimes O_E\otimes_{C(V)}L^2(X^u(P))^*$ induces a linear mapping $\tilde{E}\to C^*(G^s(P))\rtimes \Z$. It is readily verified that this is a covariant representation as in \cite[Theorem 3.12]{Pimsnerspaper} producing a $*$-homomorphism $\tilde{\pi}:O_{\tilde{E}}\to C^*(G^s(P))\rtimes \Z$. The mapping $\tilde{\pi}$ is bijective and we deduce the following result.

\begin{prop}
\label{stablecpa}
Let $(X,\varphi)$ be an irreducible Wieler solenoid arising from an open surjection $g:V \to V$ and suppose $P\subseteq X$ is a finite $\varphi$-invariant set of periodic points. The mapping $\tilde{\pi}:O_{\tilde{E}}\to C^*(G^s(P))\rtimes \Z$ is a $*$-isomorphism. In particular, $\tilde{E}$ provides a Cuntz-Pimsner model for the stable Ruelle algebra with coefficient algebra $\mathbb{K}_V(P):=\mathbb{K}_{C(V)}(L^{2}(X^{u}(P))_{C(V)})$.
\end{prop}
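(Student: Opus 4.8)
The plan is to realize $\tilde\pi$ explicitly as the $*$-homomorphism induced, via the universal property, by a Cuntz--Pimsner covariant representation of $\tilde E$, and then to prove bijectivity by combining a gauge-invariance argument with a generation argument, keeping everything $\T$-equivariant throughout. First I would assemble the identifications already available. Writing $\mathcal H:=L^2(X^u(P))_{C(V)}$, Theorem \ref{moreth} realizes $C^*(G^s(P))\rtimes\Z$ as the compacts $\mathbb{K}_{C^*(\G_g)}(L^2(Z^u(P))_{C^*(\G_g)})$ on the equivalence bimodule, and Theorem \ref{tensorfactor} identifies this bimodule $\T$-equivariantly with $\mathcal H\otimes_{C(V)}C^*(\G_g)$. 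Feeding in the isomorphism $O_E\cong C^*(\G_g)$ of Theorem \ref{isothemggtooh} yields a $\T$-equivariant identification of $C^*(G^s(P))\rtimes\Z$ with $\mathbb{K}_{O_E}(\mathcal H\otimes_{C(V)}O_E)=\mathcal H\otimes_{C(V)}O_E\otimes_{C(V)}\mathcal H^*$, the target algebra in the statement. Conceptually $\tilde\pi$ is the isomorphism expressing Morita invariance of the Cuntz--Pimsner construction under the equivalence $C(V)\sim_M\mathbb{K}_V(P)$, but I would verify it concretely.

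Next I would set up the covariant representation. The coefficient algebra maps in by $\mathbb{K}_V(P)=\mathbb{K}_{C(V)}(\mathcal H)\ni T\mapsto T\otimes 1$, using the inclusion $C(V)\hookrightarrow O_E$, and the universal generating map $E\to O_E$ (sending $\xi$ to the class of $\tilde T_\xi$) induces $\tilde E=\mathcal H\otimes_{C(V)}E\otimes_{C(V)}\mathcal H^*\to \mathcal H\otimes_{C(V)}O_E\otimes_{C(V)}\mathcal H^*$. I would then check that this pair is a Toeplitz representation, i.e. a correspondence morphism preserving the left $\mathbb{K}_V(P)$-action and the $\mathbb{K}_V(P)$-valued inner product. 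Through the imprimitivity isomorphism $\mathcal H^*\otimes_{\mathbb{K}_V(P)}\mathcal H\cong C(V)$ these identities reduce to the corresponding statements for the generating representation of $E$ inside $O_E$, which are standard.

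The crucial point is Cuntz--Pimsner covariance, and this is where I expect the main obstacle to lie. Because $E$ is finitely generated projective over $C(V)$ (the frame $(\chi_j)$ of \eqref{omegadefin} exhibits this) and $\tilde E$ is bi-Hilbertian of finite Jones--Watatani index, both left actions are injective and take values in compacts, so the Katsura ideal is the entire coefficient algebra and the covariance relation must be imposed everywhere. The functor $\mathcal H\otimes_{C(V)}(-)\otimes_{C(V)}\mathcal H^*$ is compatible with tensor powers, $\tilde E^{\otimes n}\cong\mathcal H\otimes_{C(V)}E^{\otimes n}\otimes_{C(V)}\mathcal H^*$, again by $\mathcal H^*\otimes_{\mathbb{K}_V(P)}\mathcal H\cong C(V)$; the hard part is to verify that, through these balanced tensor products, the covariance relation for $\tilde E$ is exactly the image of the defining covariance relation of $O_E$, i.e. that rank-one operators and inner products intertwine correctly. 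This is the step that genuinely uses the finite-index hypothesis and the framework for non-unital coefficient algebras of \cite{AFR, Pimsnerspaper}. Granting it, the universal property of $O_{\tilde E}$ produces the $\T$-equivariant $*$-homomorphism $\tilde\pi:O_{\tilde E}\to C^*(G^s(P))\rtimes\Z$.

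Finally I would prove bijectivity. For injectivity I would invoke the gauge-invariant uniqueness theorem: $\tilde\pi$ intertwines the gauge action on $O_{\tilde E}$ with the $\T$-action on the target coming from $O_E$, and it is faithful on the coefficient algebra since $T\mapsto T\otimes 1$ is injective; hence $\tilde\pi$ is injective. For surjectivity I would argue that the image is a $C^*$-subalgebra containing $\mathbb{K}_V(P)$ and the image of $\tilde E$, and that these generate $\mathbb{K}_{O_E}(\mathcal H\otimes_{C(V)}O_E)$: the elements $C(V)$ and $\{\tilde T_\xi\}$ generate $O_E$, and combining these with the rank-one operators $\xi\langle\eta,\cdot\rangle$ built from $\mathcal H$ recovers all compacts on $\mathcal H\otimes_{C(V)}O_E$ by a routine approximation argument. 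Since every identification used is $\T$-equivariant, $\tilde\pi$ is a $\T$-equivariant $*$-isomorphism, establishing the Cuntz--Pimsner model for the stable Ruelle algebra with coefficient algebra $\mathbb{K}_V(P)$.
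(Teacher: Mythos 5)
Your proposal follows essentially the same route as the paper: the paper constructs $\tilde{\pi}$ from exactly the same chain of identifications (Theorem \ref{isothemggtooh}, Theorem \ref{moreth} and Proposition \ref{tensorfactor} giving $C^*(G^s(P))\rtimes \Z\cong L^2(X^u(P))_{C(V)}\otimes O_E\otimes_{C(V)}L^2(X^u(P))^*$), views the obvious linear map $\tilde{E}\to C^*(G^s(P))\rtimes \Z$ as a covariant representation in the sense of \cite[Theorem 3.12]{Pimsnerspaper}, and invokes the universal property. The only difference is one of detail, not of method: the paper states that covariance and bijectivity are ``readily verified,'' whereas you spell out the verification (Toeplitz relations via $\mathcal{H}^*\otimes_{\mathbb{K}_V(P)}\mathcal{H}\cong C(V)$, gauge-invariant uniqueness for injectivity, and a generation argument for surjectivity), all of which is correct.
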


\begin{remark}
Theorem \ref{moreth} and Proposition \ref{stablecpa} are in a sense complementary. The algebra $O_E$ is defined using a unital coefficient algebra, making it easier to work with, but it is only related to $C^*(G^s(P))\rtimes \Z$ via a Morita equivalence. The algebra $O_{\tilde{E}}$ is defined using a non-unital coefficient algebra, but is {\bf explicitly isomorphic} to $C^*(G^s(P))\rtimes \Z$.
\end{remark}

\subsection{The $\kappa$-function on $\pmb{Z^{u}(P)}$} 

To compute the Kasparov product of the cycle in Theorem \ref{represent!} with the Morita equivalence from Theorem \ref{moreth} as an explicit unbounded $(C^{*}(G^{s}(P))\rtimes\Z, C(V))$-cycle we make use of a natural $\kappa$-function defined on $Z^{u}(P)$. To construct a self-adjoint regular operator on the module  $L^{2}(Z^{u}(P))_{C(V)}$ we look at the pullback of the function $\kappa:\G_{g}\to \N$ through the map $\pi_{0}$. Consider the functions
\[\kappa_{Z}(x,j,v):=\min\{k\geq \max\{0,-j\}: g^{k+j}(\pi_{0}(x))=g^{k}(v)\},\quad c_{Z}(x,j,v)=j.\]

\begin{lemma}
\label{thefunctionkappa} 
The functions $\kappa_{Z}$ and $c_{Z}$ are locally constant and hence continuous. 
\end{lemma}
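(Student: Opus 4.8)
The plan is to show that both $\kappa_Z$ and $c_Z$ are locally constant on the basic open sets $W^s((x,j,v),N,U)$ that generate the topology of $Z^u(P)$; continuity then follows automatically since a locally constant function is continuous. The function $c_Z(x,j,v)=j$ is trivially locally constant because the coordinate $j\in\Z$ is fixed along any single basic set $W^s((x,j,v),N,U)$ by the very definition in Equation \eqref{basiszup}: every point of that set has third-slot integer equal to the same $j$. So the only substantive content is the local constancy of $\kappa_Z$.

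For $\kappa_Z$, first I would transport the problem to the groupoid $\G_g$ via the surjective local homeomorphism $\pi_0:Z^u(P)\to \G_g$ of Proposition \ref{momentmaps}. The key observation is that $\kappa_Z=\kappa\circ\pi_0$ up to the effect of the lower bound $\max\{0,-j\}$ in the minimum. More precisely, for $(x,j,v)\in Z^u(P)$ we have $\pi_0(x,j,v)=(\pi_0(x),j,v)\in\G_g$, and the defining equation $g^{k+j}(\pi_0(x))=g^k(v)$ is exactly the equation whose minimal solution defines $\kappa(\pi_0(x),j,v)$. Since $\kappa$ is already known to be locally constant on $\G_g$ by the Proposition following Definition \ref{cgg} (where it was shown $\kappa|_{\mathcal{U}(U_1,k,l,U_2)}=l$ on the basic sets \eqref{basisgg}), and since $\pi_0$ is continuous, the composite $\kappa\circ\pi_0$ is locally constant. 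The remaining point is to check that the lower bound $\max\{0,-j\}$ does not alter this: because $j$ is itself locally constant (it is the value of $c_Z$), the quantity $\max\{0,-j\}$ is locally constant, so even if $\kappa_Z$ differs from $\kappa\circ\pi_0$ by enforcing this floor, the two agree locally up to a locally constant adjustment, and hence $\kappa_Z$ is locally constant.

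Concretely, on a fixed basic set $W^s((x,j,v),N,U)$ the integer $j$ is constant and, by the construction of the basis in \eqref{basiszup} together with the identification $W^s((x,j,v),N,U)\cong \mathcal{U}(\pi_0(U),N,j,U_2)$ used in Lemma \ref{factorweak}, the map $\pi_0$ carries $W^s((x,j,v),N,U)$ homeomorphically into a single basic set of the form $\mathcal{U}(U_1,k,l,U_2)$ in $\G_g$ on which $\kappa$ takes the constant value $l$. Thus $\kappa_Z$ is constant on $W^s((x,j,v),N,U)$.

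I expect the main obstacle to be the bookkeeping around the floor $\max\{0,-j\}$ in the definition of $\kappa_Z$ versus the plain minimum defining $\kappa$ on $\G_g$. One must verify that imposing $k\geq\max\{0,-j\}$ genuinely yields a locally constant function, i.e. that the minimal $k$ satisfying $g^{k+j}(\pi_0(x))=g^k(v)$ subject to this lower bound does not jump within a basic neighbourhood. Since both $j$ and the unconstrained minimal $k$ (which equals $\kappa(\pi_0(x),j,v)$) are locally constant, and the floor depends only on $j$, the constrained minimum is the maximum of two locally constant integer-valued functions and is therefore locally constant; this resolves the obstacle cleanly.
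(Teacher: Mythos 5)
Your proposal is correct and takes essentially the same route as the paper: $c_Z$ is obviously locally constant, and $\kappa_Z=\kappa\circ\pi_0$ where $\pi_0:Z^u(P)\to\G_g$ is the local homeomorphism of Proposition \ref{momentmaps}, so local constancy of $\kappa$ on the basic sets \eqref{basisgg} transfers directly. Your concern about the floor $\max\{0,-j\}$ is a non-issue, since the definition of $\kappa$ on $\G_g$ already carries the implicit constraint $k\in\N$, $k+n\geq 0$, so the identity $\kappa_Z=\kappa\circ\pi_0$ holds exactly rather than only up to a locally constant adjustment.
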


\begin{proof} 
For $c_{Z}$ this is an obvious fact. For $\kappa_{Z}$ this follows from the observation that $\kappa_{Z}=\kappa\circ\pi_{0}$, where $\pi_{0}:Z^{u}(P)\to \G_{g}$.
\end{proof}
 The operator 
\[D_{Z}:=\psi(c,\kappa_{Z}): C_{c}(Z^{u}(P))\to C_{c}(Z^{u}(P)),\] defines a self-adjoint regular operator on $L^{2}(Z^{u}(P))_{C(V)}$. The tensor product module $L^{2}(Z^{u}(P))_{C^{*}(\G_{g})}\otimes_{C^{*}(\G_{g})}L^{2}(\G_{g})_{C(V)}$ can be identified with the module $L^{2}(Z^{u}(P))_{C(V)}$ studied in Proposition \ref{tensorfactor}. On the dense subspace $C_{c}(Z^{u}(P))\otimes_{C_{c}(\G_{g})} C_{c}(\G_{g})$ this identification is realized by the convolution product. We use the standard notation 
\[T_{x}:C_{c}(\G_{g})\to C_{c}(Z^{u}(P))\otimes_{C_{c}(\G_{g})}C_{c}(\G_{g})=C_{c}(Z^{u}(P)),\quad f\to x\otimes f=x*f,\]
defined for elements $x\in C_{c}(Z^{u}(P))$.

\begin{lemma}
\label{connection} 
For $x\in C_{c}(Z^{u}(P))$, the commutator
\[D_{Z}T_{x}-T_{x}D: C_{c}(\G_{g})\to C_{c}(Z^{u}(P)),\]
extends to a bounded operator $L^{2}(\G_{g})_{C(V)}\to L^{2}(Z^{u}(P))_{C(V)}$.
\end{lemma}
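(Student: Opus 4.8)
The plan is to verify this connection condition by reducing to a convenient class of generators and then bounding a single scalar coefficient. Since the commutator $D_Z T_x - T_x D$ is linear in $x$, and $C_c(Z^u(P))$ is spanned, via a partition of unity exactly as in Lemma \ref{factorstrong}, by functions supported in a single basic neighbourhood $W^s(z_0,N,W)$ (cf. \eqref{basiszup}), I would first assume $x$ is of this form. On such a neighbourhood the cocycle is constant, $c_Z\equiv m$ (the middle coordinate $j$ is fixed in \eqref{basiszup}), and, shrinking $W$ if necessary, $\kappa_Z=\kappa\circ\pi_0\equiv \ell$ is constant by local constancy (Lemma \ref{thefunctionkappa}). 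Crucially, $\rho_L$ restricts to a homeomorphism on $W^s(z_0,N,W)$ by Proposition \ref{momentmaps}, so for each $z$ there is at most one $v$ with $(z,m,v)\in\supp x$.

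Second, I would compute the commutator pointwise. Writing $T_x f = x*f$ and using that $D_Z$ and $D$ act as pointwise multiplication by $\psi(c_Z,\kappa_Z)$ and $\psi(c,\kappa)$ respectively, a direct calculation gives
\[
(D_Z T_x - T_x D)f\,(z,j,w) = B(z,j,w)\,(x*f)(z,j,w),
\]
where, with $p:=\pi_0(z)$, with $v=v(z)$ the unique element above $z$, and with $n:=j-m$,
\[
B(z,j,w) = \psi\big(j,\kappa(p,j,w)\big) - \psi\big(n,\kappa(v,n,w)\big).
\]
The single-valuedness of $v(z)$ from the first step is what allows me to pull $B$ out of the sum, so that $D_Z T_x - T_x D = M_B\circ T_x$ with $M_B$ pointwise multiplication by $B$. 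Since multiplication by a bounded continuous function on $L^2(Z^u(P))_{C(V)}$ is adjointable (the standard fact also invoked in Theorem \ref{represent!}) and the creation operator $T_x$ is bounded, it remains only to show that $B$ is a bounded function.

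Third, and this is the crux, I would bound $B$ uniformly. Setting $a=(p,m,v)$ and $b=(v,n,w)$ in $\G_g$, so that $ab=(p,j,w)$, the defining identities $g^{m+\ell}(p)=g^\ell(v)$ and $g^{n+s}(v)=g^s(w)$ with $s:=\kappa(b)$ combine to exhibit both $k=s+\ell$ and $k=\kappa(ab)+\ell$ as admissible exponents, yielding the cocycle-type estimate
\[
|\kappa(ab)-\kappa(b)|\le \kappa(a)=\ell.
\]
The function $\psi$ is, however, not Lipschitz across the locus $\{k=0\}$, so I would finish with the same four-case analysis as in the proof of Theorem \ref{represent!}, according to whether $\kappa(ab)$ and $\kappa(b)$ are zero or positive. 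The two mixed cases are the delicate ones: when exactly one of them vanishes, the naive estimate on $B$ grows linearly in $n$, but the same expanding-map identities force $n$ into a bounded range determined by $\ell$ (for instance $\kappa(ab)=0$ together with $\kappa(b)>0$ forces $-m\le n<\ell$). Hence $B$ is bounded in every case by a constant depending only on $m$ and $\ell$, which completes the proof. The principal obstacle is precisely this interplay between the non-smoothness of $\psi$ and the combinatorics of $\kappa$; once the estimate $|\kappa(ab)-\kappa(b)|\le\kappa(a)$ and the accompanying range restrictions on $n$ are established, the remainder is routine.
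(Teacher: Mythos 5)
Your reduction to basic neighbourhoods, the pointwise formula for the multiplier $B$, and the mixed-case range restrictions are all correct, but the ``cocycle-type estimate'' $|\kappa(ab)-\kappa(b)|\le\kappa(a)$ on which you rest the four-case analysis is false, and it fails precisely in the case you treat as routine, namely when $\kappa(ab)$ and $\kappa(b)$ are \emph{both} positive. The inequality $\kappa(ab)\le\kappa(a)+\kappa(b)$ is fine (the exponent $\ell+s$ is admissible for $ab$ because $(m+\ell)+(n+s)\ge 0$), but the reverse inequality requires the exponent $k=\kappa(ab)+\ell$ to be admissible for $b=(v,n,w)$, i.e.\ $n+\kappa(ab)+\ell\ge 0$, which can fail when $n<0$. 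Concretely, for $g(x)=2x$ on $S^1$ take $a=(0,2,0)$ (so $m=2$, $\ell=\kappa(a)=0$) and $b=\bigl(0,-2,\tfrac{1}{2}\bigr)$: then $\kappa(b)=2$ while $\kappa(ab)=\kappa\bigl(0,0,\tfrac{1}{2}\bigr)=1$, so $\kappa(b)-\kappa(ab)=1>\kappa(a)$. Both $\kappa$-values are positive, and $B=\psi(0,1)-\psi(-2,2)=3$, violating the bound $\bigl||n|-|j|\bigr|+|\kappa(b)-\kappa(ab)|\le m+\ell=2$ that your estimate would give; this configuration is realized in your setting for the dyadic solenoid with $P=\{\bar{0}\}$ and $z=\bar{0}\in X^u(P)$. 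The repair is the minimality/admissibility argument of case (4) in the proof of Theorem \ref{represent!}: bringing in the lower bound $\kappa(ab)\ge\max(0,-(m+n))$ yields $\kappa(b)-\kappa(ab)\le\ell+m$, after which $|B|$ is again bounded by a constant depending only on $m$ and $\ell$. So your route is salvageable, but as written the both-positive case has a genuine gap.

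You should also be aware that the paper's own proof avoids every one of these estimates. By Lemma \ref{factorstrong} and Proposition \ref{tensorfactor} it suffices to treat $x=\rho_{L}^{*}(u)\cdot\pi_{0}^{*}(v)$ with $u\in C_{c}(X^{u}(P))$ and $v\in C_{c}(\G_{g})$; the Leibniz identity $D_{Z}T_{x*v}-T_{x*v}D=(D_{Z}T_{x}-T_{x}D)*v+T_{x}[D,v]$ together with the boundedness of $[D,v]$ (Theorem \ref{represent!}) reduces everything to $x=\rho_{L}^{*}(u)\cdot\pi_{0}^{*}(1_{V})$, and for such $x$ the commutator vanishes \emph{identically}, because $\kappa_{Z}=\kappa\circ\pi_{0}$ (Lemma \ref{thefunctionkappa}) makes the two $\psi$-values cancel pointwise. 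In other words, the delicate interplay between $\psi$ and $\kappa$ needs to be confronted only once, in Theorem \ref{represent!} (where $m=1$, $\ell=0$); your proposal re-proves it for arbitrary $m$ and $\ell$, and that extra generality is exactly where the error crept in.
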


\begin{proof} 
By Lemma \ref{factorstrong} and Proposition \ref{tensorfactor}, it suffices to prove the statement for elements $x=\rho_{L}^{*}(u)\cdot \pi_{0}^{*}(v)$, with $u\in C_{c}(X^{u}(P))$ and $v\in C_{c}(\G_{g})$. Moreover, since $$\rho_{L}^{*}(u)\cdot \pi_{0}^{*}(v)=\rho_{L}^{*}(u)\cdot \pi_{0}^{*}(1_{V}*v)=\rho_{L}^{*}(u)\cdot \pi_{0}^{*}(1_{V})*v,$$ and for any $x\in C_{c}(Z^{u}(P))$ 
\[D_{Z}T_{x*v}-T_{x*v}D=(D_{Z}T_{x}-T_{x}D)*v+ T_{x}[D,v],\]
we can further reduce the proof to the case where $x=\rho_{L}^{*}(u)\cdot \pi_{0}^{*}(1_{V})$. For such $x$ and $f\in C_{c}(\G_{g})$ we have
\[(D_{Z}T_{x}-T_{x}D)f(t,j,w)=(\psi(j,\kappa_{Z}(t,j,w))-\psi(j,\kappa(\pi_{0}(t),j,w))u(t)f(\pi_{0}(t),j,w)=0,\]
as desired.
\end{proof}

\begin{remark}
\label{otherapptodz}
The operator $D_Z$ can be constructed from $D$ in other ways. One other way is to choose a cover $(U_j)_{j=1}^N$ of $V$ such that each $\pi_0:\pi_0^{-1}(U_j)\subseteq X^u(P)\to V$ is trivialisable, via a trivialisation $\psi_j$, say. Pick a partition of unity $(\chi_j^{2})_{j=1}^N$ subordinate to $(U_j)_{j=1}^N$. Associated with the data $(U_j,\psi_j,\chi_j)_{j=1}^N$ there is an adjointable inner product preserving $C^*(\G_g)$-linear mapping $v:L^2(Z^u(P))_{C^*(\G_g)}\to \ell^2(\N)\otimes C^*(\G_g)$. A short computation shows that $D_Z=v^*(1\otimes D)v$ on the dense submodule $C_c(Z^u(P))\subseteq L^2(Z^u(P))_{C(V)}$. This is the usual connection construction \cite{MR}, which implies Lemma \ref{connection}.

Yet another way uses Proposition \ref{stablecpa} and the unbounded representative of the Pimsner extension in the non-unital case from \cite{AFR}. The argument in \cite[Subsection 3.2]{GMR} easily generalizes to show that $D_Z$ is the operator constructed in \cite[Theorem 3.7]{AFR}.
\end{remark}

We wish to show that $(L^{2}(Z^{u}(P)_{C(V)},D_{Z})$ is a $KK_1^\T$-cycle for  $(C^*(G^s(P))\rtimes \Z, C(V))$ and that it represents the Kasparov product of the Morita equivalence $L^{2}(Z^{u}(P)_{C^{*}(\G_{g})}$ with the cycle $(L^{2}(\G_{g})_{C(V)},D)$ constructed in Theorem \ref{represent!}. We begin with a useful observation concerning the unbounded Kasparov product with a Morita equivalence bimodule.

\begin{prop}
\label{awesome} 
Let $E$ be a $\T$-equivariant $(A,B)$-Morita equivalence bimodule and $(F,D)$ an odd $\T$-equivariant unbounded Kasparov module for $(B,C)$. Suppose that $\tilde{D}$ is a $\T$-equivariant selfadjoint regular operator on $E\otimes_{B}F$ and that $X\subset E$ is a set that generates $E$ as a $B$-module, such that for all $x\in X$ the operators $T_{x}:f\to x\otimes f$ satisfy 
\begin{equation}
\label{hypcon}
T_{x}:\Dom D\to \Dom\tilde{D},\quad\textnormal{and}\quad\tilde{D}T_{x}-T_{x}D: \Dom D\to E\otimes_{B}F,
\end{equation}
extend to adjointable operators $F\to E\otimes_{B} F$. Then $(E\otimes_{B}F,\tilde{D})$ is a $\T$-equivariant $(A,B)$-unbounded Kasparov module that represents the Kasparov product $[(E,0)]\otimes_{B}[(F,D)]$ in $KK_1^{\T}(A,C)$.
\end{prop}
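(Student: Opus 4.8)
The plan is to recognize the statement as an instance of Kucerovsky's criterion for the unbounded Kasparov product, applied to the pair consisting of $(E,0)$ — the Morita equivalence viewed as an unbounded $(A,B)$-cycle carrying the zero operator — and $(F,D)$. Since $[(E,0)]\in KK_0^{\T}(A,B)$ is the invertible class of the Morita equivalence, the product $[(E,0)]\otimes_B[(F,D)]$ is precisely the image of $[(F,D)]$ under the induced isomorphism $KK_1^{\T}(B,C)\cong KK_1^{\T}(A,C)$, so it suffices to produce an unbounded representative on $E\otimes_B F$ and verify Kucerovsky's hypotheses for it.

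First I would check that $(E\otimes_B F,\tilde D)$ is a $\T$-equivariant unbounded $(A,C)$-Kasparov module. Self-adjointness, regularity and equivariance of $\tilde D$ are assumed. Using that $E$ is a Morita equivalence, so that $A\cong\mathbb{K}_B(E)$ and the finite-rank operators $\Theta_{x,y}$ with $x,y\in X$ span a dense subalgebra, it suffices to treat $a=\Theta_{x,y}$, for which $a\otimes 1=T_xT_y^*$. Boundedness of $[\tilde D,a]$ follows by writing $[\tilde D,T_xT_y^*]=(\tilde D T_x-T_x D)T_y^*+T_x(D T_y^*-T_y^*\tilde D)$ and invoking \eqref{hypcon} together with its adjoint. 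The compactness of $\Theta_{x,y}(1+\tilde D^2)^{-1}$ is the technical heart: using the connection identity $(1+\tilde D^2)^{-1}T_y=T_y(1+D^2)^{-1}+K_y$ with $K_y$ adjointable (a consequence of \eqref{hypcon}), one reduces to showing that $T_x(1+D^2)^{-s}$ is $C$-compact for $s=\tfrac12,1$. This in turn follows from a frame $(e_j)$ for the $B$-module $E$: the reconstruction $x=\sum_j e_j\langle e_j,x\rangle_B$ converges in norm, which upgrades the merely strict convergence of $\sum_j T_{e_j}\langle e_j,x\rangle_B(1+D^2)^{-s}$ to norm convergence, while each summand is $C$-compact because $b(1+D^2)^{-s}\in\mathbb{K}_C(F)$ for $b\in B$ by the Kasparov-module property of $(F,D)$.

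With $(E\otimes_B F,\tilde D)$ established as a Kasparov module, I would invoke Kucerovsky's criterion. The connection condition — that $T_x$ preserves domains and $\tilde D T_x-T_x D$ extends to an adjointable operator for every $x$ in a generating set — is exactly hypothesis \eqref{hypcon}. The positivity (local) condition requires a lower bound on $\langle D_1\otimes 1\,\xi,\tilde D\xi\rangle+\langle\tilde D\xi,D_1\otimes 1\,\xi\rangle$, but here the operator $D_1$ on the first module $E$ is zero, so this expression vanishes identically and the condition holds trivially; likewise the domain condition $\Dom\tilde D\subseteq\Dom(D_1\otimes 1)$ is vacuous. Kucerovsky's criterion then yields that $(E\otimes_B F,\tilde D)$ represents $[(E,0)]\otimes_B[(F,D)]$ in $KK_1^{\T}(A,C)$, and equivariance is preserved throughout since every operator involved is $\T$-equivariant.

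I expect the main obstacle to be the resolvent compactness in the Kasparov-module verification, as this is the only place where the connection condition must be genuinely combined with the fine structure of the Morita bimodule — its frame and the norm convergence of frame reconstruction — and with the $B$-compactness of the resolvent of $D$. All remaining conditions are either direct hypotheses or collapse to triviality because the operator on $E$ vanishes.
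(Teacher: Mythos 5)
Your proposal is correct and follows essentially the same route as the paper: identify $(E,0)$ as the unbounded representative of the Morita class, verify the Kasparov-module conditions for $(E\otimes_B F,\tilde D)$ via the commutator identity $[\tilde D,T_xT_y^*]=(\tilde D T_x-T_xD)T_y^*+T_x(DT_y^*-T_y^*\tilde D)$, and then invoke Kucerovsky's Theorem 13, with condition (1) being exactly \eqref{hypcon} and conditions (2)--(3) trivial because the operator on $E$ is zero. The only difference is that your frame argument for resolvent compactness spells out what the paper outsources to a citation of \cite[Lemma 4.3]{MR}.
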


\begin{proof} 
Since $E$ is a Morita equivalence, $A\cong \mathbb{K}(E)$ and $(E,0)$ is an unbounded $(A,B)$-Kasparov module. The argument in \cite[Lemma 4.3]{MR} and \eqref{hypcon} imply that $(a\otimes 1)(\tilde{D}\pm i)^{-1}$ is compact in $E\otimes_{B}F$ for all $a\in A$. The operators $T_{x}T_{y}^{*}$, with $x,y\in X,$ generate a dense subalgebra of $\mathbb{K}(E)\otimes 1\cong A\otimes 1$. Since
\[ [\tilde{D},T_{x}T_{y}^{*}]=(\tilde{D}T_{x}-T_{x}D)T_{y}^{*}+T_{x}(DT_{y}^{*}-T_{y}^{*}\tilde{D}),\]
is a bounded operator, $(E\otimes_{B} F, \tilde{D})$ is an $(A,C)$ unbounded Kasparov module. To prove that this cycle represents the Kasparov product, we need to verify conditions $(1)-(3)$ as given in \cite[Theorem 13]{Kuc}. Condition $(1)$ is the statement of Equation \eqref{hypcon}. Conditions (2) and (3) are trivially satisfied for the product with the unbounded Kasparov module $(E,0)$ from the left.
\end{proof}

The following result is an immediate consequence of Lemma \ref{connection} and Proposition \ref{awesome}.

\begin{thm}  
\label{productwithzup}
Assume that $(X,\varphi)$ is an irreducible Wieler solenoid defined from an open surjection $g:V\to V$. The pair $(L^{2}(Z^{u}(P))_{C(V)}, D_{Z})$ is a $\T$-equivariant unbounded Kasparov module for the pair $(C^*(G^s(P))\rtimes \Z, C(V))$, that represents the Kasparov product 
$$[(L^{2}(Z^{u}(P))_{C^{*}(\G_{g})},0)]\otimes_{C^{*}(\G_{g})}[(L^{2}(\G_{g})_{C(V)},D)]\in KK_1^{\T}(C^*(G^s(P))\rtimes \Z, C(V)).$$
In particular, there is an exact triangle in the triangulated category $KK^{\T}$ of the form
\[\xymatrix@C=0.8em@R=2.71em{
C(V) \ar[rr]^{1-[E_g]}& & C(V) \ar[dl]^{\iota_\mathcal{R}}\\
  &C^*(G^s(P))\rtimes \Z. \ar[ul]|-\circ^{[L^{2}(Z^{u}(P))_{C(V)}, D_{Z}]\,\,\,\,}
}\]
where $\iota_\mathcal{R}\in KK_0^\T(C(V),C^*(G^s(P))\rtimes \Z)$ is defined from the inclusion $C(V)\hookrightarrow C^*(\G_g)$ and Corollary \ref{stableisocor}.
\end{thm}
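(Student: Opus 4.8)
The plan is to recognise the statement as a direct application of Proposition \ref{awesome}, with the following choice of data: take the Morita equivalence bimodule $E=L^{2}(Z^{u}(P))_{C^{*}(\G_{g})}$, which by Theorem \ref{moreth} is a $\T$-equivariant $(C^{*}(G^{s}(P))\rtimes\Z,\,C^{*}(\G_{g}))$-equivalence; take $(F,D)=(L^{2}(\G_{g})_{C(V)},D)$, the $\T$-equivariant unbounded Kasparov module for $(C^{*}(\G_{g}),C(V))$ from Theorem \ref{represent!}; and take $\tilde{D}=D_{Z}$. Proposition \ref{tensorfactor} supplies the identification $E\otimes_{C^{*}(\G_{g})}F\cong L^{2}(Z^{u}(P))_{C(V)}$, under which $D_{Z}=\psi(c,\kappa_{Z})$ is the regular self-adjoint operator (as noted after Lemma \ref{thefunctionkappa}) that will play the role of $\tilde{D}$. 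The $\T$-equivariance of $D_{Z}$ is immediate from the $\T$-equivariance of the defining functions $c$ and $\kappa_{Z}$.

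Next I would verify the hypotheses of Proposition \ref{awesome} with the generating set $X=C_{c}(Z^{u}(P))\subset E$, which is dense and hence generates $E$ as a right $C^{*}(\G_{g})$-module. The operators $T_{x}\colon f\mapsto x*f$ are precisely those appearing in Lemma \ref{connection}. The domain requirement in \eqref{hypcon} holds because $C_{c}(\G_{g})$ is a core for $D$ while $C_{c}(Z^{u}(P))$ is a core for $D_{Z}$, and for $f\in C_{c}(\G_{g})$ the convolution $x*f$ again lies in $C_{c}(Z^{u}(P))\subseteq\Dom D_{Z}$. The second half of \eqref{hypcon}, namely that $D_{Z}T_{x}-T_{x}D$ extends to an adjointable operator $L^{2}(\G_{g})_{C(V)}\to L^{2}(Z^{u}(P))_{C(V)}$, is exactly the content of Lemma \ref{connection}. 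With the generating set, the regular self-adjoint operator, and the connection condition all in place, Proposition \ref{awesome} yields at once that $(L^{2}(Z^{u}(P))_{C(V)},D_{Z})$ is a $\T$-equivariant unbounded Kasparov module for $(C^{*}(G^{s}(P))\rtimes\Z,C(V))$ representing the Kasparov product $[(L^{2}(Z^{u}(P))_{C^{*}(\G_{g})},0)]\otimes_{C^{*}(\G_{g})}[(L^{2}(\G_{g})_{C(V)},D)]$.

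For the exact triangle I would transport the Pimsner triangle of Theorem \ref{pimsnerthm} along the Morita equivalence. Since an equivalence bimodule is an isomorphism in $KK^{\T}$, tensoring the triangle of Theorem \ref{pimsnerthm} on the right with the invertible class $[(L^{2}(Z^{u}(P))_{C^{*}(\G_{g})},0)]$ replaces the vertex $C^{*}(\G_{g})$ by $C^{*}(G^{s}(P))\rtimes\Z$, leaving the two $C(V)$-vertices and the edge $1-[E_{g}]$ unchanged. The edge out of $C(V)$ becomes $\iota_{\mathcal{R}}$, which by Corollary \ref{stableisocor} is the class of the stable inclusion $C(V)\hookrightarrow C^{*}(\G_{g})\hookrightarrow C^{*}(G^{s}(P))\rtimes\Z$, and the opposite edge becomes $[L^{2}(Z^{u}(P))_{C(V)},D_{Z}]$ by the product computation just established.

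The genuinely substantive input here, Lemma \ref{connection}, is already available, so the main thing to be careful about is confirming that the unitary identification of Proposition \ref{tensorfactor} really does intertwine $D_{Z}$ with the naive operator on $E\otimes_{C^{*}(\G_{g})}F$, so that the Kucerovsky conditions $(1)$--$(3)$ invoked in Proposition \ref{awesome} apply verbatim; conditions $(2)$ and $(3)$ are trivial for a product with $(E,0)$ from the left because $E$ carries the zero operator, and condition $(1)$ is \eqref{hypcon}. Beyond this the argument is bookkeeping, which is why the result is an immediate consequence of Lemma \ref{connection} and Proposition \ref{awesome}.
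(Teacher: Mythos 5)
Your proposal is correct and follows exactly the paper's own route: the paper proves Theorem \ref{productwithzup} precisely by combining Lemma \ref{connection} (the bounded-commutator/connection condition) with Proposition \ref{awesome} (the Kucerovsky-type criterion for products with a Morita equivalence), together with the identification from Proposition \ref{tensorfactor}, and obtains the exact triangle by transporting Theorem \ref{pimsnerthm} along the Morita equivalence of Theorem \ref{moreth}. Your write-up simply makes explicit the bookkeeping that the paper compresses into the phrase ``immediate consequence of Lemma \ref{connection} and Proposition \ref{awesome}.''
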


\begin{remark}
From the computations in Example \ref{computingktehroyexp}, using Remark \ref{someremarksonpex}, we can compute the $K$-theory group $K_*(C^*(G^s(P)))$ for an expansive dilation matrix on $(S^1)^d$ to be the expression of Equation \eqref{equicompsole} (see page \pageref{equicompsole}).
\end{remark}

\section{KMS weights on the stable Ruelle algebra}
\label{sec: KMS states}

Cuntz-Pimsner algebras come equipped with a natural gauge action that extends to an action of the real numbers. The analysis of equilibrium states (or KMS states) is then of particular interest for these algebras. Such equilibrium states were first studied by Kubo, Martin and Schwinger and a comprehensive study of KMS states is provided by Bratteli and Robinson in \cite{BR}.

Suppose $B$ is a $C^*$-algebra and that $\sigma:\R \to \Aut B$ is a strongly continuous action. An element $a \in B$ is said to be \emph{$\sigma$-analytic} if the function $t \mapsto \sigma_t(a)$ extends to an entire function on $\C$. For $\beta \in (0,\infty]$, a state $\phi$ on $B$ is called \emph{a KMS$_\beta$ state} if it satisfies
\begin{equation}\label{KMS_condition}
\phi(ab)=\phi(b \sigma_{i\beta}(a)),
\end{equation}
for all analytic elements $a,b \in B$. For $\beta=0$, the KMS$_\beta$ states are the $\sigma$-invariant traces on $B$. We often call \eqref{KMS_condition} the KMS$_\beta$ condition.

Olesen and Pedersen \cite{OP} first studied KMS states for the periodic gauge action on the Cuntz algebras $O_{N}$ where they discovered that there is a unique KMS state at inverse temperature $\beta=\log N$. Enomoto, Fujii and Watatani \cite{EFW} generalized this result to the Cuntz-Krieger algebras $O_{\pmb A}$ of an irreducible $N\times N$ matrix ${\pmb A}$. In this case, the unique KMS state occurred at $\beta=\log \rho({\pmb A})$, where $\rho({\pmb A})$ is the spectral radius of the matrix ${\pmb A}$, i.e. its Perron-Frobenius eigenvalue. More recently, Laca and Neshveyev \cite{ln} studied KMS states of Cuntz-Pimsner algebras and proved that KMS states arise as traces on the coefficient algebra of the underlying Hilbert module. In so doing, \cite{ln} initialized a program that is directly related to our work in several situations, most notably for local homeomorphisms  in \cite{aahr} and for self-similar groups in \cite{lrrw}. Similar results for expansive maps are also found in Thomsen's work \cite{Thom KMS} and in Kumjian and Renault's work \cite{kr}.

In this section we first consider the Cuntz-Pimsner algebra $O_E$ associated with a local homeomorphism $g:V \to V$ satisfying Wieler's axioms making the associated Wieler solenoid irreducible, as described in Section \ref{sectioncpmodel}. Recall that the Toeplitz algebra $\TT_E$ is generated by the Toeplitz operators $\{\tilde T_\xi : \xi \in E\}$. Let $S_\xi$ be the image of $\tilde T_\xi$ in the Cuntz-Pimsner algebra $O_E$, which is then generated by $\{S_\xi : \xi \in E\}$. We show that every KMS state for the natural gauge action on $O_E$ gives rise to a KMS weight on the stable Ruelle algebra $C^*(G^s(P)) \rtimes \Z$  through the isomorphism described in Proposition \ref{stablecpa}. 

We let  $\gamma: \T \to \Aut O_E$ denote the strongly continuous gauge action defined in Subsection \ref{TheCuntzPimsneralgebraofalocalhomeomorphism}. The gauge action extends to a periodic action of the real line through the equation $\sigma_t=\gamma_{e^{it}}$. Thus, there is an action $\sigma: \R \to \Aut O_E$ defined on the generating set by $\sigma_t(S_\xi)=e^{it}S_\xi$ for $\xi\in E$.

\begin{thm}
\label{OE KMS states}
Suppose $g:V \to V$ is an open surjection satisfying Wieler's axioms and $O_E$ is the associated Cuntz-Pimsner algebra generated by $\{S_\xi : \xi \in E\}$. Let $\sigma: \R \to \Aut O_E$ denote the strongly continuous action defined by $\sigma_t(S_\xi)=e^{it}S_\xi$ for $\xi\in E$. Then 
\begin{enumerate}
\item\label{KMS existence} 
There is a bijective correspondence between tracial states $\tau$ on $C(V)$ satisfying 
\begin{equation}
\label{KMS formula Laca-Nesh}
\tau(\mathfrak{L}(a))=e^\beta \tau(a)
\end{equation} 
and KMS$_{\beta}$ states on $(O_E,\sigma)$. Through the $\T$-equivariant isomorphism $O_E \cong C^*(\mathcal{R}_g)$, the bijection is given by $\tau \to \omega_\tau$ where  $\omega_\tau$ is defined on $C_c(\G_g)$ by
\begin{equation}
\label{omeagtauadef}
\omega_\tau(f):=\int_V f(v,0,v)\mathrm{d}\mu,
\end{equation}
such that $\mu$ is the probability measure that $\tau$ defines on $V$. There is always at least one $\beta>0$ and one tracial state $\tau$ such that \eqref{KMS formula Laca-Nesh} holds.

\item\label{KMS uniqueness} 
If $(V,g)$ is mixing (Definition \ref{recprops}.\ref{mixing}), then there is a unique pair $(\tau,\beta)$ satisfying \eqref{KMS formula Laca-Nesh}. That is, there is a unique $\beta$ for which there exists a KMS$_{\beta}$ state of $(O_E,\sigma)$ and the KMS$_{\beta}$ state is unique. Moreover, for $v\in V$,
\begin{equation}\label{KMS formula beta}
\beta=h(g)=\lim_{n \to \infty} \log\left(\frac{\mathfrak{L}^{n+1} 1}{\mathfrak{L}^{n} 1}(v)\right),
\end{equation}
where $h(g)$ is the topological entropy of $(V,g)$.
\end{enumerate}
\end{thm}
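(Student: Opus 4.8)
The plan is to combine the Laca--Neshveyev parametrisation of KMS states on Cuntz--Pimsner algebras \cite{ln} with classical Ruelle--Perron--Frobenius theory for the transfer operator $\mathfrak{L}$ from \eqref{innerproductl}. The two inputs are: first, that for the gauge dynamics $\sigma$ a KMS$_\beta$ state is completely determined by, and in bijection with, a suitably scaled tracial state on the coefficient algebra $C(V)$; second, that the abstract scaling operator appearing in \cite{ln} is, for our module $E=\,\!_{\id}C(V)_{g^*}$, precisely the transfer operator $\mathfrak{L}$.

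For the forward direction of part \eqref{KMS existence} I would argue directly. Using the frame $(\chi_j)_{j=1}^N$ built from the partition of unity preceding \eqref{omegadefin} and the Cuntz--Pimsner covariance $\sum_j S_{\chi_j}S_{\chi_j}^*=1$, for $a\in C(V)$ one has $a\,S_{\chi_j}=S_{a\chi_j}$, so that a KMS$_\beta$ state $\phi$ with restriction $\tau:=\phi|_{C(V)}$ satisfies $\tau(a)=\sum_j\phi(S_{a\chi_j}S_{\chi_j}^*)=e^{-\beta}\sum_j\tau(\langle \chi_j,a\chi_j\rangle_E)$. Since $\langle \chi_j,a\chi_j\rangle_E=\mathfrak{L}(a\chi_j^2)$ by \eqref{innerproductl} and $\sum_j\chi_j^2=1$, the sum collapses to $\mathfrak{L}(a)$, and the KMS condition becomes exactly $\tau(\mathfrak{L}(a))=e^\beta\tau(a)$, which is \eqref{KMS formula Laca-Nesh}. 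As $C(V)$ is abelian, $\tau$ is integration against a probability measure $\mu$. The converse implication---that every such scaled $\tau$ extends to a genuine KMS$_\beta$ state---is supplied by \cite{ln}. Because $\phi$ is gauge invariant it factors through the conditional expectation onto the fixed-point algebra $C^*(\G_g)^{\T}$; transporting through the isomorphism $\pi_E$ of Theorem \ref{isothemggtooh} and comparing with the expectation $\varrho$ of \eqref{Rexp} then yields the explicit formula \eqref{omeagtauadef}.

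Existence of at least one pair $(\tau,\beta)$ with $\beta>0$ I would obtain from thermodynamic formalism: a Krein--Rutman / Schauder--Tychonoff argument for the normalised dual operator $\nu\mapsto \mathfrak{L}^*\nu/\|\mathfrak{L}^*\nu\|$ on the weak-$*$ compact convex set of probability measures produces a probability eigenmeasure $\mathfrak{L}^*\mu=e^\beta\mu$ with $e^\beta=r(\mathfrak{L})$, and $\beta=\log r(\mathfrak{L})=h(g)>0$ because $g^K$ is locally expanding by Lemma \ref{locally expanding}, forcing exponential growth of the preimage counts $\mathfrak{L}^n1(v)=|g^{-n}(v)|$. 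For part \eqref{KMS uniqueness}, under the mixing hypothesis the Ruelle--Perron--Frobenius theorem for the transfer operator of the open mixing map $g$ provides a simple leading eigenvalue $e^\beta=r(\mathfrak{L})$ with strictly positive eigenfunction $h$, a unique eigenmeasure $\mu$, and a spectral gap. Uniqueness of $\beta$ follows from Perron--Frobenius positivity, since a positive eigenmeasure can only occur at the spectral radius, and uniqueness of $\tau$ from simplicity of the leading eigenvalue. Finally, the spectral gap gives $e^{-n\beta}\mathfrak{L}^n1\to c\,h$ uniformly, hence $\mathfrak{L}^{n+1}1/\mathfrak{L}^n1\to e^\beta$ pointwise, while $\log r(\mathfrak{L})=\lim_n\tfrac1n\log\max_v|g^{-n}(v)|=h(g)$ by Bowen's description of topological entropy for expanding maps; together these give \eqref{KMS formula beta}.

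The step I expect to be the main obstacle is securing the precise match with \cite{ln}: one must check that the correspondence $E=\,\!_{\id}C(V)_{g^*}$ genuinely satisfies their hypotheses (finite Jones--Watatani index, left action by compacts), so that \emph{all} KMS$_\beta$ states are accounted for by scaled traces---ruling out any additional KMS states supported away from the gauge-invariant subalgebra---and that their abstract scaling operator is identified with $\mathfrak{L}$ through the frame $(\chi_j)$. Once this identification is in place, the remaining existence and uniqueness assertions are standard consequences of Ruelle--Perron--Frobenius theory for the open, mixing, locally expanding map $g$.
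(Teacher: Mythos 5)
Your operator-algebraic half coincides with the paper's proof: the paper likewise reduces everything to Laca--Neshveyev, checking the positive-energy hypothesis of \cite[Theorem 2.1]{ln} (the Arveson spectrum of every element of $E$ is $1$ because the gauge action is scalar on $E$), combining \cite[Theorems 2.1 and 2.5]{ln} to get the bijection between KMS$_\beta$ states $\psi$ and traces $\tau$ with $\tau(\mathfrak{L}(a))=e^{\beta}\tau(a)$, and then evaluating $\psi(S_\xi S_\eta^*)$ on spanning elements to identify $\psi$ with $\omega_\tau$ of \eqref{omeagtauadef}. Your direct frame computation using $\sum_j S_{\chi_j}S_{\chi_j}^*=1$ is a correct, slightly more self-contained rendering of the forward direction, and the compatibility issue you flag as the ``main obstacle'' is unproblematic: since $g$ is a surjective local homeomorphism of a compact space, $E$ is finitely generated projective, the left action is unital and by compacts, so all KMS states are captured.

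Where you genuinely diverge is the dynamical half, and here you should be aware that the paper proves nothing itself: existence of a pair $(\mu,\beta)$ is quoted from Walters \cite[Corollary 2.3]{Wal}, using that $(V,g)$ is \emph{positively expansive}; uniqueness under mixing is quoted from Walters \cite[Theorem 2.16(i)]{Wal} after invoking Kumjian--Renault \cite[Proposition 2.1]{kr} to convert mixing into Walters' weak specification hypothesis; and the entropy formula \eqref{KMS formula beta} is \cite[Theorem 2.16(iv)]{Wal}. You instead sketch direct proofs (Schauder--Tychonoff for existence, Ruelle--Perron--Frobenius with spectral gap for uniqueness and the limit formula). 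This route can be made to work, but two steps need more care than your sketch provides. First, only $g^K$ is locally expanding (Lemma \ref{locally expanding}); $g$ itself need not be distance-expanding, so the classical RPF theorem does not apply verbatim to $g$ --- you must either pass to an adapted metric or run RPF for $g^K$ (mixing of $g$ implies mixing of $g^K$, and $\mathfrak{L}_{g^K}=\mathfrak{L}^K$) and transfer the conclusions back. Walters' formulation in terms of positive expansiveness is exactly what lets the paper sidestep this. Second, ``a positive eigenmeasure can only occur at the spectral radius'' is false for general positive operators; in the mixing case it must be deduced from the strictly positive eigenfunction $h$ with $\mathfrak{L}h=e^{\beta_0}h$ supplied by RPF, via $e^{\beta}\mu(h)=\mu(\mathfrak{L}h)=e^{\beta_0}\mu(h)$ and $\mu(h)>0$. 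With those repairs your argument is complete; what it buys is independence from Walters' and Kumjian--Renault's results, at the cost of reproving a constant-potential special case of them.
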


\begin{remark}
Part \eqref{KMS uniqueness} of Theorem \ref{OE KMS states} contains the statement that $$v\mapsto \lim_{n \to \infty} \log\left(\frac{\mathfrak{L}^{n+1} 1}{\mathfrak{L}^{n} 1}(v)\right)$$ is a constant function on $V$ that equals $\beta$.
\end{remark}

\begin{remark}
In part \eqref{KMS existence} of Theorem \ref{OE KMS states}, we are only using that $g$ is a local homeomorphism. In part \eqref{KMS uniqueness} we are only using that $g$ is a mixing, positively expansive local homeomorphism. We assume Wieler's axioms at this point since the applications we have in mind are to Smale spaces.
\end{remark}

\begin{remark}
The proof of \cite[Lemma 3.4]{GMR} extends to local homeomorphisms. This fact shows that the conditional expectation $\Psi_\infty:O_{E_g}\to C(V)$, constructed for a large class of Cuntz-Pimsner algebras in \cite{RRS}, satisfies $\Psi_\infty(f)(x):=f(x,0,x)$ for $f\in C_c(\G_g)$. The correspondence in Theorem \ref{OE KMS states} part \eqref{KMS existence} between tracial states $\tau$ satisfying $\mathfrak{L}^*\tau=\mathrm{e}^\beta\tau$ and KMS$_\beta$ states on $O_{E_g}$ is then implemented by $\tau\mapsto \tau\circ \Psi_\infty$. In \cite[Section 4]{RRS}, a related correspondence was considered for a larger class of modules using an $\R$-action defined from the Jones-Watatani index. In the case under consideration in this paper, the Jones-Watatani indices are 1 (cf. \cite[Proposition 3.3]{GMR}) and the $\R$-action considered in  \cite[Section 4]{RRS} is trivial.
\end{remark}

\begin{proof}[Proof of Theorem \ref{OE KMS states}]
We will apply \cite[Theorems 2.1 and 2.5]{ln} and start by verifying the hypothesis of said theorems. Since the gauge dynamics on $E$ corresponds to the action of scalars on $C(V)$, the `positive energy' hypothesis of \cite[Theorem 2.1]{ln} holds; the Arveson spectrum $\text{Sp}_U(a)$ is equal to $1$ for all $a \in E$. Thus, in our situation, combining \cite[Theorems 2.1 and 2.5]{ln} implies that $\psi$ is a KMS$_\beta$ state on $O_E$ if and only if there is a tracial state $\tau$ on $C(V)$ such that $\tau(\mathfrak{L}(a))=e^\beta \tau(a)$ and for $\xi\in E^{\otimes m}$ and $\eta\in E^{\otimes n}$
$$
\psi(S_\xi S_\eta^*)=
\begin{cases}
\mathrm{e}^{-\beta m}\tau(\langle \eta,\xi\rangle_{E^{\otimes m}}), \;&\mbox{if  $n=m$}\\
0, \;&\mbox{if  $n\neq m$} \end{cases}.$$
From this identity and the fact $\tau(\mathfrak{L}(a))=e^\beta \tau(a)$, we deduce that 
\begin{equation}
\label{psicomp}
\psi(S_\xi S_\eta^*)=\begin{cases}
\mathrm{e}^{-\beta m}\tau(\mathfrak{L}^m(\overline{\eta}\xi)), \;&\mbox{if  $n=m$}\\
0, \;&\mbox{if  $n\neq m$} \end{cases}=\begin{cases}
\tau(\overline{\eta}\xi), \;&\mbox{if  $n=m$}\\
0, \;&\mbox{if  $n\neq m$} \end{cases},
\end{equation}
where we use the identification $E^{\otimes m}\cong C(V)$ as linear spaces. Since $V$ is a compact Hausdorff space, every tracial state $\tau$ on $C(V)$ is given by integrating against a probability measure. From Equation \eqref{psicomp} we deduce that $\psi$ is a KMS$_\beta$ state on $O_E$ if and only if there is a probability measure $\mu$ on $V$ such that 
\begin{equation}
\label{KMS Walters cond}
\mathfrak{L}_* \mu=e^\beta \mu,
\end{equation}
and $\psi=\omega_\tau$, where $\omega_\tau$ is defined in Equation \eqref{omeagtauadef}.

The existence of probability measures $\mu$ on $V$ satisfying \eqref{KMS Walters cond} was taken up by Walters in \cite{Wal}. If $g:V \to V$ is expansive, \cite[Corollary 2.3]{Wal} implies that there is at least one pair $(\mu,\beta)$ satisfying \eqref{KMS Walters cond}. Since $g$ is assumed to be a local homeomorphism, it follows that $(V,g)$ is expansive (see Remark \ref{remarkaboutthomsen}), and therefore \eqref{KMS existence} holds.

In the case that $g: V \to V$ is a positively expansive local homeomorphism satisfying the weak specification condition, \cite[Theorem 2.16(i)]{Wal} implies that there is a unique pair $(\mu,\beta)$ on $(V,g)$ such that \eqref{KMS Walters cond} holds. In \cite[Proposition 2.1]{kr}, Kumjian and Renault show that the weak specification property is equivalent to the condition that $(V,g)$ is mixing. Moreover, \cite[Theorem 2.16(iv)]{Wal} shows that the unique value $\beta$ satisfies \eqref{KMS formula beta}. Therefore \eqref{KMS uniqueness} holds as well.
\end{proof}

Before turning to the $C^*$-algebra $C^*(G^s(P))\rtimes \Z$, we note the following result which follows from a short algebraic manipulation with matrix units.

\begin{prop}
\label{proponkms}
Let $\mathbb{K}$ denote the $C^*$-algebra of compact operators on a separable Hilbert space with trivial $\R$-action and $\mathrm{tr}$ the operator trace on $\mathbb{K}$. If $B$ is a unital $\R-C^*$-algebra, then the mapping $\omega\mapsto \omega\otimes \mathrm{tr}$ defines a bijection between the KMS$_\beta$ states on $B$ and the KMS$_\beta$ weights on $B\otimes \mathbb{K}$. 
\end{prop}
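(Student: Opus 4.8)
The plan is to use that $\mathbb{K}$ carries the trivial $\R$-action, so that the action on $B\otimes\mathbb{K}$ is $\sigma\otimes\id$ and, for a fixed system of matrix units $(e_{ij})$ of $\mathbb{K}$, the elements $u_{ij}:=1_B\otimes e_{ij}$ are $\sigma$-fixed; in particular each $u_{ij}$ is analytic with $\sigma_z(u_{ij})=u_{ij}$ for all $z\in\C$. I would then prove separately that the assignment is well defined (sends KMS$_\beta$ states to KMS$_\beta$ weights), injective, and surjective, with surjectivity carrying essentially all of the work.

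For well-definedness, first recall that $\omega\otimes\mathrm{tr}$ is a proper (densely defined, lower semicontinuous) $\sigma\otimes\id$-invariant weight: this is the standard slice construction $\omega\circ(\id\otimes\mathrm{tr})$, which is proper because $\omega$ is bounded and $\mathrm{tr}$ is proper, and invariant because $\omega$ is $\sigma$-invariant and $\mathrm{tr}$ is invariant for the trivial action. It then remains to check the KMS$_\beta$ identity on the dense $\sigma$-invariant $*$-subalgebra spanned by elementary tensors $a\otimes e_{ij}$ with $a\in B$ analytic. Writing $\phi:=\omega\otimes\mathrm{tr}$ and using $\mathrm{tr}(e_{ab})=\delta_{ab}$ together with $\sigma_{i\beta}(a\otimes e_{ij})=\sigma_{i\beta}(a)\otimes e_{ij}$, a one-line matrix-unit computation gives $\phi\big((a\otimes e_{ij})(b\otimes e_{kl})\big)=\delta_{jk}\delta_{il}\,\omega(ab)$ and $\phi\big((b\otimes e_{kl})\,\sigma_{i\beta}(a\otimes e_{ij})\big)=\delta_{kj}\delta_{li}\,\omega\big(b\,\sigma_{i\beta}(a)\big)$; the two agree for all indices precisely when $\omega(ab)=\omega\big(b\,\sigma_{i\beta}(a)\big)$, i.e. exactly when $\omega$ is KMS$_\beta$. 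Injectivity is then immediate, since evaluating $\omega\otimes\mathrm{tr}$ on $a\otimes e_{11}$ returns $\omega(a)$.

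For surjectivity, let $\phi$ be a KMS$_\beta$ weight on $B\otimes\mathbb{K}$, put $p:=u_{11}$ and $c:=\phi(p)$, and define the candidate state $\omega(a):=c^{-1}\phi(a\otimes e_{11})$. Because each $u_{ii}$ is $\sigma$-fixed, the KMS$_\beta$ condition specializes on these invariant elements to the trace relation $\phi(xy)=\phi(yx)$; applying it to $u_{1i},u_{i1}$ yields $\phi(u_{ii})=c$ for every $i$, while the identity $a\otimes e_{ij}=(1_B\otimes e_{i1})(a\otimes e_{11})(1_B\otimes e_{1j})$ together with these trace relations expresses every $\phi(a\otimes e_{ij})$ through $\phi(a\otimes e_{11})$. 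Comparing with the values of $\omega\otimes\mathrm{tr}$ on the same elementary tensors then gives $\phi=c\,(\omega\otimes\mathrm{tr})$ on the dense $*$-subalgebra, hence everywhere by lower semicontinuity; restricting the KMS$_\beta$ condition of $\phi$ to the corner $B\otimes e_{11}\cong B$ shows that $\omega$ is a KMS$_\beta$ state. Thus every KMS$_\beta$ weight is a positive multiple of one in the image, and the normalization $\mathrm{tr}(e_{11})=1$ pins down the scalar $c=1$, giving the bijection.

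The main obstacle is to establish that $c=\phi(p)$ is finite and nonzero, since a priori a weight could assign infinite or zero mass to the minimal invariant projection $p$. Nonvanishing is easy: monotonicity gives $\phi(a\otimes e_{11})\le\|a\|\,\phi(p)$ for $a\ge0$, so $c=0$ would force $\phi$ to vanish on each $B\otimes e_{ii}$ and hence on all of its definition ideal, contradicting $\phi\neq0$. Finiteness is the delicate point; I would derive it from density of the definition ideal $\mathfrak{M}_\phi$ together with the compression inequality $\phi(pzp)\le\phi(z)$, which holds because the cross terms $\phi(pzp^{\perp})$ vanish for analytic $z$ by the KMS condition and the invariance $\sigma_z(p)=p$. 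Choosing $z\ge0$ in $\mathfrak{M}_\phi$ with $\|z-p\|<\tfrac12$ gives $p\le 2\,pzp$ in the corner, whence $\phi(p)\le 2\phi(pzp)\le2\phi(z)<\infty$. With $c\in(0,\infty)$ secured, the reconstruction above completes the proof.
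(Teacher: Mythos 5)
Your computations are all correct, and they constitute exactly the ``short algebraic manipulation with matrix units'' that the paper invokes without giving any details: the verification of the KMS$_\beta$ identity for $\omega\otimes\mathrm{tr}$ on elementary tensors $a\otimes e_{ij}$, injectivity by evaluation at $a\otimes e_{11}$, the trace relations $\phi(1_B\otimes e_{ii})=\phi(1_B\otimes e_{11})$ coming from the KMS condition applied to the $\sigma$-fixed matrix units, and the finiteness of $c=\phi(1_B\otimes e_{11})$. Your compression inequality $\phi(pzp)\le\phi(z)$ is legitimate: it is the standard KMS-weight estimate $\phi(a^*xa)\le\|\sigma_{i\beta/2}(a)\|^2\phi(x)$ applied to the invariant multiplier projection $p=1_B\otimes e_{11}$, so modulo routine domain bookkeeping this part is sound, and it is genuinely the delicate point you identify.

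The gap is the very last step: nothing ``pins down the scalar $c=1$,'' and in fact this step cannot be repaired, because the statement as literally phrased fails. The KMS$_\beta$ weights on $B\otimes\mathbb{K}$ form a cone stable under multiplication by positive scalars, whereas states are normalized: if $\omega$ is a KMS$_\beta$ state then $2(\omega\otimes\mathrm{tr})$ is a KMS$_\beta$ weight, and it is not of the form $\omega'\otimes\mathrm{tr}$ for any state $\omega'$, since evaluating at $1_B\otimes e_{11}$ would force $\omega'(1_B)=2$. What your argument actually proves is the correct form of the result: $\omega\mapsto\omega\otimes\mathrm{tr}$ is a bijection from KMS$_\beta$ states on $B$ onto the KMS$_\beta$ weights $\phi$ on $B\otimes\mathbb{K}$ normalized by $\phi(1_B\otimes e_{11})=1$, and every KMS$_\beta$ weight is a unique positive multiple of one in the image. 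Equivalently, since a dense left ideal of a unital Banach algebra meets the invertibles and hence is everything, so that $\mathfrak{N}_\phi=B$ and every KMS$_\beta$ weight on $B$ is a positive multiple of a KMS$_\beta$ state, the clean statement is that $\phi\mapsto\phi\otimes\mathrm{tr}$ is a bijection between KMS$_\beta$ weights on $B$ and KMS$_\beta$ weights on $B\otimes\mathbb{K}$. This rescaled version is also the one Corollary \ref{kmsweightsonds} actually needs: the measures there satisfying $\mathfrak{L}_*\mu=e^\beta\mu$ are not required to be probability measures, so both sides of that correspondence carry the same scaling freedom. In short: your proof is right up to its final sentence, and that sentence papers over an imprecision in the proposition itself rather than a fixable omission in your argument.
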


\begin{cor}
\label{kmsweightsonds}
Let $(X,\varphi)$ be an irreducible Wieler solenoid arising from an open surjection $g:V \to V$ and suppose $P\subseteq X$ is a finite $\varphi$-invariant set of periodic points. There is a bijective correspondence between measures $\mu$ on $V$ satisfying \eqref{KMS Walters cond} and KMS$_\beta$ weights on $C^*(G^s(P)) \rtimes \Z$ via $\mu\mapsto \phi_\mu$ where the KMS$_\beta$ weight $\phi_\mu$ is defined on $C_c(\mathcal{G}^s(P)\rtimes \Z)$ via 
$$\phi_\mu(f):=\int_{X^u(P)} f(x,0,x)\mathrm{d}(\pi_0^*\mu).$$
In particular, $C^*(G^s(P))\rtimes \Z$ always admits a KMS$_\beta$ weight and if $(V,g)$ is mixing, then there is a unique $\beta>0$ for which there exists a KMS$_\beta$ weight and that KMS$_\beta$ weight is unique.
\end{cor}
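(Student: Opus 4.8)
The plan is to obtain the correspondence by composing three bijections that are already in place: the classification of KMS$_\beta$ states on $O_E$ from Theorem \ref{OE KMS states}, the $\T$-equivariant isomorphism $C^*(G^s(P))\rtimes\Z\cong O_E\otimes\mathbb{K}$ of Corollary \ref{stableisocor}, and the passage from KMS states on a unital algebra to KMS weights on its stabilisation in Proposition \ref{proponkms}. Concretely, a measure $\mu$ on $V$ satisfying \eqref{KMS Walters cond} corresponds via Theorem \ref{OE KMS states} to the tracial state $\tau$ it defines on $C(V)$, and hence to the KMS$_\beta$ state $\omega_\tau$ on $O_E$. Since the isomorphism of Corollary \ref{stableisocor} is $\T$-equivariant, it intertwines the $\R$-action on $C^*(G^s(P))\rtimes\Z$ obtained from the dual $\T$-action of the crossed product with the action $\sigma\otimes\mathrm{id}_{\mathbb{K}}$ on $O_E\otimes\mathbb{K}$; Proposition \ref{proponkms} then identifies $\omega_\tau\otimes\mathrm{tr}$ as the KMS$_\beta$ weight on $C^*(G^s(P))\rtimes\Z$ attached to $\omega_\tau$. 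Composing these bijections yields the asserted correspondence $\mu\mapsto\phi_\mu$.

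The substantive part is to verify that the weight produced by this composition is given by the explicit formula $\phi_\mu(f)=\int_{X^u(P)}f(x,0,x)\,\mathrm{d}(\pi_0^*\mu)$ for $f\in C_c(G^s(P)\rtimes\Z)$. First I would recall that $\omega_\tau$ acts on $f\in C_c(\G_g)$ by $\omega_\tau(f)=\int_V f(v,0,v)\,\mathrm{d}\mu$, and that the stabilising $\mathbb{K}$ in Corollary \ref{stableisocor} is $\mathbb{K}(\mathcal{H})$ for the fibre Hilbert space $\mathcal{H}$ of Proposition \ref{tensorfactor}, which encodes the discrete fibres of the covering map $\pi_0:X^u(P)\to V$ from Theorem \ref{localfibb}. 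The key computation is that the operator trace $\mathrm{tr}$ sums over these fibres while $\omega_\tau$ integrates over the base, so that for a diagonal element $\omega_\tau\otimes\mathrm{tr}$ evaluates to $\int_V\big(\sum_{x\in\pi_0^{-1}(v)}f(x,0,x)\big)\,\mathrm{d}\mu$. Using the conditional expectation $\pi_{0*}h(v)=\sum_{x\in\pi_0^{-1}(v)}h(x)$ introduced before Proposition \ref{tensorfactor}, this is precisely $\int_V\pi_{0*}\big(f(\cdot,0,\cdot)\big)\,\mathrm{d}\mu=\int_{X^u(P)}f(x,0,x)\,\mathrm{d}(\pi_0^*\mu)=\phi_\mu(f)$.

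The main obstacle is the bookkeeping through the identifications: one must track how a compactly supported function on $G^s(P)\rtimes\Z$ is written, under the isomorphism of Corollary \ref{stableisocor} together with the unitary $L^2(X^u(P))_{C(V)}\cong\mathcal{H}\otimes C(V)$ of Proposition \ref{tensorfactor}, as an element of $O_E\otimes\mathbb{K}(\mathcal{H})$, and then to confirm that $\omega_\tau\otimes\mathrm{tr}$ reproduces the fibrewise sum over $\pi_0^{-1}(v)$. The KMS condition at $\beta>0$ forces the weight to vanish on elements of nonzero gauge-degree, so it suffices to treat diagonal elements supported near $\{(x,0,x)\}$, which is exactly where $\phi_\mu$ is defined. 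Finally, the remaining assertions are immediate: existence of at least one KMS$_\beta$ weight, and uniqueness of both $\beta$ and the weight when $(V,g)$ is mixing, follow directly from the corresponding existence and uniqueness statements of Theorem \ref{OE KMS states} transported across the bijection.
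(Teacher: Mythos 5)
Your proposal is correct and follows essentially the same route as the paper's proof: it composes Theorem \ref{OE KMS states}, Corollary \ref{stableisocor} and Proposition \ref{proponkms}, and verifies the explicit formula for $\phi_\mu$ by the same fibrewise-sum computation in local trivializations of the covering $\pi_0:X^u(P)\to V$. The only cosmetic difference is that the paper additionally invokes \cite[Theorem 3.5.3]{AH} to identify mixing of $(V,g)$ with mixing of $(X,\varphi)$, which is not needed for the statement as literally formulated since its hypothesis is placed on $(V,g)$.
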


\begin{proof}
Since $\pi_0:X^u(P)\to V$ is a covering (see Theorem \ref{localfibb}), the measure $\pi_0^*\mu$ is well-defined on $X^u(P)$. We choose a $\T$-equivariant $*$-isomorphism 
\[
\rho:C^*(G^s(P))\rtimes \Z\xrightarrow{\cong} O_E\otimes \mathbb{K}
\]
as in Corollary \ref{stableisocor}. By Proposition \ref{proponkms}, there is a bijective correspondence between KMS$_\beta$ states $\omega$ on $O_E$ and KMS$_\beta$ weights on $C^*(G^s(P))\rtimes \Z$ given by $\omega\mapsto (\omega\otimes \mathrm{tr})\circ \rho$. By Theorem \ref{OE KMS states} implies that any KMS$_\beta$ state on $O_E$ takes the form $\omega_\tau$ where $\tau\in C(V)^*$ is defined from a probability measure $\mu$ on $V$ satisfying Equation \eqref{KMS Walters cond}. Since $\mathrm{tr}$ is invariant under unitary transformations and $\rho$ is $C(V)$-bilinear, it follows from an argument in local trivializations of $\pi_0:X^u(P)\to V$ that for $f\in C_c(G^s(P)\rtimes \Z)$ we have 
$$(\omega_\tau\otimes \mathrm{tr})\circ \rho(f)=\int_V\Big[\sum_{x\in \pi_0^{-1}(v)}f(x,0,x)\Big]\mathrm{d} \mu(v)=\int_{X^u(P)} f(x,0,x)\mathrm{d}(\pi_0^*\mu)(x).$$

From Theorem \ref{OE KMS states} we deduce that $C^*(G^s(P))\rtimes \Z$ always admits a KMS$_\beta$ weight and if $(V,g)$ is mixing the KMS$_\beta$ weight is unique. The corollary now follows from \cite[Theorem 3.5.3]{AH}, which says that if $g:V \to V$ is a continuous surjection, then $g:V \to V$ is mixing if and only if $\varphi:X \to X$ is mixing.
\end{proof}

\begin{remark}
\label{remarkonforms}
If $V$ is an oriented manifold and $\tau(a)=\int_V a\wedge \omega$ for a differential form $\omega$, the condition $\mathfrak{L}^*\tau=\mathrm{e}^\beta\tau$ is equivalent to $g^*\omega=\mathrm{e}^{\beta}\omega$.
\end{remark}

\section{Examples}
\label{examplessection}

In this section, we will consider examples of irreducible Wieler solenoids arising from an open surjection $g:V\to V$. In these examples, Theorem \ref{productwithzup} enables computations of explicit $K$-theoretic invariants and construction of concrete spectral triples.

\subsection{Subshifts of finite type}
\label{cksubsec}

The Smale spaces where both the stable and unstable sets are totally disconnected are the subshifts of finite type. They were discussed in Example \ref{firstckex}. In this case, the stable and unstable Ruelle algebra can be treated on an equal footing. Consider an $N\times N$-matrix $\pmb{A}$ consisting of zeros and ones, and assume that $\pmb{A}$ is irreducible. The compact space $\Omega_{\pmb{A}}$ is defined in Example \ref{firstckex} and the associated Wieler solenoid  $(X_{\Omega_{\pmb{A}}},\varphi_{\pmb A})$  is conjugate to the two sided sided shift
$$\Sigma_{\pmb{A}}:=\{(x_j)_{j\in \Z}\in \{1,\ldots,N\}^\Z: \pmb{A}_{x_j,x_{j+1}}=1\}, \quad \varphi_{\pmb{A}}(x_j)_{j\in \Z}:=(x_{j+1})_{j\in \Z}.$$
As above, we chose a finite $\varphi_{\pmb{A}}$-invariant set $P\subseteq \Sigma_{\pmb{A}}$ of periodic points. We write 
$$Z^u_{\pmb{A}}(P):=\{(x,j,v)\in \Sigma_{\pmb{A}}^u(P)\times \Z\times \Omega_{\pmb{A}}: \; \exists k, \, x_{l+j}=v_l\;\forall l\geq k\}.$$
It follows from Theorem \ref{moreth} that $C^*(G^s_{\pmb{A}}(P))\rtimes \Z\sim_M O_{\pmb{A}}$ via the $\T$-equivariant Morita equivalence $Z^u_{\pmb{A}}(P)$. The $K$-theory and $K$-homology groups of the Cuntz-Krieger algebra $O_{\pmb{A}}$ are well-known, and given by
$$K^*(O_{\pmb{A}})\cong K_{*+1}(O_{\pmb{A}^T})\cong 
\begin{cases}
\mathrm{ker}(1-\pmb{A}), \quad& *=0\\
\mathrm{coker}(1-\pmb{A}), \quad& *=1.\end{cases}$$
Since $\Omega_{\pmb{A}}$ is totally disconnected, $K^1(C(\Omega_{\pmb{A}}))=0$ and the exact $K$-homology sequence in Corollary \ref{corollarpseqpage13} reduces to
\[0\to K^{0}( C^{*}(G^{s}(P)\rtimes\Z)) \xrightarrow{\iota^{*}} K^0(C(\Omega_{\pmb{A}}))\xrightarrow{1-[E_g]} K^0(C(\Omega_{\pmb{A}})) \xrightarrow{\partial} K^1(C^{*}(G^{s}(P)\rtimes \Z))\to 0,\]
In particular the boundary map $\partial$ is surjective, and the description of explicit representatives of $K^1(O_{\pmb{A}})\cong \Z^N/(1-\pmb{A})\Z^N$ in \cite{goffmes} shows that $\partial$ is surjective already when restricted to classes in $K^{0}(C(\Omega_{\pmb{A}}))$ given by point evaluations of $C(\Omega_{\pmb{A}})$: each class in $K^{1}(O_{\pmb{A}})$ is given by a spectral triple obtained by localizing the unbounded Kasparov module from Theorem \ref{represent!} in a character of $C(\Omega_{\pmb{A}})$. We see that the same statement is true for the stable Ruelle algebra of the two-sided shift. Let $(e_i)_{i=1}^N$ denote the standard basis of $\Z^N$. We deduce the following result from \cite[Theorem 5.2.3]{goffmes}.

\begin{prop}
\label{proponcktwosided}
Let $v\in \Omega_{\pmb{A}}$ be a one-sided sequence starting in the letter $i\in \{1,\ldots, N\}$. Define the Hilbert space 
$$\mathcal{H}_v:=\ell^2(\rho_R^{-1}(v)),\quad\mbox{where $\rho_R^{-1}(v)=\{(x,n)\in \Sigma_{\pmb{A}}^u(P)\times \Z: \, (x,n,v)\in Z^u_{\pmb{A}}(P)\}$.}$$
We let $C^*(G^s(P))\rtimes \Z$ act on $\mathcal{H}_v$ via identifying $\mathcal{H}_v$ with the localisation of $L^2(Z^u(P))_{C(V)}$ in $v\in \Omega_{\pmb{A}}$. Consider the self-adjoint operator $D_v$ defined by 
$$D_vf(x,n):=\psi(n,\kappa_Z(x,n,v))f(x,n).$$ 
Then $(C_c(G^s(P))\rtimes_{\rm alg} \Z, \mathcal{H}_v,D_v)$ is a $\theta$-summable spectral triple representing the equivalence class $e_i\!\!\mod (1-\pmb{A})\Z^N$ under the isomorphism $K^1(O_{\pmb{A}})\cong \Z^N/(1-\pmb{A})\Z^N$. The bounded Fredholm module $(C_c(G^s(P))\rtimes_{\rm alg} \Z, \mathcal{H}_v,D_v|D_v|^{-1})$ is finitely summable.
\end{prop}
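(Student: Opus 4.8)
The plan is to obtain the proposition by localizing the unbounded Kasparov module $(L^2(Z^u(P))_{C(V)}, D_Z)$ of Theorem~\ref{productwithzup} at the evaluation character $\mathrm{ev}_v \colon C(\Omega_{\pmb{A}}) \to \C$ at $v$, and then to transport the $K$-homological content from the one-sided picture of \cite{goffmes} through the Morita equivalence of Theorem~\ref{moreth}. First I would identify the localization $L^2(Z^u(P))_{C(V)} \otimes_{\mathrm{ev}_v} \C$ with $\mathcal{H}_v = \ell^2(\rho_R^{-1}(v))$: evaluating the inner product \eqref{tensorinnerprod} at $v$ gives $\langle f,f\rangle(v) = \sum_{(x,n,v) \in Z^u(P)} |f(x,n,v)|^2$, which is exactly the $\ell^2$-norm over the fibre $\rho_R^{-1}(v)$. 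Since $D_Z = \psi(c,\kappa_Z)$ is diagonal with respect to continuous functions, its localization is $f \mapsto \psi(n,\kappa_Z(x,n,v))f(x,n)$, i.e.\ exactly $D_v$. As localizing an odd unbounded Kasparov module for $(C^*(G^s(P)) \rtimes \Z, C(\Omega_{\pmb{A}}))$ at a character yields an odd spectral triple for $C^*(G^s(P)) \rtimes \Z$, the spectral triple axioms are inherited directly from Theorem~\ref{productwithzup}.

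For $\theta$-summability I would estimate $\mathrm{Tr}(e^{-tD_v^2})$ directly. The spectrum of $D_v$ consists of the integers $\psi(n,k)$, and the multiplicity of a given value equals the number of pairs $(x,n) \in \rho_R^{-1}(v)$ realizing it. As the subshift has at most $\rho(\pmb{A})^{m}$ admissible words of length $m$, these multiplicities grow at most exponentially in $|\psi|$; since $\psi(n,k)^2$ grows quadratically in $(n,k)$ whereas the logarithm of the multiplicity grows only linearly, the Gaussian factor $e^{-t\psi^2}$ dominates and the trace converges for all $t>0$. The same comparison shows the triple is not finitely summable as an unbounded cycle, which explains why only the bounded transform is asserted to be finitely summable.

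To pin down the $K$-homology class I would use that localization at $\mathrm{ev}_v$ is the interior Kasparov product with $[\mathrm{ev}_v]$, and that this product is associative with the Morita product realized in Theorem~\ref{productwithzup}. Consequently $(\mathcal{H}_v, D_v)$ is the image, under the isomorphism $K^1(O_{\pmb{A}}) \cong K^1(C^*(G^s(P)) \rtimes \Z)$ induced by the Morita equivalence $Z^u_{\pmb{A}}(P)$, of the localization at $v$ of the cycle $(L^2(\G_g)_{C(V)}, D)$ from Theorem~\ref{represent!}. By \cite[Theorem 5.2.3]{goffmes}, this one-sided localization represents $e_i \bmod (1-\pmb{A})\Z^N$ in $K^1(O_{\pmb{A}}) \cong \Z^N/(1-\pmb{A})\Z^N$ when $v$ starts with the letter $i$, and the Morita isomorphism transports this to the asserted class. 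Finite summability of the phase $D_v|D_v|^{-1}$ then follows by transporting \cite[Lemma 5.1.6]{goffmes} through the same localization: the commutator $[D_v|D_v|^{-1}, a]$ with a generator $a$ differs from the goffmes computation only by the Morita intertwiner, and the Schatten-class estimate there rests on the uniform bound \eqref{starfunction} on the jump function together with the finite-to-one structure of $\rho_R$, both of which survive localization.

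The step I expect to be the main obstacle is making precise the commutation of localization with the Kasparov product used to identify $(\mathcal{H}_v, D_v)$ with the Morita image of the one-sided goffmes cycle; in particular, one must check that the explicit product cycle of Theorem~\ref{productwithzup} matches the localized $D$-cycle at $v$ without unbounded-operator pathologies. Once this identification is secured, the summability estimates and the class computation reduce cleanly to the cited results in \cite{goffmes}.
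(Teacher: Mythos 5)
Your strategy for the class computation is the paper's own: Theorem \ref{productwithzup} exhibits $(L^2(Z^u(P))_{C(V)},D_Z)$ as the Kasparov product of the Morita equivalence of Theorem \ref{moreth} with the cycle of Theorem \ref{represent!}, localization at $\mathrm{ev}_v$ is the interior product with $[\mathrm{ev}_v]$, and \cite[Theorem 5.2.3]{goffmes} identifies the resulting class with $e_i \bmod (1-\pmb{A})\Z^N$. Your identification of the localized module with $\ell^2(\rho_R^{-1}(v))$ and of the localized operator with $D_v$ is correct, and this localization-plus-Morita-transport argument is exactly how the paper deduces the proposition from \cite{goffmes}.

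The genuine gap is in your summability argument, which rests on a false finiteness claim. For fixed $(n,k)$, the conditions $(x,n)\in\rho_R^{-1}(v)$ and $\kappa_Z(x,n,v)=k$ only prescribe $x_m$ for $m\geq n+k$; to the left, $x$ is an arbitrary admissible word subject only to being eventually equal to some $p\in P$, and the position at which $x$ merges with $p$ is unbounded. Concretely, for the full $2$-shift with $P=\{\bar{1}\}$ and $v=\bar{1}$, every two-sided sequence with a single $2$ at some negative position and $1$'s elsewhere is an eigenvector of $D_v$ with eigenvalue $\psi(0,0)=0$. Thus every eigenvalue of $D_v$ has countably infinite multiplicity, $(1+D_v^2)^{-1/2}$ is not compact, and $\mathrm{Tr}(e^{-tD_v^2})=\infty$ for every $t>0$, so your comparison of ``exponentially growing multiplicities'' against the Gaussian factor cannot get off the ground. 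This is forced by the structure of the algebra: $C^*(G^s(P))\rtimes\Z\cong O_{\pmb{A}}\otimes\mathbb{K}$ is nonunital, $\mathcal{H}_v$ is the one-sided Hilbert space of \cite{goffmes} tensored with an infinite-dimensional multiplicity space, and $D_v$ is essentially $1\otimes(\text{the one-sided operator})$, cf.\ Remark \ref{otherapptodz}. Consequently both ``spectral triple'' and ``$\theta$-summable'' must be read in the nonunital sense --- $a(1+D_v^2)^{-1/2}$ compact and $ae^{-tD_v^2}$ trace class for $a\in C_c(G^s(P))\rtimes_{\rm alg}\Z$ --- and the trace-class estimate must use the compact support of $a$: a compact subset of $X^u(P)$ in the unstable topology has uniformly bounded merging position with $P$, so that $a\,p_{n,k}$ (with $p_{n,k}$ the spectral projection of $D_v$ attached to the pair $(n,k)$) has finite rank growing at most exponentially in $|n|+k$, and only then does the Gaussian factor win. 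The same oversight affects your last paragraph: $\rho_R$ is a surjective local homeomorphism (Proposition \ref{momentmaps}) whose fibres are countably infinite, not finite-to-one, so ``the finite-to-one structure of $\rho_R$'' is not available; moreover \cite[Lemma 5.1.6]{goffmes} concerns boundedness of $[D,f]$, not Schatten estimates --- the finite summability of the phase is again a statement about $[D_v|D_v|^{-1},a]$ for compactly supported $a$, obtained by transporting \cite[Theorem 5.2.3]{goffmes} through the Morita equivalence exactly as for the class.
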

This provides an exhaustive explicit description of the odd $K$-homology of the stable Ruelle algebra of a subshift of finite type. The group $K^0(C^*(G^s(P)\rtimes\Z))$ stands in stark contrast; a description of even spectral triples on the stable Ruelle algebra is at this point in time still elusive.

\subsection{Self-similar groups}
\label{selfsimilarsubsec}

Self-similar groups are a relatively new area of group theory. They were discovered by Grigorchuk through his construction of a group with intermediate growth that is amenable but not elementary amenable. Since then much of the abstract theory has been developed by Nekrashevych \cite{Nek_book}.

In \cite{Nek_crelle}, Nekrashevych constructs a Smale space from a contracting and regular self-similar group. In this section we recast Nekrashevych's result by showing that the limit space of a contracting, regular self-similar group satisfies Wieler's axioms and the shift map is a local homeomorphism. The shift on the limit space of a self-similar group gives rise to another Cuntz-Pimsner model for self-similar groups that generalizes the Cuntz-Pimsner algebra $O_{f}$ constructed by Nekrashevych in \cite[Section 6.3]{Nek_crelle}. We note that the results of Section \ref{sectionfour}, \ref{sectionfive} and \ref{sec: KMS states} apply to this example. We obtain a plethora of Kasparov cycles, KMS states on the Cuntz-Pimsner algebra $O_E$ and KMS weights on the stable Ruelle algebra. We also remark that all of these results generalize to Exel and Pardo's self-similar groups on graphs through the forthcoming paper \cite{BBGSW}.

A self-similar group $(G,X)$ consists of a group $G$, a finite set $X$, and a faithful action of $G$ on the set $X^*$ of finite words in $X$ such that, for each $g \in G$ and $x \in X$, there exists $y \in X$ and $h \in G$ satisfying $g \cdot xw=y(h\cdot w)$ for all $w \in X^*$. Faithfulness of the action implies the group element $h$ is unique, so there is a map $(g,x) \mapsto h$ and we call $h$ the \emph{restriction} of $g$ to $x$ and denote it by $g|_x$. Thus the defining relation for a self-similar group can be written
\begin{equation}\label{SSG def}
g \cdot xv=(g \cdot x)(g|_x \cdot w) \qquad \text{ for all $w \in X^*$.}
\end{equation}
A self-similar group is said to be \emph{contracting} if there is a finite subset $S \subset G$ such that for every $g \in G$, there exists $n \in \N$ such that $\{g|_v : |v| \geq n\} \subset S$. The (unique) smallest such set is called the \emph{nucleus} as is denoted by $\NN$.

We now briefly construct the limit space $\JJ_G$ in the case that $(G,X)$ is contracting, for further details see \cite[Section 3.6]{Nek_book}. The space of left infinite words with letters in $X$ is denoted $X^{-\infty}$. Two sequences $\dots x_3x_2x_1$ and $\dots y_3y_2y_1$ are said to be \emph{asymptotically equivalent} if there exists a sequence $\{g_n\}_{n=1}^\infty$ of nucleus elements such that $g_n \cdot x_n \dots x_1=y_n \dots x_1$. The \emph{limit space} $\JJ_G$ is the quotient of $X^{-\infty}$ by the asymptotic equivalence relation, and we let $q:X^{-\infty} \to \JJ_G$ denote the quotient map. The asymptotic equivalence relation is invariant under the shift map $\sigma:X^{-\infty} \to X^{-\infty}$ given by $\sigma(\dots x_3x_2x_1)=\dots x_3x_2$, so $\sigma:\JJ_G \to \JJ_G$ is a continuous surjection. That $\JJ_G$ is a compact metric space is proved in \cite[Theorem 3.6.3]{Nek_book}, and in a more general framework in \cite{BBGSW}.

In \cite{BBGSW}, it is proved that $(\JJ_G,\sigma)$ satisfies Wieler's axioms in the generality of Exel and Pardo's self-similar graphs. The proofs require an additional assumption: a self-similar group is said to be \emph{regular} if for every $g \in G$ and $x \in X^{\infty}$, either $g \cdot x \neq x$ or $g$ fixes $x$ as well as all points in a clopen neighbourhood of $x$. In \cite[Proposition 6.1]{Nek_crelle}, Nekrashevych shows that $(G,X)$ is regular if and only if $\sigma:\JJ_G \to \JJ_G$ is a covering. We now summarize the results of \cite{BBGSW}.

\begin{prop}[{\cite{BBGSW}}]\label{SSGs locally expanding}
Suppose $(G,X)$ is a contracting and regular self-similar group with limit space $\JJ_G$. Then $\sigma: \JJ_G \to \JJ_G$ is a local homeomorphism that satisfies Wieler's axioms and the dynamical system $(\JJ_G,\sigma)$ is mixing.
\end{prop}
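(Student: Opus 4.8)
The plan is to verify the three assertions separately, treating the metric expansion estimate as the technical core and obtaining the covering property (hence the local-homeomorphism and openness properties) directly from regularity. Throughout I work with the shift $\sigma$ on $\JJ_G$ and a fixed metric $\d_{\JJ_G}$.

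The local-homeomorphism assertion requires no new work: by Nekrashevych's \cite[Proposition 6.1]{Nek_crelle}, regularity of $(G,X)$ is \emph{equivalent} to $\sigma\colon \JJ_G\to \JJ_G$ being a covering map, and every covering map is an open local homeomorphism. This already supplies openness and, more importantly for Axiom 2 below, the even-covering property over small balls. It remains to produce the constants $\beta,K,\gamma$ of Definition \ref{wieleax} and to establish mixing.

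For Wieler's axioms I would equip $\JJ_G$ with Nekrashevych's self-similar metric (\cite[Section 3.6]{Nek_book}), in which $q(\dots x_2x_1)$ and $q(\dots y_2y_1)$ lie within distance comparable to $\gamma^n$ exactly when they admit representatives sharing the length-$n$ suffix $x_n\cdots x_1=y_n\cdots y_1$. Since $\sigma$ deletes the rightmost letter, it shortens a common suffix by one and so expands such distances by the factor $\gamma^{-1}>1$. The subtlety — and the step I expect to be the main obstacle — is that a point of $\JJ_G$ may have several suffix representatives related by the asymptotic equivalence, so the heuristic ``common suffix length'' must be upgraded to a genuine two-sided estimate on $\d_{\JJ_G}$. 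This is exactly where contraction enters: restrictions eventually land in the \emph{finite} nucleus $\NN$, so the depth at which representatives of asymptotically equivalent words can disagree is bounded by a constant depending only on $\NN$. Taking $K\in\N_+$ above that bound, one shows $\sigma^K$ is locally expanding for $\gamma^{-K}$ within some distance $2\delta$. Choosing $\beta$ small enough that $\d_{\JJ_G}(v,w)<\beta$ forces $\d_{\JJ_G}(\sigma^K v,\sigma^K w)<2\delta$, local expansion gives $\d_{\JJ_G}(\sigma^{2K}v,\sigma^{2K}w)\ge \gamma^{-K}\d_{\JJ_G}(\sigma^K v,\sigma^K w)$, which is precisely Wieler's Axiom 1 after rearrangement (cf. Lemma \ref{locally expanding}).

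Axiom 2 then follows from a lifting argument combining the covering property with local expansion. Fix $\ep\in(0,\beta]$ and a point $\sigma^K(w)\in \sigma^K(B(\sigma^K(v),\ep))$, so $\d_{\JJ_G}(w,\sigma^K(v))<\ep$. Since $\sigma^K$ is a covering, for $\ep$ small the ball $B(\sigma^K(v),\ep)$ is evenly covered; lifting $w$ through the sheet containing $v$ yields $u$ with $\sigma^K(u)=w$, and local expansion bounds $\d_{\JJ_G}(u,v)\le \gamma^K\d_{\JJ_G}(w,\sigma^K(v))<\gamma^K\ep\le \gamma\ep$. As $\sigma^{2K}(u)=\sigma^K(w)$, this exhibits the point in $\sigma^{2K}(B(v,\gamma\ep))$, proving Axiom 2. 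Finally, mixing is essentially formal: the sets $q([x_m\cdots x_1])$ form a neighbourhood basis of $\JJ_G$ because $q$ is a continuous surjection from the product space $X^{-\infty}$, and since $\sigma^m$ deletes the rightmost $m$ letters one has $\sigma^m\big(q([x_m\cdots x_1])\big)=q(X^{-\infty})=\JJ_G$. Hence any nonempty open $U\subseteq\JJ_G$ contains such a cylinder, and surjectivity of $\sigma$ gives $\sigma^n(U)=\JJ_G$ for every $n\ge m$; in particular $\sigma^n(U)\cap W\ne\varnothing$ for all $n\ge m$ and every nonempty open $W$, which is mixing in the sense of Definition \ref{recprops}.
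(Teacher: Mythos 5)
Your mixing argument and your use of Nekrashevych's \cite[Proposition 6.1]{Nek_crelle} are both fine (more on this below), but the technical core of your treatment of Wieler's axioms contains a genuine gap. Your entire derivation of Axiom 1 rests on the claim that contraction bounds ``the depth at which representatives of asymptotically equivalent words can disagree'' by a constant depending only on the nucleus $\NN$. This claim is false. Consider the binary adding machine, $G=\Z=\langle a\rangle$ acting on $X=\{0,1\}$ by $a\cdot 0w=1w$, $a\cdot 1w=0(a\cdot w)$: it is contracting with nucleus $\{e,a,a^{-1}\}$ and regular (the odometer action on $X^{\infty}$ is free), with limit space the circle and $\sigma$ the doubling map, so it lies squarely in the class covered by the proposition. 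Here $a\cdot 1^n=0^n$ for every $n$, so the sequences $\dots 111$ and $\dots 000$ are asymptotically equivalent (take $g_n=a\in\NN$ for all $n$) yet disagree in \emph{every} coordinate. What contraction actually buys is that the relating elements $g_n$ may be chosen in the finite nucleus, and hence that each asymptotic equivalence class is finite; it does not give eventual agreement of representatives. Without that claim, your ``two-sided estimate'' on $\d_{\JJ_G}$ is not established, and the local expansion of $\sigma^K$ --- from which you derive both Axiom 1 and (via the lifting argument) Axiom 2 --- is unproven. The difficulty is not cosmetic: the representatives of $\sigma(\xi)$ need not all arise as shifts of representatives of $\xi$, so applying $\sigma$ can create new pairs of representatives with long common suffixes; a lower bound on $\d_{\JJ_G}(\sigma\xi,\sigma\zeta)$ in terms of $\d_{\JJ_G}(\xi,\zeta)$ therefore requires a careful extension/lifting analysis of exactly the kind your heuristic is meant to bypass.

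For calibration: the paper itself does not prove this proposition at all --- it attributes the Wieler-axioms and mixing statements to the (in preparation) reference \cite{BBGSW} and only records from \cite[Proposition 6.1]{Nek_crelle} that regularity is equivalent to $\sigma$ being a covering. So your attempt at a self-contained argument goes beyond the paper. Two of your three components do survive scrutiny: the covering/local-homeomorphism part is correctly reduced to Nekrashevych, and your mixing argument is correct and pleasantly direct --- since $q^{-1}(U)$ is open and nonempty it contains a cylinder $[w]$, and $\sigma^{|w|}(q([w]))=q(\sigma^{|w|}([w]))=q(X^{-\infty})=\JJ_G$, which in fact shows $\sigma$ is locally eventually onto, stronger than mixing. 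But as written, the proof of Wieler's Axiom 1 (and with it Axiom 2) fails at its central step.
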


Let $(X,\varphi)$ be the mixing Wieler solenoid associated with $(\JJ_G,\sigma)$. Since $\sigma:\JJ_G\to \JJ_G$ is a continuous open surjection satisfying Wieler's axioms, there is a Cuntz-Pimsner model for the stable Ruelle algebra of $(X,\varphi)$ by Proposition \ref{stablecpa}. Moreover, Corollary \ref{kmsweightsonds} implies that there is a unique KMS weight on the stable Ruelle algebra of a contracting and regular self-similar group.

\section{Solenoids constructed from covering maps on manifolds}
\label{solenoidsubsec}

In this section we will consider the solenoid construction on a smooth closed manifold $M$. The outcome will be a procedure to lift certain $K$-homology classes on $M$ to the Cuntz-Pimsner algebra constructed from $M$. The results are complementary to the constructions in Section \ref{sectionfour} and \ref{sectionfive} as it produces $K$-homology classes that are not factorizable over the Pimsner extension in the sense of Corollary \ref{corollarofpseqonpage14}. In certain special cases, the Hilbert space appearing in the spectral triple is the GNS space of the KMS weight. To elucidate our results we will run the details of $M=S^1$ in Example \ref{solenioidexampleonaroundpage7} in parallel to the general construction. We show that for $M=S^1$, the $K$-homology classes we construct along with the products with the Pimsner extension exhaust the $K$-homology of the stable Ruelle algebra $C^*(G^s(P))\rtimes \Z$.

We consider a closed manifold $M$ and an $n$-fold smooth covering map $g:M\to M$ acting conformally on a Riemannian metric $h_M$ on $M$. That is, $g^*h_M=Nh_M$ for $N\in C^\infty(M,\R_{>0})$. The function $g$ satisfies Wieler's axioms if $N>1$ on $M$. We follow the notation from the previous sections and write $\mathfrak{L}(\xi)(x):=\sum_{g(y)=x} \xi(y)$ for $\xi\in C(M)$.

\begin{remark}
It follows from Remark \ref{remarkonforms} that the volume measure $\mathrm{d}V_h$ satisfies Equation \eqref{KMS Walters cond} with $\beta=\frac{d}{2}\log(N)$ if $N$ is constant. Specifically, if $N$ is constant, then the volume measure defines a KMS$_\beta$ state on $O_{E_g}$ (as in Theorem \ref{OE KMS states}) and a KMS$_\beta$ weight on $C^*(G^s(P))\rtimes \Z$ (as in Corollary \ref{kmsweightsonds}).
\end{remark}

\begin{definition}
Let $S\to M$ denote a hermitian vector bundle with hermitian metric $h_S$ such that $g$ lifts to $S$ (in the sense that there is a unitary vector bundle isomorphism $g_S:g^*S\to S$ giving rise to a unitary isomorphism $g_{S,y}:S_{g(y)}\to S_y$ for any $y\in M$). We  define the transfer operator
$$\mathfrak{L}_S:C(M,S)\to C(M,S), \quad\mathfrak{L}_S(\xi)_x:=\sum_{g(y)=x} g_{S,y}^{-1}\xi_y.$$
\end{definition}

The operator $\mathfrak{L}_S$ is $C(M)$-linear in the sense that $\mathfrak{L}_S(\xi\cdot g^*a)=\mathfrak{L}_S(\xi)a$ for any $a\in C(M)$ and $\xi\in C(M,S)$. Define the mapping $V_0:L^2(M,S)\to L^2(M,S)$ as the composition of the pullback $g^*:L^2(M,S)\to L^2(M,g^*S)$ and the fibrewise action of $g_S$.

\begin{prop}
\label{theisometry}
Assume that $M$ is a closed $d$-dimensional manifold with the data specified above. The operator $V:=N^{d/4}n^{-1/2}V_0$ is an isometry on $L^2(M,S)$ such that 
$$V^*=\mathfrak{L}_SN^{-d/4}n^{-1/2},\quad VaV^*=g^*a VV^*\quad\mbox{and}\quad V^*aV=\frac{1}{n}\mathfrak{L}(a).$$
\end{prop}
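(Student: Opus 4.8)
The plan is to reduce all four assertions to two elementary inputs: the fibrewise unitarity of the maps $g_{S,y}\colon S_{g(y)}\to S_y$ (so that $g_{S,y}^{*}=g_{S,y}^{-1}$), and the change-of-variables formula for the $n$-fold covering $g$ together with its conformal distortion of volume. The key geometric observation is that $g^{*}h_M=Nh_M$ forces the Riemannian density to transform as $g^{*}(\mathrm{d}V_h)=N^{d/2}\,\mathrm{d}V_h$, since multiplying a metric by $N$ multiplies $\sqrt{\det}$ by $N^{d/2}$; working with the Riemannian density throughout means no orientation hypothesis is needed. Performing a single-sheet change of variables and summing over the $n$ points of each fibre $g^{-1}(x)$ then gives, for $f\in C(M)$,
\[
\int_M f(g(y))\,\mathrm{d}V_h(y)=\int_M\Big(\sum_{g(y)=x}N(y)^{-d/2}\Big)f(x)\,\mathrm{d}V_h(x),
\]
and in particular $\int_M N(y)^{d/2}f(g(y))\,\mathrm{d}V_h(y)=n\int_M f(x)\,\mathrm{d}V_h(x)$ because the fibre has exactly $n$ points.

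With this in hand the isometry property is immediate: expanding $\|V\xi\|^{2}=n^{-1}\int_M N(y)^{d/2}\,\|g_{S,y}\xi_{g(y)}\|_{S_y}^{2}\,\mathrm{d}V_h(y)$, unitarity of $g_{S,y}$ collapses the integrand to $N(y)^{d/2}\,\|\xi_{g(y)}\|^{2}$, and the second displayed identity with $f(x)=\|\xi_x\|^{2}$ turns the factor $N^{d/2}$ and the count $n$ into the constant $1$, yielding $\|V\xi\|^{2}=\|\xi\|^{2}$. For the adjoint I would pair $\langle V\xi,\eta\rangle$, move $g_{S,y}$ across the inner product as $g_{S,y}^{-1}$, and apply the first change of variables; the resulting weight $N(y)^{d/4}\cdot N(y)^{-d/2}=N(y)^{-d/4}$, assembled inside the sum over $g(y)=x$ together with the prefactor $n^{-1/2}$, is precisely $(\mathfrak{L}_S N^{-d/4}n^{-1/2}\eta)_x$, proving $V^{*}=\mathfrak{L}_S N^{-d/4}n^{-1/2}$.

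The two remaining identities then follow without further analysis. Since $a\in C(M)$ is scalar it commutes with each $g_{S,y}$, and one reads off $(Va\xi)_y=(g^{*}a)(y)(V\xi)_y$, i.e. $Va=(g^{*}a)\,V$ as operators; right-multiplying by $V^{*}$ gives $VaV^{*}=g^{*}a\,VV^{*}$. For the last identity I would compute $V^{*}aV$ directly: the scalar prefactors combine to $n^{-1}$, and applying $\mathfrak{L}_S$ reinstates $g_{S,y}^{-1}g_{S,y}=\mathrm{id}$ on each fibre, leaving
\[
(V^{*}aV\xi)_x=n^{-1}\Big(\sum_{g(y)=x}a(y)\Big)\xi_x=\tfrac{1}{n}\mathfrak{L}(a)(x)\,\xi_x.
\]
I expect the only real friction to be bookkeeping the exponents of $N$ and the factor $n^{-1/2}$ so that they cancel precisely; there is no conceptual obstacle once the conformal scaling $g^{*}(\mathrm{d}V_h)=N^{d/2}\,\mathrm{d}V_h$ and the cardinality of the fibres are recorded correctly.
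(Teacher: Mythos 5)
Your proposal is correct and follows essentially the same route as the paper: both rest on the conformal scaling $g^{*}(\mathrm{d}V_h)=N^{d/2}\mathrm{d}V_h$, fibrewise unitarity of $g_S$, and the $n$-fold change of variables for the covering, with only a cosmetic reordering (you compute $V^{*}$ explicitly first and deduce $V^{*}aV=\tfrac{1}{n}\mathfrak{L}(a)$ algebraically, while the paper verifies that identity by a separate inner-product computation). The exponent bookkeeping you flag as the only friction indeed works out exactly as you indicate.
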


\begin{proof}
We first prove that $V$ is an isometry. Let $\mathrm{d}V_h$ denote the volume density constructed from $h_M$. Note that $g^*\mathrm{d}V_h=N^{d/2}\mathrm{d}V_h$. We have for $f_1,f_2\in C(M,S)$ that
\begin{align*}
\langle Vf_1,Vf_2\rangle_{L^2(M,S)}&=\frac{1}{n}\int_M\langle g^*f_1,g^*f_2\rangle_SN^{d/2}\mathrm{d}V_h\\
&=\frac{1}{n}\int_Mg^*\left(\langle f_1,f_2\rangle_S\mathrm{d}V_h\right)=\langle f_1,f_2\rangle_{L^2(M,S)}.
\end{align*}
Next, we prove the identity $V^*aV=\frac{1}{n}\mathfrak{L}(a)$. For $f_1,f_2\in C(M,S)$, a similar computation as above gives
\begin{align*}
\langle V^*aVf_1,f_2\rangle_{L^2(M,S)}&=\langle aVf_1,Vf_2\rangle_{L^2(M,S)}=\frac{1}{n}\int_Mag^*\left(\langle f_1,f_2\rangle_S\mathrm{d}V_h\right)\\
&=\frac{1}{n}\int_M\mathfrak{L}(a)\langle f_1,f_2\rangle_S\mathrm{d}V_h=\left\langle \frac{1}{n}\mathfrak{L}(a)f_1,f_2\right\rangle_{L^2(M,S)}.
\end{align*}
The identity $VaV^*=g^*a VV^*$ follows from the simple computation $V(af)=g^*a\cdot Vf$ which holds for $a\in C(M)$, $f\in C(M,S)$.

Finally, we compute $V^*$. For $f_1,f_2\in C(M,S)$, 
\begin{align*}
\langle Vf_1,f_2\rangle_{L^2(M,S)}&=\int_M\langle g^*f_1,N^{-d/4}n^{-1/2}f_2\rangle_S N^{d/2}\mathrm{d}V_h\\
&=\int_M\left\langle f_1,\mathfrak{L}_S\left(N^{-d/4}n^{-1/2}f_2\right)\right\rangle_S \mathrm{d}V_h. \qedhere
\end{align*}
\end{proof}

\begin{example}
\label{solenoidsandisometries}
The prototypical example is the $n$-solenoid on $M=S^1$ from Example \ref{solenioidexampleonaroundpage7}. We return to this example throughout the section. In this example, $M=S^1=\R/\Z$ and $g(x):=nx\pmod 1$. We can take $S$ to be the trivial line bundle on $S^1$ and $h_{S^1}$ to be the flat metric on $S^1$ so $N=n^2$ and $Vf(x)=f(nx)$. An orthonormal basis for $L^2(S^1)$ is given by $e_k(x):=\mathrm{exp}(2\pi i kx)$ for $k\in \Z$. In this case, the identity $Ve_k=e_{kn}$ show straight away that $V$ is an isometry. In this case, the transfer operator $\mathfrak{L}$ takes the form
$$\mathfrak{L}(a)(x)=\sum_{j=0}^{n-1}a\left(\frac{x+j}{n}\right).$$
Here we identify functions on $S^1$ with periodic functions on $\R$. On $L^2(S^1)$ we have $V^*=n^{-1}\mathfrak{L}$.
\end{example}

\begin{example}
\label{dilationnumber}
We consider a higher dimensional example mentioned towards the end of Example \ref{solenioidexampleonaroundpage7}. Consider a matrix $A\in M_d(\Z)$ with non-zero determinant and let $g_A:(S^1)^d\to (S^1)^d$ denote the associated smooth local homeomorphism. If $A^TA=N$ for some $N>0$, then $g_A$ act conformally on the flat metric on $(S^1)^d=\R^d/\Z^d$. Let us give some non-trivial examples of such matrices. For $d=2$, such a matrix takes the general form
$$\begin{pmatrix} x&\pm y\\ y& \mp x\end{pmatrix},\quad x^2+y^2=N.$$
Asymptotically, the number of local homeomorphisms acting conformally on $(S^1)^2$ with $N\leq r^2$ is determined by Gauss' circle problem and behave like $\pi r^2$ as $r\to \infty$. The equation $x^2+y^2=N$ admits solutions if and only if the prime factors $p|N$ with $p=3\pmod 4$ occur with even multiplicity in $N$. In dimension $d=3$, the general form is
$$\begin{pmatrix} x_1&y_1&z_1\\ x_2&y_2&z_2\\ x_3&y_3&z_3\end{pmatrix},\quad\mbox{where} \quad
\begin{cases} 
x_j^2+y_j^2+z_j^2=N, \mbox{   for $j=1,2,3$}\\
x_jx_k+y_jy_k+z_jz_k=0, \mbox{   for $j\neq k$.}\end{cases} $$
By Legendre's three square theorem, such a matrix exists only if $N$ is not of the form $4^a(8b+7)$ for natural numbers $a,b$.

In this case, the Lebesgue measure $m$ on $(S^1)^d$ satisfies $g_A^*m=N^{d/2}m$. Theorem \ref{OE KMS states} shows that the Lebesgue measure induces the unique KMS state on $O_{E_{g_A}}$ and by Corollary \ref{kmsweightsonds} the unique KMS weight on $C^*(G^s(P))\rtimes \Z$, both having inverse temperature $\beta=\frac{d}{2}\log(N)$.
\end{example}

\begin{prop}
\label{constrrep}
Let $M$ be a $d$-dimensional manifold, $g:M\to M$ an $n$-fold smooth covering map acting conformally on a metric $h_M$ that lifts to the hermitian bundle $S\to M$, and $V$ the isometry on $L^2(M,S)$ constructed as in Proposition \ref{theisometry}. Write $E_g:={}_{\mathrm{id}}C(M)_{g^*}$ as in Subsection \ref{TheCuntzPimsneralgebraofalocalhomeomorphism}. The pointwise action of $C(M)$ on $L^2(M,S)$ and the linear mapping $\mathfrak{t}_V:E_g\to \mathbb{B}(L^2(M,S))$ given by $a\mapsto \sqrt{n}aV$, defines a $*$-representation $\pi_V:O_{E_g}\to \mathbb{B}(L^2(M,S))$.
\end{prop}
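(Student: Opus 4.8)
The plan is to build a Toeplitz representation of $E_g$ on $L^2(M,S)$ and then descend it to $O_{E_g}$ by the universal property. Since $g$ is a surjective local homeomorphism, the module $E_g={}_{\id}C(M)_{g^{*}}$ is finitely generated projective (cf. Proposition \ref{fingenproandmore}) and the left action $\phi$ of $C(M)$ is by pointwise multiplication, hence injective with image in $\mathbb K_{C(M)}(E_g)=\End^*_{C(M)}(E_g)$. For such a module the quotient by $\mathbb K_{C(M)}(F_{E_g})$ defining $O_{E_g}$ is the Cuntz-Pimsner (Katsura) algebra, so any Cuntz-Pimsner covariant Toeplitz representation extends to it. It therefore suffices to show that $\pi:=$ (pointwise multiplication) together with $\mathfrak t_V$ forms such a representation, and the verification splits into the Toeplitz relations and the covariance relation.

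For the Toeplitz relations I would argue directly from Proposition \ref{theisometry}. Writing $a$ for the multiplication operator of $a\in C(M)$ and $\mathfrak t_V(\xi)=\sqrt n\,\xi V$, the identity $V^*aV=\tfrac1n\mathfrak L(a)$ gives
\[
\mathfrak t_V(\xi)^*\mathfrak t_V(\eta)=nV^*\bar\xi\eta V=\mathfrak L(\bar\xi\eta)=\langle\xi,\eta\rangle_{E_g},
\]
which is precisely the isometric condition $\mathfrak t_V(\xi)^*\mathfrak t_V(\eta)=\pi(\langle\xi,\eta\rangle_{E_g})$ for the inner product \eqref{innerproductl}; the factor $\sqrt n$ is dictated by the $\tfrac1n$ in $V^*aV$. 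Left-linearity $\mathfrak t_V(a\xi)=\pi(a)\mathfrak t_V(\xi)$ is immediate, while right-linearity follows from $VaV^*=g^*a\,VV^*$: multiplying on the right by $V$ yields $Va=g^*a\,V$, so $\mathfrak t_V(\xi)\pi(a)=\sqrt n\,\xi Va=\sqrt n\,\xi\,g^*a\,V=\mathfrak t_V(\xi\cdot a)$, since the right action on $E_g$ is through $g^*$.

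For covariance I would use a frame. Choosing a cover $(U_j)_{j=1}^N$ on which $g$ is injective and a subordinate partition of unity $(\chi_j^2)_{j=1}^N$, the real functions $(\chi_j)$ form a frame for $E_g$ (as used before \eqref{omegadefin}), so $\phi(a)=\sum_j\Theta_{a\chi_j,\chi_j}$ in $\mathbb K_{C(M)}(E_g)$, where $\Theta_{\xi,\eta}\zeta:=\xi\langle\eta,\zeta\rangle_{E_g}$. Covariance is then the statement
\[
\pi(a)=\sum_{j=1}^N\mathfrak t_V(a\chi_j)\mathfrak t_V(\chi_j)^*=n\,a\sum_{j=1}^N\chi_j\,VV^*\,\chi_j ,
\]
so everything reduces to the single operator identity $n\sum_j\chi_j VV^*\chi_j=1$ on $L^2(M,S)$.

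This last identity is the main obstacle and the only place where the geometry enters. I would establish it by computing $VV^*$ fibrewise from Proposition \ref{theisometry}: for $h\in L^2(M,S)$,
\[
(VV^*h)_y=\frac{N(y)^{d/4}}{n}\sum_{g(z)=g(y)}N(z)^{-d/4}\,g_{S,y}g_{S,z}^{-1}\,h_z .
\]
Substituting this into $\sum_j\chi_j VV^*\chi_j$ attaches the scalar weight $\sum_j\chi_j(y)\chi_j(z)$ to the term indexed by a preimage $z$ of $g(y)$. Because each $\chi_j$ is supported in $U_j$ and $g|_{U_j}$ is injective, two distinct points of a common $g$-fibre cannot lie in the same $U_j$, so $\sum_j\chi_j(y)\chi_j(z)=\delta_{y,z}$ on fibres of $g$; only the diagonal term $z=y$ survives, where $N(y)^{d/4}N(y)^{-d/4}=1$ and $g_{S,y}g_{S,y}^{-1}=\id$, leaving $\tfrac1n h_y$. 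Hence $n\sum_j\chi_j VV^*\chi_j=1$, covariance holds, and $(\pi,\mathfrak t_V)$ descends to the desired $*$-representation $\pi_V\colon O_{E_g}\to\mathbb B(L^2(M,S))$.
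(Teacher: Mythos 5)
Your proposal is correct and follows essentially the same route as the paper: the Toeplitz relations are read off from Proposition \ref{theisometry}, and Cuntz--Pimsner covariance is verified using the frame $(\chi_j)$ coming from a partition of unity subordinate to a cover on which $g$ is injective, with the off-diagonal fibre terms killed by that injectivity. Your reduction of covariance to the single identity $n\sum_j\chi_j VV^*\chi_j=1$ is just a repackaging of the paper's direct computation of $\sum_j\mathfrak{t}_V(a\chi_j)\mathfrak{t}_V(\chi_j)^*f$ (and your fibrewise formula for $VV^*$, with the weight $N(y)^{d/4}N(z)^{-d/4}$, is the correct one).
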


\begin{proof}
It is an immediate consequence of Proposition \ref{theisometry} that $\mathfrak{t}_V$ defines a Toeplitz representation of $E_g$; that is, conditions (1)--(3) of \cite[Theorem 3.12]{Pimsnerspaper} are satisfied. It remains to prove that the Cuntz-Pimsner covariance condition (4) of \cite[Theorem 3.12]{Pimsnerspaper} is satisfied. Thus, we need to show the following: after writing the action of $a\in C(M)$ on $E_g$ as $a\xi=\sum_{j=1}^m\eta_j\langle \xi_j,\xi\rangle_{C(M)}$, for some $\eta_j,\xi_j\in E_g$, we require that $a=\sum_{j=1}^m\mathfrak{t}_V(\eta_j)\mathfrak{t}_V(\xi_j)^*$ as operators on $L^2(M,S)$.

As in the discussion preceding \eqref{omegadefin}, consider a cover $(U_j)_{j=1}^m\subseteq M$ of balls such that $g|_{U_j}$ is injective. Take a partition of unity $(\chi_j^{2})_{j=1}^m$ subordinate to $(U_j)_{j=1}^m$, so that $\chi_j$ is a frame for $E_g$; that is, for any $\xi\in E_g=C(M)$ we have $\xi=\sum_{j=1}^m \chi_j\langle \chi_j,\xi\rangle_{C(M)}$. So we can write $a\xi=\sum_{j=1}^m\eta_j\langle \xi_j,\xi\rangle_{C(M)}$ where $\eta_j=a\chi_j$ and $\xi_j=\chi_j$. We fix this decomposition for the remainder of the proof.

We now verify the Cuntz-Pimsner covariance condition. Consider an $f\in C(M,S)$. Using Proposition \ref{theisometry}, we write
\small
\begin{align*}
\left[\sum_{j=1}^m\mathfrak{t}_V(\eta_j)\mathfrak{t}_V(\xi_j)^*f\right](x)&=n\left[\sum_{j=1}^m\eta_jVV^*\xi_jf\right](x)=\left[\sum_{j=1}^m\eta_jN^{d/4}V_0\mathfrak{L}_SN^{d/4}\xi_jf\right](x)\\
&=\sum_{j=1}^m\sum_{g(x)=g(y)}a(x)\chi_j(x)N^{d/4}(x)N^{d/4}(y)\chi_j(y)g_{S,x}g_{S,y}^{-1}f(y)\\
&=\sum_{j=1}^ma(x)\chi^{2}_j(x)f(x)=a(x)f(x),
\end{align*}
\normalsize
where the second last identity used the fact that $g|_{U_j}$ is injective, so that $$\{y\in \supp(\chi_j):g(x)=g(y)\}=\{x\},$$ for $x\in \supp (\chi_j)$.
\end{proof}

\begin{definition}
Define the function $\mathrm{signlog}:\R\to \R$ by 
$$\mathrm{signlog}(x):=\begin{cases}
0, \quad &x=0,\\
\mathrm{sign}(x)\log|x|, \quad &x\neq 0.
\end{cases}$$
\end{definition}

We set $\Dsla_{\mathrm{log}}:=\mathrm{signlog}(\Dsla)$. For any $\ep>0$, there is a function $\mathfrak{l}_\ep\in C^\infty(\R)$ such that $\mathfrak{l}_\ep=\mathrm{signlog}$ on $\R\setminus \big((-\ep,0)\cup(0,\ep)\big)$. We remark that for any $\ep>0$ and $m>0$, the function $\mathfrak{l}_\ep$ is a H\"ormander symbol of order $m$ on $\R$. In fact, for any $k\in \N_{>0}$ there is a $C_k>0$ (possibly depending on $\ep>0$) such that $|\partial^k_x\mathfrak{l}_\ep(x)|\leq C_k(1+|x|)^{-k}$. We note that $\Dsla_{\mathrm{log}}-\mathfrak{l}_\ep(\Dsla)\in \Psi^{-\infty}(M,S)$ for all $\ep>0$. For $\ep$ small enough, $\Dsla_{\mathrm{log}}=\mathfrak{l}_\ep(\Dsla)$ if $\mathfrak{l}_\ep(0)=0$. 

\begin{prop}
\label{signlogopstruc}
Let $M$ be a closed Riemannian manifold, $S\to M$ a Clifford bundle and $\Dsla$ a Dirac operator acting on $S$. Then $\Dsla_{\mathrm{log}}$ is a self-adjoint operator and the inclusion $H^s(M,S)\subseteq \Dom(\Dsla_{\mathrm{log}})$ is compact for any $s>0$, and $e^{-t\Dsla_{\mathrm{log}}^2}$ is in the operator ideal $\mathcal{L}^{(d/t,\infty)}(L^2(M,S))$ for any $t>0$.
\end{prop}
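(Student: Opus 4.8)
The plan is to reduce all three claims to the spectral data of $\Dsla$, using that a Dirac operator on a closed manifold $M$ is essentially self-adjoint with purely discrete spectrum $\{\lambda_k\}_k$ (finite multiplicities, $|\lambda_k|\to\infty$) whose growth is governed by Weyl's law. Self-adjointness of $\Dsla_{\mathrm{log}}$ is then immediate from the Borel functional calculus: $\mathrm{signlog}\colon\R\to\R$ is a real-valued Borel function, so $\Dsla_{\mathrm{log}}=\mathrm{signlog}(\Dsla)$ is self-adjoint with domain $\{\xi:\sum_k\mathrm{signlog}(\lambda_k)^2\,|\langle\xi,e_k\rangle|^2<\infty\}$, where $\{e_k\}$ is an orthonormal eigenbasis of $\Dsla$.

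For the inclusion $H^s(M,S)\hookrightarrow\Dom(\Dsla_{\mathrm{log}})$ I would show that the operator $f(\Dsla):=(1+\Dsla_{\mathrm{log}}^2)^{1/2}(1+\Dsla^2)^{-s/2}$, with $f(\lambda)=(1+\mathrm{signlog}(\lambda)^2)^{1/2}(1+\lambda^2)^{-s/2}$, is compact. The function $f$ is bounded along $\mathrm{spec}(\Dsla)$ and tends to $0$ there: although $\mathrm{signlog}$ is unbounded near $0$, discreteness of the spectrum leaves only finitely many $\lambda_k$ near $0$, while for $|\lambda_k|\to\infty$ the numerator $|\mathrm{signlog}(\lambda_k)|=|\log|\lambda_k||$ grows slower than any positive power of $|\lambda_k|$, forcing $f(\lambda_k)\to0$. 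Hence $f(\Dsla)$ is a norm limit of finite-rank spectral truncations, so it is compact; its boundedness alone already gives the set-theoretic inclusion $H^s(M,S)\subseteq\Dom(\Dsla_{\mathrm{log}})$. Conjugating the inclusion $\iota$ by the isometric isomorphisms $(1+\Dsla^2)^{-s/2}\colon L^2(M,S)\to H^s(M,S)$ and $(1+\Dsla_{\mathrm{log}}^2)^{1/2}\colon\Dom(\Dsla_{\mathrm{log}})\to L^2(M,S)$ (graph norm on the target) identifies $\iota$ with $f(\Dsla)$, so $\iota$ is compact.

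For the ideal membership I would use an order comparison rather than a direct count. An elementary pointwise estimate gives $e^{-t\,\mathrm{signlog}(\lambda)^2}\leq C_t\,(1+\lambda^2)^{-t/2}$ for all $\lambda\in\R$ and some constant $C_t>0$; for $|\lambda|\geq1$ this amounts to $e^{-t(\log|\lambda|)^2}\leq e^{t/4}|\lambda|^{-t}$, equivalently $t(\log|\lambda|-\tfrac12)^2\geq0$, and the region $|\lambda|\leq1$ is absorbed into the constant. Applying the (order-preserving) functional calculus of the self-adjoint operator $\Dsla$ then yields $0\leq e^{-t\Dsla_{\mathrm{log}}^2}\leq C_t\,(1+\Dsla^2)^{-t/2}$ as positive bounded operators on $L^2(M,S)$. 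Now $(1+\Dsla^2)^{-t/2}$ lies in $\mathcal{L}^{(d/t,\infty)}(L^2(M,S))$: this is the standard consequence of Weyl's law for the first-order elliptic operator $\Dsla$ on the $d$-dimensional manifold $M$, giving $\mu_n\big((1+\Dsla^2)^{-1/2}\big)\sim c\,n^{-1/d}$ for the $n$-th singular value $\mu_n$, and hence $\mu_n\big((1+\Dsla^2)^{-t/2}\big)=O(n^{-t/d})=O(n^{-1/(d/t)})$. Since $\mathcal{L}^{(d/t,\infty)}$ is order-hereditary among positive operators (by the min-max principle $0\leq A\leq B$ forces $\mu_n(A)\leq\mu_n(B)$), the comparison places $e^{-t\Dsla_{\mathrm{log}}^2}$ in $\mathcal{L}^{(d/t,\infty)}(L^2(M,S))$.

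The only non-formal input is Weyl's law controlling the eigenvalue growth of $\Dsla$; everything else is functional calculus and the min-max principle. The sole point demanding care—and not a genuine obstacle—is the singularity of $\mathrm{signlog}$ at the origin, which is rendered harmless at each step by discreteness of the spectrum together with the convention $\mathrm{signlog}(0)=0$. I note that $(\log s)^2$ in fact dominates every power of $\log s$, so the singular values of $e^{-t\Dsla_{\mathrm{log}}^2}$ decay faster than any polynomial and the operator actually lies in every $\mathcal{L}^{(p,\infty)}$; the exponent $d/t$ recorded in the statement is merely the natural scale produced by the comparison above.
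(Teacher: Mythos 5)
Your proof is correct and takes essentially the same route as the paper, whose entire proof is the single assertion that the proposition ``follows from functional calculus for self-adjoint operators''; your argument simply supplies the details that assertion leaves implicit (discreteness of $\mathrm{spec}(\Dsla)$, Weyl's law for the singular values of $(1+\Dsla^2)^{-1/2}$, and the min-max comparison for positive operators). Your closing observation that $e^{-t\Dsla_{\mathrm{log}}^2}$ in fact lies in every $\mathcal{L}^{(p,\infty)}(L^2(M,S))$ is also correct and shows the stated exponent $d/t$ is far from sharp.
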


Proposition \ref{signlogopstruc} follows from functional calculus for self-adjoint operators. To study $\Dsla_{\mathrm{log}}$ further, we will make use of log-polyhomogeneous pseudo-differential operators. Such operators are studied in detail in \cite[Section 3]{logpolylesch}. In short, the log-polyhomogeneous pseudo-differential operators form a bi-filtered algebra $\Psi^{m,k}(M,S)\subseteq \cap_{s>0}\Psi^{m+s}(M,S)$ where $m\in \R$ and $k\in \N$. A full symbol $\sigma_U^{\rm full}(A)$ of an operator $A\in \Psi^{m,k}(M,S)$ in a local coordinate chart $U$ admits an asymptotic expansion 
$$\sigma_U^{\rm full}(A)(x,\xi)\sim \sum_{j=0}^k a_j(x,\xi)\log^j|\xi|,$$
where the elements $a_j$ are classical symbols of order $m$.

\begin{prop}
\label{signlogoperator}
The operator $\Dsla_{\mathrm{log}}$ is a log-polyhomogeneous pseudo-differential operator of order $(0,1)$ with leading order symbol $c_S(\xi)|\xi|_h^{-1}\log|\xi|_h$ where $c_S$ denotes the Clifford multiplication on $S$. In particular, for any $s>0$ and $a\in C^\infty(M)$, the commutator $[\Dsla_{\mathrm{log}},a]$ is a pseudo-differential operator of order $-1+s$.
\end{prop}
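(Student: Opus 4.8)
The plan is to identify $\Dsla_{\mathrm{log}}$ with a product of a classical operator of order $0$ and a logarithm of a positive elliptic operator, and to read off the leading symbol from that factorisation. The point of departure is the reduction already prepared before the statement: since $\Dsla_{\mathrm{log}}-\mathfrak{l}_\ep(\Dsla)\in\Psi^{-\infty}(M,S)$ and $\Psi^{-\infty}(M,S)\subseteq\Psi^{0,1}(M,S)$, a smoothing perturbation alters neither the order $(0,1)$ nor the coefficient of $\log|\xi|_h$. Hence it suffices to analyse $\mathfrak{l}_\ep(\Dsla)$ for a fixed small $\ep>0$, i.e. to work with a function that is smooth on all of $\R$ and agrees with $\mathrm{signlog}$ away from a neighbourhood of $0$.

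Next I would introduce the positive invertible elliptic operator $\langle\Dsla\rangle:=(1+\Dsla^2)^{1/2}$, which is classical of order $1$ with principal symbol $|\xi|_h$. By the structure theory of complex powers and logarithms of elliptic operators (see \cite[Section 3]{logpolylesch}), $\log\langle\Dsla\rangle$ is a log-polyhomogeneous pseudo-differential operator of order $(0,1)$ whose coefficient of $\log|\xi|_h$ has principal part $\id_S$. The phase $F:=\Dsla\langle\Dsla\rangle^{-1}$ is a classical pseudo-differential operator of order $0$ with principal symbol $c_S(\xi)|\xi|_h^{-1}$, being the composition of the order-$1$ operator $\Dsla$ with the classical order-$(-1)$ operator $\langle\Dsla\rangle^{-1}$. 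Comparing $\Dsla_{\mathrm{log}}$ with $F\log\langle\Dsla\rangle$ by functional calculus, the difference is $h(\Dsla)$ with $h(x)=\mathrm{signlog}(x)-\tfrac{x}{\langle x\rangle}\log\langle x\rangle$ (modulo the smooth modification near $0$); a Taylor expansion at infinity gives $h(x)=O(x^{-2}\log|x|)$, so $h$ is a symbol of order $-2+s$ for every $s>0$ and $h(\Dsla)$ is of lower order. Since $\Psi^{0,1}(M,S)$ is a module over the classical order-$0$ operators, with leading coefficients multiplying, I conclude $\Dsla_{\mathrm{log}}\in\Psi^{0,1}(M,S)$ and that its coefficient of $\log|\xi|_h$ has principal part $c_S(\xi)|\xi|_h^{-1}\id_S$, which is exactly the claimed leading order symbol $c_S(\xi)|\xi|_h^{-1}\log|\xi|_h$.

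The ``in particular'' assertion then follows from the symbol calculus of log-polyhomogeneous operators. For $a\in C^\infty(M)$ acting by multiplication, i.e. a classical operator of order $0$ with $\xi$-independent symbol, the leading term of $[\Dsla_{\mathrm{log}},a]$ is governed by $\partial_\xi\sigma(\Dsla_{\mathrm{log}})\,\partial_x a$; differentiating the order-$(0,1)$ symbol of $\Dsla_{\mathrm{log}}$ in $\xi$ lowers the homogeneity by one, so $[\Dsla_{\mathrm{log}},a]\in\Psi^{-1,1}(M,S)\subseteq\bigcap_{s>0}\Psi^{-1+s}(M,S)$. This is precisely the statement that $[\Dsla_{\mathrm{log}},a]$ is a pseudo-differential operator of order $-1+s$ for every $s>0$.

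The technical heart, and the step I expect to be the main obstacle, is the use of the structure theorem that $\log\langle\Dsla\rangle$ is log-polyhomogeneous of order $(0,1)$ with the stated leading coefficient, as this is where one genuinely leaves the classical pseudo-differential calculus. A self-contained alternative would represent $\mathfrak{l}_\ep(\Dsla)$ via the Helffer--Sjöstrand formula, using an almost-analytic extension of $\mathfrak{l}_\ep$ to write the operator as a contour integral of the resolvent $(z-\Dsla)^{-1}$; this reproves both the pseudo-differential nature and the symbol expansion, but extracting the $\log|\xi|_h$ coefficient from that integral is the delicate part and must reproduce the same leading symbol.
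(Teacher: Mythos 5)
Your proposal is correct and follows essentially the same route as the paper: both factor $\Dsla_{\mathrm{log}}$ as the order-zero phase times the logarithm of a positive elliptic operator, invoke the known log-polyhomogeneous structure of that logarithm as the key external input (you cite Lesch's structure theory for $\log\langle\Dsla\rangle$, the paper cites Grubb for $\log\Delta$ with $\Delta=\Dsla^2$), and deduce the commutator statement from the composition/Poisson-bracket formula in the bi-filtered algebra $\Psi^{*,*}(M,S)$. Your replacement of $|\Dsla|$ by $\langle\Dsla\rangle=(1+\Dsla^2)^{1/2}$, with the explicit error term $h(\Dsla)$ of order $-2+s$, is only a minor technical variant that handles the kernel of $\Dsla$ differently and changes nothing essential.
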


\begin{proof}
By \cite[Proposition 3.4.(2)]{logpolylesch} the log-polyhomogeneous pseudo-differential operators form a bi-filtered algebra. Since $\Dsla|\Dsla|^{-1}\in \Psi^0_{cl}(M,S)=\Psi^{0,0}(M,S)$ has principal symbol $c_S(\xi)|\xi|_h^{-1}$ we have that $\Dsla_{\mathrm{log}}\in \Psi^{0,1}(M,S)$ provided $\log|\Dsla|\in \Psi^{0,1}(M,S)$ with leading order symbol $\log|\xi|_h$. It suffices to prove that $\log\Delta\in \Psi^{0,1}(M,S)$ with leading order symbol $\log|\xi|_h^2$ for any Laplacian type operator $\Delta$ on $S\to M$. This fact follows from \cite[Equation (6), Page 121]{grubblogproj}.

It follows from the composition formulas for log-polyhomogeneous pseudo-differential operators (see \cite[Proposition 3.4.(2)]{logpolylesch}) that the leading order term in the symbol of $[\Dsla_{\mathrm{log}},a]$ is the Poisson bracket $\{c_S(\xi)|\xi|_h^{-1}\log|\xi|_h,a\}_{T^*M}$ which is of order $(-1,1)$. Thus, $[\Dsla_{\mathrm{log}},a]\in \Psi^{-1,1}(M,S)\subseteq \Psi^{-1+s}(M,S)$ for all $s>0$.
\end{proof}

\begin{cor}
The spectral triple $(C^\infty(M), L^2(M,S),\Dsla_{\mathrm{log}})$ satisfies
\begin{enumerate}
\item the class $[C^\infty(M), L^2(M,S),\Dsla_{\mathrm{log}}]$ coincides with $[C^\infty(M), L^2(M,S),\Dsla]$ in $K_*(M)$;
\item $(C^\infty(M), L^2(M,S),\Dsla_{\mathrm{log}})$ is $\theta$-summable;
\item for $a\in C^\infty(M)$,  $[\Dsla_{\mathrm{log}},a]\in \mathcal{L}^{d+s}(L^2(M,S))$ for any $s>0$, where $d=\dim(M)$. 
\end{enumerate}
\end{cor}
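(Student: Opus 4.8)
The plan is to establish the three properties by passing to functional calculus and the bounded transform, and by reading off operator orders from Propositions \ref{signlogopstruc} and \ref{signlogoperator}. For (i), I would compare the bounded transforms $F:=\Dsla(1+\Dsla^2)^{-1/2}$ and $F_{\mathrm{log}}:=\Dsla_{\mathrm{log}}(1+\Dsla_{\mathrm{log}}^2)^{-1/2}$, both of which are functions of $\Dsla$ and hence simultaneously diagonal in an eigenbasis of $\Dsla$ with eigenvalues $\{\lambda_n\}$. On the eigenvector with eigenvalue $\lambda_n$ the operator $F$ acts by $\lambda_n(1+\lambda_n^2)^{-1/2}$ and $F_{\mathrm{log}}$ by $\mu_n(1+\mu_n^2)^{-1/2}$ with $\mu_n=\mathrm{signlog}(\lambda_n)$; since $|\lambda_n|\to\infty$ and $\mathrm{sign}(\mu_n)=\mathrm{sign}(\lambda_n)$ once $|\lambda_n|>1$, both quantities converge to $\mathrm{sign}(\lambda_n)$, so $F-F_{\mathrm{log}}$ has eigenvalues tending to $0$ and is therefore compact. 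As $[\Dsla,a]$ is bounded and $[\Dsla_{\mathrm{log}},a]$ has negative order by Proposition \ref{signlogoperator}, the commutators $[F,a]$, $[F_{\mathrm{log}},a]$ and the defects $F^2-1$, $F_{\mathrm{log}}^2-1$ are all compact, so the straight-line path $u\mapsto (1-u)F+uF_{\mathrm{log}}$ is a norm-continuous operator homotopy of Fredholm modules over $C^\infty(M)$. The endpoints therefore represent the same class in $K_*(M)$; since $\mathrm{signlog}$ is odd, $\Dsla_{\mathrm{log}}$ respects the grading and the comparison takes place in the correct parity.

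For (iii) I would invoke Proposition \ref{signlogoperator}, which gives $[\Dsla_{\mathrm{log}},a]\in\Psi^{-1+s'}(M,S)$ for every $s'>0$. A classical pseudodifferential operator of order $-m<0$ on a closed $d$-dimensional manifold has singular values $O(n^{-m/d})$ and hence lies in $\mathcal{L}^p(L^2(M,S))$ for every $p>d/m$. Applying this with $m=1-s'$ shows $[\Dsla_{\mathrm{log}},a]\in\mathcal{L}^p$ for all $p>d/(1-s')$. Given a target exponent $d+s$ with $s>0$, it suffices to choose $s'>0$ small enough that $d/(1-s')<d+s$, and then $[\Dsla_{\mathrm{log}},a]\in\mathcal{L}^{d+s}$, which is exactly (iii).

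The main subtlety is (ii). The ideal statement $e^{-t\Dsla_{\mathrm{log}}^2}\in\mathcal{L}^{(d/t,\infty)}$ of Proposition \ref{signlogopstruc} only gives trace-class-ness for $t>d$ (where $d/t<1$), so for $\theta$-summability at all $t>0$ I would instead use the sharper spectral asymptotics. By Weyl's law the eigenvalues of $\Dsla$ satisfy $|\lambda_n|\sim c\,n^{1/d}$, so by functional calculus the eigenvalues of $\Dsla_{\mathrm{log}}$ are $\mu_n=\mathrm{signlog}(\lambda_n)$ with $|\mu_n|\sim \tfrac1d\log n$, whence the eigenvalues of $\Dsla_{\mathrm{log}}^2$ grow like $\tfrac1{d^2}(\log n)^2$. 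Consequently, for every $t>0$,
\[
\Tr\bigl(e^{-t\Dsla_{\mathrm{log}}^2}\bigr)=\sum_n e^{-t\mu_n^2}\ \le\ C+\sum_n e^{-\frac{t}{2d^2}(\log n)^2},
\]
and the right-hand sum converges because $e^{-\frac{t}{2d^2}(\log n)^2}\le n^{-2}$ for $n$ large. Thus $e^{-t\Dsla_{\mathrm{log}}^2}$ is trace class for every $t>0$, which together with the boundedness of $[\Dsla_{\mathrm{log}},a]$ yields $\theta$-summability. I expect this reconciliation between the understated weak-ideal bound and the genuine logarithmic eigenvalue growth to be the crux of the argument; the remaining parts are formal once the operator orders are in hand.
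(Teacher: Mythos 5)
Your proposal is correct, and all three items are established soundly. The paper offers no written proof of this corollary: it is stated as an immediate consequence of Propositions \ref{signlogopstruc} and \ref{signlogoperator}, so the natural comparison is with that implied argument. For (1) and (3) your route is the intended one: you compare the bounded transforms $F$ and $F_{\mathrm{log}}$ inside the functional calculus of $\Dsla$ (both are odd functions of $\Dsla$, so parity is respected, and their difference is a diagonal operator with entries tending to zero, hence compact, making the straight-line operator homotopy legitimate), and you feed the order $-1+s'$ bound of Proposition \ref{signlogoperator} into the standard Weyl/Schatten embedding $\Psi^{-m}(M,S)\subseteq\mathcal{L}^{p}$ for $p>d/m$. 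Where you genuinely depart from the paper is item (2), and your instinct there is right: the weak-ideal bound $e^{-t\Dsla_{\mathrm{log}}^2}\in\mathcal{L}^{(d/t,\infty)}$ recorded in Proposition \ref{signlogopstruc} yields trace-class only when $d/t<1$, i.e. $t>d$, so a literal citation of that proposition cannot deliver $\theta$-summability at all $t>0$. Your replacement -- Weyl's law $|\lambda_n|\sim c\,n^{1/d}$ giving $\mu_n^2\sim d^{-2}(\log n)^2$ and hence the super-polynomial decay $e^{-t\mu_n^2}=|\lambda_n|^{-t\log|\lambda_n|}\le n^{-2}$ for $n$ large -- is exactly the right repair, and it in fact proves more: $e^{-t\Dsla_{\mathrm{log}}^2}$ lies in \emph{every} Schatten ideal for every $t>0$, so the weak-ideal statement in Proposition \ref{signlogopstruc} is a strict understatement of what functional calculus gives; the bound $\mathcal{L}^{(d/t,\infty)}$ would instead be sharp for $e^{-t|\Dsla_{\mathrm{log}}|}$, which (modulo the finitely many eigenvalues of modulus less than $1$) equals $|\Dsla|^{-t}$. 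In short, your argument fills the unwritten proof correctly and, for (2), supplies the sharper spectral estimate that the corollary actually needs.
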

Thus, $[\Dsla_{\mathrm{log}},a]$ is not only bounded but even Schatten class and in particular it is a compact operator.

\begin{example}
\label{moresolenoidsandisometries}
We return to the circle, with an eye towards solenoids (cf. Example \ref{solenoidsandisometries}). We let $\Dsla:=\frac{1}{2\pi i}\frac{\mathrm{d}}{\mathrm{d} x}$ on $L^2(S^1)$ (with periodic boundary conditions). Then $\Dsla e_k=ke_k$ so 
$$\Dsla_{\mathrm{log}}e_k=\begin{cases}
0, \quad &k=0,\\
\mathrm{sign}(k)\log|k|e_k, \quad &k\neq 0.
\end{cases}$$
In particular, with $z\in C^\infty(S^1)$ denoting the complex coordinate function $x\mapsto \mathrm{exp}(2\pi i x)$,
$$[\Dsla_{\mathrm{log}},z]e_k=
\begin{cases}
\log(2)e_0, \quad &k=-1,\\
\mathrm{sign}(k)\log\left(1+\frac{2|k|+1}{k^2+1}\right)e_{k+1}, \quad &k\neq -1.
\end{cases}$$
Since $\log(1+t)=-\sum_{k=1}^\infty (-t)^k/k$ for $|t|<1$,  $[\Dsla_{\mathrm{log}},z]\in \Psi^{-1}(S^1)$ (see \cite{agran84}). This gives us a proof of a refinement of the commutator property in Proposition \ref{signlogoperator}.(3) for $S^1$. Any $a\in C^\infty(S^1)$ admits an expansion $a=\sum_{k\in \Z}a_kz^k$ for a rapidly decreasing sequence $(a_k)_{k\in \Z}$ so $[\Dsla_{\mathrm{log}},a]\in \Psi^{-1}(S^1)$. We remark that if $\dim(M)>1$, the property $[\Dsla_{\mathrm{log}},a]\in \Psi^{-1}(M)$ for all $a\in C^\infty(M)$ fails.
\end{example}

\begin{prop}
Let $M$ be a $d$-dimensional manifold, $g:M\to M$ an $n$-fold smooth covering map acting conformally on a metric $h_M$ that lifts to a Clifford bundle $S\to M$, and $V$ the isometry on $L^2(M,S)$ constructed as in Proposition \ref{theisometry}. Assume that $g_S:g^*S\to S$ is Clifford linear. If $\Dsla$ is a Dirac operator on $S$, the operator $[\Dsla_{\mathrm{log}},V]$ admits a bounded extension to $L^2(M,S)$. 
\end{prop}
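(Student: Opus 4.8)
The plan is to reduce the statement to a symbol computation in the log-polyhomogeneous calculus and to read off boundedness from the cancellation of the leading log-term. First I would strip off the smooth scalar prefactor: since $V=N^{d/4}n^{-1/2}V_0$ with $N^{d/4}n^{-1/2}\in C^\infty(M,\R_{>0})$, the Leibniz rule gives $[\Dsla_{\mathrm{log}},V]=[\Dsla_{\mathrm{log}},N^{d/4}n^{-1/2}]V_0+N^{d/4}n^{-1/2}[\Dsla_{\mathrm{log}},V_0]$, and the first summand is bounded by the commutator estimate of Proposition \ref{signlogoperator} together with the boundedness of $V_0$. Thus it suffices to bound $[\Dsla_{\mathrm{log}},V_0]$. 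Here $V_0=g_S\circ g^*$, with $g^*$ the pullback along the $n$-fold covering $g$ and $g_S$ the Clifford-linear bundle isomorphism; since $g$ is a local diffeomorphism, $V_0$ is an order-zero Fourier integral operator whose canonical relation is the cotangent lift of the locally defined inverse branches of $g$, while $\Dsla_{\mathrm{log}}\in\Psi^{0,1}(M,S)$ by Proposition \ref{signlogoperator}.

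The heart of the argument is to transport $\Dsla_{\mathrm{log}}$ through $V_0$ and compute the resulting difference of symbols. Working on local inverse branches of $g$, the Egorov-type transformation law for log-polyhomogeneous operators expresses $[\Dsla_{\mathrm{log}},V_0]$, modulo bounded operators, in terms of the difference between the symbol of $\Dsla_{\mathrm{log}}$ and its $g$-transform. Conformality enters decisively: $g^*h_M=Nh_M$ rescales covector norms exactly, $|\xi|_h\mapsto N^{-1/2}|\xi|_h$, so the leading log-factor transforms as $\log|\xi|_h\mapsto\log|\xi|_h-\tfrac12\log N$. This is the operator-level incarnation of the scalar identity $\mathrm{signlog}(s\lambda)=\mathrm{signlog}(\lambda)+\log s\cdot\mathrm{sign}(\lambda)$ already visible in the circle computation of Example \ref{moresolenoidsandisometries}. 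The Clifford-linearity of $g_S$ guarantees that the order-zero factor $c_S(\xi)|\xi|_h^{-1}$ in the leading symbol of $\Dsla_{\mathrm{log}}$ (equivalently, the principal symbol of the phase $\mathrm{sign}(\Dsla)=\Dsla|\Dsla|^{-1}$) is preserved modulo order $-1$ under this transformation. Consequently the two log-degree-one parts cancel modulo $\Psi^{-1,1}(M,S)$, and the difference lands in $\Psi^{0,0}(M,S)=\Psi^0_{cl}(M,S)$, whose elements are bounded on $L^2(M,S)$; its leading symbol is $\tfrac12 c_S(\xi)|\xi|_h^{-1}\log N(x)$, so after the cancellation the commutator is genuinely of classical order zero.

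The main obstacle I expect is making the transformation law for the log-polyhomogeneous symbol precise for the map $g$, which is only locally invertible and $n$-to-one rather than a global diffeomorphism; I would handle this by computing on each inverse branch over a coordinate chart and assembling with a partition of unity, observing that $V_0$ being an isometry rather than a unitary is harmless since boundedness is tested locally and the range projection $VV^*$ plays no role. The second delicate point is verifying that the Clifford-linearity hypothesis is exactly what forces an $\Psi^{-1}$ (rather than merely $\Psi^0$) discrepancy in the order-zero factor, so that the log-degree-one terms truly cancel; this is precisely where the assumption that $g_S$ is Clifford linear, absent from the earlier propositions, is used. As a consistency check, for $M=S^1$ the computation collapses to $V_0e_k=e_{nk}$ and $\log N=2\log n$, reproducing the explicit bounded commutator $[\Dsla_{\mathrm{log}},V_0]e_k=\pm(\log n)e_{nk}$ of Example \ref{moresolenoidsandisometries}.
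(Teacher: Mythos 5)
Your proposal follows the paper's proof essentially step for step: reduce to $[\Dsla_{\mathrm{log}},V_0]$ using Proposition \ref{signlogoperator}, localize to inverse branches of $g$ with cutoffs, and apply the change-of-coordinates (Egorov-type) law for log-polyhomogeneous operators to see that the log-degree-one leading symbols cancel because conformality gives $|Dg^t\xi|_h=N^{1/2}|\xi|_h$ and Clifford linearity preserves $c_S(\xi)|\xi|_h^{-1}$, leaving an order-zero Fourier integral operator with leading symbol $\tfrac12 c_S(\xi)|\xi|_h^{-1}\log N$, hence bounded. The only cosmetic discrepancy is the sign of $\tfrac12\log N$, which comes from your working with the inverse branches rather than with $Dg^t$ directly and does not affect the conclusion.
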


\begin{proof}
By Proposition \ref{signlogoperator}, $[\Dsla_{\mathrm{log}},V]$ is bounded if and only if $[\Dsla_{\mathrm{log}},V_0]$ is bounded. It in fact suffices to prove that $\chi[\Dsla_{\mathrm{log}},V_0]\chi'$ is bounded for any $\chi,\chi'\in C^\infty(M)$ that are supported in balls $U,U'\subseteq M$ and such that $g|_{U'}$ is injective and $g(U')\subseteq U$. Since $\Dsla_{\mathrm{log}}$ is a log-polyhomogeneous pseudo-differential operator by Proposition \ref{signlogoperator}, the change of coordinates formula for such (see \cite[Proposition 3.5]{logpolylesch}) implies that $\chi[\Dsla_{\mathrm{log}},V_0]\chi'$ is a Fourier integral operator with log-polyhomogeneous symbol of order $(0,-1)$. In fact, the leading order term in the symbol of $\chi[\Dsla_{\mathrm{log}},V_0]\chi'$ is of order $(0,0)$ since 
$$c_S(Dg^t\xi)|Dg^t\xi|_h^{-1}\log|Dg^t\xi|_h-c_S(\xi)|\xi|_h^{-1}\log|\xi|_h=\frac{1}{2}c_S(\xi)|\xi|_h^{-1}\log N.$$
In this identity we use that $c_S(\xi)|\xi|^{-1}$ is invariant under conformal changes of metric and that $|Dg^t\xi|_h=N^{1/2}|\xi|_h$. This computation shows that $\chi[\Dsla_{\mathrm{log}},V_0]\chi'$ is a Fourier integral operator of order $0$, hence is bounded.
\end{proof}

\begin{example}
\label{evenmoresolenoidsandisometries}
We return to the example of solenoids on $S^1$ from the Examples \ref{solenoidsandisometries} and \ref{moresolenoidsandisometries}. In this case, we have 
$$[\Dsla_{\mathrm{log}},V]e_k=\begin{cases}
0, \quad &k=0,\\
\mathrm{sign}(k)\log|n|e_{nk}, \quad &k\neq 0.
\end{cases}$$
So $\|[\Dsla_{\mathrm{log}},V]\|_{\mathbb{B}(L^2(S^1))}= \log|n|$.
\end{example}

\begin{thm}
\label{liftingonsole}
Let $M$ be a $d$-dimensional manifold, $g:M\to M$ an $n$-fold smooth covering map acting conformally on a metric $h_M$ that lifts to a Clifford bundle $S\to M$, $\Dsla$ a Dirac operator on $S$ and $\pi_V:O_{E_g}\to \mathbb{B}(L^2(M,S))$ the representation from Proposition \ref{constrrep}. Assume that $g_S:g^*S\to S$ is Clifford linear. We let $\mathcal{A}\subseteq O_{E_g}$ denote the dense pre-image of the $*$-algebra generated by $C^\infty(M)$ and $V$ under $\pi_V$. The data $(\mathcal{A},L^2(M,S),\Dsla_{\mathrm{log}})$ is a spectral triple for $O_{E_g}$ such that, under the inclusion $\iota:C(M)\to O_{E_g}$,
$$\iota^*[\mathcal{A},L^2(M,S),\Dsla_{\mathrm{log}}]=[C^\infty(M),L^2(M,S),\Dsla]\in K_*(M).$$
\end{thm}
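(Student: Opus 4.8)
The plan is to treat the two assertions separately: first that $(\mathcal{A},L^2(M,S),\Dsla_{\mathrm{log}})$ satisfies the axioms of a spectral triple over $O_{E_g}$, and then that pulling this class back along $\iota$ recovers the Dirac class on $M$. For the spectral triple structure, self-adjointness of $\Dsla_{\mathrm{log}}$ and compactness of its resolvent are already supplied by Proposition \ref{signlogopstruc}: since $e^{-t\Dsla_{\mathrm{log}}^2}$ lies in a Schatten ideal, the spectrum of $\Dsla_{\mathrm{log}}^2$ is discrete and tends to infinity, so $(1+\Dsla_{\mathrm{log}}^2)^{-1}$ is compact. The representation of $O_{E_g}$ is the one furnished by $\pi_V$ in Proposition \ref{constrrep}, and $\mathcal{A}$ is dense by definition.

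What remains is to check that every element of $\mathcal{A}$ preserves $\Dom(\Dsla_{\mathrm{log}})$ and has bounded commutator with $\Dsla_{\mathrm{log}}$. By construction $\pi_V(\mathcal{A})$ is the $*$-algebra generated by the pointwise action of $C^\infty(M)$ and the isometry $V$, so it suffices to treat these generators and then propagate. For $a\in C^\infty(M)$ the commutator $[\Dsla_{\mathrm{log}},a]$ is bounded (in fact Schatten class) by the Corollary following Proposition \ref{signlogoperator}; for $V$, boundedness of $[\Dsla_{\mathrm{log}},V]$ is exactly the content of the proposition immediately preceding this theorem (using Clifford linearity of $g_S$). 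I would then extend to all of $\mathcal{A}$ by the two standard devices: the adjoint identity $[\Dsla_{\mathrm{log}},V^*]=-[\Dsla_{\mathrm{log}},V]^*$, valid because $\Dsla_{\mathrm{log}}$ is self-adjoint, disposes of $V^*$, and the Leibniz rule $[\Dsla_{\mathrm{log}},bc]=[\Dsla_{\mathrm{log}},b]\,c+b\,[\Dsla_{\mathrm{log}},c]$ propagates both domain-invariance and boundedness of commutators from the generators to arbitrary products. This establishes the spectral triple.

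For the $K$-homology identity I would argue by naturality of the pullback. By its construction in Proposition \ref{constrrep}, the composite $\pi_V\circ\iota:C(M)\to\mathbb{B}(L^2(M,S))$ is precisely the pointwise multiplication representation, so restricting the class along $\iota$ merely forgets the action of $V$:
\[
\iota^*[\mathcal{A},L^2(M,S),\Dsla_{\mathrm{log}}]=[C^\infty(M),L^2(M,S),\Dsla_{\mathrm{log}}]\in K_*(M).
\]
The first item of the Corollary following Proposition \ref{signlogoperator} then identifies $[C^\infty(M),L^2(M,S),\Dsla_{\mathrm{log}}]$ with $[C^\infty(M),L^2(M,S),\Dsla]$, which completes the argument.

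The main obstacle is analytic rather than structural, and it has already been isolated into the preceding results: the genuinely new input is that replacing the order-one symbol of $\Dsla$ by its (sign-)logarithm still produces bounded commutators with both $C^\infty(M)$ and $V$, which rests on the log-polyhomogeneous symbol calculus in Proposition \ref{signlogoperator} and the conformal symbol computation in the proposition before the theorem. Granting those, the present statement is an assembly of routine spectral-triple bookkeeping, and the only point demanding mild care is the passage from bounded commutators on generators to domain-invariance of $\mathcal{A}$, which follows from the usual observation that a bounded operator $T$ with $[\Dsla_{\mathrm{log}},T]$ bounded necessarily maps $\Dom(\Dsla_{\mathrm{log}})$ into itself.
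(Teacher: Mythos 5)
Your proposal is correct and takes essentially the approach the paper intends: the paper states Theorem \ref{liftingonsole} without an explicit proof, presenting it as an immediate assembly of Proposition \ref{constrrep} (the representation), Proposition \ref{signlogopstruc} (self-adjointness and compact resolvent), Proposition \ref{signlogoperator} and its corollary (bounded, even Schatten, commutators with $C^\infty(M)$ and the identification of $[\Dsla_{\mathrm{log}}]$ with $[\Dsla]$ in $K_*(M)$), and the preceding proposition on boundedness of $[\Dsla_{\mathrm{log}},V]$ — which is exactly the assembly you carry out. The routine functional-analytic devices you add (the adjoint identity for $V^*$, the Leibniz rule, and the fact that a bounded operator whose commutator with $\Dsla_{\mathrm{log}}$ is bounded as a form on the domain automatically preserves that domain) are precisely the bookkeeping the paper leaves implicit.
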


The following corollary is immediate from Theorem \ref{liftingonsole} and Corollary \ref{corollarofpseqonpage14}.

\begin{cor}
The class $[\mathcal{A},L^2(M,S),\Dsla_{\mathrm{log}}]\in K^*(O_{E_g})$ is in the image of the Cuntz-Pimsner mapping $K^{*+1}(C(M))\to K^*(O_{E_g})$ if and only if $[S]=0$ in the $K$-theory group of (graded) Clifford bundles.
\end{cor}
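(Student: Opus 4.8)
The plan is to obtain the corollary by simply combining Corollary~\ref{corollarofpseqonpage14} with Theorem~\ref{liftingonsole}, reducing the statement to a classical $K$-homological fact about Dirac operators. First I would specialise Corollary~\ref{corollarofpseqonpage14} to $B=\C$ and transport it along the isomorphism $\pi_E\colon O_{E_g}\xrightarrow{\cong}C^*(\G_g)$ of Theorem~\ref{isothemggtooh}. Under this identification the Cuntz--Pimsner mapping $K^{*+1}(C(M))\to K^*(O_{E_g})$ in the statement is exactly the Kasparov product $y\mapsto [L^2(\G_g)_{C(V)},D]\otimes_{C(M)}y$ with the extension class of Theorem~\ref{represent!}. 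Writing $x:=[\mathcal{A},L^2(M,S),\Dsla_{\mathrm{log}}]\in K^*(O_{E_g})$, Corollary~\ref{corollarofpseqonpage14} then says that $x$ lies in the image of this mapping if and only if $\iota^*(x)=0$ in $K^*(C(M))=K_*(M)$, where $\iota\colon C(M)\to O_{E_g}$ is the inclusion.

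Next I would apply Theorem~\ref{liftingonsole}, which identifies $\iota^*(x)=[C^\infty(M),L^2(M,S),\Dsla]\in K_*(M)$, the $K$-homology class of the Dirac operator on the Clifford bundle $S$. At this stage the corollary has been reduced to the assertion that the Dirac class $[\Dsla_S]$ vanishes in $K_*(M)$ if and only if $[S]=0$ in the $K$-theory group of graded Clifford bundles over $M$.

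For this final equivalence I would identify the $K$-theory group of graded Clifford bundles with the graded $K$-theory $K_*(C_\tau(M))$ of the $C^*$-algebra $C_\tau(M)$ of continuous sections of the complexified Clifford algebra bundle $\mathrm{Cl}(T^*M)$, the identification being the fibrewise Atiyah--Bott--Shapiro construction (graded $\mathrm{Cl}(T^*M)$-modules modulo the ABS relation). The assignment $S\mapsto[\Dsla_S]$ is then precisely Kasparov's Poincaré duality isomorphism $K_*(C_\tau(M))\xrightarrow{\cong}K_*(M)$, realised by capping $[S]$ with the Dirac fundamental class of $M$; crucially, using the full Clifford algebra bundle makes this duality available for an arbitrary closed $M$, with no $\mathrm{spin}^c$ hypothesis. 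The easy direction $[S]=0\Rightarrow[\Dsla_S]=0$ is well-definedness of this homomorphism, and the nontrivial direction $[\Dsla_S]=0\Rightarrow[S]=0$ is its injectivity. Chaining the three equivalences yields the corollary.

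The main obstacle is this last paragraph: one must verify that the geometric assignment $S\mapsto[\Dsla_S]$ agrees with the abstract Poincaré duality map and is an isomorphism, keeping careful track of the $\Z/2$-grading, the parity/dimension shift, and the ABS relation that makes the group of Clifford bundles the correct receptacle. All of this is classical and I would cite it from Kasparov's duality theorem together with the standard theory of Clifford modules rather than reproduce it, since the genuinely new input is contained entirely in Corollary~\ref{corollarofpseqonpage14} and Theorem~\ref{liftingonsole}.
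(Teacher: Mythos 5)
Your proposal is correct and takes essentially the same route as the paper, which simply declares the corollary ``immediate from Theorem \ref{liftingonsole} and Corollary \ref{corollarofpseqonpage14}'': you specialise Corollary \ref{corollarofpseqonpage14} to $B=\C$ to reduce membership in the image to the vanishing of $\iota^*[\mathcal{A},L^2(M,S),\Dsla_{\mathrm{log}}]$, and then apply Theorem \ref{liftingonsole} to identify this restriction with the Dirac class $[C^\infty(M),L^2(M,S),\Dsla]$. The remaining step you supply --- that $[\Dsla_S]=0$ in $K_*(M)$ if and only if $[S]=0$ in the $K$-theory group of graded Clifford bundles, via Kasparov's Poincar\'e duality $K_*(C_\tau(M))\cong K_*(M)$ --- is precisely the classical fact the paper leaves implicit, so your elaboration matches the intended argument.
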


\begin{example}
Let us consider the $K$-homological consequences of the constructions in the solenoid example (see Examples \ref{solenoidsandisometries}, \ref{moresolenoidsandisometries} and \ref{evenmoresolenoidsandisometries}). We assume $n>1$ to ensure that $g$ is oriented and lifts to the spinor bundle. By combining the $K$-theory computations in \cite[Proposition 4.3]{yisolenoid} with the duality results of Jerry Kaminker, Ian Putnam and the fourth listed author \cite{KPW} (see \cite[Page 293]{yisolenoid}) one computes that $K^0(O_{E_g})=\Z$ and $K^1(O_{E_g})=\Z\oplus \Z/(n-1)\Z$. Another approach relating these $K$-homology groups with the $K$-homology of $S^1$ is to use Corollary \ref{corollarpseqpage13} giving rise to the six term exact sequences
\begin{equation}
\label{sixtermforsonesol}
\begin{CD}
K^0(O_{E_g})@>\iota^*>>K^0(C(S^1))@>1-[E_g]>>K^0(C(S^1)) \\
@A[L^2(\G_g)_{C(V)},D]\otimes -AA  @. @VV[L^2(\G_g)_{C(V)},D]\otimes -V\\
K^1(C(S^1)) @<1-[E_g]<<K^1(C(S^1))@<\iota^*<<K^1(O_{E_g})\\
\end{CD}
\end{equation}
Some short computations show that the diagrams 
\[\begin{CD}
K^0(C(S^1))@>1-[E_g]>>K^0(C(S^1)) \\
@V\cong VV  @VV\cong V\\
\Z @>1-n>> \Z\\
\end{CD} \qquad\mbox{and}\qquad \begin{CD}
K^1(C(S^1))@>1-[E_g]>>K^1(C(S^1)) \\
@V\cong VV   @VV\cong V\\
\Z @>0>>\Z\\
\end{CD} \]
commute.
Using these diagrams, the diagram \eqref{sixtermforsonesol} collapses to 
\[\begin{CD}
K^0(O_{E_g})@>0>>\Z@>1-n>>\Z \\
@A[L^2(\G_g)_{C(V)},D]\otimes -AA  @. @VV[L^2(\G_g)_{C(V)},D]\otimes -V\\
\Z @<0<<\Z@<\iota^*<<K^1(O_{E_g})\\
\end{CD}\]
The generator of the upper right corner is a character $\mathrm{ev}_\theta:C(S^1)\to \C$ for a $\theta\in \R/\Z$. It follows that $K^1(O_{E_g})$ is generated by any pre-image $x$ of the fundamental class $[S^1]=[L^2(S^1),\Dsla]\in K^1(C(S^1))$ and the product $[L^2(\G_g)_{C(V)},D]\otimes_{C(S^1)} [\mathrm{ev}_\theta]$. Clearly, such an $x$ is of infinite order and $[L^2(\G_g)_{C(V)},D]\otimes_{C(S^1)} [\mathrm{ev}_\theta]$ is of order $n-1$. By Theorem \ref{liftingonsole}, $x$ can be taken to be represented by the spectral triple $(\mathcal{A},L^2(S^1),\Dsla_{\mathrm{log}})$ for $O_{E_g}$. The $(n-1)$-torsion class given by the product $[L^2(\G_g)_{C(V)},D]\otimes_{C(S^1)} [\mathrm{ev}_\theta]$ is represented by the spectral triple $(\mathcal{A},L^2(\G_g)\otimes_{\mathrm{ev}_\theta}\C,D\otimes_{\mathrm{ev}_\theta}1)$. Note that $L^2(\G_g)\otimes_{\mathrm{ev}_\theta} \C\cong \ell^2(\mathcal{V}_\theta)$ where the discrete set $\mathcal{V}_\theta$ is given by
$$\mathcal{V}_\theta=\{(x,j)\in S^1\times \Z: \;(x,j,\theta)\in \G_g\}\cong \Z\left[\frac{1}{n}\right]/\Z.$$
For $f\in c_c(\mathcal{V}_\theta)$, the operator $D\otimes_{\mathrm{ev}_\theta}1$ can be described by 
$$(D\otimes_{\mathrm{ev}_\theta}1)f(x,j)=\psi(n,\kappa(x,j,\theta))f(x,j).$$
It is an interesting problem to compute the product $[L^2(\G_g)_{C(S^1)},D]\otimes_{C(S^1)}[S^1]$ in order to find the generator of $K^0(O_{E_g})$. 
\end{example}

We now turn to describing a representative of the Kasparov product of the class in Theorem \ref{liftingonsole} with the Morita equivalence in Theorem \ref{moreth}. Our geometric setup presents us with a natural candidate for this product, for which we can check the sufficient conditions in Kucerovsky's theorem \cite[Theorem 13]{Kuc}.

As above, we define $X:=X_M$ and let $P\subseteq X$ be a finite $\varphi$-invariant set of periodic points. Since $\pi_0:X^u(P)\to M$ is a local homeomorphism, we can equip $X^u(P)$, $G^s(P)$ and $Z^u(P)$ with smooth structures compatible with the groupoid operations. Since $g$ lifts to a unitary action on $S$, we can integrate the action of the groupoid $G^s(P)$ on $X^u(P)$ to an action of $C^*(G^s(P))\rtimes \Z$ on $L^2(X^u(P),\pi_0^*(S))$, where $X^u(P)$ is equipped with the measure defined from the metric $\tilde{h}_{X^u(P)}:=\pi_0^*h_M$. The isomorphism 
$$L^2(X^u(P),\pi_0^*(S))\cong L^2(Z^u(P))_{C^*(\G_g)}\otimes_{C^*(\G_g)}L^2(M,S),$$ 
induced from Proposition \ref{tensorfactor} is compatible with the left $C^*(G^s(P))\rtimes \Z$-action. We have that $\varphi^*\tilde{h}_{X^u(P)}=\pi_0^*(N)\tilde{h}_{X^u(P)}$. Let $\tilde{\Dsla}$ denote the lift of $\Dsla$ to $X^u(P)$; it is a Dirac operator on $\pi_0^*(S)\to X^u(P)$. Using the proof of Proposition \ref{signlogoperator} and the fact that $X^u(P)$ is a complete manifold of bounded geometry, we can deduce that $\tilde{\Dsla}_{\rm log}$ is a self-adjoint log-polyhomogeneous pseudo-differential operator on $X^u(P)$.

\begin{prop}
\label{anotherproduct}
Let $M$, $g$, $h_M$, $S$ and $\Dsla$ be as in Theorem \ref{liftingonsole}. Assume that $g:V\to V$ is an irreducible open surjection satisfying Wieler's axioms. The data $(C^\infty_c(G^s(P))\rtimes_{\rm alg}\Z, L^2(X^u(P),\pi_0^*(S)), \tilde{\Dsla}_{\rm log})$ is a spectral triple for $C^*(G^s(P))\rtimes \Z$ representing the product of the class in Theorem \ref{liftingonsole} with the Morita equivalence in Theorem \ref{moreth}.
\end{prop}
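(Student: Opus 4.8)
The plan is to realize $(C^\infty_c(G^s(P))\rtimes_{\rm alg}\Z, L^2(X^u(P),\pi_0^*(S)), \tilde{\Dsla}_{\mathrm{log}})$ as the unbounded Kasparov product of the Morita equivalence bimodule of Theorem \ref{moreth} with the spectral triple of Theorem \ref{liftingonsole}, and to verify this via Proposition \ref{awesome}. Under the isomorphism
$$L^2(X^u(P),\pi_0^*(S))\cong L^2(Z^u(P))_{C^*(\G_g)}\otimes_{C^*(\G_g)}L^2(M,S)$$
induced by Proposition \ref{tensorfactor}, the data fit the hypotheses of Proposition \ref{awesome} with $A=C^*(G^s(P))\rtimes\Z$, Morita bimodule $E=L^2(Z^u(P))_{C^*(\G_g)}$ carrying the zero operator, and the $(O_E,\C)$-cycle $(F,D)=(L^2(M,S),\Dsla_{\mathrm{log}})$ of Theorem \ref{liftingonsole} on the right (using $O_E\cong C^*(\G_g)$ from Theorem \ref{isothemggtooh}). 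Because the left factor carries $D_1=0$, the positivity and domain conditions of Kucerovsky's theorem \cite{Kuc} are automatic, exactly as in the proof of Proposition \ref{awesome}, so the entire content reduces to the connection condition \eqref{hypcon} for $\tilde{D}=\tilde{\Dsla}_{\mathrm{log}}$.

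Next I would record the analytic properties of $\tilde{\Dsla}_{\mathrm{log}}$. As noted before the statement, the lift $\tilde{\Dsla}$ of $\Dsla$ to the complete bounded-geometry manifold $X^u(P)$ is a Dirac operator, and $\tilde{\Dsla}_{\mathrm{log}}$ is a self-adjoint log-polyhomogeneous pseudo-differential operator of order $(0,1)$, whose leading symbol is the pullback along the local homeomorphism $\pi_0$ of the symbol of $\Dsla_{\mathrm{log}}$ computed in Proposition \ref{signlogoperator}. This self-adjointness is the one hypothesis of Proposition \ref{awesome} to be checked by hand; the compactness of $a(\tilde{\Dsla}_{\mathrm{log}}\pm i)^{-1}$ for $a\in C^\infty_c(G^s(P))\rtimes_{\rm alg}\Z$, and hence the full spectral-triple property, will then be \emph{delivered} by Proposition \ref{awesome} once \eqref{hypcon} holds, using that such $a$ act by kernels with compact support in the $X^u(P)$-direction while the parametrix of $\tilde{\Dsla}_{\mathrm{log}}$ improves Sobolev order.

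For the connection condition I would use the generating set of Proposition \ref{tensorfactor} and Lemma \ref{factorstrong}, reducing as in the proof of Lemma \ref{connection} to generators $x=\rho_L^*(u)\,\pi_0^*(1_V)$ with $u\in C^\infty_c(X^u(P))$, together with the partial isometries $\tilde V$ implementing the $\Z$-shift, these being lifts of the isometry $V$ of Proposition \ref{theisometry}. Then $T_x$ factors into multiplication by $u$ along the $X^u(P)$-fibre and the lift $\tilde V$, so that
$$[\tilde{\Dsla}_{\mathrm{log}},T_x]=[\tilde{\Dsla}_{\mathrm{log}},u]\,\tilde V+u\,[\tilde{\Dsla}_{\mathrm{log}},\tilde V].$$
Each term is handled by transferring the corresponding computation on $M$ to $X^u(P)$: the first commutator is a pseudo-differential operator of order $-1+s$ by the pulled-back Proposition \ref{signlogoperator}, hence bounded (indeed compact), while the second is, by the proof of the preceding proposition that $[\Dsla_{\mathrm{log}},V]$ is bounded, a Fourier integral operator of order $0$ with leading symbol the bounded multiple $\tfrac12 c_S(\xi)|\xi|_h^{-1}\log N$. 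Since these estimates are local and $\pi_0$ is a local homeomorphism onto $M$, they transfer to $X^u(P)$ through the bounded-geometry pseudo-differential and Fourier integral calculus. This verifies \eqref{hypcon}, and Proposition \ref{awesome} (in its even or odd form according to the grading of $S$) then identifies the resulting spectral triple with the asserted product; that $\iota^*$ returns $[C^\infty(M),L^2(M,S),\Dsla]$ is inherited from Theorem \ref{liftingonsole}.

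The main obstacle is the analysis on the non-compact manifold $X^u(P)$: one must make precise that $\tilde{\Dsla}_{\mathrm{log}}$ is a \emph{globally} self-adjoint log-polyhomogeneous operator and that the commutator estimates hold \emph{uniformly} across the fibres of $\pi_0$, so that the local Fourier-integral computation of $[\Dsla_{\mathrm{log}},V]$ on the compact base $M$ genuinely yields a globally bounded operator $[\tilde{\Dsla}_{\mathrm{log}},\tilde V]$ on $L^2(X^u(P),\pi_0^*(S))$. This is where the bounded-geometry hypothesis on $X^u(P)$ and the conformality $\varphi^*\tilde{h}_{X^u(P)}=\pi_0^*(N)\,\tilde{h}_{X^u(P)}$ enter; granting the uniform calculus, the remainder is a direct transfer of the compact-base estimates already established on $M$.
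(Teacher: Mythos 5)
Your proposal follows essentially the same route the paper intends: the paper states this proposition after setting up exactly your framework (the isomorphism $L^2(X^u(P),\pi_0^*(S))\cong L^2(Z^u(P))_{C^*(\G_g)}\otimes_{C^*(\G_g)}L^2(M,S)$, and the lifted operator $\tilde{\Dsla}_{\rm log}$ being self-adjoint and log-polyhomogeneous on the complete bounded-geometry manifold $X^u(P)$) and declares that the sufficient conditions of Kucerovsky's theorem are to be checked, which is precisely what your reduction via Proposition \ref{awesome} and Lemma \ref{factorstrong} together with the transferred commutator estimates carries out. Since the Morita factor carries the zero operator, only the connection condition on generators needs verification, and your handling of the two commutator terms mirrors the paper's estimates for $[\Dsla_{\rm log},a]$ and $[\Dsla_{\rm log},V]$ on the compact base $M$.
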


\end{document}